\renewcommand\div{{\rm div}}
\newcommand{\wuhao}{\fontsize{10.5pt}{\baselineskip}\selectfont}
\newtheorem{theorem}{Theorem}[section]
\newtheorem{lemma}[theorem]{Lemma}
\theoremstyle{definition}
\newtheorem{definition}[theorem]{Definition}
\newtheorem{remark}[theorem]{Remark}
\numberwithin{equation}{section}
\begin{document}

\title [Martingale solutions for 3D SCNS system]{Global martingale weak solutions for the three-dimensional stochastic chemotaxis-Navier-Stokes system with L\'{e}vy processes}

\author{Lei Zhang}
\address{School of Mathematics and Statistics, Hubei Key Laboratory of Engineering Modeling  and Scientific Computing, Huazhong University of Science and Technology,  Wuhan 430074, Hubei, P.R. China.}
\email{lei\_zhang@hust.edu.cn (L. Zhang)}

\author{Bin Liu}
\address{School of Mathematics and Statistics, Hubei Key Laboratory of Engineering Modeling  and Scientific Computing, Huazhong University of Science and Technology,  Wuhan 430074, Hubei, P.R. China.}
\email{binliu@mail.hust.edu.cn (B. Liu)}

\keywords{Chemotaxis-Navier-Stokes system; Martingale solutions; Weak solutions; L\'{e}vy process; Global existence.}

\date{\today}

\begin{abstract}
This paper considers the three-dimensional stochastic chemotaxis-Navier-Stokes (SCNS) system subjected to a L\'{e}vy-type random external force in a bounded domain. Until recently, the existed results concerning global solvability of SCNS system mainly concentrated on the case of two spatial dimensions, little is known about the SCNS system in dimension three. We prove in the present work that the initial-boundary value problem for the three-dimensional SCNS system has at least one global martingale solution under proper assumptions, which is weak in both the analytical and stochastic sense. A new stochastic analogue of entropy-energy inequality and an uniform boundedness estimate are derived, which enable us to construct global-in-time approximate solutions from a suitably regularized SCNS system via the Contraction Mapping Principle. The proof of the existence of martingale solution is based on a stochastic compactness method and an elaborate identification of the limits procedure, where the Jakubowski-Skorokhod Theorem is applied to deal with the phase spaces equipped with weak topology.
\end{abstract}

\maketitle
\section{Introduction}
\subsection{Formulation of problem}
The interaction of bacterial populations with a surrounding fluid in which the chemical substances is consumed has already been recognized by several authors \cite{dombrowski2004self,tuval2005,fujikawa1989fractal}. This experiment intensively revealed complex facets of the spatio-temporal behavior in colonies of the aerobic species \emph{Bacillus subtilis} when suspended in sessile water drops. On one hand, both the density of bacteria and the evolution of chemical substrates   change over time in response to the flow of liquid environment; on the other hand, the motion of fluid is affected by certain external body force, which can be produced by various physical mechanism such as gravity, centrifugal, electric or magnetic forces as well as some uncertainties. The chemotaxis-Navier-Stokes (CNS) system rigorously derived by Tuval et al. \cite{tuval2005} has accurately characterized such a mutual chemotaxis \& fluid interaction. This paper is dedicated to the study of a three-dimensional stochastic chemotaxis-Navier-Stokes (SCNS) system affected by a L\'{e}vy-type random external force:
\begin{equation}\label{SCNS}
\left\{
\begin{aligned}
&\mathrm{d} n +u\cdot \nabla n \mathrm{d}t = D_n\Delta n \mathrm{d}t- \div\left(n\chi(c)\nabla c\right)\mathrm{d}t,   &  \textrm{in} ~\mathbb{R}^+\times \mathcal {O},\\
& \mathrm{d} c+ u\cdot \nabla c  \mathrm{d}t = D_c\Delta c\mathrm{d}t-nf(c)\mathrm{d}t,  & \textrm{in} ~~\mathbb{R}^+\times \mathcal {O},\\
& \mathrm{d} u+ \left((u\cdot \nabla) u+\nabla P\right)\mathrm{d}t \\
&\quad = \delta\Delta u\mathrm{d}t+ n\nabla \Phi\mathrm{d}t+ h(t,u) \mathrm{d} t+ g(t,u)  \mathrm{d}W_t\\
& \quad + \int_{Z_0} K(u(t-),z)\widetilde{\pi}(\mathrm{d}t,\mathrm{d}z) +  \int_{Z\backslash Z_0} G(u(t-),z) \pi (\mathrm{d}t,\mathrm{d}z), &\textrm{in} ~~\mathbb{R}^+\times \mathcal {O},\\
&  \div  u  =0, &\textrm{in} ~~\mathbb{R}^+\times \mathcal {O},
\end{aligned}
\right.
\end{equation}
with the boundary conditions
\begin{equation} \label{SCNS-c1}
   \frac{\partial n}{\partial \nu} =\frac{\partial c}{\partial \nu}=0 \quad \textrm{and} \quad u =0, \quad \textrm{in} ~  \mathbb{R}^+\times\partial \mathcal {O},
 \end{equation}
and the initial conditions
\begin{equation} \label{SCNS-c2}
n|_{t=0}=n_0,~c|_{t=0}=c_0,~ u|_{t=0}=u_0,\quad \textrm{in}~  \mathcal {O}.
 \end{equation}
Here,  $\mathcal {O}\subset \mathbb{R}^3$ is a bounded domain with smooth boundary $\partial \mathcal {O}$, and $\nu$ stands for the inward normal on the boundary. In \eqref{SCNS}, the unknowns are $n=n(t, x): \mathbb{R}^+ \times \mathcal {O} \rightarrow \mathbb{R}^+$, $c=c(t, x): \mathbb{R}^+ \times \mathcal {O} \rightarrow \mathbb{R}^+$, $u=u(t, x): \mathbb{R}^+ \times \mathcal {O} \rightarrow \mathbb{R}^3$ and $P=P(t, x): \mathbb{R}^+ \times \mathcal {O} \rightarrow \mathbb{R}$, denoting the density of bacteria, concentration of substrate, velocity field of fluid and associated pressure, respectively. The term  $n\nabla \Phi$ denotes the external force exerted by the bacteria on the fluid  through a given gravitational potential $\Phi=\Phi(x):\mathbb{R}^3\mapsto \mathbb{R}$. The positive constants $D_n,D_c$ and $\delta$ represent the diffusion coefficients for the bacteria, substrate and fluid, respectively. $\chi(c)$ is the chemotactic sensitivity and $f(c)$ is the consumption rate of the substrate by the bacteria.

Besides the gravity $n\nabla \Phi$ caused by the mass of bacteria, the fluid dynamics behavior
in \eqref{SCNS} is also influenced by a L\'{e}vy-type random external force:
\begin{equation} \label{noise}
h( t,u) + g(t,u)\frac{\mathrm{d} W(t)}{\mathrm{d} t}+ \int_{Z_0} K(u(t-),z)\widetilde{\pi}(\mathrm{d}t,\mathrm{d}z)+  \int_{Z\backslash Z_0} G(u(t-),z) \pi (\mathrm{d}t,\mathrm{d}z),
 \end{equation}
which is typically applied to describe the unpredictable noises in measurement or the errors of model in  approximating physical phenomena (cf. \cites{cyr2018euler,brzezniak20132d,chen2019martingale,nguyen2021nonlinear,zhai2015large}). Particularly, $h(t,u)$ represents the deterministic external force, $g(t,u)\frac{\mathrm{d} W(t)}{\mathrm{d} t}$ denotes the random force that influences the system continuously in time, and the terms $\int_{Z_0} K(u(t-),z)\widetilde{\pi}(\mathrm{d}t,\mathrm{d}z)$ and $  \int_{Z\backslash Z_0} G(u(t-),z) \pi (\mathrm{d}t,\mathrm{d}z)$ stand for the random forces that exhibit both small and large jumps, which influence the system discretely in time as impulses.

The random noises in \eqref{noise} are defined over a fixed probability space $(\Omega,\mathcal {F},\mathfrak{F},\mathbb{P})$ equipped with a filtration $\mathfrak{F}=(\mathcal {F} _t)_{t\geq 0}$ that satisfies the usual hypotheses. Specifically, $W$ is a $\mathfrak{F}$-adapted cylindrical Wiener process with values in a separable Hilbert space $U$. $\pi $, independent of $W$, is a time-homogeneous Poisson random measure on $[0, \infty) \times Z$ with intensity measure $\mathrm{d}t\otimes\mathrm{d}\mu $, where $Z$ is a complete and separable metric space with Borel $\sigma$-field $\mathscr{B}(Z)$ and $\mu$ is a $\sigma$-finite measure on $( Z,\mathscr{B}(Z))$. The compensated Poisson random measure is denoted by $\widetilde{\pi}(\mathrm{d}t,\mathrm{d}z) =\pi(\mathrm{d}t,\mathrm{d}z)-\mu(\textrm{d}z)\mathrm{d}t$. In \eqref{SCNS} or \eqref{noise}, we suppose that there is a subset $Z_0 \in \mathscr{B}(Z)$ such that $\mu (Z\backslash Z_0)<\infty$. For example, let $L$ be a L\'{e}vy process in a separable Hilbert space $U$ and $\pi $ be the Poisson random measure corresponding to the jump process $\Delta L$
defined by
$$
\Delta L(t)\overset{\textrm{def}}{=} L(t)-L\left(t-\right),\quad t\geq 0.
$$
The intensity measure of $\pi$ takes the form of $\textrm{d}t\otimes \textrm{d}\mu$, where $\mu$ is a L\'{e}vy measure of $L$ (cf. \cite[Theorem 4.23]{peszat2007stochastic}). Setting $Z= U\backslash \{0\}$, one can endow the space $Z$ with a complete separable metric (cf. \cite[Example A.8]{cyr2018euler}) such that $B\subseteq Z$ is bounded if and only if $B$ is separated from $0$, i.e., $0\notin \overline{B}$. If we choose
$$
Z_0=\left\{x \in U:  0<\|x\|_U<1\right\},
$$
then it is clear that $Z\backslash Z_0$ is separated from $0$, and hence $\mu (Z\backslash Z_0)<\infty$.

Regarding basic assumptions for the functions $\Phi ,f,\chi$ and the coefficients $h, g,K,G$, we refer to $(\textbf{A}_1)$-$(\textbf{A}_4)$ stated in Subsection \ref{sub1.3} for details. Concrete examples that satisfy these assumptions are provided in the Remark \ref{remmain}.

\subsection{History}

This subsection provides a summary of the studies relevant to the qualitative theory for the deterministic and stochastic CNS systems.

\textsc{CNS system in $\mathbb{R}^2$ or $\mathbb{R}^3$.} The coupled CNS system (where $h$, $g$, $K$ and $G$ vanish in \eqref{SCNS}) has gained a lot of interest in recent years. For $\mathbb{R}^3$, Duan, Lorz  and  Markowich \cite{duan2010global} obtained the global existence of classical solutions near constant states, while for $\mathbb{R}^2$, they established the existence of global weak solutions provided that the external force is weak or the substrate concentration is small. Later in \cite{tan2014decay}, Tan  and  Zhang obtained the optimal convergence rates of classical solutions for small initial perturbation around constant states, which partially improves the results in \cite{duan2010global}. By virtue of a refined  energy functional, Liu  and  Lorz \cite{liu2011coupled} proved the global existence of weak solutions in two spatial dimensions with some conditions on the sensitivity and consumption rate. In \cite{chae2014global}, Chae, Kang  and  Lee established the local existence of regular solutions, and presented some blow-up criteria when the concentration equation is parabolic-type and hyperbolic-type, respectively. In the case that the initial mass is bounded and integrable, Kang  and  Kim \cite{kang2017existence} construct a solution to the CNS system such that the density of biological organism belongs to the absolutely continuous curves in the Wasserstein space. More recently, Jeong and Kang \cite{jeong2022well} investigated the local well-posedness and blow-up criteria in Sobolev spaces for both partially inviscid and fully inviscid cases by performing a weighted Gagliardo-Nirenberg-Sobolev inequality.

\textsc{CNS system in bounded domain.} The first work concerning the CNS system in bounded domain is due to Lorz \cite{lorz2010coupled}, in which the author demonstrated the existence of a local weak solution using Schauder's fixed point theory. If the domain is further assumed to be convex, by using some delicate entropy-energy estimates, Winkler \cite{winkler2012global} established the existence of a unique global classical solution with general initial data in two spatial dimensions, and he also provided an existence result in three dimensions without the effect of convection term. Note that the convex assumption on the region was then removed by Jiang, Wu  and  Zheng in \cite{mizoguchi2014nondegeneracy}. Later in \cite{winkler2014stabilization}, Winkler studied the stability of solutions obtained in \cite{winkler2012global} in two-dimensional situation, and it was discovered that the solution converges to a constant state as time approaches infinity. When the bounded domain $\mathcal {O}\subseteq \mathbb{R}^3$ is convex and smooth, Winker \cite{winkler2016global} construct a global weak solution with suitable weak conditions on initial data by using an energy-type inequality. Furthermore, it was recently demonstrated in \cite{winkler2017far} that after some relaxation time, these weak solutions constructed in \cite{winkler2016global} enjoy  additional regularity qualities and thereby comply with the eventual energy solutions. The possibility for singularities of these weak energy solutions on small time-scales has also been shown to arise solely on sets of measure zero \cite{winkler2022does}. For the CNS system of production type, we refer to \cites{hillen2009user,arumugam2021keller} and references therein for details.

\textsc{About SCNS system.} In order to account for the unpredictable environment around the fluid at a macro- and microscopic level, Zhai and Zhang \cite{zhai20202d} first considered the two-dimensional CNS system with a fluid driven by cylindrical Wiener process in a convex and bounded domain. They affirmed the existence and uniqueness of both pathwise mild solutions and weak solutions with the help of an entropy-energy inequality. After submitting this paper, we have learned a new result by Hausenblas et al. \cite{hausenblas2023existence}, in which the authors studied the initial-boundary value problem for the two-dimensional SCNS system with an additional random noise imposed on the $n$-equation. In comparison with its deterministic counterpart, the mathematical literature on the three-dimensional SCNS system is less developed, and as far as we know, \cite{zhai20202d,hausenblas2023existence} are the only results relevant to the coupled SCNS system in two spatial dimensions. For completeness, we would like to mention the works \cites{mayorcas2021blow,hausenblas2022one,misiats2022global,huang2021microscopic,
shang2019asymptotic,martini2022additive} that discuss the existence, uniqueness, and blow-up criteria for the decoupled stochastic Keller-Segel type systems.  Currently, the  well-posed problem for the coupled SCNS system in dimension three is still open,  such as the global existence in generalized solution frameworks.

The \emph{main contribution} of this paper is to provide an affirmative answer to the question of global solvability for the three-dimensional SCNS system in a bounded domain. More precisely, under appropriate assumptions, we shall prove that the three-dimensional SCNS system \eqref{SCNS} perturbed by both continuous and discontinuous random external forces possesses at least
one global martingale solution. These solutions are weak in the analytical sense (derivatives
exists only in the sense of distributions) and weak in the stochastic sense (the underlying
probability space is not a priori given but part of the problem).

\subsection{Main result}\label{sub1.3}
For any $p \in[1,+\infty]$, $ k \in \mathbb{N}$, $W^{k, p}(\mathcal {O})$ stands for the standard Sobolev spaces equipped with the norm $\|\cdot\|_{W^{k, p} }$. As a subspace, we define
$$
W^{k, p}_{0,\sigma}(\mathcal {O})=\left\{u\in W^{k, p}(\mathcal {O});\ \div u=0 \ \textrm{and} \ u |_{\partial\mathcal {O}}=0\right\}.
$$
For a Banach space $X$, its dual space is indicated by $X^*$, and the duality between $X$ and $X^*$ is denoted by $\langle\cdot, \cdot\rangle_{X^*, X}$, or simply by $\langle\cdot, \cdot\rangle$ when  there is no confusion. We denote by $\mathcal {C}([0,T];X)$ the space of all $X$-valued continuous functions, and by $\mathcal {D}[0,T];X)$ the space of all $X$-valued \emph{c\`{a}dl\`{a}g} functions, i.e., the right continuous functions with left-hand limits from $[0,T]$ into $X$. Given two separable Hilbert spaces $U$ and $G$, we use the symbol $\mathcal {L}_2(U;G)$ to denote the space of all Hilbert-Schmidt operators from $U$ into $G$.

Now let us give the rigorous definition of martingale weak solutions to SCNS system \eqref{SCNS}. Without loss of generality, we will take $D_n=D_c=\delta=1$ throughout the paper.

\begin{definition}\label{def}
A quantity
$$
\left((\Omega,\mathcal {F},\mathfrak{F} ,\mathbb{P}),W,\pi,n,c,u\right)
$$
is called a \emph{global martingale weak solution} to the problem \eqref{SCNS}-\eqref{SCNS-c2},  provided:
\begin{itemize}[leftmargin=0.9cm]
\item [$\bullet$] $\left(\Omega,\mathcal {F},\mathfrak{F},\mathbb{P}\right)$ is a filtered probability space with filtration $\mathfrak{F}=(\mathcal {F}_t)_{t\geq 0}$; $W$ is an $\mathfrak{F}$-adapted $U$-valued cylindrical Wiener process; $\pi$, independent of $W$, is a time-homogeneous Poisson random measure on $[0,\infty)\times Z$ with intensity measure $\textrm{d}t\otimes \textrm{d}\mu$.

\item [$\bullet$] $ n\in L^2\left(\Omega; L^1_{loc}(0,\infty; W^{1,1}(\mathcal {O}))\right)$ is an $\mathfrak{F}$-progressively measurable process with $\mathbb{P}$-a.s. continuous sample paths in $(W^{\frac{13}{11},11}(\mathcal {O}))^*$; $c\in L^2\left(\Omega; L^1_{loc}(0,\infty;W^{1,1}(\mathcal {O}))\right)$ is an $\mathfrak{F}$-progressively measurable process with $\mathbb{P}$-a.s. continuous sample paths in $(W^{2,\frac{5}{2}}(\mathcal {O}))^*$; $u\in L^2 \left(\Omega;  L^1_{loc} (0,\infty; (W^{1,1}_{0,\sigma}(\mathcal {O}) )^3 ) \right)$ is an $\mathfrak{F}$-progressively measurable process with $\mathbb{P}$-a.s. c\`{a}dl\`{a}g sample paths in $(W^{1, 5}_{0,\sigma}(\mathcal {O}) )^*$.

\item [$\bullet$] for $\mathbb{P}$-a.a. $\omega\in \Omega$, there hold
\begin{align*}
&(u\otimes u)(\cdot,\omega) \in L^1_{loc}\left(\mathcal {O}\times [0,\infty);\mathbb{R}^{3\times 3}\right),~(nf(c))(\cdot,\omega) \in L^1_{loc}\left(0,\infty;L^1(\mathcal {O})\right),\\
&(n\chi(c)\nabla c)(\cdot,\omega),~(nu)(\cdot,\omega),~(cu)(\cdot,\omega) \in L^1_{loc}\left(0,\infty;(L^1(\mathcal {O}))^3\right).
\end{align*}

\item [$\bullet$] there hold $\mathbb{P}$-a.s.
\begin{align*}
-  \int_\mathcal {O} n_0 \varphi|_{t=0}\mathrm{d}x  &=  \int_ 0^\infty  \int_\mathcal {O} \left(n \varphi _t+nu \cdot \nabla \varphi-\nabla n \cdot\nabla \varphi+n\chi(c) \nabla c \cdot \nabla \varphi \right)\mathrm{d}x \mathrm{d} t,\\
- \int_\mathcal {O} c_0 \varphi|_{t=0}\mathrm{d}x&=  \int_ 0^\infty  \int_\mathcal {O} \left( c \varphi _t+cu \cdot \nabla \varphi  -\nabla c \cdot\nabla \varphi-nf(c) \varphi    \right)\mathrm{d}x \mathrm{d} t,
\end{align*}
for all $\varphi\in \mathcal {C}^\infty_{0}(\mathcal {O}\times [0,\infty);\mathbb{R})$.

\item [$\bullet$] there holds $\mathbb{P}$-a.s.
\begin{align*}
&- \int_\mathcal {O} u_0 \cdot\overline{\varphi}|_{t=0}\mathrm{d}x=  \int_ 0^\infty  \int_\mathcal {O}  \left(u\cdot \overline{\varphi} _t+u\otimes u : \nabla \overline{\varphi}-\nabla u \cdot\nabla \overline{\varphi} +n \nabla \Phi\cdot \overline{\varphi  }  \right)\mathrm{d}x \mathrm{d} t\\
& \quad + \int_ 0^\infty  \int_\mathcal {O}   h( t,u) \overline{\varphi}\mathrm{d}x \mathrm{d} t+  \int_ 0^\infty  \int_\mathcal {O}  g(t,u)\overline{\varphi} \mathrm{d}x\mathrm{d}W_t \\
&\quad + \int_ 0^\infty  \int_{Z_0} \int_\mathcal {O} K(u(x,t-),z)\overline{\varphi}\mathrm{d} x \widetilde{\pi}(\mathrm{d}t,\mathrm{d}z) + \int_ 0^\infty  \int_{Z\backslash Z_0} \int_\mathcal {O} G(u(x,t-),z)\overline{\varphi}\mathrm{d} x \pi(\mathrm{d}t,\mathrm{d}z),
\end{align*}
for all $\overline{\varphi}\in \mathcal {C}^\infty_0(\mathcal {O}\times [0,\infty);\mathbb{R}^3)$ with $\div \overline{\varphi}=0$.
\end{itemize}
\end{definition}

In this paper, we make the following assumptions.

\begin{itemize}[leftmargin=1cm]
\item [$(\textbf{A}_1)$] For the initial datum $n_0,c_0$ and $u_0$, we assume that
 \begin{equation*}
\left\{
\begin{aligned}
 &u_0\in L^2_\sigma(\mathcal {O}),~~n_0 \in L \log L(\mathcal {O}) ~~ \textrm{is non-negative and}  ~n_0\not\equiv 0,\\
& c_0 \in L^\infty(\mathcal {O})~~ \textrm{is non-negative}, ~~\sqrt{c_0}\in W^{1,2}(\mathcal {O}),
\end{aligned}
\right.
\end{equation*}
where $L\log L(\mathcal {O})$ denotes the standard Orlicz space associated with the Young function  $\theta(x)=x\ln(1+x)$ on $(0,\infty)$, and $L^2_\sigma(\mathcal {O}) \overset{\textrm{def}}{=} \{u\in L^2(\mathcal {O});\ \div u=0 \}
$ stands for the solenoidal subspace of $L^2(\mathcal {O})$.

\vskip1mm
\item [$(\textbf{A}_2)$] For the potential $\Phi$, the chemotactic sensitivity $\chi$ and the signal consumption rate $f$ in SCNS \eqref{SCNS}, we assume that
 \begin{equation*}
\left\{
\begin{aligned}
&  \Phi: \mathcal {O}\mapsto \mathbb{R}~ \textrm{is locally Lipschitz countinuous, i.e.,} ~\Phi\in W^{1,\infty}(\mathcal {O};\mathbb{R}),\\
& \chi :[0,\infty)\mapsto (0,\infty)~ \textrm{is a $C^2$ function},\\
&f:[0,\infty)\mapsto [0,\infty)~ \textrm{is a $C^2$ function with} ~f(0)=0, ~~f>0~\textrm{on} ~(0,\infty) ,  \\
&\left(\frac{f}{\chi}\right)'>0,~~\left(\frac{f}{\chi}\right)''\leq 0,  ~~\textrm{and} ~ \left(f \chi\right)'\geq0~ \textrm{on}~ [0,\infty).
\end{aligned}
\right.
\end{equation*}

\vskip1mm
\item [$(\textbf{A}_3)$] For any $\alpha \in \{0,\frac{1}{2}\}\cup (\frac{3}{4},1)$, the mappings $h(\cdot,\cdot):[0,\infty)\times \mathscr{D}(\textrm{A}^\alpha)\mapsto \mathscr{D}(\textrm{A}^\alpha)$ and $g(\cdot,\cdot):[0,\infty)\times \mathscr{D}(\textrm{A}^\alpha)\mapsto \mathcal {L}_2(U;\mathscr{D}(\textrm{A}^\alpha))$ are Borel measurable, and there exists a constant $C>0$ such that
\begin{equation*}
\begin{split}
 &\|h(t,u)\|_{\mathscr{D}(\textrm{A}^\alpha)}\leq  C\left(1+\|u\|_{\mathscr{D}(\textrm{A}^\alpha)} \right), \\
 &\|h(t,u_1)-h(t,u_2)\|_{\mathscr{D}(\textrm{A}^\alpha)}\leq  C\|u_1-u_2\|_{\mathscr{D}(\textrm{A}^\alpha)},\\
 &\|g(t,u)\|_{\mathcal {L}_2(U;\mathscr{D}(\textrm{A}^\alpha))}\leq  C\left(1+\|u\|_{\mathscr{D}(\textrm{A}^\alpha)} \right),\\
 &\|g(t,u_1)-g(t,u_2)\|_{\mathcal {L}_2(U;\mathscr{D}(\textrm{A}^\alpha))}\leq  C\|u_1-u_2\|_{\mathscr{D}(\textrm{A}^\alpha)}
 \end{split}
\end{equation*}
hold uniformly in time for all $u,u_1,u_2 \in \mathscr{D}(\textrm{A}^\alpha)$. Here, $\textrm{A}\overset{\textrm{def}}{=}- \mathcal {P}\Delta$ denotes the Stokes operator with the domain $ \mathscr{D}(\textrm{A}) = W^{2,2}(\mathcal {O})\cap W^{1,2}_{0,\sigma}(\mathcal {O})$.

 \vskip1mm
\item [$(\textbf{A}_4)$] For any $\alpha \in \{0,\frac{1}{2}\}\cup (\frac{3}{4},1)$, the mappings $K(\cdot ,\cdot): \mathscr{D}(\textrm{A}^\alpha)\times Z_0 \rightarrow\mathscr{D}(\textrm{A}^\alpha)$ and $G(\cdot ,\cdot): \mathscr{D}(\textrm{A}^\alpha)\times Z\backslash Z_0 \rightarrow\mathscr{D}(\textrm{A}^\alpha)$ are Borel measurable, and there exists a  $C>0$ such that
\begin{equation*}
\begin{split}
 & \int_{Z_0}\|K(u ,z)\|_{\mathscr{D}(\textrm{A}^\alpha)}^p\mu( \mathrm{d}z)+ \int_{Z\backslash Z_0}\|G(u ,z)\|_{\mathscr{D}(\textrm{A}^\alpha)}^p\mu( \mathrm{d}z)\\
&\quad\leq C \left(1+\|u\|^p_{\mathscr{D}(\textrm{A}^\alpha)} \right),\quad \textrm{for all} ~~ p\geq 2,\\
 & \int_{Z_0}\|K(u_1 ,z)-K(u_2 ,z)\|_{\mathscr{D}(\textrm{A}^\alpha)}^2\mu( \mathrm{d}z)+ \int_{Z\backslash Z_0}\|G(u_1 ,z)-G(u_2 ,z)\|_{\mathscr{D}(\textrm{A}^\alpha)}^2\mu( \mathrm{d}z) \\
 & \quad\leq C  \| u_1- u_2 \|_{\mathscr{D}(\textrm{A}^\alpha)}^2
 \end{split}
\end{equation*}
hold for all $u,u_1,u_2 \in \mathscr{D}(\textrm{A}^\alpha)$.
\end{itemize}

The main result in this work can now be stated by the following theorem.
\begin{theorem}\label{thm}
Under the assumptions $(\textbf{A}_1)-(\textbf{A}_4)$, the initial-boundary value problem $\eqref{SCNS}-\eqref{SCNS-c2}$ possesses at least one global martingale weak solution $
((\widehat{\Omega},\widehat{\mathcal {F}},\mathfrak{F},\widehat{\mathbb{P}}),\widehat{W},\widehat{\pi},\widehat{n},\widehat{c},\widehat{u}) $ in the sense of Definition \ref{def}. Moreover, for any  $T>0$, the weak entropy-energy inequality
\begin{equation}\label{1.5}
\begin{split}
&- \int_0^T\phi'(t) \int_\mathcal {O} \left(\widehat{n} \ln \widehat{n} + \frac{1}{2}|\nabla \Psi(\widehat{c} )|^2+c^\dag |\widehat{u} |^2\right)\mathrm{d} x\mathrm{d}t \\
&\quad +  \int_0^T\phi(t)  \int_\mathcal {O}\left( \frac{1}{2}\frac{ |\nabla \widehat{n} |^{2}}{\widehat{n}}  +  d_1\frac{|\nabla \widehat{c} |^4}{\widehat{c}^3} + d_2\frac{ |\Delta \widehat{c}|^2}{\widehat{c}}  +  \frac{c^\dag}{2}|\nabla \widehat{u}|^{2}\right)\mathrm{d}x\mathrm{d}t\\
 & \quad\leq \phi(0)\int_\mathcal {O}\left(n_0 \ln n_0 + \frac{1}{2}|\nabla \Psi(c_0)|^2+c^\dag |u_0|^2\right)\mathrm{d} x \\
 &\quad+ C \int_0^T\phi(t)\left( \int_\mathcal {O} |\widehat{u}(t) | ^2\mathrm{d} x+1\right)\mathrm{d}t +  \int_0^T\phi(t)\mathrm{d}\mathcal {M}_E
\end{split}
\end{equation}
holds true $\widehat{\mathbb{P}}$-a.s. for any deterministic test function $\phi (t)\geq 0$ with $\phi(T)=0$, where
$$
\Psi(\sigma)\stackrel{\mathrm{def}}{=} \int_1^\sigma\sqrt{\frac{\chi(r)}{f(r)}} \mathrm{d}r,\quad \textrm{for all}~ \sigma>0.
$$
Here $\mathcal {M}_E$ is a real-valued martingale with bounded $p$-th order momentum
\begin{equation}\label{1.8}
\begin{split}
 \widehat{\mathbb{E}}\sup_{t\in [0,T]}|\mathcal {M}_E|^p \leq C\left[\mathbb{E}\left( \int_\mathcal {O} \left(n_0\ln n_0+ |\nabla \Psi(c_0)|^2+ |u_0|^2\right)\mathrm{d} x\right)^p +1 \right],
\end{split}
\end{equation}
for $1\leq p <\infty$, where the positive constants $d_1,d_2 ,c^\dagger,C$ depend only on $p,c_0,f,\chi$ and $\Phi$.
\end{theorem}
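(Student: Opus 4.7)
The plan follows a regularize--estimate--compactness--identify scheme. \emph{Step 1 (Regularization and approximate solutions).} For a small parameter $\varepsilon>0$, I would replace the cross-diffusion term and convective coupling by smoothed/truncated versions (e.g.\ replace $n$ by $n/(1+\varepsilon n)$ in the chemotactic flux and in the body force $n\nabla\Phi$, mollify $(u\cdot\nabla)u$ through a Galerkin projection onto $\mathrm{span}\{e_1,\ldots,e_m\}$ of Stokes eigenfunctions, and truncate the noise coefficients to be bounded). Under $(\textbf{A}_3)$--$(\textbf{A}_4)$ the resulting coefficients are globally Lipschitz on $\mathscr{D}(\textrm{A}^\alpha)$ for each admissible $\alpha$, so the Contraction Mapping Principle on a Banach space of $\mathfrak{F}$-predictable c\`adl\`ag processes (solving the $(n,c)$ subsystem pathwise for a frozen $u$, and closing the loop by fixed point) produces a global-in-time strong solution $(n^\varepsilon,c^\varepsilon,u^\varepsilon)$ of the regularized system on the original probability space.

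\emph{Step 2 (Uniform stochastic entropy--energy estimate).} This is the analytical heart of the proof. Applying It\^o's formula for c\`adl\`ag semimartingales to the functional
\[
\mathcal{E}(n,c,u)=\int_\mathcal{O}\Bigl(n\ln n+\tfrac12 |\nabla \Psi(c)|^2+c^\dag |u|^2\Bigr)\mathrm{d}x,
\]
I would test the $n$-equation against $1+\ln n$ to generate $|\nabla n|^2/n$ along with the cross term $-\int n\chi(c)\nabla c\cdot\nabla(\ln n)$; test the $c$-equation against a suitable transform of $\Psi(c)$ to yield the dissipation $d_1|\nabla c|^4/c^3+d_2|\Delta c|^2/c$, where the structural assumptions $(f/\chi)'>0$, $(f/\chi)''\le 0$, $(f\chi)'\ge 0$ from $(\textbf{A}_2)$ allow the cross terms generated with the $n$-equation to be absorbed; and test the fluid equation against $2c^\dag u$ to obtain $c^\dag|\nabla u|^2$, controlling $\int n\nabla\Phi\cdot u$ by $\int |u|^2+1$ via Cauchy--Schwarz and $\Phi\in W^{1,\infty}$. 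The It\^o corrections $\|g(t,u)\|^2_{\mathcal{L}_2(U;L^2_\sigma)}$ and $\int_{Z_0}\|K(u,z)\|^2_{L^2_\sigma}\mu(\mathrm{d}z)$, and the finite-variation contribution over $Z\setminus Z_0$, are all controlled by $C(1+\|u\|^2_{L^2})$ thanks to $(\textbf{A}_3)$--$(\textbf{A}_4)$ and so absorbed into the right-hand side of \eqref{1.5}. The remaining Brownian and compensated-jump stochastic integrals assemble into the martingale $\mathcal{M}_E$, and the bound \eqref{1.8} follows from Burkholder--Davis--Gundy, the growth assumptions, and Gronwall after taking the supremum in time.

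\emph{Step 3 (Compactness, passage to the limit, and the main obstacle).} Standard interpolation together with the regularized equations supplies fractional-in-time Sobolev bounds for $n^\varepsilon,c^\varepsilon$ (so Aubin--Lions yields compactness in $L^1_{loc}(0,\infty;L^1(\mathcal{O}))$) and an Aldous-type tightness condition for the c\`adl\`ag $u^\varepsilon$ on $\mathcal{D}([0,T];(W^{1,5}_{0,\sigma}(\mathcal{O}))^*)$ with the Skorokhod topology. Tightness of the joint laws of $(n^\varepsilon,c^\varepsilon,u^\varepsilon,W,\pi)$ on the product path space follows, but since the target topologies are weak and hence non-Polish, I would invoke the Jakubowski--Skorokhod representation theorem to extract a.s.-convergent copies $(\widehat n^\varepsilon,\widehat c^\varepsilon,\widehat u^\varepsilon,\widehat W^\varepsilon,\widehat\pi^\varepsilon)\to(\widehat n,\widehat c,\widehat u,\widehat W,\widehat\pi)$ on a new probability space $(\widehat\Omega,\widehat{\mathcal{F}},\widehat{\mathbb{P}})$. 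The \emph{main obstacle} is then the identification of the limit: (i) $\widehat W$ must be verified to be an $\widehat{\mathfrak{F}}$-cylindrical Wiener process and $\widehat\pi$ a Poisson random measure with intensity $\mathrm{d}t\otimes\mathrm{d}\mu$, independent of $\widehat W$, via a L\'evy--martingale characterization; (ii) every nonlinearity in Definition \ref{def} must pass to the limit. The quadratic couplings $\widehat u\otimes\widehat u$, $\widehat n\widehat u$, $\widehat c\widehat u$ and the chemotactic flux $\widehat n\chi(\widehat c)\nabla\widehat c$ are the delicate terms, handled by combining the strong $L^1_{t,x}$ convergence of $(\widehat n^\varepsilon,\widehat c^\varepsilon)$ with the weak convergence of $\widehat u^\varepsilon$ (and vice versa) and upgrading via Vitali's theorem using the moment bounds; the stochastic integrals are identified by the Debussche--Glatt-Holtz--Temam convergence theorem for the Wiener part and by its Poisson-random-measure analogue for the jump parts. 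Finally, since $n\mapsto n\ln n$, $v\mapsto |v|^2$, and the dissipation densities on the left of \eqref{1.5} are convex and hence lower semicontinuous in the relevant convergence modes, Fatou's lemma transfers the $\varepsilon$-level inequality to the limit, while the limiting martingale inherits \eqref{1.8} from a further Burkholder--Davis--Gundy estimate applied to its explicit representation.
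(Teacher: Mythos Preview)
Your overall architecture (regularize, derive a stochastic entropy--energy inequality, then apply Jakubowski--Skorokhod and identify the limit) matches the paper's, but two concrete steps in your Step~2 do not close as written.

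First, the entropy--energy computation on a non-convex domain produces a boundary integral that you do not mention. When you differentiate $\tfrac12\int_{\mathcal O}|\nabla\Psi(c)|^2$ and integrate by parts, a term
\[
\frac12\int_{\partial\mathcal O}\frac{1}{\theta(c)}\,\frac{\partial|\nabla c|^2}{\partial\nu}\,\mathrm d\Sigma,\qquad \theta=\tfrac{f}{\chi},
\]
survives because only $\partial_\nu c=0$ is prescribed, not $\partial_\nu|\nabla c|^2$. On a convex domain this term has a sign and can be dropped; here it cannot. The paper handles it via the Mizoguchi--Souplet inequality $\partial_\nu|\nabla c|^2\le 2\kappa|\nabla c|^2$ on $\partial\mathcal O$, followed by a trace estimate and an interpolation $\|\nabla\Psi(c)\|_{W^{r+1/2,2}}^2\le \eta\|\Delta\Psi(c)\|_{L^2}^2+C_\eta\|\Psi(c)\|_{L^2}^2$, so that the boundary contribution is absorbed into the interior dissipation $\int\theta(c)|D^2\rho(c)|^2$. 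Without this device the inequality \eqref{1.5} does not follow from your testing scheme.

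Second, your control of $\int_{\mathcal O} n\nabla\Phi\cdot u$ by ``$\int|u|^2+1$ via Cauchy--Schwarz'' cannot work uniformly in $\varepsilon$: at this stage you only have $n\in L^1$ (mass conservation), not $n\in L^2$. The paper instead uses the $L^{6/5}$--$L^6$ pairing together with the Gagliardo--Nirenberg bound $\|n\|_{L^{6/5}}\le C(\|\nabla\sqrt{n}\|_{L^2}^{1/2}+1)$, so that this coupling is absorbed partly into $\int|\nabla u|^2$ and partly into the entropy dissipation $\int|\nabla n|^2/n$; this is precisely why both dissipative terms must appear on the left of \eqref{1.5} with carefully chosen coefficients (the constant $c^\dagger$). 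Your proposed regularization of the body force to $\tfrac{n}{1+\varepsilon n}\nabla\Phi$ would only give an $\varepsilon$-dependent bound and would not survive the limit. The remaining elements of your plan (Aldous tightness for $u$, Jakubowski--Skorokhod, Vitali/Fatou for the nonlinear and convex terms) are in line with the paper; the paper identifies the stochastic integrals via a Bensoussan-type mollification-in-time argument rather than the martingale-convergence lemma you cite, but either route is viable.
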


\begin{remark} \label{remmain}
The assumptions $(\textbf{A}_1)$-$(\textbf{A}_4)$ are reasonable, for instance:
\begin{itemize}[leftmargin=0.9cm]
\item [$\bullet$] for $(\textbf{A}_1)$-$(\textbf{A}_2)$,  since $\mathcal {O}$ is a bounded domain in $\mathbb{R}^3$, the initial data may be selected as $
(n_0,c_0,u_0)=\left(|x|^2+1,|x|^4,\textrm{curl}(|x|^2)\right)$. Concerning the chemotactic sensitivity and the consumption rate of substrate, it is of interest to consider $f(s)=s$, for all $s\geq0$  and $\chi\equiv\textrm{const.}>0 $,
which corresponds to the prototypical deterministic CNS system (cf. \cite{hillen2009user,arumugam2021keller}).

\item [$\bullet$] for $(\textbf{A}_3)$, one can choose $h(t,u)=u$, for all $t\geq 0$. Concerning the Hilbert-Schmidt operator $g(t,u)$, one can take $U=L^2 (\mathcal {O})$ and fix an element  $\psi\in \mathscr{D}(\textrm{A}^\alpha)$, and define the linear mapping $g(t,\cdot):\mathscr{D}(\textrm{A}^\alpha)\rightarrow \mathcal {L}_2(U;\mathscr{D}(\textrm{A}^\alpha))$ by
\begin{equation*}
\begin{split}
g(t,u)v\overset{\textrm{def}}{=}(u, v)_{L^2}\cdot\psi,\quad \forall u\in \mathscr{D}(\textrm{A}^\alpha),  v\in U.
\end{split}
\end{equation*}
Apparently, there hold $g(t,0)=0$ and
$$
\|g(t,u_1)-g(t,u_2)\|_{\mathcal {L}_2(U;\mathscr{D}(\textrm{A}^\alpha))}^2\leq C\| u_1-u_2\|_{\mathscr{D}(\textrm{A}^\alpha)}^2\|\psi\|_{\mathscr{D}(\textrm{A}^\alpha)}^2,\quad \forall u_1,u_2 \in \mathscr{D}(\textrm{A}^\alpha),
$$
which imply that $g(\cdot,\cdot)$ satisfies the assumption $(\textbf{A}_3)$.

\item [$\bullet$] for $(\textbf{A}_4)$, let $\pi$ be the Poisson random measure with intensity measure $\mathrm{d}t\otimes\mathrm{d}\mu$ induced from a L\'{e}vy process $L$ on a separable Hilbert space $U$, where $\mu$ is a $\sigma$-finite L\'{e}vy measure such that $\int_{U\backslash\{0\}}(\|z\|_U^2 \wedge 1) \mu (\textrm{d}z)<\infty$ (cf. \cite[Theorem 4.23]{peszat2007stochastic}). By choosing $Z=U\backslash \{0\}$ and $Z_0=\{z\in U;~0<\|z\|_U<1\}$ (and hence $\mu(Z\backslash Z_0)<\infty$), one can define the measurable mapping $K(\cdot,\cdot): \mathscr{D}(\textrm{A}^\alpha) \times Z_0\mapsto \mathscr{D}(\textrm{A}^\alpha)$ by
\begin{equation*}
\begin{split}
K(u ,z)\overset{\textrm{def}}{=} \|z\|_U\cdot u, \ \ \textrm{for all}\ (u,z)\in \mathscr{D}(\textrm{A}^\alpha)\times Z_0.
\end{split}
\end{equation*}
Direct calculation shows that $K(0,z)=0$, and for all $p\geq 2$
\begin{equation*}
\begin{split}
\int_{Z_0}\|K(u_1,z)-K(u_2,z)\|^p_{\mathscr{D}(\textrm{A}^\alpha)} \mu (\textrm{d}z)&=\|u_1-u_2\|_{\mathscr{D}(\textrm{A}^\alpha)}^p\int_{Z_0}\|z\|_U^p \mu (\textrm{d}z)\\
&\leq C\|u_1-u_2\|_{\mathscr{D}(\textrm{A}^\alpha)}^p,
\end{split}
\end{equation*}
which implies that $K(\cdot,\cdot)$ satisfies the assumption $(\textbf{A}_4)$. In a similar manner, the measurable mapping $G(\cdot,\cdot): \mathscr{D}(\textrm{A}^\alpha) \times Z\backslash Z_0\mapsto \mathscr{D}(\textrm{A}^\alpha)$ defined by
\begin{equation*}
\begin{split}
G(u ,z)\overset{\textrm{def}}{=} \frac{1}{\|z\|_U}\cdot u, \ \ \textrm{for all}\ (u,z)\in \mathscr{D}(\textrm{A}^\alpha)\times Z\backslash Z_0
\end{split}
\end{equation*}
 satisfies the assumption $(\textbf{A}_4)$ due to the fact of $\mu (Z\backslash Z_0)<0$.
\end{itemize}
\end{remark}

\begin{remark}
As far as we aware, Theorem \ref{thm} appears to be the first result to consider the global solvability for SCNS system in three spatial dimensions, extending the earlier works in the deterministic case by Winkler \cite{winkler2012global,winkler2016global} and in the two-dimensional stochastic cases by Zhai and Zhang \cite{zhai20202d} and Hausenblas et al. \cite{hausenblas2023existence}. It should also be noted that Theorem \ref{thm} does not require the convexity of the bounded domain $\mathcal {O}$, which is technically assumed in \cite{winkler2012global,zhai20202d,winkler2016global}. The lack of convexity of the domain makes the derivation of several important a priori estimations more delicate, please see the associated statement in Subsection 1.4. Additionally, our result includes the effects of both continuous and discontinuous
random external forces surrounding the fluid, which is more natural from a physical point
of view, and we refer to \cite{chen2019martingale,sakthivel2012martingale,dong2011ergodicity,zhai2015large,manna2021well,
dong2011martingale,hausenblas2020existence,nguyen2021nonlinear} for a few efficient applications of L\'{e}vy processes in the stochastic hydrodynamics.
\end{remark}

\begin{remark}
In dimension two, one can demonstrate the pathwise uniqueness of solutions to the SCNS system. This fact together with  the existence of global martingale solutions as well as the Yamada-Watanabe Theorem for SPDEs (cf. \cite[Lemma 1.1]{gyongy1980stochastic}) yields that the two dimensional SCNS system admits a unique global pathwise solution, we refer the reader to \cite{zhai20202d} for the proof. In dimension three, however, due to the lack of spatial regularity of solutions, the uniqueness part can not be guaranteed. As a matter of fact, the uniqueness or smoothness of solutions to the three dimensional deterministic/stochastic Navier-Stokes equations is a long standing open problem. In this aspect, we would like to mention the recent breakthrough on the non-uniqueness of weak solutions to the deterministic Navier-Stokes equations \cite{de2013dissipative,de2014dissipative,albritton2022non,buckmaster2019nonuniqueness} and to the stochastic Navier-Stokes equations \cite{hofmanova2019non,hofmanova2021global,hofmanova2021ill,chen2022sharp} based on the convex integration method. This also inspires us to pursue instead the global martingale (weak in probability) weak solutions. In future, a fascinating but challenging problem is to investigate the regularity and uniqueness of solutions constructed in Theorem \ref{thm}.
\end{remark}

\subsection{Outline of proof and main  ideas}

Theorem \ref{thm} will be proved through the following steps. First,  we consider a regularized SCNS system \eqref{SCNS-1} by applying Leray's regularization and the weakly increasing approximation in \cite{winkler2022reaction,winkler2022does}. However, due to the irregular random
external forces, the standard methodology used in \cite{winkler2016global} is not enough  to get the existence and uniqueness of approximate solutions. To overcome this,  proper working spaces are selected such that the processes are bounded in space variable, and several cut-off operators, which depends on the size of $\|n_\epsilon\|_{L^\infty}$, $\|c_\epsilon\|_{W^{1,q}}$ and $\|\textrm{A}^\alpha u_{\epsilon}\|_{L^2}$, are used to deal with the nonlinear terms and stochastic integrals (cf. \eqref{SCNS-2}). Based on a contraction mapping argument, one can prove the existence of a unique mild solution on any interval. Then we construct a maximal local mild solution $\left(n_{\epsilon},c_{\epsilon},u_{\epsilon},\widetilde{\tau}_\epsilon\right)$ to the regularized system \eqref{SCNS-1} by removing the cut-off operators via a sequence of stopping times. Therein, the smoothing effect of heat semigroup and Stokes semigroup \cite{winkler2010aggregation,giga1986solutions} plays an important role in a series of estimations.

The second step is to show that the maximal local solution obtained in Lemma \ref{lem1} is global in time $\mathbb{P}$-almost surely, that is, $
\mathbb{P}[\widetilde{\tau}_\epsilon=\infty]=1, $
where the key component is a newly derived stochastic analogue of entropy-energy type inequality and a crucial uniform estimate for the approximate solutions $(n_{\epsilon},c_{\epsilon},u_{\epsilon})$ within an unnecessarily convex region (see Lemma \ref{lem4.5}-\ref{lem4.6}). Note that in \cite{winkler2012global,zhai20202d}, the authors obtained the entropy-energy estimates by dropping the negative term $ \int_0^T\int_{\partial\mathcal {O}} \frac{1}{\theta(c_{\epsilon})}  \frac{\partial|\nabla c_{\epsilon}|^{2}}{\partial \nu}\mathrm{d}\Sigma\mathrm{d}t\leq 0$ under the convex condition of domain. Without the convexity condition, this term can not be dropped directly which makes the estimation more subtle. Fortunately, by means of the Mizoguchi-Souplet Lemma stated in Lemma \ref{lem-ms}, the term $\frac{\partial|\nabla c_{\epsilon}|^{2}}{\partial \nu}$ can be controlled by the quantity $|\nabla c_{\epsilon}|^2$ on the boundary $\partial\mathcal {O}$, which together with a proper Sobolev Embedding Theorem and an interpolation argument leads to the key inequality \eqref{(4.13)}. Then we obtain the desired estimate by absorbing the second and third terms on the R.H.S. of \eqref{(4.13)} in terms of the fine structure of the system \eqref{SCNS-1}. To proceed, several stopping times should be introduced which assist us to establish some a priori estimates by a careful application of the BDG inequality. We remark that for the classical Keller-Segel system and the ones coupled to fluid, the entropy-energy dissipative structure constituted crucial ingredients in the development of global existence theories \cite{duan2010global,liu2011coupled,wang2021local}.

The third step is to take a limit as $\epsilon\rightarrow 0$ and prove the existence of global martingale solutions. At this stage, the main difficulty comes from the fact that the weak convergence inherited from the uniform bounds is too weak to proceeding a compactness argument as that in deterministic situation. We overcome this problem by adapting the original idea of Skorokhod \cite{skorokhod1961existence} to investigating the tightness of approximate solutions, which can be carried out by applying the uniform energy estimate together with a generalized  Aubin-Lions Lemma \cite{nguyen2021nonlinear},  Flandoli's Lemma \cite{flandoli1995martingale} and a tightness criteria in $\mathcal {D} ([0,T];(W^{1, 5}_{0,\sigma}(\mathcal {O}))^*)$ due to Aldous \cite{aldous1978stopping,aldous1989stopping}. Then with the Jakubowski-Skorokhod Theorem (which is valid for quasi-polish spaces such as the separable Banach spaces with weak topology) one can  construct a new probability space $(\widehat{\Omega}, \widehat{\mathcal {F}}, \widehat{\mathbb{P}})$, on which a sequence of random variables $(\widehat{W}_{\epsilon_j},\widehat{\pi}_{\epsilon_j},\widehat{n}_{\epsilon_j},
\widehat{c}_{\epsilon_j},\widehat{u}_{\epsilon_j})_{j\geq 1}$ can be defined. These random elements have the same distributions as the original ones and in addition converge almost surely to an element $(\widehat{W} ,\widehat{\pi} ,\widehat{n} ,\widehat{c },\widehat{u})$.  Making use of Bensoussan's argument \cite{bensoussan1995stochastic} but with a crucial generalization on dealing with the stochastic integrals, we show that $(\widehat{n}_{\epsilon_j},\widehat{c}_{\epsilon_j},\widehat{u}_{\epsilon_j})_{j\geq 1}$ satisfies the approximate system \eqref{SCNS-1} under the new stochastic basis $((\widehat{\Omega}, \widehat{\mathcal {F}}, (\widehat{\mathcal {F}} _t), \widehat{\mathbb{P}}),\widehat{W}_{\epsilon_j},\widehat{\pi}_{\epsilon_j})$. With above results at hand, by carefully using the Uniform Integrability Theorem, the Vitali Convergence Theorem, the Dominated Convergence Theorem together with the almost surely convergence in Lemma \ref{lem5.5}, one can take the limit $j\rightarrow\infty$ in \eqref{SCNS-1} to verify  that the quantity $(\widehat{W} ,\widehat{\pi} ,\widehat{n} ,\widehat{c },\widehat{u})$ is indeed a global martingale weak solution to \eqref{SCNS} in the sense of Definition \ref{def}.

\subsection{Organization of the paper}

Section \ref{sec3} is devoted to the existence of maximal local mild solutions to the regularized SCNS system.
In Section \ref{sec4}, we first verify that the solution constructed in Section \ref{sec3} is indeed a variational solution associated to a Gelfand triple, then we derive a stochastic analogue of   entropy-energy type inequality and also an uniform bounds for approximate solutions, based on which we show that the approximate solutions are global ones. Section \ref{sec5} focuses on the identification of limits by using the stochastic compactness method and the Jakubowski-Skorokhod Theorem.

\section{Local existence of approximate solutions}\label{sec3}

In this section, we first introduce a regularized SCNS system, and then prove the existence and uniqueness of maximal local mild solution in appropriate function spaces.

\subsection{Regularized SCNS system}
Suppose that $\varrho\in C_0^\infty(\mathbb{R}^3)$ is a standard mollifier, for any $\epsilon> 0$, consider the Leray regularization defined by
$$
\textbf{L}_\epsilon u\overset {\textrm{def}} {=}\varrho _\epsilon \star u= \frac{1}{\epsilon^3} \int_{\mathbb{R}^3}\varrho\left(\frac{x-y}{\epsilon}\right)u(y)\mathrm{d} y,
$$
and utilize the weakly increasing approximation  $\textbf{h}_\epsilon$ given by $
\textbf{h}_\epsilon(s)\overset {\textrm{def}} {=}\frac{1}{\epsilon}\ln (1+\epsilon s)$, $s\geq 0$. Then the regularized SCNS system takes the form of
\begin{equation}\label{SCNS-1}
\left\{
\begin{aligned}
&\mathrm{d} n_\epsilon +u_\epsilon\cdot \nabla n_\epsilon \mathrm{d}t =  \Delta n_\epsilon\mathrm{d}t - \div\left(n_\epsilon \textbf{h}_\epsilon' (n_\epsilon)\chi(c_\epsilon)\nabla c_\epsilon\right)\mathrm{d}t,\\
&\mathrm{d} c_\epsilon+ u_\epsilon\cdot \nabla c _\epsilon \mathrm{d}t = \Delta c_\epsilon\mathrm{d}t-\textbf{h}_\epsilon(n_\epsilon) f(c_\epsilon)\mathrm{d}t,\\
&\mathrm{d} u_\epsilon+ \mathcal {P} (\textbf{L}_\epsilon u_\epsilon\cdot \nabla) u_\epsilon  \mathrm{d}t \\
&\quad \ =-\textrm{A} u_\epsilon\mathrm{d}t+\mathcal {P}(n_\epsilon\nabla \Phi )\mathrm{d}t +\mathcal {P}h(t,u_\epsilon) \mathrm{d}t+ \mathcal {P}g(t,u_\epsilon) \mathrm{d}W_t\\
 &\quad \ + \int_{Z_0} \mathcal {P}K(u_\epsilon(t-),z)\widetilde{\pi}(\mathrm{d}t,\mathrm{d}z)+  \int_{Z\backslash Z_0} \mathcal {P}G(u_\epsilon(t-),z) \pi (\mathrm{d}t,\mathrm{d}z),\\
& \frac{\partial n_\epsilon}{\partial \nu} \Big|_{\partial\mathcal {O}}=\frac{\partial c_\epsilon}{\partial \nu} \Big|_{\partial\mathcal {O}}=0,~~ u_\epsilon|_{\partial\mathcal {O}}=0,\\
 & n_\epsilon|_{t=0}=n_{ \epsilon0},~c_\epsilon|_{t=0}=c_{ \epsilon0},~ u_\epsilon|_{t=0}=u_{ \epsilon0},
 \end{aligned}
\right.
\end{equation}
where $\mathcal {P}:L^2(\mathcal {O})\mapsto L^2_\sigma(\mathcal {O})$ is the Leray-Holmholtz projection, and the family of approximate initial datum $(n_{ \epsilon0},c_{ \epsilon0},u_{ \epsilon0})$ have the following properties:
\begin{equation*}
\left\{
\begin{aligned}
&0\leq n_{\epsilon0}\in \mathcal {C}^\infty_{0}(\mathcal {O}),~\|n_{ \epsilon0}\|_{L^1}=\|n_{ 0}\|_{L^1},~\|n_{ \epsilon0}-n_{ 0}\|_{L \textrm{log} L(D)}\stackrel{\epsilon\rightarrow0}{\longrightarrow} 0,\\
& 0\leq \sqrt{c_{ \epsilon0}}\in \mathcal {C}^\infty_{0}(\mathcal {O}),~\|c_{ \epsilon0}\|_{L^\infty}\leq \|c_{0}\|_{L^\infty},~\|\sqrt{c_{ \epsilon0}}-\sqrt{c_{ 0}}\|_{1,2}\stackrel{\epsilon\rightarrow0}{\longrightarrow} 0,\\
&c_{ \epsilon0}\stackrel{\epsilon\rightarrow0}{\longrightarrow} c_{ 0}~\textrm{a.e. in} ~\mathcal {O},\\
& u_{ \epsilon0}\in \mathcal {C}^\infty_{0,\sigma}(\mathcal {O}),~\|u_{ \epsilon0}\|_{L^2}\leq \|u_{0}\|_{L^2},~\|u_{ \epsilon0}-u_{ 0}\|_{1,2}\stackrel{\epsilon\rightarrow0}{\longrightarrow} 0,~u_{ \epsilon0}\stackrel{\epsilon\rightarrow0}{\longrightarrow} u_{ 0}~\textrm{a.e. in} ~\mathcal {O}.
 \end{aligned}
\right.
\end{equation*}
Note that the above smooth functions can be found by standard mollification of $n_0$ and $c_0$ for the first two component (cf. \cite{winkler2016global}); the construction for the fluid component is based on the same idea (cf. \cite[Theorem III.4.1]{galdi2011introduction}).

Let us clarify the definition of a local mild solution  to \eqref{SCNS-1}, which allows us to exploit the regularizing properties of the heat and Stokes semigroups.

\begin{definition}\label{def-1}
Let $(\Omega,\mathcal {F},\mathfrak{F},\mathbb{P})$ be the stochastic basis given as before, then a quadruplet $( n_\epsilon ,c_\epsilon,u_\epsilon,\tau_\epsilon)$ is called a \emph{local mild solution} to the system \eqref{SCNS-1} provided:
\begin{itemize}[leftmargin=0.9cm]
\item [$\bullet$] $\tau_\epsilon$ is a $\mathbb{P}$-a.s. strictly positive $\mathfrak{F}$-stopping time.

\item [$\bullet$] for any $q>3$, $(n_\epsilon(\cdot\wedge \tau_\epsilon),c_\epsilon(\cdot\wedge \tau_\epsilon)) \in L^2\left(\Omega;L^\infty(0,\infty; \mathcal{C}^0(\overline{\mathcal {O}})\times W^{1,q}(\mathcal {O}))\right)
$ is an $\mathfrak{F}$-progressively measurable process with $\mathbb{P}$-a.s. continuous sample paths in  $\mathcal{C}^0(\overline{\mathcal {O}})\times W^{1,q}(\mathcal {O})$, such that $n_\epsilon (\cdot\wedge \tau_\epsilon)\geq0$,  $c_\epsilon (\cdot\wedge \tau_\epsilon) \geq0$, $ \mathbb{P}$-a.s.; for any $\alpha \in (\frac{3}{4},1)$, $u_\epsilon(\cdot\wedge \tau_\epsilon) \in L^2(\Omega; L^\infty(0,\infty;\mathscr{D}(\textrm{A}^\alpha)))$ is an $\mathfrak{F}$-progressively measurable process with $\mathbb{P}$-a.s. c\`{a}dl\`{a}g sample paths in $\mathscr{D}(\textrm{A}^\alpha)$.

\item [$\bullet$] for all $t\geq0$, there holds $\mathbb{P}$-a.s.
\begin{equation*}
\begin{split}
n_\epsilon  (t\wedge \tau_\epsilon)  = & e^{t\Delta} n_{ \epsilon0}
- \int_0^{t\wedge \tau_\epsilon}  e^{(t-s)\Delta} \big( u_\epsilon\cdot \nabla n_\epsilon+ \div\left(n_\epsilon \textbf{h}_\epsilon' (n_\epsilon)\chi(c_\epsilon)\nabla c_\epsilon\right)\big)\mathrm{d}s,\\
c_\epsilon  (t\wedge \tau_\epsilon)=& e^{t\Delta} c_{ \epsilon0}
- \int_0^{t\wedge \tau_\epsilon}  e^{(t-s)\Delta}  \big(u_\epsilon\cdot \nabla c _\epsilon  +\textbf{h}_\epsilon(n_\epsilon) f(c_\epsilon)\big)\mathrm{d}s,\\
u_\epsilon  (t\wedge \tau_\epsilon) =& e^{-t\textrm{A}} u_{ \epsilon0}
- \int_0^{t\wedge \tau_\epsilon}  e^{-(t-s)\textrm{A}}  \mathcal {P}\big((\textbf{L}_\epsilon u_\epsilon\cdot \nabla) u_\epsilon -   n_\epsilon\nabla \Phi-h(s,u_\epsilon) \big)\mathrm{d}s\\
+&  \int_0^{t\wedge \tau_\epsilon}  e^{-(t-s)\textrm{A}}   \mathcal {P}g(s,u_\epsilon) \mathrm{d}W_s+ \int_0^{t\wedge \tau_\epsilon} \int_{Z_0} e^{-(t-s)\textrm{A}} \mathcal {P}K(u_\epsilon(s-),z)\widetilde{\pi}(\mathrm{d}s,\mathrm{d}z)\\
 +&  \int_0^{t\wedge \tau_\epsilon} \int_{Z\backslash Z_0} e^{-(t-s)\textrm{A}} \mathcal {P}G(u_\epsilon(s-),z) \pi (\mathrm{d}s,\mathrm{d}z).
\end{split}
\end{equation*}
\end{itemize}

\end{definition}
Here and in what follows, we denote by $(e^{t\Delta})_{t\geq0}$ the Neumann heat semigroup  and by $(e^{-t\textrm{A}} )_{t\geq0}$ the Stokes semigroup with Dirichlet boundary data (cf. \cite{winkler2010aggregation,giga1986solutions}).

\begin{definition}\label{def-2}
A quadruplet $( n_\epsilon ,c_\epsilon,u_\epsilon,\widetilde{\tau}_\epsilon)$ is called a \emph{maximal local mild solution} to the system \eqref{SCNS-1} provided:
\begin{itemize}[leftmargin=0.9cm]
\item [$\bullet$] $\widetilde{\tau}_\epsilon$ is an $\mathbb{P}$-a.s. strictly positive $\mathfrak{F}$-stopping time.

\item [$\bullet$]  $(\tau_R)_{R\geq1}$ is an increasing sequence of $\mathfrak{F}$-stopping times such that $\tau_R<\widetilde{\tau}_\epsilon$ on $[\widetilde{\tau}_\epsilon<\infty]$, $\lim\limits_{R\rightarrow\infty}\tau_R=\widetilde{\tau}_\epsilon$ $\mathbb{P}$-a.s., and
$$
\sup_{t\in [0,\tau_R]} \|(n_\epsilon,c_\epsilon,u_\epsilon)(t)\|_{L^\infty\times W^{1,q}  \times \mathscr{D}(\textrm{A}^\alpha)}\geq R \quad \textrm{on} \quad [\widetilde{\tau}_\epsilon<\infty].
$$

\item [$\bullet$] $( n_\epsilon ,c_\epsilon,u_\epsilon,\tau_R)$ is a local mild solution in the sense of Definition \ref{def-1}.

\end{itemize}

\end{definition}

The main goal of this section is to prove the following result.
\begin{lemma}\label{lem1}
Suppose that the assumptions $(\textbf{A}_1)-(\textbf{A}_4)$ hold. Then the initial-boundary value problem \eqref{SCNS-1} has a unique maximal local mild solution $( n_\epsilon ,c_\epsilon,u_\epsilon,\widetilde{\tau}_\epsilon)$ in the sense of Definition \ref{def-2}.
\end{lemma}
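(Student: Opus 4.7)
The plan is to construct a unique maximal local mild solution by applying the Contraction Mapping Principle to a family of truncated versions of \eqref{SCNS-1} and then removing the truncation along a sequence of stopping times. Fix a smooth cut-off $\theta_R:[0,\infty)\to[0,1]$ with $\theta_R\equiv 1$ on $[0,R]$ and $\theta_R\equiv 0$ on $[2R,\infty)$, and multiply every nonlinear and stochastic coefficient in \eqref{SCNS-1} by
$$
\theta_R\bigl(\|n_\epsilon(t)\|_{L^\infty}+\|c_\epsilon(t)\|_{W^{1,q}}+\|\textrm{A}^\alpha u_\epsilon(t)\|_{L^2}\bigr).
$$
The truncated nonlinearities become globally bounded and Lipschitz in the solution with constants depending only on $R$, $\epsilon$, and the bounds supplied by $(\textbf{A}_3)$-$(\textbf{A}_4)$.

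Next, I would work in the Banach space
$$
\mathcal{X}_T \overset{\textrm{def}}{=} L^2\bigl(\Omega;\mathcal{C}([0,T];\mathcal{C}^0(\overline{\mathcal{O}})\times W^{1,q}(\mathcal{O}))\bigr)\times L^2\bigl(\Omega;\mathcal{D}([0,T];\mathscr{D}(\textrm{A}^\alpha))\bigr),
$$
with $q>3$ and $\alpha\in(3/4,1)$, so that the Sobolev embeddings $W^{1,q}(\mathcal{O})\hookrightarrow L^\infty(\mathcal{O})$ and $\mathscr{D}(\textrm{A}^\alpha)\hookrightarrow (L^\infty(\mathcal{O}))^3$ both hold and all products arising in the chemotactic flux and Leray-regularized convection live in spaces compatible with the semigroup smoothing. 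Denoting by $\Gamma_R$ the right-hand side of the mild formulation in Definition \ref{def-1} with cut-offs in place, I would verify that $\Gamma_R$ maps $\mathcal{X}_T$ into itself and is a strict contraction for $T=T(R,\epsilon)>0$ sufficiently small. The deterministic summands are controlled via the standard $L^p$-$L^q$ smoothing estimates for the Neumann heat semigroup (cf.~\cite{winkler2010aggregation}) and the analytic Stokes semigroup (cf.~\cite{giga1986solutions}), which absorb the derivative appearing in $\textrm{div}(n_\epsilon\textbf{h}_\epsilon'(n_\epsilon)\chi(c_\epsilon)\nabla c_\epsilon)$ and in $\mathcal{P}((\textbf{L}_\epsilon u_\epsilon\cdot\nabla)u_\epsilon)$. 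For the stochastic terms I would use the Burkholder-Davis-Gundy inequality and a factorization argument for the Wiener convolution, its c\`{a}dl\`{a}g analogue for the small-jump compensated Poisson integral, and the fact that $\mu(Z\setminus Z_0)<\infty$ to treat the large-jump integral pathwise between the finitely many jumps on $[0,T]$. Once the fixed point is obtained, a deterministic iteration in time extends it to any finite interval and yields a global solution $(n_\epsilon^R,c_\epsilon^R,u_\epsilon^R)$ of the $R$-truncated problem.

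To pass to a maximal solution I would set
$$
\tau_R \overset{\textrm{def}}{=} \inf\bigl\{t\geq 0:\|n_\epsilon^R(t)\|_{L^\infty}+\|c_\epsilon^R(t)\|_{W^{1,q}}+\|\textrm{A}^\alpha u_\epsilon^R(t)\|_{L^2}\geq R\bigr\}\wedge R,
$$
observe that a local uniqueness argument (another application of Banach contraction on the associated difference equation) forces the $R$- and $R'$-truncated solutions to agree on $[0,\tau_R\wedge\tau_{R'}]$ for $R\leq R'$, so that $(\tau_R)_{R\geq 1}$ is increasing and the pieces concatenate into a well-defined process $(n_\epsilon,c_\epsilon,u_\epsilon)$ on $[0,\widetilde{\tau}_\epsilon)$ with $\widetilde{\tau}_\epsilon\overset{\textrm{def}}{=}\lim_{R\to\infty}\tau_R$; the blow-up alternative demanded by Definition \ref{def-2} is then immediate from the definition of $\tau_R$. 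The main obstacle I expect is propagating the c\`{a}dl\`{a}g regularity of $u_\epsilon$ at the level of $\mathscr{D}(\textrm{A}^\alpha)$ rather than only in $L^2_\sigma$ through all four stochastic forcing terms simultaneously; this is precisely why $(\textbf{A}_3)$-$(\textbf{A}_4)$ are imposed at the $\mathscr{D}(\textrm{A}^\alpha)$ level, and the analyticity of $(e^{-t\textrm{A}})_{t\geq 0}$, which makes $\textrm{A}^\alpha$ commute with the semigroup and yields the factorization estimate for the Wiener stochastic convolution, supplies the missing control in the correct norm.
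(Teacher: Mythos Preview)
Your proposal is correct and follows essentially the same scheme as the paper: truncate the nonlinearities with smooth cut-offs depending on $\|n_\epsilon\|_{L^\infty}$, $\|c_\epsilon\|_{W^{1,q}}$ and $\|\textrm{A}^\alpha u_\epsilon\|_{L^2}$, run a Banach fixed point using the smoothing of the Neumann heat and Stokes semigroups together with BDG for the stochastic pieces, then peel off the cut-offs along the increasing stopping times $\tau_R$.

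Two implementation differences are worth noting. First, the paper does \emph{not} run the contraction directly in $L^2(\Omega;\mathcal{C}\times\mathcal{C}\times\mathcal{D})$ as you propose, but in the weaker space $L^2(\Omega;L^\infty(0,T;\mathcal{C}^0(\overline{\mathcal{O}})\times W^{1,q}\times\mathscr{D}(\textrm{A}^\alpha)))$; only after obtaining the fixed point does it show separately that $u_\epsilon$ admits a c\`{a}dl\`{a}g modification in $\mathscr{D}(\textrm{A}^\alpha)$, by appealing to the maximal-inequality results of Brze\'zniak et al.\ for stochastic convolutions in $2$-smooth Banach spaces. This circumvents precisely the obstacle you flag at the end: proving a priori that the mild map sends c\`{a}dl\`{a}g inputs to c\`{a}dl\`{a}g outputs for the compensated Poisson convolution is not obvious, and the paper's two-step strategy avoids it. Second, the paper's uniqueness argument for the \emph{untruncated} system \eqref{SCNS-1} is carried out by direct $L^2$ energy estimates on the difference (with auxiliary stopping times $\tau_l^\epsilon$), rather than by another Banach contraction; your route via consistency of the $R$- and $R'$-truncated solutions on $[0,\tau_R\wedge\tau_{R'}]$ also works and is somewhat cleaner.
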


\subsection{Regularized system with truncation}

Let $R>0$,  and $\varphi_R : [0,\infty)\rightarrow [0,1]$ be a smooth cut-off function such that $\varphi_R (s)\equiv 1$ for $0\leq s \leq R$ and $ \varphi_R (s)\equiv0$ for $s > 2R$. Then the regularized SCNS system with cut-off operators takes the following form:
\begin{equation}\label{SCNS-2}
\left\{
\begin{aligned}
&\mathrm{d} n_\epsilon +\varphi_R\left(\|u_\epsilon\|_{\mathscr{D}(\textrm{A}^\alpha)}\right)\varphi_R\left(\|n_\epsilon\|_{L^\infty}\right)u_\epsilon\cdot \nabla n_\epsilon \mathrm{d}t \\
&\quad \  = \Delta n_\epsilon\mathrm{d}t- \varphi_R(\|n_\epsilon\|_{L^\infty})\varphi_R(\|c_\epsilon\|_{W^{1,q}})\div\left(n_\epsilon \textbf{h}_\epsilon' (n_\epsilon)\chi(c_\epsilon)\nabla c_\epsilon\right)\mathrm{d}t,\\
&\mathrm{d} c_\epsilon+ \varphi_R\left(\|u_\epsilon\|_{\mathscr{D}(\textrm{A}^\alpha)}\right)\varphi_R(\|c_\epsilon\|_{W^{1,q}})u_\epsilon\cdot \nabla c _\epsilon \mathrm{d}t \\
&\quad\  =\Delta c_\epsilon\mathrm{d}t -\varphi_R(\|n_\epsilon\|_{L^\infty})\varphi_R(\|c_\epsilon\|_{W^{1,q}})\textbf{h}_\epsilon(n_\epsilon) f(c_\epsilon)\mathrm{d}t,\\
&\mathrm{d} u_\epsilon+ \varphi_R\left(\|u_\epsilon\|_{\mathscr{D}(\textrm{A}^\alpha)}\right) \mathcal {P}(\textbf{L}_\epsilon u_\epsilon\cdot \nabla) u_\epsilon  \mathrm{d}t =-\textrm{A} u_\epsilon\mathrm{d}t\\
&\quad \ + \varphi_R(\|n_\epsilon\|_{L^\infty})\mathcal {P} (n_\epsilon\nabla \Phi) \mathrm{d}t+  \varphi_R\left(\|u_\epsilon\|_{\mathscr{D}(\textrm{A}^\alpha)}\right)\mathcal {P}h(t,u_\epsilon) \mathrm{d}t\\
&\quad\ +  \varphi_R\left(\|u_\epsilon\|_{\mathscr{D}(\textrm{A}^\alpha)}\right)\mathcal {P}g(t,u_\epsilon) \mathrm{d}W_t + \int_{Z_0}   \varphi_R\left(\|u_\epsilon\|_{\mathscr{D}(\textrm{A}^\alpha)}\right)\mathcal {P}K(u_\epsilon(t-),z)\widetilde{\pi}(\mathrm{d}t,\mathrm{d}z)\\
&\quad\  + \int_{Z\backslash Z_0} \varphi_R\left(\|u_\epsilon\|_{\mathscr{D}(\textrm{A}^\alpha)}\right)\mathcal {P}G(u_\epsilon(t-),z) \pi (\mathrm{d}t,\mathrm{d}z),
\end{aligned}
\right.
\end{equation}
together with the initial-boundary conditions:
\begin{align}
&\frac{\partial n_\epsilon}{\partial \nu}  \bigg|_{\partial\mathcal {O}}=\frac{\partial c_\epsilon}{\partial \nu} \bigg|_{\partial\mathcal {O}}=  u_\epsilon  \bigg|_{\partial\mathcal {O}}=0,\label{cc1}\\
&n_\epsilon|_{t=0}=n_{ \epsilon0},~c_\epsilon|_{t=0}=c_{ \epsilon0},~ u_\epsilon|_{t=0}=u_{ \epsilon0}.\label{cc2}
\end{align}

Here and in the proof of Lemma \ref{lem2} below, we shall drop the subscript $R$ in $(n_{R,\epsilon},c_{R,\epsilon},u_{R,\epsilon})$ to avoid abundant notations. We remark that the concept of maximal local mild solutions to \eqref{SCNS-2} can be defined as same as Definitions \ref{def-1}-\ref{def-2}, and we shall omit the details.

\begin{lemma}\label{lem2}
Assume that the conditions $(\textbf{A}_1)-(\textbf{A}_4)$ hold. Then for any $T>0$, the initial-boundary value problem $\eqref{SCNS-2}-\eqref{cc2}$ has a unique mild solution $(n_\epsilon,c_\epsilon,u_\epsilon)$ on $[0,T]$.
\end{lemma}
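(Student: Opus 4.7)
The plan is to prove Lemma \ref{lem2} via the Banach Contraction Mapping Principle applied to the mild formulation of \eqref{SCNS-2}. First I would introduce the Banach space
$$
\mathcal{X}_T \overset{\mathrm{def}}{=} L^2\!\left(\Omega; \mathcal{D}\!\left([0,T]; \mathcal{C}^0(\overline{\mathcal {O}}) \times W^{1,q}(\mathcal {O}) \times \mathscr{D}(\textrm{A}^\alpha)\right)\right)
$$
restricted to $\mathfrak{F}$-progressively measurable triples $(n,c,u)$ that agree with $(n_{\epsilon0},c_{\epsilon0},u_{\epsilon0})$ at $t=0$, with $q>3$ and $\alpha\in(3/4,1)$ (so that $\mathscr{D}(\textrm{A}^\alpha)\hookrightarrow L^\infty$). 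On $\mathcal{X}_T$ I would define the solution map $\Phi_T$ sending $(n,c,u)$ to the triple $(\tilde n,\tilde c,\tilde u)$ given by the R.H.S. of the mild formulation in Definition \ref{def-1} with all nonlinearities cut off by $\varphi_R$. The target is to show that $\Phi_T$ is a contraction for a sufficiently small $T_0>0$, and then to iterate to cover any interval $[0,T]$, since the cut-offs provide global-in-size Lipschitz bounds (so there is no blow-up in the iteration step sizes).

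For the deterministic summands, I would exploit the standard $L^p$-$L^q$ smoothing of $(e^{t\Delta})_{t\geq0}$ and $(e^{-t\mathrm{A}})_{t\geq0}$: estimates of the form $\|e^{t\Delta}\nabla\cdot F\|_{\mathcal{C}^0}\lesssim t^{-\sigma_1}\|F\|_{L^p}$ and $\|\mathrm{A}^\alpha e^{-t\mathrm{A}}\mathcal{P} F\|_{L^2}\lesssim t^{-\alpha-\sigma_2}\|F\|_{L^p}$ with integrable singularities, combined with the Lipschitz bounds for the truncated drift terms (the products $n\textbf{h}_\epsilon'(n)\chi(c)\nabla c$, $\textbf{h}_\epsilon(n)f(c)$, $u\cdot\nabla n$, $u\cdot\nabla c$, $\mathcal{P}(\textbf{L}_\epsilon u\cdot\nabla)u$, $\mathcal{P}(n\nabla\Phi)$ and $\mathcal{P}h(t,u)$, each multiplied by the appropriate $\varphi_R$) in the topology of $\mathcal{X}_T$. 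The cut-off by $\varphi_R(\|n\|_{L^\infty})\varphi_R(\|c\|_{W^{1,q}})\varphi_R(\|u\|_{\mathscr{D}(\mathrm{A}^\alpha)})$ together with the local Lipschitz continuity of $\chi,f$ and the Leray mollification $\textbf{L}_\epsilon$ (which is bounded on every $L^p$) reduce every nonlinearity to a globally Lipschitz map, and a standard use of the Mizoguchi-type bound on $\|\varphi_R(\|x\|)x-\varphi_R(\|y\|)y\|\leq C_R\|x-y\|$ handles the cut-off differences.

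For the stochastic summands I would apply the Burkholder-Davis-Gundy inequality in its continuous and jump-noise forms: for the Wiener part,
$$
\mathbb{E}\sup_{t\in[0,T_0]}\left\|\int_0^t \mathrm{A}^\alpha e^{-(t-s)\mathrm{A}}\mathcal{P} g(s,u)\mathrm{d} W_s\right\|_{L^2}^2 \leq C\,\mathbb{E}\int_0^{T_0} \|\mathrm{A}^\alpha e^{-(t-s)\mathrm{A}}\mathcal{P} g(s,u)\|_{\mathcal{L}_2(U;L^2)}^2\mathrm{d} s,
$$
while the compensated-Poisson integral involves a second-moment bound and the large-jump integral is controlled pathwise since $\mu(Z\setminus Z_0)<\infty$. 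Assumption $(\textbf{A}_3)$--$(\textbf{A}_4)$ with $\alpha\in(3/4,1)$ and the factor $\varphi_R$ give global Lipschitz constants for $\mathcal{P}g,\mathcal{P}K,\mathcal{P}G$ as maps into $\mathscr{D}(\mathrm{A}^\alpha)$, so the semigroup smoothing produces a factor $T_0^\eta$ with $\eta>0$ after integration. Collecting these estimates yields $\|\Phi_T(n,c,u)-\Phi_T(\bar n,\bar c,\bar u)\|_{\mathcal{X}_{T_0}}\leq C_R T_0^\eta \|(n,c,u)-(\bar n,\bar c,\bar u)\|_{\mathcal{X}_{T_0}}$, and choosing $T_0$ small gives contraction.

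The main obstacle I anticipate is the simultaneous treatment of the three components in a single Banach space, because the coupling through $u\cdot\nabla n$ and $u\cdot\nabla c$ and through $n\nabla\Phi$ forces a delicate balance of the semigroup smoothing exponents against the spatial regularity indices $(L^\infty, W^{1,q}, \mathscr{D}(\mathrm{A}^\alpha))$; in particular, to bound $\|e^{(t-s)\Delta}\mathrm{div}(n\textbf{h}_\epsilon'(n)\chi(c)\nabla c)\|_{\mathcal{C}^0}$ and $\|\nabla e^{(t-s)\Delta}(u\cdot\nabla c)\|_{W^{1,q}}$ with integrable time singularities, one needs $q>3$ and to use the factorization $u\cdot\nabla c=\mathrm{div}(cu)$ (since $\mathrm{div}\,u=0$) so that only one derivative falls on the semigroup. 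Once these exponents are balanced, iteration over consecutive intervals of length $T_0$ extends the solution to $[0,T]$ (the constant $C_R$ does not degenerate since the cut-offs are in force), and pathwise uniqueness on $[0,T]$ follows from the same Lipschitz estimates combined with Gronwall's inequality applied to $\mathbb{E}\sup_{s\leq t}\|\cdot\|^2$.
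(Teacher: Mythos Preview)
Your proposal is correct and follows essentially the same route as the paper: a Banach fixed-point argument on the mild formulation in the space $L^2(\Omega;L^\infty(0,T;\mathcal{C}^0(\overline{\mathcal{O}})\times W^{1,q}\times\mathscr{D}(\mathrm{A}^\alpha)))$, with semigroup smoothing for the drift terms, global Lipschitz bounds furnished by the cut-offs $\varphi_R$, and iteration over subintervals of a fixed length (the contraction constant depending only on $R$, not on the data).

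Two points deserve comment. First, the displayed BDG-type estimate for the Wiener stochastic convolution is not literally correct as written (the variable $t$ on the right is unbound; the naive BDG does not commute with $\sup_t$ when the integrand $e^{-(t-s)\mathrm{A}}$ depends on $t$). What you need is a maximal inequality for stochastic convolutions with a contraction semigroup on the Hilbert space $\mathscr{D}(\mathrm{A}^\alpha)$, which is available but should be cited. The paper instead rewrites each stochastic convolution $\mathscr{F}_{3j}$ as the solution of $\mathrm{d}\mathscr{F}_{3j}=-\mathrm{A}\mathscr{F}_{3j}\,\mathrm{d}t+\mathrm{d}M$, applies It\^o's formula to $\|\mathrm{A}^\alpha\mathscr{F}_{3j}\|_{L^2}^2$, and closes with BDG plus Gronwall; this avoids invoking an external maximal inequality and also handles the jump terms uniformly. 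Second, your phrase ``Mizoguchi-type bound'' for the cut-off Lipschitz estimate $\|\varphi_R(\|x\|)x-\varphi_R(\|y\|)y\|\leq C_R\|x-y\|$ is a misnomer (Mizoguchi--Souplet is the boundary lemma used later for the entropy estimate); this is just the elementary Lipschitz property of the radial truncation, and the paper carries it out by an explicit case analysis on whether $\|n_\epsilon\|_{L^\infty}$, $\|c_\epsilon\|_{W^{1,q}}$, $\|u_\epsilon\|_{\mathscr{D}(\mathrm{A}^\alpha)}$ exceed $2R$.
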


\begin{proof} [\textbf{\emph{Proof.}}]

\textsc{Step 0:}  The proof involves a reasoning based on the Banach Fixed Point Theorem. To this end, we introduce a Banach space $\mathscr{X}_T$ which consists of all $\mathfrak{F}$-progressively measurable processes with values in $\mathcal{C}^0(\overline{\mathcal {O}}) \times W^{1,q}(\mathcal {O}) \times \mathscr{D}(\textrm{A}^\alpha)$, denoted by
\begin{equation*}
\begin{split}
\mathscr{X}_T &\overset{\textrm{def}}{=}  L^2 \left(\Omega; L^\infty(0,T;\mathcal{C}^0(\overline{\mathcal {O}}))\right) \times L^2\left(\Omega; L^\infty(0,T;W^{1,q}(\mathcal {O}))\right)\\
&\times  L^2\left(\Omega; L^\infty(0,T;\mathscr{D}(\textrm{A}^\alpha))\right), \quad \textrm{for any}~~q \in (3,\infty) ~~\textrm{and} ~\alpha \in (\frac{3}{4},1),
\end{split}
\end{equation*}
which is equipped with the norm
$
\| (n_\epsilon,c_\epsilon,u_\epsilon)\|_{\mathscr{X}_T}^2 \overset{\textrm{def}}{=} \mathbb{E}(\|n_\epsilon\|_{L^\infty(0,T;L^\infty)}^2) +\mathbb{E}(\|c_\epsilon\|_{L^\infty(0,T;W^{1,q})}^2)
+\mathbb{E}(\|u_\epsilon\|_{L^\infty(0,T;\mathscr{D}(\textrm{A}^\alpha))}^2
).
$
With $r>0$ to be specified below, we consider a closed ball  in $\mathscr{X}_T$:
$
B_T(r)= \{(n_\epsilon,c_\epsilon,u_\epsilon)\in \mathscr{X}_T;~ \| (n_\epsilon,c_\epsilon,u_\epsilon)\|_{\mathscr{X}_T} ^2 \leq r\}$. For any $(n_\epsilon,c_\epsilon,u_\epsilon)\in \mathscr{X}_T$, we explore a mapping $\mathscr{F}=(\mathscr{F}_1,\mathscr{F}_2,\mathscr{F}_3)$ given by
\begin{equation*}
\begin{split}
\mathscr{F}_1(n_\epsilon,c_\epsilon,u_\epsilon)(t)&\overset {\textrm{def}} {=}   e^{t\Delta}  n_{\epsilon 0} - \int_0^t  e^{(t-s)\Delta} \big(\varphi_R\left(\|u_\epsilon\|_{\mathscr{D}(\textrm{A}^\alpha)}\right)\varphi_R(\|n_\epsilon\|_{L^\infty})u_\epsilon\cdot \nabla n_\epsilon \\
& + \varphi_R(\|n_\epsilon\|_{L^\infty})\varphi_R(\|c_\epsilon\|_{W^{1,q}})\div\left(n_\epsilon \textbf{h}_\epsilon' (n_\epsilon)\chi(c_\epsilon)\nabla c_\epsilon\right) \big)\mathrm{d}s,\\
\end{split}
\end{equation*}
\begin{equation*}
\begin{split}\mathscr{F}_2(n_\epsilon,c_\epsilon,u_\epsilon)(t)&\overset {\textrm{def}} {=} e^{t\Delta}  c_{\epsilon 0} - \int_0^t  e^{(t-s)\Delta} \big(\varphi_R\left(\|u_\epsilon\|_{\mathscr{D}(\textrm{A}^\alpha)}\right)\varphi_R(\|c_\epsilon\|_{W^{1,q}})u_\epsilon\cdot \nabla c _\epsilon\\
&  +\varphi_R(\|n_\epsilon\|_{L^\infty})\varphi_R(\|c_\epsilon\|_{W^{1,q}})\textbf{h}_\epsilon(n_\epsilon) f(c_\epsilon)  \big)\mathrm{d}s,\\
\end{split}
\end{equation*}
and
\begin{equation*}
\begin{split}
&\mathscr{F}_3(n_\epsilon,c_\epsilon,u_\epsilon)(t)\overset {\textrm{def}} {=}  e^{-t\textrm{A}}  u_{\epsilon 0} \\
&\quad- \int_0^t  e^{-(t-s)\textrm{A}} \mathcal {P}\Big(\varphi_R\left(\|u_\epsilon\|_{\mathscr{D}(\textrm{A}^\alpha)}\right)(\textbf{L}_\epsilon u_\epsilon\cdot \nabla) u_\epsilon-\varphi_R(\|n_\epsilon\|_{L^\infty})n_\epsilon\nabla \Phi -h(s,u_\epsilon)\Big)\mathrm{d}s \\
&\quad+  \int_0^t  e^{-(t-s)\textrm{A}} \varphi_R\left(\|u_\epsilon\|_{\mathscr{D}(\textrm{A}^\alpha)}\right)\mathcal {P}g(s,u_\epsilon) \mathrm{d}W_s  \\
&\quad + \int_0^t\int_{Z_0} e^{-(t-s)\textrm{A}}  \varphi_R\left(\|u_\epsilon\|_{\mathscr{D}(\textrm{A}^\alpha)}\right)\mathcal {P}K(u_\epsilon(s-),z)\widetilde{\pi}(\mathrm{d}s,\mathrm{d}z)\\
&\quad + \int_0^t\int_{Z\backslash Z_0} e^{-(t-s)\textrm{A}}\varphi_R\left(\|u_\epsilon\|_{\mathscr{D}(\textrm{A}^\alpha)}\right)\mathcal {P}G(u_\epsilon(s-),z) \pi (\mathrm{d}s,\mathrm{d}z)  \\
 &\quad\overset{\textrm{def}}{=} e^{-t\textrm{A}}  u_{\epsilon 0}+\mathscr{F}_{31} (t)+\mathscr{F}_{32} (t)+\mathscr{F}_{33} (t)+\mathscr{F}_{34} (t).
\end{split}
\end{equation*}
In the next two steps, we need to show that the mapping $\mathscr{F}(\cdot)$ satisfies the conditions for  applying the Contraction Mapping Principle.

\textsc{Step 1:}  $\mathscr{F}(\cdot): \mathscr{X}_T \mapsto \mathscr{X}_T$ is well-defined. For $\mathscr{F}_1(n_\epsilon,c_\epsilon,u_\epsilon)$, since $q>3$, one can  pick a $\gamma\in (0,\frac{1}{2})$ such that $ 2\gamma-\frac{3}{q}>0$ and  $\mathscr{D}((1-\Delta)^\gamma)\subset \mathcal{C}^0(\overline{\mathcal {O}})$ by the Sobolev  embedding,  where $(1-\Delta)^\gamma$ denotes the Bessel potential.  Applying the smooth effect of Neumann heat semigroup (cf. \cite{winkler2015boundedness}) and the identity $\div (u_\epsilon n_\epsilon)=u_\epsilon \cdot \nabla n_\epsilon$, we have for $t\in [0,T]$
\begin{equation}\label{(3.5)}
\begin{split}
 &\|\mathscr{F}_1(n_\epsilon,c_\epsilon,u_\epsilon)(t)\|_{L^\infty}\\
&\quad\leq \| n_{\epsilon 0}\|_{L^\infty} +C \int_0^t (t-s)^{-\frac{2\gamma+1}{2}}\varphi_R\left(\|u_\epsilon\|_{\mathscr{D}(\textrm{A}^\alpha)}\right)\varphi_R(\|n_\epsilon\|_{L^\infty})\|  u_\epsilon   n_\epsilon  \|_{L^q} \mathrm{d}s  \\
 &\quad +C \int_0^t (t-s)^{-\frac{2\gamma+1}{2}}\varphi_R(\|n_\epsilon\|_{L^\infty})\varphi_R(\|c_\epsilon\|_{W^{1,q}}) \|  n_\epsilon \textbf{h}_\epsilon' (n_\epsilon)\chi(c_\epsilon)\nabla c_\epsilon \|_{L^q} \mathrm{d}s\\
&\quad \leq \| n_{ 0}\|_{L^\infty} +C_RT^{\frac{1}{2}-\gamma} +C_R T <\infty,
 \end{split}
\end{equation}
where the second inequality used the condition $(\textbf{A}_2)$ and the fact of $0\leq\textbf{h}_\epsilon' (n_\epsilon)\leq1$.

For $\mathscr{F}_2(n_\epsilon,c_\epsilon,u_\epsilon)$, by choosing a $\delta \in (\frac{1}{2},1)$ such that $2\delta-\frac{3}{q}>1-\frac{3}{q}$, which makes sense of the estimate $\|\cdot \|_{W^{1,q}}\leq C \|(1-\Delta)^\delta \cdot \|_{L^{q}}$, we gain
\begin{equation}\label{(3.6)}
\begin{split}
 & \|\mathscr{F}_2(n_\epsilon,c_\epsilon,u_\epsilon)(t)\|_{W^{1,q}}\\
 &\quad\leq \| c_{\epsilon 0} \|_{W^{1,q}}+ \int_0^t \Big\|(1-\Delta)^\delta  e^{(t-s)\Delta}\Big[\varphi_R\left\|u_\epsilon\|_{\mathscr{D}(\textrm{A}^\alpha)}\right) \varphi_R(\|c_\epsilon\|_{W^{1,q}})u_\epsilon\cdot \nabla c _\epsilon\\
& \quad+\varphi_R(\|n_\epsilon\|_{L^\infty})\varphi_R(\|c_\epsilon\|_{W^{1,q}})\textbf{h}_\epsilon(n_\epsilon) f(c_\epsilon)  \Big]\Big\|_{L^q}\mathrm{d}s\\
&\quad \leq  \| c_{ 0} \|_{W^{1,q}}+C T^{1-\delta}<\infty,\quad \textrm{for all}~  t\in [0,T].
 \end{split}
\end{equation}

Now let us estimate $\mathscr{F}_3(n_\epsilon,c_\epsilon,u_\epsilon)$. For $\mathscr{F}_{31}(t)$, it follows from the fact of $\alpha\in (\frac{3}{4},1)$ that
$\|\cdot\|_{W^{1,2}}\leq C \|\cdot\|_{\mathscr{D}(\textrm{A}^\alpha)}$ and $\|\cdot\|_{L^\infty}\leq C \|\cdot\|_{\mathscr{D}(\textrm{A}^\alpha)}.$
Since the operator $\textbf{L}_\epsilon $ is bounded from $ L^2(\mathcal {O})$ into $ L^\infty(\mathcal {O})$, we deduce from the smooth effect of Stokes semigroup (cf. \cite{giga1986solutions}) that
\begin{equation}\label{(3.7)}
\begin{split}
&\|\textrm{A}^\alpha(e^{-t\textrm{A}}  u_{\epsilon 0}+\mathscr{F}_{31} (t))\|_{L^2} \\
&\quad \leq \|\textrm{A}^\alpha u_{\epsilon 0}\|_{L^2}+ \int_0^t (t-s)^{-\alpha}\Big[ \varphi_R\left(\|u_\epsilon\|_{\mathscr{D}(\textrm{A}^\alpha)}\right)\|(\textbf{L}_\epsilon u_\epsilon\cdot \nabla) u_\epsilon \|_{L^2}\\
&\quad +\varphi_R(\|n_\epsilon\|_{L^\infty})\|n_\epsilon\nabla \Phi \|_{L^2}+ \varphi_R\left(\|u_\epsilon\|_{\mathscr{D}(\textrm{A}^\alpha)}\right)\left(1+\|u_\epsilon\|_{ L^2} \right) \Big] \mathrm{d}s\\
&\quad\leq\| u_{ 0}\|_{\mathscr{D}(\textrm{A}^\alpha)}+C T ^{1-\alpha} <\infty,\quad \textrm{for all}~  t\in [0,T].
  \end{split}
  \end{equation}
For $\mathscr{F}_{32}(t)$, it will be convenient to rewrite the integral into the evolution equation
$
\textrm{A}^\alpha\mathscr{F}_{32}( t)= \int_0^t\textrm{A}^{\alpha+1}\mathscr{F}_{32}(s)\mathrm{d}s+  \int_0^t \varphi_R\left(\|u_\epsilon\|_{\mathscr{D}(\textrm{A}^\alpha)}\right)\textrm{A}^\alpha\mathcal {P}g(s,u_\epsilon) \mathrm{d}W.
$
By applying the Burkholder-Davis-Gundy (BDG) inequality (cf. \cite[Theorem 3.50]{peszat2007stochastic}), the boundedness of the projection $\mathcal {P}$ as well as the assumption $(\textbf{A}_3)$, we obtain
\begin{equation*}
\begin{split}
\mathbb{E}\sup_{t\in [0,T]}\|\textrm{A}^\alpha\mathscr{F}_{32} ( t)\|_{L^2}^2&\leq \mathbb{E}  \int_0^T\varphi_R^2\left(\|u_\epsilon\|_{\mathscr{D}(\textrm{A}^\alpha)}\right)\|\textrm{A}^\alpha\mathcal {P}g(t,u_\epsilon)\|_{\mathcal {L}_2(U;L^2)}^2 \mathrm{d}t
 \\
  &   +2\mathbb{E}\sup_{t\in [0,T]}\left| \int_0^t\varphi_R\left(\|u_\epsilon\|_{\mathscr{D}(\textrm{A}^\alpha)}\right)\langle \textrm{A}^\alpha\mathscr{F}_{32}, \textrm{A}^\alpha\mathcal {P}g(s,u_\epsilon) \mathrm{d}W_s\rangle_{L^2}\right|\\
   & \leq C  T+ C  \mathbb{E} \int_0^T\sup_{s\in [0,t]} \left\| \textrm{A}^\alpha\mathscr{F}_{32}(s)\right\|_{L^2}^2 \mathrm{d}t,
 \end{split}
\end{equation*}
which combined with the Gronwall inequality leads to
\begin{equation}\label{(3.8)}
\begin{split}
 \mathbb{E}\sup_{t\in [0,T]}\|\textrm{A}^\alpha\mathscr{F}_{32} ( t)\|_{L^2}^2 \leq  C  T.
 \end{split}
\end{equation}
For $\mathscr{F}_{33}+\mathscr{F}_{34} \overset {\textrm{def}} {=}\mathscr{F}_{35}$, note that $\textrm{A}^\alpha\mathscr{F}_{35}$ satisfies the identity
\begin{equation*}
\begin{split}
\textrm{A}^\alpha\mathscr{F}_{35}(t)&=  \int_0^t \textrm{A}^{\alpha+1}\mathscr{F}_{35} (s)\mathrm{d}s+ \int_0^t \int_{Z_0}  \varphi_R\left(\|u_\epsilon\|_{\mathscr{D}(\textrm{A}^\alpha)}\right)\textrm{A}^\alpha\mathcal {P}K(u_\epsilon(s-),z)\widetilde{\pi}(\mathrm{d}s,\mathrm{d}z)\\
&+ \int_0^t \int_{Z\backslash Z_0} \varphi_R\left(\|u_\epsilon\|_{\mathscr{D}(\textrm{A}^\alpha)}\right) \textrm{A}^\alpha \mathcal {P}G(u_\epsilon(s-),z) \pi (\mathrm{d}s,\mathrm{d}z).
 \end{split}
\end{equation*}
By applying the It\^{o} formula in Hilbert spaces (cf. \cite[Theorem 3.50]{peszat2007stochastic} or \cite[Theorem 1]{gyongy1980stochastic}) to the process $ \|\textrm{A}^\alpha\mathscr{F}_{35}  (t)\|_{L^2}^2$, after integrating by parts and using the identity $\widetilde{\pi}(\textrm{d}t,\textrm{d}z)=\pi(\textrm{d}t,\textrm{d}z)-\mu(\textrm{d}z)\textrm{d}t$ to rearrange the integrals, we obtain
{\wuhao
\begin{equation}\label{(3.9)}
\begin{split}
 &\|\textrm{A}^\alpha\mathscr{F}_{35}  (t)\|_{L^2}^2+ 2 \int_0^t\| \textrm{A}^{\alpha+\frac{1}{2}}\mathscr{F}_{35} (s)\|_{L^2} \mathrm{d}s \\
 &\quad\leq\bigg| \int_0^t \int_{Z\backslash Z_0}2\varphi_R\left(\|u_\epsilon\|_{\mathscr{D}(\textrm{A}^\alpha)}\right)\langle \textrm{A}^\alpha\mathscr{F}_{35}  (s-), \textrm{A}^\alpha \mathcal {P}G(u_\epsilon(s-),z)\rangle_{L^2} \mu(\mathrm{d} z)\mathrm{d}s\bigg|\\
 & \quad+ \int_0^t \int_{Z_0} \varphi_R^2\left(\|u_\epsilon\|_{\mathscr{D}(\textrm{A}^\alpha)}\right)\|\textrm{A}^\alpha \mathcal {P}K(u_\epsilon(s-),z)\|_{L^2}^2\mu(\mathrm{d} z)\mathrm{d}s\\
 & \quad+ \int_0^t \int_{Z\backslash Z_0}  \varphi_R^2\left(\|u_\epsilon\|_{\mathscr{D}(\textrm{A}^\alpha)}\right)\|\textrm{A}^\alpha \mathcal {P}G (u_\epsilon(x,s-),z) \|_{L^2}^2 \mu(\mathrm{d} z)\mathrm{d}s\\
 & \quad +\bigg| \int_0^t \int_{Z_0} \Big(2\varphi_R\left(\|u_\epsilon\|_{\mathscr{D}(\textrm{A}^\alpha)}\right)\langle \textrm{A}^\alpha\mathscr{F}_{35}  (s-),\textrm{A}^\alpha \mathcal {P}K(u_\epsilon(s-),z)\rangle_{L^2}\\
 & \quad  + \varphi_R^2\left(\|u_\epsilon\|_{\mathscr{D}(\textrm{A}^\alpha)}\right)\|\textrm{A}^\alpha \mathcal {P}K(u_\epsilon(s-),z) \|_{L^2}^2 \Big)\widetilde{\pi}(\mathrm{d}s,\mathrm{d}z)\bigg|\\
 & \quad+\bigg| \int_0^t \int_{Z\backslash Z_0} \Big( 2\varphi_R\left(\|u_\epsilon\|_{\mathscr{D}(\textrm{A}^\alpha)}\right)\langle \textrm{A}^\alpha\mathscr{F}_{35}  (s-), \textrm{A}^\alpha \mathcal {P}G(u_\epsilon(s-),z)\rangle_{L^2}\\
 & \quad + \varphi_R^2\left(\|u_\epsilon\|_{\mathscr{D}(\textrm{A}^\alpha)}\right)\|\textrm{A}^\alpha \mathcal {P}G (u_\epsilon(x,s-),z)\|_{L^2}^2 \Big)\widetilde{\pi}(\mathrm{d}s,\mathrm{d}z)\bigg| \\
 &\quad\overset{\textrm{def}}{=}  I_1(t)+\cdot\cdot\cdot+I_5 (t).
 \end{split}
\end{equation}}\noindent
Taking the supremum for $I_1(t)$ over interval $[0,T]$, and then taking the mathematical expectation, we get by assumption $(\textbf{A}_4)$ and the BDG inequality that
\begin{equation}\label{(3.10)}
\begin{split}
\mathbb{E}\sup_{t\in [0,T]}I_1(t)
 &\leq  \mathbb{E}  \int_0^T \|\textrm{A}^\alpha\mathscr{F}_{35}  \|_{L^2}^2\mathrm{d}s+C_\mu\mathbb{E} \int_0^T\varphi_R^2\left(\|u_\epsilon\|_{\mathscr{D}(\textrm{A}^\alpha)}\right)
 \left(1+\|u_\epsilon\|^2_{\mathscr{D}(\textrm{A}^\alpha)} \right)\mathrm{d}s\\
 &\leq  \mathbb{E}  \int_0^T \|\textrm{A}^\alpha\mathscr{F}_{35}  (s)\|_{L^2}^2\mathrm{d}s+C T.
 \end{split}
\end{equation}
Here $C_\mu\overset{\textrm{def}}{=} \int_{Z\backslash Z_0} \mu(\mathrm{d} z) = \mu (Z\backslash Z_0)<\infty$. For $I_2(t)$ and $I_3(t)$, we conclude that
\begin{equation}\label{(3.11)}
\begin{split}
\mathbb{E}\sup_{t\in [0,T]}(I_2+I_3)(t) \leq C  \int_0^T\varphi_R^2\left(\|u_\epsilon\|_{\mathscr{D}(\textrm{A}^\alpha)}\right)\left(1+\|u_\epsilon\|^2_{\mathscr{D}(\textrm{A}^\alpha)} \right)\mathrm{d}s  \leq C T.
 \end{split}
\end{equation}
For $I_4(t)$, on  the one hand, we get by applying the BDG inequality that
\begin{equation*}
\begin{split}
\mathbb{E}\sup_{t\in [0,T]} I_{41}(t)
 & \leq C\mathbb{E}\left[\sup_{t\in [0,T]}\|\textrm{A}^\alpha\mathscr{F}_{35}  (t)\|_{L^2} \left( \int_0^T\varphi_R^2\left(\|u_\epsilon\|_{\mathscr{D}(\textrm{A}^\alpha)}\right) \left(1+\|u_\epsilon\|^2_{\mathscr{D}(\textrm{A}^\alpha)} \right) \mathrm{d} s\right)^{\frac{1}{2}}\right]\\
 &\leq \epsilon_1\mathbb{E} \sup_{t\in [0,T]}\|\textrm{A} ^\alpha \mathscr{F}_{35}  (t)\|_{L^2}^2+C  \int_0^T\varphi_R\left(\|u_\epsilon\|_{\mathscr{D}(\textrm{A}^\alpha)}\right)\left(1+\|u_\epsilon\|^2_{\mathscr{D}(\textrm{A}^\alpha)} \right)\mathrm{d}s\\
 &\leq \epsilon_1\mathbb{E} \sup_{t\in [0,T]}\|\textrm{A}^\alpha\mathscr{F}_{35}  (t)\|_{L^2}^2+C  T,\quad \textrm{for any}~ \epsilon_1 >0.
 \end{split}
\end{equation*}
On the other hand,  it follows from the assumption on $K$ and the BDG inequality  that
\begin{equation*}
\begin{split}
\mathbb{E}\sup_{t\in [0,T]} I_{42}(t) &\leq 2\mathbb{E}\left( \int_0^T \int_{Z_0}  \varphi_R^4(\|u_\epsilon\|_{\mathscr{D}(\textrm{A}^\alpha)})\|\textrm{A}^\alpha  K \|_{L^2}^4 \mu(\mathrm{d}z) \mathrm{d} s\right)^{\frac{1}{2}}\\
&\leq C\mathbb{E}\left( \int_0^T\varphi_R^4(\|u_\epsilon\|_{\mathscr{D}(\textrm{A}^\alpha)})\left(1+\|u_\epsilon\|^4_{\mathscr{D}(\textrm{A}^\alpha)} \right) \mathrm{d} s\right)^{\frac{1}{2}} \leq C  T^{\frac{1}{2}}.
 \end{split}
\end{equation*}
The last two inequalities imply that
\begin{equation}\label{(3.12)}
\begin{split}
\mathbb{E}\sup_{t\in [0,T]} I_{4}(t) \leq \epsilon_1\mathbb{E} \sup_{t\in [0,T]}\|\textrm{A}^\alpha\mathscr{F}_{35}  (t)\|_{L^2}^2+C  T+C  T^{\frac{1}{2}}.
 \end{split}
\end{equation}
Proceeding similarly, by using the BDG inequality and the assumption on $G$, one has
\begin{equation}\label{(3.13)}
\begin{split}
&\mathbb{E}\sup_{t\in [0,T]} I_{5}(t) \leq \epsilon_2\mathbb{E} \sup_{t\in [0,T]}\|\textrm{A}^\alpha\mathscr{F}_{35}  (t)\|_{L^2}^2+C  T+C  T^{\frac{1}{2}},\quad \textrm{for any}~ \epsilon_2>0.
 \end{split}
\end{equation}
Substituting the estimate \eqref{(3.10)}-\eqref{(3.13)} into \eqref{(3.9)}, after choosing $\epsilon_1$, $\epsilon_2>0$ small enough such that $\epsilon_1+\epsilon_2\leq \frac{1}{2}$, we obtain
\begin{equation*}
\begin{split}
 \mathbb{E}\sup_{t\in [0,T]} \|\textrm{A}^\alpha\mathscr{F}_{35}  (t)\|_{L^2}^2\leq  \mathbb{E}  \int_0^T \|\textrm{A}^\alpha\mathscr{F}_{35}  (t)\|_{L^2}^2\mathrm{d}t+C \left(T+T^{\frac{1}{2}}\right),
 \end{split}
\end{equation*}
which implies
\begin{equation}\label{(3.14)}
\begin{split}
 \mathbb{E}\sup_{t\in [0,T]} \|\textrm{A}^\alpha(\mathscr{F}_{33} +\mathscr{F}_{34}) (t)\|_{L^2}^2\leq  C \left(T+T^{\frac{1}{2}}\right).
 \end{split}
\end{equation}
As a result, it follows from \eqref{(3.7)}, \eqref{(3.8)} and \eqref{(3.14)} that
\begin{equation*}
\begin{split}
 &\mathbb{E}\sup_{t\in [0,T]} \|\textrm{A}^\alpha\mathscr{F}_{3} (n_\epsilon,c_\epsilon,u_\epsilon)  (t)\|_{L^2}^2\leq  4\| u_{ 0}\|_{\mathscr{D}(\textrm{A}^\alpha)}^2 +C \left(T^{\frac{1}{2}}+T+T^2+T ^{2-2\alpha}\right) ,
 \end{split}
\end{equation*}
which together with \eqref{(3.5)} and \eqref{(3.6)} leads to
\begin{equation}\label{vv}
\begin{split}
 \|\mathscr{F}(n_\epsilon,c_\epsilon,u_\epsilon)\|_{\mathscr{X}_T}^2 &\leq 3\| n_{ 0}\|_{L^\infty} ^2 + 2\| c_{ 0} \|_{W^{1,q}}^2 +4\| u_{ 0}\|_{\mathscr{D}(\textrm{A}^\alpha)}^2\\
 &+C \left(T+T^2 +T^{\frac{1}{2}}+T^{1-2\gamma}+T^{2-2\delta}+T ^{2-2\alpha}\right).
 \end{split}
\end{equation}
Choosing $r>0$ such that $r^2 = 6\| n_{ 0}\|_{L^\infty} ^2 + 4\| c_{ 0} \|_{W^{1,q}}^2 +8\| u_{ 0}\|_{\mathscr{D}(\textrm{A}^\alpha)}^2$
and then $T>0$ small enough such that
$C  (T+T^2 +T^{\frac{1}{2}}+T^{1-2\gamma}+T^{2-2\delta}+T ^{2-2\alpha} )\leq\frac{r^2}{2}$, we get from \eqref{vv} that
\begin{equation*}
\begin{split}
 \|\mathscr{F}(n_\epsilon,c_\epsilon,u_\epsilon)\|_{\mathscr{X}_T} \leq r,\quad \forall (n_\epsilon,c_\epsilon,u_\epsilon)\in B_T(r) .
 \end{split}
\end{equation*}

On the other hand, by using the strong continuity of the semigroups, the assumptions ($\textbf{A}_3$)-($\textbf{A}_4$) and performing the similar estimates as before, one can prove that
\begin{equation}\label{mn}
\begin{split}
\lim_{t\rightarrow s}\mathbb{E}\left(\|\mathscr{F}(n_\epsilon,c_\epsilon,u_\epsilon)(t) -\mathscr{F}(n_\epsilon,c_\epsilon,u_\epsilon)(s) \|_{L^\infty\times W^{1,q}\times \mathscr{D}(\textrm{A}^\alpha)}^2\right)=0,
 \end{split}
\end{equation}
for all $0\leq s \leq t \leq T$, which implies the stochastic continuity of  $\mathscr{F}(n_\epsilon,c_\epsilon,u_\epsilon)$. Hence, as the stochastic continuity of an adapted process implies the existence of a predictable version (cf. \cite[Proposition 3.21]{peszat2007stochastic}), it follows from \eqref{mn} that the mapping $\mathscr{F}: B_T(r)\subseteq \mathscr{X}_T \mapsto B_T(r)$ is well-defined.

\textsc{Step 2.}  $\mathscr{F}(\cdot):B_T(r)\mapsto B_T(r)$ is a contraction for $T>0$ small enough. For any $(n_\epsilon,c_\epsilon,u_\epsilon)$, $(\bar{n}_\epsilon,\bar{c}_\epsilon,\bar{u}_\epsilon)\in B_T(r)$, and any $\gamma\in (\frac{3}{2q},\frac{1}{2})$, we get for the $n_\epsilon$-equation that
\begin{equation}\label{(3.15)}
\begin{split}
&\|\mathscr{F}_1(n_\epsilon,c_\epsilon,u_\epsilon)(t)-\mathscr{F}_1(\bar{n}_\epsilon,\bar{c}_\epsilon,
\bar{u}_\epsilon)(t)\|_{L^\infty}\\
&\quad  \leq  \int_0^t(t-s)^{-\gamma-\frac{1}{2}} \big\| \varphi_R(\|u_\epsilon\|_{\mathscr{D}(\textrm{A}^\alpha)})\varphi_R(\|n_\epsilon\|_{L^\infty})u_\epsilon  n_\epsilon\\
& \quad \quad \quad\quad -\varphi_R(\|\bar{u}_\epsilon\|_{\mathscr{D}(\textrm{A}^\alpha)})\varphi_R(\|\bar{n}_\epsilon\|_{L^\infty})\bar{u}_\epsilon \bar{n}_\epsilon \big\|_{L^q}\mathrm{d}s\\
 & \quad + \int_0^t (t-s)^{-\gamma-\frac{1}{2}}\big\| \varphi_R(\|n_\epsilon\|_{L^\infty})\varphi_R(\|c_\epsilon\|_{W^{1,q}}) n_\epsilon \textbf{h}_\epsilon' (n_\epsilon)\chi(c_\epsilon)\nabla c_\epsilon \\
&\quad \quad \quad\quad -\varphi_R(\|\bar{n}_\epsilon\|_{L^\infty})\varphi_R(\|\bar{c}_\epsilon\|_{W^{1,q}}) \bar{n}_\epsilon \textbf{h}_\epsilon' (\bar{n}_\epsilon)\chi(\bar{c}_\epsilon)\nabla \bar{c}_\epsilon \big\|_{L^q}\mathrm{d}s \\
& \quad\overset{\textrm{def}}{=}  J_1(t) +J_2(t).
 \end{split}
\end{equation}
The term $J_1(t)$ can be handled in a similar manner as that in \cite{zhai20202d} and we have
\begin{equation}\label{(3.16)}
\begin{split}
J_1(t) \leq C_R T^{\frac{1}{2}-\gamma} \left(\|n_\epsilon-\bar{n}_\epsilon\|_{L^\infty(0,T;L^\infty)}
+\|u_\epsilon-\bar{u}_\epsilon\|_{L^\infty(0,T;\mathscr{D}(\textrm{A}^\alpha))}\right).
 \end{split}
\end{equation}
For $J_2(t)$, define
$$
\mathcal {G}_\epsilon \overset{\textrm{def}}{=} \varphi_R(\|n_\epsilon\|_{L^\infty})\varphi_R(\|c_\epsilon\|_{W^{1,q}})  n_\epsilon \textbf{h}_\epsilon' (n_\epsilon)\chi(c_\epsilon)\nabla c_\epsilon -\varphi_R(\|\bar{n}_\epsilon\|_{L^\infty})\varphi_R(\|\bar{c}_\epsilon\|_{W^{1,q}}) \bar{n}_\epsilon \textbf{h}_\epsilon' (\bar{n}_\epsilon)\chi(\bar{c}_\epsilon)\nabla \bar{c}_\epsilon.
$$
We will divide the estimate for $J_2$ into several cases.
\begin{itemize}[leftmargin=0.9cm]
\item [$\bullet$] If $\max\{\|n_\epsilon\|_{L^\infty(0,T;L^\infty)}, \|c_\epsilon\|_{L^\infty(0,T;W^{1,q})}\}> 2R$ and  $\max\{\|\bar{n}_\epsilon\|_{L^\infty(0,T;L^\infty)}, \|\bar{c}_\epsilon\|_{L^\infty(0,T;W^{1,q})}\}> 2R$, then $
\|\mathcal {G}_\epsilon\|_{L^q}=0 $.
\item [$\bullet$] If $\max\{\|n_\epsilon\|_{L^\infty(0,T;L^\infty)}, \|c_\epsilon\|_{L^\infty(0,T;W^{1,q})}\}> 2R$ and $\|\bar{n}_\epsilon\|_{L^\infty(0,T;L^\infty)}  \leq2R$, then
\begin{equation*}
\begin{split}
\|\mathcal {G}_\epsilon\|_{L^q}
&\leq \max_{s\in [0,Cr]}\chi(s)\varphi_R(\|\bar{n}_\epsilon\|_{L^\infty})\varphi_R(\|\bar{c}_\epsilon\|_{W^{1,q}})\| \bar{n}_\epsilon \|_{ L^\infty}^2\|\nabla \bar{c}_\epsilon\|_{L^q}\\
&\leq 8R^3\max_{s\in [0,Cr]}\chi(s)(\varphi_R(\|\bar{n}_\epsilon\|_{L^\infty})-\varphi_R(\|n_\epsilon\|_{L^\infty})) \leq C \|n_\epsilon-\bar{n}_\epsilon\|_{L^\infty(0,T;L^\infty)},
 \end{split}
\end{equation*}
where we used $\|\bar{c}_\epsilon\|_{L^\infty}\leq C\|\bar{c}_\epsilon\|_{W^{1,q}}\leq Cr$.

\item [$\bullet$] If $\max\{\|n_\epsilon\|_{L^\infty(0,T;L^\infty)}, \|c_\epsilon\|_{L^\infty(0,T;W^{1,q})}\}> 2R$ and $\|\bar{c}_\epsilon\|_{L^\infty(0,T;L^\infty)}  \leq2R$,  then
\begin{equation*}
\begin{split}
\|\mathcal {G}_\epsilon\|_{L^q}
&\leq \max_{s\in [0,Cr]}\chi(s)\varphi_R(\|\bar{n}_\epsilon\|_{L^\infty})\varphi_R(\|\bar{c}_\epsilon\|_{W^{1,q}})\| \bar{n}_\epsilon \|_{ L^\infty}^2\|\nabla \bar{c}_\epsilon\|_{L^q}\leq C\|c_\epsilon-\bar{c}_\epsilon\|_{L^\infty(0,T;L^\infty)}.
 \end{split}
\end{equation*}

\item [$\bullet$] By symmetry, if $\max\{\|\bar{n}_\epsilon\|_{L^\infty(0,T;L^\infty)}, \|\bar{c}_\epsilon\|_{L^\infty_TW^{1,q}}\}> 2R$, $\|n_\epsilon\|_{L^\infty(0,T;L^\infty)}  \leq2R$, then
\begin{equation*}
\begin{split}
\|\mathcal {G}_\epsilon\|_{L^q} \leq C \|n_\epsilon-\bar{n}_\epsilon\|_{L^\infty(0,T;L^\infty)}.
 \end{split}
\end{equation*}

\item [$\bullet$] By symmetry, if $\max\{\|\bar{n}_\epsilon\|_{L^\infty(0,T;L^\infty)}, \|\bar{c}_\epsilon\|_{L^\infty(0,T;W^{1,q})}\}> 2R$, $\|c_\epsilon\|_{L^\infty(0,T;L^\infty)}  \leq2R$, then
\begin{equation*}
\begin{split}
\|\mathcal {G}_\epsilon\|_{L^q} \leq C \|c_\epsilon-\bar{c}_\epsilon\|_{L^\infty(0,T;L^\infty)}.
 \end{split}
\end{equation*}

\item [$\bullet$] If $\max\{\|n_\epsilon\|_{L^\infty(0,T;L^\infty)}, \|c_\epsilon\|_{L^\infty(0,T;W^{1,q})} , \|\bar{n}_\epsilon\|_{L^\infty(0,T;L^\infty)}, \|\bar{c}_\epsilon\|_{L^\infty(0,T;W^{1,q})}\} \leq 2R$, then
\begin{equation*}
\begin{split}
\|\mathcal {G}_\epsilon\|_{L^q} &\leq
\varphi_R(\|n_\epsilon\|_{L^\infty})\varphi_R(\|c_\epsilon\|_{W^{1,q}}) \| (n_\epsilon-\bar{n}_\epsilon) \textbf{h}_\epsilon' (n_\epsilon)\chi(c_\epsilon)\nabla c_\epsilon \|_{L^q} \\
&+\varphi_R(\|n_\epsilon\|_{L^\infty})\varphi_R(\|c_\epsilon\|_{W^{1,q}}) \|\bar{n}_\epsilon  ( \textbf{h}_\epsilon' (n_\epsilon)  -   \textbf{h}_\epsilon' (\bar{n}_\epsilon))\chi(c_\epsilon)\nabla c_\epsilon )\|_{L^q}\\
&+\varphi_R(\|n_\epsilon\|_{L^\infty})\varphi_R(\|c_\epsilon\|_{W^{1,q}}) \|\bar{n}_\epsilon \textbf{h}_\epsilon' (\bar{n}_\epsilon) ( \chi(c_\epsilon)  -    \chi(\bar{c}_\epsilon))\nabla c_\epsilon\|_{L^q}  \\
&+\varphi_R(\|n_\epsilon\|_{L^\infty})\varphi_R(\|c_\epsilon\|_{W^{1,q}}) \|\bar{n}_\epsilon \textbf{h}_\epsilon' (\bar{n}_\epsilon) \chi(\bar{c}_\epsilon)( \nabla c_\epsilon -\nabla \bar{c}_\epsilon )\|_{L^q}\\
&+\big|\varphi_R(\|n_\epsilon\|_{L^\infty}) -\varphi_R(\|\bar{n}_\epsilon\|_{L^\infty})\big|\varphi_R(\|c_\epsilon\|_{W^{1,q}}) \|\bar{n}_\epsilon \textbf{h}_\epsilon' (\bar{n}_\epsilon)\chi(\bar{c}_\epsilon)\nabla \bar{c}_\epsilon\|_{L^q}\\
&+\varphi_R(\|\bar{n}_\epsilon\|_{L^\infty})\big|\varphi_R(\|c_\epsilon\|_{W^{1,q}}) - \varphi_R(\|\bar{c}_\epsilon\|_{W^{1,q}})\big| \|\bar{n}_\epsilon \textbf{h}_\epsilon' (\bar{n}_\epsilon)\chi(\bar{c}_\epsilon)\nabla \bar{c}_\epsilon\|_{L^q}.
   \end{split}
\end{equation*}
According to the definition of $\varphi_R$, $\textbf{h}_\epsilon $ and $\chi$, the Mean Value Theorem as well as the embedding $W^{1,q}(\mathcal {O})\subset L^\infty (\mathcal {O})$ for $q>3$, one can deduce that
\begin{equation*}
\begin{split}
\|\mathcal {G}_\epsilon\|_{L^q} &\leq
C_{R,\chi} \Big(\| n_\epsilon-\bar{n}_\epsilon \|_{L^\infty}  + \left\| \textbf{h}_\epsilon' (n_\epsilon)  -   \textbf{h}_\epsilon' (\bar{n}_\epsilon) \right\|_{L^\infty} + \| \chi(c_\epsilon)  -    \chi(\bar{c}_\epsilon) \|_{L^\infty} \\
&+ \| \nabla c_\epsilon -\nabla \bar{c}_\epsilon \|_{L^q} + \big|\varphi_R(\|n_\epsilon\|_{L^\infty}) -\varphi_R(\|\bar{n}_\epsilon\|_{L^\infty})\big| \\
&+  \big|\varphi_R(\|c_\epsilon\|_{W^{1,q}}) - \varphi_R(\|\bar{c}_\epsilon\|_{W^{1,q}})\big|  \Big)\\
&\leq
C (\| n_\epsilon-\bar{n}_\epsilon \|_{L^\infty}+ \| \nabla c_\epsilon -\nabla \bar{c}_\epsilon \|_{L^q}).
   \end{split}
\end{equation*}
\end{itemize}
We conclude from the above discussion that
\begin{equation}\label{(3.17)}
\begin{split}
J_2(t)&\leq C  \int_0^T (T-s)^{-\gamma-\frac{1}{2}} (\| n_\epsilon-\bar{n}_\epsilon \|_{L^\infty}+ \| \nabla c_\epsilon -\nabla \bar{c}_\epsilon \|_{L^q})\mathrm{d}s\\
& \leq CT^{\frac{1}{2}-\gamma} \left(\|n_\epsilon-\bar{n}_\epsilon\|_{L^\infty(0,T;L^\infty)}
+\|c_\epsilon-\bar{c}_\epsilon\|_{L^\infty(0,T;W^{1,q})}\right).
 \end{split}
\end{equation}
From estimates \eqref{(3.15)}-\eqref{(3.17)}, we obtain for any $\gamma\in (\frac{3}{2q},\frac{1}{2})$
\begin{equation}\label{(3.18)}
\begin{split}
 &\mathbb{E}\sup_{t\in [0,T]}\|\mathscr{F}_1(n_\epsilon,c_\epsilon,u_\epsilon)-\mathscr{F}_1(\bar{n}_\epsilon,\bar{c}_\epsilon,
\bar{u}_\epsilon)\|_{L^\infty}^2\\
 &\quad\leq  C T^{1-2\gamma} \Big(\mathbb{E}\|n_\epsilon-\bar{n}_\epsilon\|_{L^\infty(0,T;L^\infty)}^2
 +\mathbb{E}\|c_\epsilon-\bar{c}_\epsilon\|_{L^\infty(0,T;W^{1,q})}^2+\mathbb{E}\|u_\epsilon-\bar{u}_\epsilon\|_{L^\infty(0,T;\mathscr{D}(\textrm{A}^\alpha))}^2\Big).
 \end{split}
\end{equation}
Similarly, for the $c_\epsilon$-equation, one can prove that for any $\delta \in (\frac{1}{2},1)$
\begin{equation}\label{(3.19)}
\begin{split}
&\mathbb{E}\sup_{t\in [0,T]} \|\mathscr{F}_2(n_\epsilon,c_\epsilon,u_\epsilon)-\mathscr{F}_2(\bar{n}_\epsilon,\bar{c}_\epsilon,
\bar{u}_\epsilon)\|_{W^{1,q}}^2\\
 &\quad\leq  C T^{2-2\delta} \Big(\mathbb{E}\|n_\epsilon-\bar{n}_\epsilon\|_{L^\infty(0,T;L^\infty)}^2
 +\mathbb{E}\|c_\epsilon-\bar{c}_\epsilon\|_{L^\infty(0,T;W^{1,q})}^2+\mathbb{E}\|u_\epsilon-\bar{u}_\epsilon\|_{L^\infty(0,T;\mathscr{D}(\textrm{A}^\alpha))}^2\Big).
 \end{split}
\end{equation}
It remains to estimate $\|\mathscr{F}_3(n_\epsilon,c_\epsilon,u_\epsilon)(t)-\mathscr{F}_3(\bar{n}_\epsilon,\bar{c}_\epsilon
,\bar{u}_\epsilon)(t)\|_{\mathscr{D}(\textrm{A}^\alpha)}$ with respect to the $u_\epsilon$-equation. Thanks to  the smooth effect of  Stokes semigroup, we have
\begin{equation}\label{(3.20)}
\begin{split}
&\|\textrm{A}^\alpha(\mathscr{F}_3(n_\epsilon,c_\epsilon,u_\epsilon) -\mathscr{F}_3(\bar{n}_\epsilon,\bar{c}_\epsilon
,\bar{u}_\epsilon))\|_{L^2}\\
&\quad\leq  \int_0^t  (t-s)^{-\alpha}\left\|\varphi_R\left(\|u_\epsilon\|_{\mathscr{D}(\textrm{A}^\alpha)}\right)(\textbf{L}_\epsilon u_\epsilon\cdot \nabla) u_\epsilon-\varphi_R(\|\bar{u}_\epsilon\|_{\mathscr{D}(\textrm{A}^\alpha)})(\textbf{L}_\epsilon \bar{u}_\epsilon\cdot \nabla) \bar{u}_\epsilon\right\|_{L^2}\mathrm{d}s\\
& \quad+  \int_0^t (t-s)^{-\alpha}\left\| \varphi_R(\|n_\epsilon\|_{L^\infty})n_\epsilon\nabla \Phi-\varphi_R(\|\bar{n}_\epsilon\|_{L^\infty})\bar{n}_\epsilon\nabla \Phi \right\|_{L^2}\mathrm{d}s \\
&\quad +  \int_0^t (t-s)^{-\alpha}\left\| \varphi_R(\|u_\epsilon\|_{L^\infty})h(s,u_\epsilon)-\varphi_R(\|\bar{u}_\epsilon\|_{L^\infty})h(s,\bar{u}_\epsilon) \right\|_{L^2}\mathrm{d}s\\
&  \quad+  \Bigg\| \int_0^t\textrm{A}^\alpha  e^{-(t-s)\textrm{A}} \mathcal {P}\Big(\varphi_R\left(\|u_\epsilon\|_{\mathscr{D}(\textrm{A}^\alpha)}\right) g(s,u_\epsilon)-\varphi_R(\|\bar{u}_\epsilon\|_{\mathscr{D}(\textrm{A}^\alpha)})g(s,\bar{u}_\epsilon)\Big) \mathrm{d}W_s\Bigg\|_{L^2} \\
 &\quad +\Bigg\| \int_0^t \int_{Z_0}\textrm{A}^\alpha  e^{-(t-s)\textrm{A}}  \Big(\varphi_R\left(\|u_\epsilon\|_{\mathscr{D}(\textrm{A}^\alpha)}\right)\mathcal {P}K(u_\epsilon(s-),z) \\
 &\quad\quad\quad\quad\quad\quad-\varphi_R(\|\bar{u}_\epsilon\|_{\mathscr{D}(\textrm{A}^\alpha)}) \mathcal {P}K(\bar{u}_\epsilon(s-),z)\Big) \widetilde{\pi}(\mathrm{d}s,\mathrm{d}z) \\
 &\quad\quad\quad\quad\quad\quad\quad \quad +  \int_0^t \int_{Z\backslash Z_0} \textrm{A}^\alpha  e^{-(t-s)\textrm{A}}  \Big(\varphi_R\left(\|u_\epsilon\|_{\mathscr{D}(\textrm{A}^\alpha)}\right)\mathcal {P}G(u_\epsilon(s-),z) \\
 &\quad\quad\quad\quad\quad\quad\quad\quad\quad\quad \quad\quad -\varphi_R(\|\bar{u}_\epsilon\|_{\mathscr{D}(\textrm{A}^\alpha)}) \mathcal {P}G(\bar{u}_\epsilon(s-),z)\Big) \pi(\mathrm{d}s,\mathrm{d}z)\Bigg\|_{L^2}\\
 &\quad \overset{\textrm{def}}{=} K_1(t)+\cdot\cdot\cdot+\|K_5(t)\|_{L^2}.
 \end{split}
\end{equation}
The estimate for $K_1(t)$ will be divided into four cases.

\begin{itemize}[leftmargin=0.9cm]
\item [$\bullet$] If $\max\{\|u_\epsilon\|_{L^\infty(0,T;\mathscr{D}(\textrm{A}^\alpha))}, \|\bar{u}_\epsilon\|_{L^\infty(0,T;\mathscr{D}(\textrm{A}^\alpha))}\}> 2R$, then $K_1(t)\equiv0.$

\item [$\bullet$] If $\|u_\epsilon\|_{L^\infty(0,T;\mathscr{D}(\textrm{A}^\alpha))} > 2R$ and $\|\bar{u}_\epsilon\|_{L^\infty(0,T;\mathscr{D}(\textrm{A}^\alpha))}\leq 2R$, then it follows from the embedding $W^{2\alpha,2}(\mathcal {O})\subset W^{1,2}(\mathcal {O})$ with $ \alpha > \frac{3}{4}$ that
\begin{equation*}
\begin{split}
K_1(t)
&\leq C \int_0^t  (t-s)^{-\alpha}\left(\varphi_R(\|\bar{u}_\epsilon\|_{\mathscr{D}(\textrm{A}^\alpha)})-\varphi_R\left(\|u_\epsilon\|_{\mathscr{D}(\textrm{A}^\alpha)}\right)\right)\| \bar{u}_\epsilon\|_{L^2}\| \textrm{A}^\alpha\bar{u}_\epsilon\|_{L^2}\mathrm{d}s\\
&\leq C  T^{1-\alpha}\|\bar{u}_\epsilon-u_\epsilon\|_{\mathscr{D}(\textrm{A}^\alpha)}.
 \end{split}
\end{equation*}

\item [$\bullet$] Similar to ($K_{12}$), if $\|u_\epsilon\|_{L^\infty(0,T;\mathscr{D}(\textrm{A}^\alpha))} \leq 2R$ and $\|\bar{u}_\epsilon\|_{L^\infty(0,T;\mathscr{D}(\textrm{A}^\alpha))}> 2R$, then
\begin{equation*}
\begin{split}
J_1(t) \leq C  T^{1-\alpha}\|\bar{u}_\epsilon-u_\epsilon\|_{\mathscr{D}(\textrm{A}^\alpha)}.
 \end{split}
\end{equation*}

\item [$\bullet$] If $\max\{\|u_\epsilon\|_{L^\infty(0,T;\mathscr{D}(\textrm{A}^\alpha))}, \|\bar{u}_\epsilon\|_{L^\infty(0,T;\mathscr{D}(\textrm{A}^\alpha))}\}\leq 2R$, then
\begin{equation*}
\begin{split}
&\left\|\varphi_R\left(\|u_\epsilon\|_{\mathscr{D}(\textrm{A}^\alpha)}\right)(\textbf{L}_\epsilon u_\epsilon\cdot \nabla) u_\epsilon-\varphi_R(\|\bar{u}_\epsilon\|_{\mathscr{D}(\textrm{A}^\alpha)})(\textbf{L}_\epsilon \bar{u}_\epsilon\cdot \nabla) \bar{u}_\epsilon\right\|_{L^2}\\
&\quad\leq C  \|u_\epsilon-\bar{u}_\epsilon\|_{\mathscr{D}(\textrm{A}^\alpha)} \|(\textbf{L}_\epsilon u_\epsilon\cdot \nabla) u_\epsilon\|_{L^2} + \varphi_R\left(\|\bar{u}_\epsilon\|_{\mathscr{D}(\textrm{A}^\alpha)}\right) \|(\textbf{L}_\epsilon u_\epsilon-\textbf{L}_\epsilon \bar{u}_\epsilon)\cdot \nabla  u_\epsilon \|_{L^2}\\
&\quad\quad + \varphi_R\left(\|\bar{u}_\epsilon\|_{\mathscr{D}(\textrm{A}^\alpha)}\right) \|\textbf{L}_\epsilon \bar{u}_\epsilon\cdot \nabla ( u_\epsilon- \bar{u}_\epsilon)\|_{L^2}\\
&\quad\leq C   \|\bar{u}_\epsilon-u_\epsilon\|_{\mathscr{D}(\textrm{A}^\alpha)},
 \end{split}
\end{equation*}
which implies that $
K_1(t) \leq C  T^{1-\alpha}\|\bar{u}_\epsilon-u_\epsilon\|_{\mathscr{D}(\textrm{A}^\alpha)}$.
\end{itemize}
In conclusion, we get
\begin{equation}\label{(3.21)}
\begin{split}
\mathbb{E}\sup_{t\in[0,T]} |K_1(t)|^2 \leq C T^{2-2\alpha}\mathbb{E}\left(\|\bar{u}_\epsilon-u_\epsilon\|_{L^\infty(0,T;\mathscr{D}(\textrm{A}^\alpha))}^2\right) .
 \end{split}
\end{equation}
For $K_2(t)$, we prove by a similar method that
\begin{equation}\label{(3.22)}
\begin{split}
\mathbb{E}\sup_{t\in[0,T]} |K_2(t)|^2 \leq C T^{2-2\alpha}\mathbb{E}\left(\|\bar{n}_\epsilon-n_\epsilon\|_{L^\infty(0,T;L^\infty)}^2\right).
 \end{split}
\end{equation}
For $K_3(t)$, in virtue of the assumption on $h$, we have
\begin{equation*}
\begin{split}
|K_3(t)| \leq&  \int_0^t (t-s)^{-\alpha} |\varphi_R(\|u_\epsilon\|_{L^\infty})-\varphi_R(\|\bar{u}_\epsilon\|_{L^\infty})| \left(1+\| u_\epsilon \|_{L^2}^2\right)\mathrm{d}s\\
+& \int_0^t (t-s)^{-\alpha} \varphi_R(\|\bar{u}_\epsilon\|_{L^\infty})  \| u_\epsilon - \bar{u}_\epsilon \|_{L^2}\mathrm{d}s,
 \end{split}
\end{equation*}
which implies that
\begin{equation}\label{(3.23)}
\begin{split}
\mathbb{E}\sup_{t\in[0,T]} |K_3(t)|^2\leq C T^{2-2\alpha}\mathbb{E}\left(\|\bar{u}_\epsilon-u_\epsilon\|_{L^\infty(0,T;\mathscr{D}(\textrm{A}^\alpha))}^2\right) .
 \end{split}
\end{equation}
For $K_4(t)$, we rewrite it into the form of It\^{o} type as $\mathrm{d} K_4 (t)= \textrm{A} K_4 (t)\mathrm{d} t+ \textrm{A}^\alpha \hbar(t)\mathrm{d}W_t$ with $K_4 (0)=0$, where $\hbar(t) \overset{\textrm{def}}{=} \mathcal {P}\left(\varphi_R(\|u_\epsilon\|_{\mathscr{D}(\textrm{A}^\alpha)}) g(t,u_\epsilon)-\varphi_R(\|\bar{u}_\epsilon\|_{\mathscr{D}(\textrm{A}^\alpha)})g(t,\bar{u}_\epsilon) \right)$. By applying the It\^{o} formula and the BDG inequality, we get
\begin{equation*}
\begin{split}
 \mathbb{E}\sup_{t\in[0,T]} \|K_4 (t)\|_{L^2}^2
\leq&\mathbb{E}  \int_0^T \|\textrm{A}^\alpha \hbar(t)\|_{\mathcal {L}_2(U;L^2)}^2 \mathrm{d} t +2\mathbb{E}\left[\sup_{t\in[0,T]}\|K_4 (t)\|_{L^2} \left( \int_0^T\|\textrm{A}^\alpha \hbar (t)\|_{\mathcal {L}_2(U;L^2)} ^2 \mathrm{d} t \right)^{\frac{1}{2}}\right]\\
\leq& \frac{1}{2}\mathbb{E}\sup_{t\in[0,T]} \|K_4 (t)\|_{L^2}^2+C\mathbb{E}  \int_0^T \|\textrm{A}^\alpha \hbar(t)\|_{\mathcal {L}_2(U;L^2)}^2 \mathrm{d} t .
 \end{split}
\end{equation*}
For the last term on the R.H.S., we have
\begin{equation*}
\begin{split}
\|\textrm{A}^\alpha \hbar(t)\|_{\mathcal {L}_2(U;L^2)}^2 \leq& 2 \left|\varphi_R\left(\|u_\epsilon\|_{\mathscr{D}(\textrm{A}^\alpha)}\right)-\varphi_R\left(\|\bar{u}_\epsilon\|_{\mathscr{D}(\textrm{A}^\alpha)}\right)\right|^2\|\textrm{A}^\alpha g(t,u_\epsilon)\|_{\mathcal {L}_2(U;L^2)}^2 \\
+&2 \varphi_R^2\left(\|\bar{u}_\epsilon\|_{\mathscr{D}(\textrm{A}^\alpha)}\right)\left\|\textrm{A}^\alpha [g(t,u_\epsilon)-g(t,\bar{u}_\epsilon) ]\right\|_{\mathcal {L}_2(U;L^2)}^2\\
\leq& \frac{C}{R}\left(1+R^2\right)\|u_\epsilon-\bar{u}_\epsilon\|_{\mathscr{D}(\textrm{A}^\alpha)}^2.
 \end{split}
\end{equation*}
It follows from the last two estimates that
\begin{equation}\label{(3.24)}
\begin{split}
 \mathbb{E}\sup_{t\in[0,T]} \|K_4 (t)\|_{L^2}^2\leq C T\mathbb{E}\left(\|u_\epsilon-\bar{u}_\epsilon\|_{L^\infty(0,T; \mathscr{D}(\textrm{A}^\alpha))}^2\right).
 \end{split}
\end{equation}
To estimate  $K_5(t)$, we first note that
\begin{equation*}
\begin{split}
 K_5(t)=&  \int_0^t \textrm{A} K_5(s)\mathrm{d}s+  \int_0^t \int_{Z_0}\textrm{A}^\alpha \mathcal {P}V_1(s,z)\widetilde{\pi}(\mathrm{d}s,\mathrm{d}z) +  \int_0^t \int_{Z\backslash Z_0}\textrm{A}^\alpha\mathcal {P}  V_2(s,z)\pi(\mathrm{d}s,\mathrm{d}z),
 \end{split}
\end{equation*}
where
\begin{equation*}
\begin{split}
V_1(t,z)&= \varphi_R\left(\|u_\epsilon\|_{\mathscr{D}(\textrm{A}^\alpha)}\right) K(u_\epsilon(t-),z)-\varphi_R\left(\|\bar{u}_\epsilon\|_{\mathscr{D}(\textrm{A}^\alpha)}\right)K(\bar{u}_\epsilon(t-),z),\\
V_2(t,z)&= \varphi_R\left(\|u_\epsilon\|_{\mathscr{D}(\textrm{A}^\alpha)}\right) G(u_\epsilon(t-),z)-\varphi_R\left(\|\bar{u}_\epsilon\|_{\mathscr{D}(\textrm{A}^\alpha)}\right)G(\bar{u}_\epsilon(t-),z).
 \end{split}
\end{equation*}
By using the assumptions on $K$ and $G$, one can verify that for all $t \in [0,T]$
\begin{equation}\label{(3.25)}
\begin{split}
& \int_{Z_0}\|V_1(t,z)\|_{\mathscr{D}(\textrm{A}^\alpha)}^2 \mu(\mathrm{d}z)+ \int_{Z\backslash Z_0}\|V_2(t,z)\|_{\mathscr{D}(\textrm{A}^\alpha)}^2 \mu(\mathrm{d}z) \leq C  \|u_\epsilon-\bar{u}_\epsilon\|_{\mathscr{D}(\textrm{A}^\alpha)}^2.
 \end{split}
\end{equation}
By applying the It\^{o} formula to $\|K_5(t)\|_{L^2}^2$ and using the fact of $\widetilde{\pi}=\pi-\textrm{d} \mu\otimes\textrm{d}t $, we get
\begin{equation}\label{(3.26)}
\begin{split}
\|K_5(t)\|_{L^2}^2
 &\leq 2\left| \int_0^t  \int_{Z\backslash Z_0}\langle K_5(s-),\textrm{A}^\alpha \mathcal{P}V_2(s,z)\rangle \mu(\mathrm{d}z) \mathrm{d} s\right|\\
& + \int_0^t  \int_{Z_0}\| \textrm{A}^\alpha \mathcal {P}V_1(s-,z)\|_{L^2}^2 \mu(\mathrm{d}z) \mathrm{d} s + \int_0^t  \int_{Z\backslash Z_0}\|\textrm{A}^\alpha \mathcal{P}V_2(s-,z)\|_{L^2}^2  \mu(\mathrm{d}z) \mathrm{d} s\\
& +\left| \int_0^t  \int_{Z_0}\Big(2\langle K_5(s-),\textrm{A}^\alpha \mathcal{P}V_1(s,z)\rangle + \|\textrm{A}^\alpha \mathcal{P}V_1(s-,z)\|_{L^2} ^2 \Big) \widetilde{\pi}(\mathrm{d}s,\mathrm{d}z)\right|\\
&  +\left| \int_0^t  \int_{Z\backslash Z_0}\Big(2\langle K_5(s-),\textrm{A}^\alpha \mathcal{P}V_2(s-,z)\rangle + \|\textrm{A}^\alpha \mathcal{P}V_2(s-,z)\|_{L^2}^2 \Big)\widetilde{\pi} (\mathrm{d}s,\mathrm{d}z)\right|\\
&  \overset{\textrm{def}}{=} L_1(t)+\cdot\cdot\cdot+L_5(t).
 \end{split}
\end{equation}
For $L_1(t)$, it follows from \eqref{(3.25)} and the H\"{o}lder inequality  that
\begin{equation*}
\begin{split}
 L_1(t) &\leq  \int_0^t \|K_5(s)\|_{L^2}^2\mathrm{d} s+ \int_0^t\left| \int_{Z\backslash Z_0}\| \textrm{A}^\alpha \mathcal{P}V_2(s-,z) \|_{L^2} \mu(\mathrm{d}z)\right|^2 \mathrm{d} s\\
  &\leq  \int_0^t \|K_5(s)\|_{L^2}^2\mathrm{d} s+ C T\|u_\epsilon-\bar{u}_\epsilon\|_{L^\infty(0,T;\mathscr{D}(\textrm{A}^\alpha))}^2.
 \end{split}
\end{equation*}
By \eqref{(3.25)} again, we have
\begin{equation*}
\begin{split}
 L_2(t)+L_3(t)  \leq C  \int_0^T \|u_\epsilon-\bar{u}_\epsilon\|_{\mathscr{D}(\textrm{A}^\alpha)}^2\mathrm{d} s\leq C T\|u_\epsilon-\bar{u}_\epsilon\|_{L^\infty(0,T;\mathscr{D}(\textrm{A}^\alpha))}^2.
 \end{split}
\end{equation*}
For $L_4(t)$, by applying the BDG inequality, we get for any $\eta>0$
\begin{equation*}
\begin{split}
 &\mathbb{E}\sup_{t\in [0,T]} \left| \int_0^t  \int_{Z_0} \langle K_5(s-),\textrm{A}^\alpha \mathcal{P}V_1(t,z)\rangle_{L^2}\widetilde{\pi}(\mathrm{d}t,\mathrm{d}z)\right|\\
 &\quad\leq \eta \mathbb{E}  \sup_{t\in [0,T]}\| K_5(t)\|_{L^2}  + C T\mathbb{E}\left(\|u_\epsilon-\bar{u}_\epsilon\|_{L^\infty(0,T;\mathscr{D}(\textrm{A}^\alpha))}^2\right),
 \end{split}
\end{equation*}
and
\begin{equation*}
\begin{split}
 &\mathbb{E}\sup_{t\in [0,T]}  \left| \int_0^T  \int_{Z_0} \|\textrm{A}^\alpha \mathcal{P}V_1(s-,z)\|_{L^2}^2 \widetilde{\pi}(\mathrm{d}t,\mathrm{d}z)\right|\\
  &\quad\leq \eta \mathbb{E} \sup_{t\in [0,T]}\|u_\epsilon-\bar{u}_\epsilon\|_{\mathscr{D}(\textrm{A}^\alpha)}^2+ C T\mathbb{E}\left(\|u_\epsilon-\bar{u}_\epsilon\|_{L^\infty(0,T;\mathscr{D}(\textrm{A}^\alpha))}^2
  \right),
 \end{split}
\end{equation*}
which implies that
\begin{equation*}
\begin{split}
 \mathbb{E}\sup_{t\in [0,T]} L_4(t)\leq 2\eta \mathbb{E}  \sup_{t\in [0,T]}\| K_5(t)\|_{L^2}  + C T\mathbb{E}\left(\|u_\epsilon-\bar{u}_\epsilon\|_{L^\infty(0,T;\mathscr{D}(\textrm{A}^\alpha))}^2\right),\quad \forall\eta>0.
 \end{split}
\end{equation*}
In a similar manner, by assumptions on $G$ and \eqref{(3.25)}, we also have
\begin{equation*}
\begin{split}
 \mathbb{E}\sup_{t\in [0,T]} L_5(t)\leq 2 \eta \mathbb{E}  \sup_{t\in [0,T]}\| K_5(t)\|_{L^2}  + C T\mathbb{E}\left(\|u_\epsilon-\bar{u}_\epsilon\|_{L^\infty(0,T;\mathscr{D}(\textrm{A}^\alpha))}^2\right),\quad \forall\eta>0.
 \end{split}
\end{equation*}
Substituting the above estimates for $L_1(t)\sim L_5(t)$ into \eqref{(3.26)} and then choosing $\eta>0$ small enough, we obtain
\begin{equation*}
\begin{split}
 \mathbb{E}\sup_{t\in [0,T]} \|K_5(t)\|_{L^2}^2\leq  \int_0^T \|K_5(t)\|_{L^2}^2\mathrm{d}t+ C T\mathbb{E}\left(\|u_\epsilon-\bar{u}_\epsilon\|_{L^\infty(0,T;\mathscr{D}(\textrm{A}^\alpha))}^2\right).
 \end{split}
\end{equation*}
Applying the Gronwall inequality to the above inequality  leads to
\begin{equation}\label{(3.27)}
\begin{split}
 \mathbb{E}\sup_{t\in [0,T]} \|K_5(t)\|_{L^2}^2\leq C T\mathbb{E}\left(\|u_\epsilon-\bar{u}_\epsilon\|_{L^\infty(0,T;\mathscr{D}(\textrm{A}^\alpha))}^2\right).
 \end{split}
\end{equation}
It follows from the estimates \eqref{(3.21)}-\eqref{(3.24)} and \eqref{(3.27)} that
\begin{equation*}
\begin{split}
&\mathbb{E}\sup_{t\in [0,T]}\left\|\textrm{A}^\alpha(\mathscr{F}_3(n_\epsilon,c_\epsilon,u_\epsilon) -\mathscr{F}_3(\bar{n}_\epsilon,\bar{c}_\epsilon
,\bar{u}_\epsilon))\right\|_{L^2}^2\leq C  \left(T+T^{2-2\alpha}\right)\\
&\quad\times\left(\mathbb{E}\|\bar{c}_\epsilon-c_\epsilon\|_{L^\infty(0,T;W^{1,q})}^2+\mathbb{E}
 \|\bar{n}_\epsilon-n_\epsilon\|_{L^\infty(0,T;L^\infty)}^2+ \mathbb{E}\|u_\epsilon-\bar{u}_\epsilon\|_{L^\infty(0,T;\mathscr{D}(\textrm{A}^\alpha))}^2\right),
 \end{split}
\end{equation*}
which together with \eqref{(3.18)} and \eqref{(3.19)} yields that
\begin{equation}\label{mm}
\begin{split}
& \left\|\mathscr{F}(n_\epsilon,c_\epsilon,u_\epsilon) -\mathscr{F}(\bar{n}_\epsilon,\bar{c}_\epsilon
,\bar{u}_\epsilon) \right\|_{\mathscr{X}_T }^2\\
& \quad\leq  C  \left(T+T^{2-2\alpha}+T^{1-2\gamma}+T^{2-2\delta}\right)\|(n_\epsilon,c_\epsilon,u_\epsilon) -(\bar{n}_\epsilon,\bar{c}_\epsilon
,\bar{u}_\epsilon)\|_{\mathscr{X}_T}^2.
 \end{split}
\end{equation}
If we take $T_R>0$ small enough such that $C  (T_R+T_R^{2-2\alpha}+T_R^{1-2\gamma}+T_R^{2-2\delta})\leq \frac{1}{2}$, then \eqref{mm} implies that $\mathscr{F}:B_{T_R}(r)\mapsto B_{T_R}(r)$ is a contraction mapping.  By applying the Banach Fixed Point Theorem, there is a unique triple $(n_\epsilon,c_\epsilon,u_\epsilon)\in B_{T_R}(r) $ such that $ \mathscr{F}(n_\epsilon,c_\epsilon,u_\epsilon)=(n_\epsilon,c_\epsilon,u_\epsilon)$, i.e., the process $(n_\epsilon,c_\epsilon,u_\epsilon)$ satisfies the mild form of \eqref{SCNS-2} on the interval $[0,T_R]$. Observing that $T_R>0$ is independent of the initial data $(n_0,c_0,u_0)$, one can repeat above arguments to \eqref{SCNS-2} with the new initial data $(n_{\epsilon}(T_R), c_{\epsilon}(T_R), u_{\epsilon}(T_R))$. In this way, one can construct a unique solution on intervals $[T_R,2T_R]$, $[2T_R,3T_R]$, $\cdots$, which implies that the triple $(n_\epsilon,c_\epsilon,u_\epsilon)$ satisfies the mild form of \eqref{SCNS-2} in $\mathscr{X}_T$ for any $T>0$.

\textsc{Step 3.} We show that $(n_\epsilon,c_\epsilon)$ is a $\mathcal {C}^0(\mathcal {O}) \times W^{1,q}(\mathcal {O})$-valued continuous process and $u_\epsilon$ is a $\mathscr{D}(\textrm{A})$-valued c\`{a}dl\`{a}g process, $\mathbb{P}$-a.s. Note that it is sufficient to consider the stochastic convolution terms, since the deterministic integrals have continuous trajectories. Indeed, by using $\widetilde{\pi}(\mathrm{d}t, \mathrm{d}z)=\pi(\mathrm{d}t, \mathrm{d}z)-\mu(\mathrm{d}z)\mathrm{d}t$, the stochastic convolution terms can be written as
\begin{equation*}
\begin{split}
&\int_0^{t}  e^{-(t-s)\textrm{A}}  \varphi_R\left(\|u_\epsilon\|_{\mathscr{D}(\textrm{A}^\alpha)}\right) \mathcal {P}g(s,u_\epsilon) \mathrm{d}W_s\\
&\quad+ \int_0^{t} \int_{Z} e^{-(t-s)\textrm{A}}\varphi_R\left(\|u_\epsilon\|_{\mathscr{D}(\textrm{A}^\alpha)}\right)\mathcal {P}\mathcal {R}(u_\epsilon(s-),z)\widetilde{\pi}(\mathrm{d}s,\mathrm{d}z)\\
 &\quad-  \int_0^{t}\int_{Z\backslash Z_0} e^{-(t-s)\textrm{A}}\varphi_R\left(\|u_\epsilon\|_{\mathscr{D}(\textrm{A}^\alpha)}\right)\mathcal {P}G(u_\epsilon(s-),z) \mu (\mathrm{d}z)\mathrm{d}s  \overset{\textrm{def}}{=}\mathscr{L}_t^1+\mathscr{L}_t^2+\mathscr{L}_t ^3,
\end{split}
\end{equation*}
where $\mathcal {R}(u_\epsilon(s-),z)=K(u_\epsilon(s-),z)$ for $z \in Z_0$ and $\mathcal {R}(u_\epsilon(s-),z)= G(u_\epsilon(s-),z)$ for $z \in Z\backslash Z_0$. Clearly, by virtue of the condition ($\textbf{A}_4$) and the fact of $\mu(Z\backslash Z_0)<\infty$, it is straightforward to verify that the deterministic integral $\mathscr{L}_t ^3$  is continuous in $\mathscr{D}(\textrm{A})$, $\mathbb{P}$-a.s. To deal with  $\mathscr{L}_t^1$ and $\mathscr{L}_t^2$, notice that $\mathscr{D}(\textrm{A}^\alpha)$ equipped with the norm $\|\cdot\|_{\mathscr{D}(\textrm{A})}=\|\textrm{A}^\alpha\cdot\|_{L^2}$ is a separable Hilbert space, which infers that $\mathscr{D}(\textrm{A}^\alpha)$ is a  martingale type 2 or 2-smooth Banach space (cf. \cite[Definition A.1]{Brzezniak2019Maximal}). Meanwhile, since $U$ is a separable Hilbert space, the space $\gamma(U;\mathscr{D}(\textrm{A}^\alpha))$ consisting of all $\gamma$-radonifying operators is isometrically isomorphic to the space of  Hilbert-Schmidt operators from $U$ into $\mathscr{D}(\textrm{A}^\alpha)$, i.e., $\gamma(U;\mathscr{D}(\textrm{A}^\alpha))\simeq\mathcal {L}_2(U;\mathscr{D}(\textrm{A}^\alpha))$ (cf. \cite[Example 2.8]{2010Stochastic}).  Furthermore, by using the fact of $u_\epsilon \in L^2(\Omega; L^\infty(0,T;\mathscr{D}(\textrm{A})))$ and the assumptions $(\textbf{A}_3)$-$(\textbf{A}_4)$, one can deduce that
\begin{equation*}
\begin{split}
&\mathbb{E}\int_0^T\left\|\varphi_R\left(\|u_\epsilon\|_{\mathscr{D}(\textrm{A}^\alpha)}\right)\mathcal {P}g(s,u_\epsilon)\right\|_{\mathcal {L}_2(U;\mathscr{D}(\textrm{A}))} ^2 \textrm{d}t\\
&\quad +
\mathbb{E}\int_0^T\int_Z\left\|\varphi_R\left(\|u_\epsilon\|_{\mathscr{D}(\textrm{A}^\alpha)}\right)\mathcal {P}\mathcal {R}(u_\epsilon(t-),z)\right\|_{\mathscr{D}(\textrm{A})}^2 \mu (\textrm{d}z)ds \leq  C.
\end{split}
\end{equation*}
Therefore,  by applying the results in \cite[Theorem 3.6]{Brzezniak2019Maximal} and \cite[Corollary 5.1]{Brzezniak2017Maximal}, we obtain that the stochastic convolution  $\mathscr{L}_t^1+\mathscr{L}_t^2$  has a c\`{a}dl\`{a}g modification, which implies that $u_\epsilon$ has $\mathbb{P}$-a.s. c\`{a}dl\`{a}g sample paths in $\mathscr{D}(\textrm{A})$. The  proof of Lemma \ref{lem2} is completed.
\end{proof}

\subsection{Removing the cut-off operator}

 \begin{proof}[\emph{\textbf{Proof of Lemma \ref{lem1}.}}]

Assume that $(n_\epsilon,c_\epsilon,u_\epsilon,\tau ^\epsilon)$ and $(\overline{n}_\epsilon,\overline{c}_\epsilon,\overline{u}_\epsilon,\overline{\tau}^\epsilon )$ are two local mild solutions to the system \eqref{SCNS-1} with the same initial data. For $l>0$, we set
$
\tau^*_\epsilon=\tau^\epsilon \wedge \overline{\tau}^\epsilon$, $\tau^{ *}_{\epsilon,l} =\tau_l^\epsilon\wedge \overline{\tau}_l^\epsilon,
$
where
\begin{equation*}
\begin{split}
\tau_l^\epsilon&=\inf\left\{0\leq t <\infty ;~\| (n_\epsilon,c_\epsilon,u_\epsilon)\|_{\mathscr{X}_T} \geq l\right\}\wedge \tau ^\epsilon,\\
\overline{\tau}_l^\epsilon&=\inf\left\{0\leq t <\infty;~\| (\overline{n}_\epsilon,\overline{c}_\epsilon,\overline{u}_\epsilon)\|_{\mathscr{X}_T} \geq l\right\}\wedge \overline{\tau}^\epsilon .
 \end{split}
\end{equation*}
Since $(n_\epsilon,c_\epsilon,u_\epsilon)$ and $(\overline{n}_\epsilon,\overline{c}_\epsilon,\overline{u}_\epsilon)$ are $\mathfrak{F}$-adapted processes with $\mathbb{P}$-a.s. right-continuous sample paths by Lemma \ref{lem2}, it follows from a well-known result \cite[Proposition 4.6 in Chapter I]{Revuz1999}  that $\tau_l^\epsilon$ and $\overline{\tau}_l^\epsilon$ are stopping times with respect to the filtration $\mathfrak{F}$.

Denote $n_\epsilon^*=n_\epsilon-\overline{n}_\epsilon$, $c_\epsilon^*=c_\epsilon-\overline{c}_\epsilon$, and $u_\epsilon^*=u_\epsilon-\overline{u}_\epsilon$. The proof of uniqueness result depends upon some energy estimates for $(n_\epsilon^*,c_\epsilon^*,u_\epsilon^*)$. For $n_\epsilon^*$, we get by integrating by parts and the divergence-free condition that, for any $t\in [0,\tau^{ *}_{\epsilon,l}  \wedge T)$,
\begin{equation}\label{111}
\begin{split}
& \|n_\epsilon^* (t)\|_{L^2}^2 + \int_0^t  \|\nabla n_\epsilon^*\|_{L^2}^2\mathrm{d}s\\
&\quad\leq \int_0^t  \int_\mathcal {O}\big|n_\epsilon^* u_\epsilon^*\cdot \nabla n_\epsilon\big|\mathrm{d}x  \mathrm{d}s+ \int_0^t \int_\mathcal {O}  \big|n_\epsilon^* \textbf{h}_\epsilon' (n_\epsilon)\chi(c_\epsilon)\nabla n_\epsilon^*\cdot\nabla c_\epsilon  \big| \mathrm{d}x  \mathrm{d}s\\
&\quad+ \int_0^t \int_\mathcal {O} \big|\overline{n}_\epsilon  (\textbf{h}_\epsilon' (n_\epsilon)-\textbf{h}_\epsilon' (\overline{n}_\epsilon))\chi(c_\epsilon)\nabla n_\epsilon^*\cdot\nabla c_\epsilon  \big|\mathrm{d}x  \mathrm{d}s\\
&\quad+ \int_0^t \int_\mathcal {O}  \big| \overline{n}_\epsilon \textbf{h}_\epsilon' (\overline{n}_\epsilon)(\chi(c_\epsilon)-\chi(\overline{c}_\epsilon))\nabla n_\epsilon^*\cdot\nabla c_\epsilon  \big| \mathrm{d}x  \mathrm{d}s\\
&\quad+   \int_0^t \int_\mathcal {O} \big| \overline{n}_\epsilon \textbf{h}_\epsilon' (\overline{n}_\epsilon)\chi(\overline{c}_\epsilon)\nabla n_\epsilon^*\cdot\nabla c_\epsilon^* \big| \mathrm{d}x  \mathrm{d}s  \overset{\textrm{def}}{=}  N_1 +\cdot\cdot\cdot+N_5.
 \end{split}
\end{equation}
For $N_1$, by using the embedding $W^{1,q}(\mathcal {O})\subset L^\infty(\mathcal {O})$ for $q>3$ and integrating by parts $ \int_\mathcal {O}n_\epsilon^* u_\epsilon^*\cdot \nabla n_\epsilon\mathrm{d}x=- \int_\mathcal {O}n_\epsilon u_\epsilon^*\cdot\nabla n_\epsilon^* \mathrm{d}x$ because $\div u_\epsilon =0$, we get for any $\eta> 0$
\begin{equation*}
\begin{split}
N_1+N_2  \leq \eta \int_0^t \|\nabla n_\epsilon^*\|_{L^2}^2 \mathrm{d}s+ C   \int_0^t \left(\| u_\epsilon^*\|_{L^2}^2+\|n_\epsilon^* \|_{L^2}^2\right) \mathrm{d}s , \quad t\in [0,\tau^{ *}_{\epsilon,l}  \wedge T).
 \end{split}
\end{equation*}
For $N_3$ and $N_5$, we get from the boundedness of $\textbf{h}_\epsilon'' (\cdot)$ that, for any $\eta> 0$,
\begin{equation*}
\begin{split}
N_3+N_5\leq \eta \int_0^t \|\nabla n_\epsilon^*\|_{L^2}^2 \mathrm{d}s+ C  \int_0^t \left(\| n_\epsilon^*\|_{L^2}^2+\|c_\epsilon^* \|_{L^2}^2\right)\mathrm{d}s , \quad t\in [0,\tau^{ *}_{\epsilon,l}  \wedge T).
 \end{split}
\end{equation*}
For $N_4$, recalling the following interpolation result: If $B_0 \subset B \subset B_1$ are three Banach spaces, the embedding $B_0 \subset B$ being compact, $B \subset B_1$ being continuous, then for every $\eta>0$ there is a constant $C_{\eta}>0$ such that
$
\|x\|_B \leq \eta\|x\|_{B_0}+C_{\eta}\|x\|_{B_1}
$
for every $x \in B_0$. We apply this fact to $ W^{1,2}(\mathcal {O})\subset L^{\frac{2q}{q-2}}(\mathcal {O})\subset W^{-1,2}(\mathcal {O})$ with $q>3$: For any $\eta>0$, there exists a $C_\eta>0$ such that
$$
\|f\|_{L^{\frac{2q}{q-2}}}^2\leq \eta \|\nabla f\|_{L^{2}}^2+C _\eta\|f\|_{L^2}^2.
$$
It then follows from the locally Lipschitz continuity of $\chi$ that
\begin{equation*}
\begin{split}
N_4
&\leq  \eta  \int_0^t \|\nabla n_\epsilon^*\|_{L^2}^2 \mathrm{d}s+ C  \int_0^t\|\overline{n}_\epsilon\|_{L^\infty}^2\| c_\epsilon^*\|_{L^{\frac{2q}{q-2}}}^2 \|\nabla c_\epsilon \|_{L^q}^2 \mathrm{d}s\\
&\leq \eta  \int_0^t \|\nabla n_\epsilon^*\|_{L^2}^2 \mathrm{d}s+ C  \int_0^t\| c_\epsilon^*\|_{L^{\frac{2q}{q-2}}}^2 \mathrm{d}s\\
&\leq \eta  \int_0^t \|\nabla n_\epsilon^*\|_{L^2}^2 \mathrm{d}s+ \eta  \int_0^t\|  \nabla c_\epsilon^* \|_{L^2}^2 \mathrm{d}s+ C   \int_0^t\| c_\epsilon^*  \|_{L^2}^2 \mathrm{d}s.
 \end{split}
\end{equation*}
Therefore plugging the estimates for terms $N_1-N_6$ into \eqref{111} leads to
\begin{equation}\label{(3.28)}
\begin{split}
& \mathbb{E}\sup_{s\in [0,t\wedge \tau^{ *}_{\epsilon,l} ]}\|n_\epsilon^* (s)\|_{L^2}^2 + (1-3\eta) \int_0^{t\wedge \tau^{ *}_{\epsilon,l} }  \|\nabla n_\epsilon^*\|_{L^2}^2\mathrm{d}s\\
&\quad\leq \eta  \int_0^{t\wedge \tau^{ *}_{\epsilon,l} }\|  \nabla c_\epsilon^* \|_{L^2}^2 \mathrm{d}s + C   \int_0^{t\wedge \tau^{ *}_{\epsilon,l} } \big(\| u_\epsilon^*\|_{L^2}^2 + \|n_\epsilon^* \|_{L^2}^2 + \| c_\epsilon^*  \|_{L^2}^2\big) \mathrm{d}s.
 \end{split}
\end{equation}
Proceeding similarly to the equations satisfied by $c_\epsilon$ and $\overline{c}_\epsilon $, we obtain
\begin{equation}\label{(3.29)}
\begin{split}
 \mathbb{E}\sup_{s\in [0,t\wedge \tau^{ *}_{\epsilon,l} ]}\|c_\epsilon^* (s)\|_{L^2}^2 +   \int_0^{t\wedge \tau^{ *}_{\epsilon,l} }  \|\nabla c_\epsilon^*\|_{L^2}^2\mathrm{d}s \leq C  \int_0^{t\wedge \tau^{ *}_{\epsilon,l} } \big(\| u_\epsilon^*\|_{L^2}^2 + \|n_\epsilon^* \|_{L^2}^2 + \| c_\epsilon^*  \|_{L^2}^2\big) \mathrm{d}s.
 \end{split}
\end{equation}
To estimate $\| u_\epsilon^*\|_{L^2}$, we apply the It\^{o} formula to infer that for any $t\in [0,\tau^{ *}_{\epsilon,l}  \wedge T)$
{\wuhao\begin{equation}\label{(3.30)}
\begin{split}
& \|u_\epsilon^*(t)\|_{L^2}^2+2 \int_0^t\|\textrm{A}^{\frac{1}{2}}u_\epsilon^*(s)\|_{L^2}^2\mathrm{d}s \leq  \int_0^t\|\mathcal {P}[g(s,u_\epsilon)-g(s,\overline{u}_\epsilon)]\|_{\mathcal {L}_2(U;L^2)}^2 \mathrm{d}s\\
&\quad+2\left| \int_0^t\big\langle u_\epsilon^*(t),\mathcal {P} [(\textbf{L}_\epsilon u_\epsilon\cdot \nabla) u_\epsilon-\textbf{L}_\epsilon \overline{u}_\epsilon\cdot \nabla) \overline{u}_\epsilon]  - \mathcal {P}(n_\epsilon^*\nabla \Phi ) \big\rangle_{L^2}\mathrm{d}s\right|\\
 &\quad+ \int_0^t \int_{Z_0} \|\mathcal {P}[ K(u_\epsilon(x,s-),z)- K(\overline{u}_\epsilon(x,s-),z)]\|_{L^2}^2 \pi(\mathrm{d}s,\mathrm{d}z)\\
 &\quad+ \int_0^t \int_{Z\backslash Z_0} \|\mathcal {P}[ G(u_\epsilon(x,s-),z)- G(\overline{u}_\epsilon(x,s-),z)]\|_{L^2}^2 \pi(\mathrm{d}s,\mathrm{d}z) \\
&\quad+2\left| \int_0^t \int_{Z\backslash Z_0}
 \big\langle u_\epsilon^*(s),\mathcal {P}[ G(u_\epsilon(x,s-),z)- G(\overline{u}_\epsilon(x,s-),z)]\big\rangle_{L^2}\mu(\mathrm{d}z)\mathrm{d}s\right|\\
 &\quad+2\left| \int_0^t\big\langle u_\epsilon^*(s), \mathcal {P}[g(s,u_\epsilon)-g(s,\overline{u}_\epsilon)] \mathrm{d}W_s\big\rangle_{L^2}\right|\\
&\quad+2\left| \int_0^t \int_{Z_0}
 \big\langle u_\epsilon^*(s),\mathcal {P}[ K(u_\epsilon(x,s-),z)- K(\overline{u}_\epsilon(x,s-),z)]\big\rangle_{L^2}\widetilde{\pi}(\mathrm{d}s,\mathrm{d}z)\right|\\
&\quad+2\left| \int_0^t \int_{Z\backslash Z_0}
 \big\langle u_\epsilon^*(s),\mathcal {P}[ G(u_\epsilon(x,s-),z)- G(\overline{u}_\epsilon(x,s-),z)]\big\rangle_{L^2}\widetilde{\pi}(\mathrm{d}s,\mathrm{d}z)\right|\\
 &\quad \overset{\textrm{def}}{=}  K_1(t)+\cdot\cdot\cdot+K_8(t),
\end{split}
\end{equation}}\noindent
where we used the fact of  $\widetilde{\pi}(\textrm{d}t,\textrm{d}z)=\pi(\textrm{d}t,\textrm{d}z)-\mu(\textrm{d}z)\textrm{d}t$. Now we shall estimate each integral terms on the R.H.S. of \eqref{(3.30)}. For $K_1$, by the hypothesis on $g$, we have
\begin{equation*}
\begin{split}
\mathbb{E}\sup_{s\in [0,t\wedge \tau^{ *}_{\epsilon,l} ]}K_1 (s) \leq C\mathbb{E} \int_0^{t\wedge \tau^{ *}_{\epsilon,l} }\| g(s,u_\epsilon)-g(s,\overline{u}_\epsilon) \|_{\mathcal {L}_2(U;L^2)}^2 \mathrm{d}s \leq C\mathbb{E} \int_0^{t\wedge \tau^{ *}_{\epsilon,l} }\|u_\epsilon^*\|_{L^2}^2\mathrm{d}s.
 \end{split}
\end{equation*}
For $K_2$, by virtue of $\left\langle u_\epsilon^*,(\textbf{L}_\epsilon \overline{u}_\epsilon\cdot \nabla) u_\epsilon^*\right\rangle=0$ and $\left\langle u_\epsilon^*,(\textbf{L}_\epsilon \overline{u}_\epsilon^*\cdot \nabla) u_\epsilon \right\rangle =-\left\langle u_\epsilon,(\textbf{L}_\epsilon \overline{u}_\epsilon^*\cdot \nabla) u_\epsilon^* \right\rangle $ because of $\div (\textbf{L}_\epsilon \overline{u}_\epsilon)=0$, we get for any $\eta>0$
\begin{equation*}
\begin{split}
\mathbb{E}\sup_{s\in [0,t\wedge \tau^{ *}_{\epsilon,l} ]}K_2(s)  &\leq 2\mathbb{E}\sup_{t\in [0,t\wedge \tau^{ *}_{\epsilon,l} ]}\left| \int_0^{t\wedge \tau^{ *}_{\epsilon,l} }\langle u_\epsilon^*,(\textbf{L}_\epsilon u_\epsilon^*\cdot \nabla) u_\epsilon+(\textbf{L}_\epsilon \overline{u}_\epsilon\cdot \nabla) u_\epsilon^*\rangle_{L^2}\mathrm{d}s\right| \\
&+2\mathbb{E}  \int_0^{t\wedge \tau^{ *}_{\epsilon,l} }\| u_\epsilon^*\|_{L^2}\|n_\epsilon^*\|_{L^2}\|\nabla \Phi  \|_{L^\infty}\mathrm{d}s\\
&\leq \eta\mathbb{E}  \int_0^{t\wedge \tau^{ *}_{\epsilon,l} } \|\nabla  u_\epsilon^*\|_{L^2}^2\mathrm{d}s +C \mathbb{E}  \int_0^{t\wedge \tau^{ *}_{\epsilon,l} }\left(\| u_\epsilon^*\|_{L^2}^2+\|n_\epsilon^*\|_{L^2}^2\right)\mathrm{d}s .
 \end{split}
\end{equation*}
For $K_3$, we have
\begin{equation*}
\begin{split}
\mathbb{E}\sup_{s\in [0,t\wedge \tau^{ *}_{\epsilon,l} ]}K_3(s)&\leq
C\mathbb{E} \int_0^{t\wedge \tau^{ *}_{\epsilon,l} } \int_{Z\backslash Z_0} \|G(u_\epsilon(x,s-),z)- G(\overline{u}_\epsilon(s-),z)\|_{L^2}^2 \pi(\mathrm{d}s,\mathrm{d}z)\\
&=C
\mathbb{E} \int_0^{t\wedge \tau^{ *}_{\epsilon,l} } \int_{Z\backslash Z_0} \|G(u_\epsilon(x,s-),z)- G(\overline{u}_\epsilon(s-),z)\|_{L^2}^2 \mu(\mathrm{d}z)\mathrm{d}s\\
&\leq C
\mathbb{E} \int_0^{t\wedge \tau^{ *}_{\epsilon,l} }  \| u_\epsilon^*(s)\|_{L^2}^2 \mathrm{d}s.
 \end{split}
\end{equation*}
Similarly, we have
\begin{equation*}
\begin{split}
\mathbb{E}\sup_{s\in [0,t\wedge \tau^{ *}_{\epsilon,l} ]}(K_4(s)+K_5(s)) \leq  C \mathbb{E}  \int_0^{t\wedge \tau^{ *}_{\epsilon,l} } \| u_\epsilon^*(s)\|_{L^2}^2 \mathrm{d}s.
 \end{split}
\end{equation*}
For $K_6$, the assumption on $g$ and BDG inequality imply that, for any $\eta>0$,
\begin{equation*}
\begin{split}
\mathbb{E}\sup_{s\in [0,t\wedge \tau^{ *}_{\epsilon,l} ]}K_6(s)&\leq  C \mathbb{E}\left(\sup_{s\in [0,t\wedge \tau^{ *}_{\epsilon,l} ]}\| u_\epsilon^*(s)\|^2  \int_0^{t\wedge \tau^{ *}_{\epsilon,l} } \|\mathcal {P}[g(s,u_\epsilon)-g(s,\overline{u}_\epsilon)\|_{\mathcal {L}_2(U;L^2)}^2\mathrm{d}s \right)^{\frac{1}{2}}\\
&\leq \eta\mathbb{E} \sup_{s\in [0,t\wedge \tau^{ *}_{\epsilon,l} ]}\| u_\epsilon^*(s)\|^2 + C \mathbb{E}  \int_0^{t\wedge \tau^{ *}_{\epsilon,l} } \| u_\epsilon^*(s)\|_{L^2}^2 \mathrm{d}s.
 \end{split}
\end{equation*}
For $K_{7}$, by using the assumption on $K$ and the BDG inequality, we obtain for any $\eta>0$
\begin{equation*}
\begin{split}
&\mathbb{E}\sup_{s\in [0,t\wedge \tau^{ *}_{\epsilon,l} ]}K_{7 }(s)\\
&\quad\leq  C \mathbb{E}\left[\sup_{s\in [0,t\wedge \tau^{ *}_{\epsilon,l} ]}\|u_\epsilon^*(s)\|_{L^2} \left( \int_0^{t\wedge \tau^{ *}_{\epsilon,l} } \int_{Z_0}
\|K(u_\epsilon(s-),z)- K(\overline{u}_\epsilon(s-),z)\big\|_{L^2} ^2\mu(\mathrm{d}z)\mathrm{d}s\right)^{\frac{1}{2}}\right]\\
&\quad\leq  \eta \mathbb{E} \sup_{s\in [0,t\wedge \tau^{ *}_{\epsilon,l} ]}\|u_\epsilon^*(s)\|_{L^2}^2+ C \mathbb{E}  \int_0^{t\wedge \tau^{ *}_{\epsilon,l} } \| u_\epsilon^*(s)\|_{L^2}^2 \mathrm{d}s.
 \end{split}
\end{equation*}
In a similar manner, the term $K_{8}$ can be estimated by
\begin{equation*}
\begin{split}
&\mathbb{E}\sup_{s\in [0,t\wedge \tau^{ *}_{\epsilon,l} ]}K_{8}(s) \leq  \eta \mathbb{E} \sup_{s\in [0,t\wedge \tau^{ *}_{\epsilon,l} ]}\|u_\epsilon^*(s)\|_{L^2}^2+ C \mathbb{E}  \int_0^{t\wedge \tau^{ *}_{\epsilon,l} } \| u_\epsilon^*(s)\|_{L^2}^2 \mathrm{d}s,\quad \forall\eta>0.
 \end{split}
\end{equation*}
Taking the supremum to \eqref{(3.30)} on time over $[0, t\wedge \tau^{ *}_{\epsilon,l} ]$ and then taking expectation on both sides, after  substituting the above estimates for $K_{1}\sim K_{8}$ and choosing $\eta>0$ small enough, we arrive at
\begin{equation}\label{(3.31)}
\begin{split}
 \mathbb{E}\sup_{s\in [0,t\wedge \tau^{ *}_{\epsilon,l} ]}\|u_\epsilon^*(s)\|_{L^2}^2\leq  C \mathbb{E}  \int_0^{t\wedge \tau^{ *}_{\epsilon,l} }\left(\| u_\epsilon^*(s)\|_{L^2}^2+\|n_\epsilon^*(s)\|_{L^2}^2\right)\mathrm{d}s.
\end{split}
\end{equation}
By virtue of the estimates  \eqref{(3.28)}, \eqref{(3.29)} and \eqref{(3.31)} and taking small $\eta>0$, we obtain
\begin{equation}
\begin{split}
 &\mathbb{E}\sup_{s\in [0,t\wedge \tau^{ *}_{\epsilon,l} ]}\left(\|u_\epsilon^*(s)\|_{L^2}^2+ \|c_\epsilon^* (s)\|_{L^2}^2 + \|n_\epsilon^* (s)\|_{L^2}^2\right) \\
 & \quad\leq  C \mathbb{E} \int_0^{t} \sup_{\varsigma\in [0,s\wedge \tau^{ *}_{\epsilon,l} )}\left(\| u_\epsilon^*(\varsigma)\|_{L^2}^2 + \|n_\epsilon^* (\varsigma)\|_{L^2}^2 + \| c_\epsilon^* (\varsigma) \|_{L^2}^2\right) \mathrm{d}s,
\end{split}
\end{equation}
which combined with  the Gronwall inequality implies that $\mathbb{E}\sup_{s\in [0,t\wedge \tau^{ *}_{\epsilon,l} ]}(\|u_\epsilon^*(s)\|_{L^2}^2+ \|c_\epsilon^* (s)\|_{L^2}^2 + \|n_\epsilon^* (s)\|_{L^2}^2)=0$. By taking $t=\tau^*$ and letting $l\rightarrow\infty$, this proves the \emph{uniqueness} result.

Now let us extend the solution $(n_{\epsilon},c_{\epsilon},u_{\epsilon})$ to a maximal time of existence $\widetilde{\tau}_{\epsilon}$. For any $R>0$, we define
\begin{equation*}
\tau_R\overset {\textrm{def}} {=}
\left\{
 \begin{array}{ll}
   \inf\left\{0\leq t<\infty ;~ \| c_\epsilon\|_{L^\infty(0,t;W^{1,q})}^2+
 \| n_\epsilon\|_{L^\infty(0,t;L^\infty)}^2+  \|u_\epsilon \|_{L^\infty(0,t;\mathscr{D}(\textrm{A}^\alpha))}^2\geq R\right\},  \\
    +\infty,  \quad \textrm{if the above set $\{\cdot\cdot\cdot\}$ is empty.}
  \end{array}
\right.
\end{equation*}
Due to the right-continuity of $(n_\epsilon,c_\epsilon,u_\epsilon)$, $(\tau_R)_{R>0}$ is a  sequence of stopping times \cite{Revuz1999}, and since $\| c_{\epsilon 0}\|_{W^{1,q}}^2+
 \| n_{\epsilon 0}\|_{L^\infty}^2+ \|u_{\epsilon 0}\|_{\mathscr{D}(\textrm{A}^\alpha)}^2$ is bounded from above by some constant independent of $R$,  $\tau_R>0$ $\mathbb{P}$-a.s. for all $R$ large enough. According to the definition of cut-off operator, we see that the solution $(n_\epsilon,c_\epsilon,u_\epsilon)$ constructed in Lemma \ref{lem2} restricted on $[0,\tau_R]$ is indeed a local mild solution to the regularized system \eqref{SCNS-1}. Moreover,  it follows from the uniqueness of solutions that $\tau_R\leq  \tau_{R+1}$ $\mathbb{P}$-a.s., and for $t\in [0,\tau_R]$, there holds $
(n_{R+1,\epsilon},c_{R+1,\epsilon},u_{R+1,\epsilon})= (n_{R,\epsilon},c_{R,\epsilon},u_{R,\epsilon}).$

Define a stopping time
$
\widetilde{\tau}_{\epsilon}(\omega)=\lim_{R\rightarrow\infty} \tau_R(\omega),
$
and a triplet
$
(n_{\epsilon},c_{\epsilon},u_{\epsilon})=(n_{R,\epsilon},c_{R,\epsilon},u_{R,\epsilon})$ on $[0,\tau_R]$.
One can conclude from the definition of $(\tau_R)_{R>0}$ that $(n_{\epsilon},c_{\epsilon},u_{\epsilon},\widetilde{\tau}_{\epsilon})$ is actually a unique maximal local mild solution to \eqref{SCNS-1}.  The proof of Lemma \ref{lem1} is completed.
\end{proof}

\section{Global-in-time approximate solutions} \label{sec4}

In this section, we prove that the local solution constructed in Section \ref{sec3} is actually a global one. The proof is based on a new entropy-energy estimate, which can be regarded as an extension of the deterministic counterpart but without convex condition on the domain. Let us begin with an observation on spatio-temporal regularity of solutions.

\subsection{Variational formulation}
For $q>3$ and $\alpha\in(\frac{3}{4},1)$, we infer from Definition \ref{def-2} that
\begin{equation} \label{(4.1)}
\begin{split}
(c_\epsilon,u_\epsilon) \in L^2\left([0,\widetilde{\tau}_\epsilon); W^{1,2}(\mathcal {O})\times \left(W^{1,2}_{0,\sigma}(\mathcal {O})\right)^3\right),\quad \mathbb{P}\textrm{-a.s.}
 \end{split}
\end{equation}
Since  $n_\epsilon \in L^\infty \left([0,\widetilde{\tau}_\epsilon);  \mathcal {C}^0(\mathcal {O}) \right)$, \eqref{(4.1)} implies that $u_\epsilon\cdot \nabla n_\epsilon=\div(u_\epsilon n_\epsilon)\in L^2 \left([0,\widetilde{\tau}_\epsilon); L^{2} (\mathcal {O}) \right)$ and $\div\left(n_\epsilon \textbf{h}_\epsilon' (n_\epsilon)\chi(c_\epsilon)\nabla c_\epsilon\right)\in L^2 \left([0,\widetilde{\tau}_\epsilon);  W^{-1,2} (\mathcal {O}) \right)$. By making use of the $n_\epsilon$-equation in \eqref{SCNS-1} and the maximal $L^p-L^q$ regularity for parabolic equations (cf. \cite{giga1991abstract}), we obtain
$n_\epsilon \in L^2  \left([0,\widetilde{\tau}_\epsilon); W^{1,2}(\mathcal {O})\right)$, $\mathbb{P}$-a.s.

Setting
$$
\textbf{V}=W^{1,2}(\mathcal {O})\times W^{1,2}(\mathcal {O})\times \left(W^{1,2}_{0,\sigma}(\mathcal {O})\right)^3, \quad \textbf{H}=L^{2}(\mathcal {O})\times L^{2}(\mathcal {O})\times \left(L^{2}_\sigma(\mathcal {O})\right)^3.
$$
Then, $\textbf{V}\subset\textbf{H}\equiv \textbf{H}^*\subset\textbf{V}^*$ formulates a Gelfand inclusion, where $\textbf{H}$ is identified with its dual $\textbf{H}^*$ and $\textbf{V}^*$ is the dual space  of $\textbf{V}$. As  a consequence, the local mild solution $(n_\epsilon,c_\epsilon,u_\epsilon)$ constructed in Lemma \ref{lem1} is actually a \emph{variational solution}, and the variational formulation of \eqref{SCNS-1} in $\textbf{V}^*$ is given by
\begin{subequations}
\begin{align}
& \langle n_\epsilon (t), \phi\rangle + \int_0^t \langle u_\epsilon\cdot \nabla n_\epsilon , \phi\rangle \mathrm{d}s =  \int_0^t\langle  \Delta n_\epsilon , \phi\rangle  \mathrm{d}s -  \int_0^t \langle\div\left(n_\epsilon \textbf{h}_\epsilon' (n_\epsilon)\chi(c_\epsilon)\nabla c_\epsilon\right), \phi\rangle \mathrm{d}s,\label{Va}\\
& \langle c_\epsilon(t), \varphi\rangle +  \int_0^t\langle u\cdot \nabla c _\epsilon, \varphi\rangle  \mathrm{d}s = \int_0^t\langle  \Delta c_\epsilon, \varphi\rangle \mathrm{d}s- \int_0^t \langle\textbf{h}_\epsilon(n_\epsilon) f(c_\epsilon), \varphi\rangle \mathrm{d}s,\label{Vb}\\
& \langle u_\epsilon(t), \psi\rangle +  \int_0^t \langle\mathcal {P} (\textbf{L}_\epsilon u_\epsilon\cdot \nabla) u_\epsilon , \psi\rangle   \mathrm{d}s \label{Vc}=- \int_0^t\langle \textrm{A} u_\epsilon , \psi\rangle \mathrm{d}s \\
&\quad+ \int_0^t\langle\mathcal {P}(n_\epsilon\nabla \Phi)+\mathcal {P} h(s,u_\epsilon), \psi\rangle  \mathrm{d}s\notag+  \int_0^t \langle\mathcal {P}g(s,u_\epsilon) \mathrm{d}W_s, \psi\rangle  \\
&\quad + \int_0^t\int_{Z_0} \langle\mathcal {P}K(u_\epsilon(s-),z)\widetilde{\pi}(\mathrm{d}s,\mathrm{d}z),\psi\rangle \notag +  \int_0^t\int_{Z\backslash Z_0} \langle\mathcal {P}G(u_\epsilon(s-),z) \pi (\mathrm{d}s,\mathrm{d}z),\psi\rangle,\notag
\end{align}
\end{subequations}
for any $(\phi,\varphi,\psi) \in \textbf{V}$, and any $t\in [0,T]\subset [0,\widetilde{\tau}_\epsilon)$,  $\mathbb{P}$-a.s.

\begin{remark}
In view of the assumptions on $g$, $K$ and $G$ (see $(\textbf{A}_3)$-$(\textbf{A}_4)$),  the solution $(n_\epsilon,c_\epsilon,u_\epsilon)$ can be regarded as a $\textbf{V}^*$-valued c\`{a}dl\`{a}g semimartingale with the stochastic integrals being local square integrable martingale with values in $\textbf{H}$. As a result, a generalized It\^{o}  formula in Banach spaces provided by Gy\"{o}ngy-Krylov (cf.  \cite[Theorem 1]{gyongy1980stochastic}) can be applied to the system \eqref{SCNS-1}, which is the basic analytical tool for deriving some entropy-energy estimates and uniform bounds in the following argument. Meanwhile, the It\^{o} formula for infinite-dimensional system driven by Wiener process could be found in \cite[Theorem I.3.1]{KR79} and \cite[Theorem 4.2.5]{prevot2007concise}. Concerning the It\^{o} formula for the system driven by L\'{e}vy noises, we refer to the results in \cite[Theorem D.2]{peszat2007stochastic}, \cite[Theorem A.1]{brzezniak20132d} and \cite[Appendix C]{brzezniak2019weak}.
\end{remark}

\subsection{A new entropy-energy inequality}
To begin with, let us recall an identity which was initially derived by Winkler \cite{winkler2012global} for the deterministic counterpart in the case of $\textbf{L}_\epsilon \equiv \textrm{Id}$. Due to the divergence-free condition $\div u_\epsilon =0$ and the fact that the derivation of this identity mainly depends on the $n_\epsilon$-equation and $c_\epsilon$-equation, similar result also holds for the first two random PDEs in \eqref{SCNS-1}.

\begin{lemma} \label{lem4.1}
For any given $\epsilon \in (0,1)$ and any $T>0$, the solution
\begin{equation}\label{(4.3)}
\begin{split}
 \begin{aligned}
&\mathrm{d}\left( \int_\mathcal {O} n_{\epsilon} \ln n_{\epsilon}\mathrm{d}x+\frac{1}{2}  \int_\mathcal {O} |\nabla \Psi (c_{\epsilon} ) |^{2}\mathrm{d} x\right)+ \int_\mathcal {O} \frac{ |\nabla n_{\epsilon} |^{2}}{n_{\epsilon}}\mathrm{d}x\mathrm{d}t+ \int_\mathcal {O} \theta (c_{\epsilon} ) |D^{2} \rho (c_{\epsilon} ) |^{2}\mathrm{d}x\mathrm{d}t \\
&\quad=  \int_\mathcal {O} \emph{\textbf{h}}_{\epsilon}(n_{\epsilon})\left(\frac{f(c_{\epsilon}) \theta^{\prime}(c_{\epsilon})}{2 \theta^{2}(c_{\epsilon})}
-\frac{f^{\prime}(c_{\epsilon})}{\theta(c_{\epsilon})}\right)  |\nabla c_{\epsilon}|^{2} \mathrm{d}x\mathrm{d}t\\
&\quad+ \int_\mathcal {O} \bigg(\frac{1}{\theta c_{\epsilon})}\Delta c_{\epsilon} -\frac{1}{2} \frac{\theta^{\prime} (c_{\epsilon})}{\theta^{2}(c_{\epsilon})}|\nabla c_{\epsilon}|^{2}\bigg)(u_{\epsilon} \cdot \nabla c_{\epsilon})\mathrm{d}x\mathrm{d}t\\
&\quad+\frac{1}{2}  \int_\mathcal {O} \frac{\theta^{\prime \prime}(c_{\epsilon})}{\theta^{2}(c_{\epsilon})}|\nabla c_{\epsilon}|^{4}\mathrm{d}x\mathrm{d}t
+\frac{1}{2}  \int_{\partial\mathcal {O}} \frac{1}{\theta(c_{\epsilon})}  \frac{\partial|\nabla c_{\epsilon}|^{2}}{\partial \nu}\mathrm{d}\Sigma\mathrm{d}t, \quad \mathbb{P}\textrm{-a.s.},
\end{aligned}
\end{split}
\end{equation}
for all $t \in(0, T\wedge\widetilde{\tau} _\epsilon)$, where
$$
\theta(s)= \frac{f(s)}{\chi(s)}, \quad \Psi(s)=  \int_{1}^{s} \frac{d \sigma}{\sqrt{\theta(\sigma)}} \quad \text { and } \quad \rho(s) =  \int_{1}^{s} \frac{d \sigma}{\theta(\sigma)}.
$$
\end{lemma}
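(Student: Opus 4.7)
The starting observation is that, in the regularized system \eqref{SCNS-1}, the equations for $n_\epsilon$ and $c_\epsilon$ carry no stochastic forcing: the only source of randomness entering them is the transport coefficient $u_\epsilon$, which is an $\mathfrak F$-adapted càdlàg input. For each fixed $\omega$ and on the random interval $[0,T\wedge\widetilde\tau_\epsilon)$, the pair $(n_\epsilon,c_\epsilon)(\cdot,\omega)$ is therefore a (pathwise) strong solution of a deterministic parabolic system with bounded, divergence-free drift and smooth reaction terms; the spatio-temporal regularity recorded in \eqref{(4.1)} together with the maximal $L^p$--$L^q$ regularity argument (yielding $n_\epsilon\in L^2_{loc}(W^{1,2})$) is exactly what is needed to legitimize the chain rule and integrations by parts below. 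Consequently, the identity \eqref{(4.3)} can be established pathwise, and the proof reduces to the deterministic computation carried out by Winkler \cite{winkler2012global}, modulated by the regularization $\textbf{h}_\epsilon$.

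The plan is to differentiate the two parts of the functional separately and then add them. First, test the $n_\epsilon$-equation against $\ln n_\epsilon + 1$ and integrate over $\mathcal O$. The diffusive term produces the dissipation $-\int_{\mathcal O}|\nabla n_\epsilon|^2/n_\epsilon\,\mathrm dx$ after integration by parts (the Neumann boundary contribution vanishes). The convective term $u_\epsilon\cdot\nabla n_\epsilon$ integrates to zero because $u_\epsilon$ is divergence-free and $u_\epsilon=0$ on $\partial\mathcal O$, so that $\int(\ln n_\epsilon+1)u_\epsilon\cdot\nabla n_\epsilon\,\mathrm dx=\int u_\epsilon\cdot\nabla(n_\epsilon\ln n_\epsilon)\,\mathrm dx=0$. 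The chemotactic term, integrated by parts, supplies a cross term $\int \textbf{h}'_\epsilon(n_\epsilon)\chi(c_\epsilon)\nabla n_\epsilon\cdot\nabla c_\epsilon\,\mathrm dx$ that will cancel against a symmetric contribution arising from the $c_\epsilon$-computation.

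Next, for the quantity $\tfrac12\int|\nabla\Psi(c_\epsilon)|^2\,\mathrm dx=\tfrac12\int|\nabla c_\epsilon|^2/\theta(c_\epsilon)\,\mathrm dx$, differentiate in time, exchange time and space derivatives, and substitute the $c_\epsilon$-equation $\partial_t c_\epsilon=\Delta c_\epsilon-u_\epsilon\cdot\nabla c_\epsilon-\textbf{h}_\epsilon(n_\epsilon)f(c_\epsilon)$. Integration by parts against the Laplacian produces three contributions: the dissipation $\int\theta(c_\epsilon)|D^2\rho(c_\epsilon)|^2\,\mathrm dx$, the quartic $|\nabla c_\epsilon|^4$ term weighted by $\theta''/\theta^2$, and, crucially, the boundary remainder $\tfrac12\int_{\partial\mathcal O}\theta(c_\epsilon)^{-1}\partial_\nu|\nabla c_\epsilon|^2\,\mathrm d\Sigma$, which cannot be discarded since $\mathcal O$ is not assumed convex. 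The reaction $\textbf{h}_\epsilon(n_\epsilon)f(c_\epsilon)$ yields precisely the first term on the right-hand side of \eqref{(4.3)} after rearranging with the identity $(1/\theta)'=-\theta'/\theta^2$ and computing $\Delta\rho(c_\epsilon)$; the transport $u_\epsilon\cdot\nabla c_\epsilon$ gives the mixed integral on the second line of \eqref{(4.3)}. The chemotactic cross term from the $n_\epsilon$-entropy then cancels exactly its counterpart here, owing to the identity $\chi(c)/\theta(c)=1/f(c)\cdot\chi(c)^2/\cdots$ combined with the definitions of $\Psi$ and $\rho$.

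The main obstacle is the pointwise algebraic identity that converts the raw quantity $\int (\Delta c_\epsilon)^2/\theta(c_\epsilon)\,\mathrm dx + \text{lower-order terms}$ into the canonical dissipation $\int\theta(c_\epsilon)|D^2\rho(c_\epsilon)|^2\,\mathrm dx$ plus the boundary integral and the $\theta''/\theta^2|\nabla c|^4$ correction; this follows Winkler's pointwise manipulation \cite[Lemma 3.2]{winkler2012global}, applied verbatim since no stochastic integral enters the $c_\epsilon$-equation. Once these manipulations are assembled and the chemotactic cross terms cancelled, summing the two computations yields exactly \eqref{(4.3)}, and the identity holds $\mathbb P$-a.s.\ for all $t\in(0,T\wedge\widetilde\tau_\epsilon)$.
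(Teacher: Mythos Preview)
Your proposal is correct and follows essentially the same approach as the paper: work pathwise (since the $n_\epsilon$- and $c_\epsilon$-equations carry no stochastic integrals), apply the chain rule to the two parts of the functional, exploit $\div u_\epsilon=0$ to kill the transport contributions, and reduce everything to Winkler's deterministic computation \cite[Lemma~3.2]{winkler2012global}. The paper's own proof says precisely this in two sentences and omits the details; your outline supplies them, including the key cancellation of the chemotactic cross term (which indeed works because $\nabla\textbf{h}_\epsilon(n_\epsilon)=\textbf{h}'_\epsilon(n_\epsilon)\nabla n_\epsilon$ and $f/\theta=\chi$).
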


\begin{Proof}
For almost all $\omega\in \Omega$, thanks to the fact of $\div u_\epsilon=0$, one can apply the chain rule to $\mathrm{d}(n_{\epsilon} \ln n_{\epsilon})$ and $\mathrm{d}\|\nabla \Psi (c_{\epsilon} ) \|_{L^2}^2$ associated with the $n_\epsilon$- and $c_\epsilon$-equation, respectively. Then the identity \eqref{(4.3)} can be obtained by performing a straightforward computation similar to \cite[Lemma 3.2]{winkler2012global}. We shall omit the details here.
\end{Proof}

\begin{remark}\label{remark4.2}
Employing the assumptions on $f,\chi$, and the fact that $t\mapsto \|c_{\epsilon}(\cdot,t)\|_{L^\infty}$ is nonincreasing. one can find two positive constants $C^-\leq C^+ $ such that
$C^- s \leq \theta (s) \leq C^+ s$  for all $s \in \left[0,\|c_0\|_{L^\infty}\right]$, and $
 \theta' (c_\epsilon) \geq \theta' (\|c_0\|_{L^\infty})>0$  for all  $(x,t)\in \mathcal {O}\times (0,\widetilde{\tau}_\epsilon)$.
\end{remark}


Let us recall the following lemma obtained by Mizoguchi and Souplet \cite{mizoguchi2014nondegeneracy}, which is crucial for us to derive a new stochastic version of the entropy-energy estimate without convex condition on the domain.

\begin{lemma} (\cite[Lemma 4.2 and Appendix A]{mizoguchi2014nondegeneracy})\label{lem-ms}
Let $\mathcal {O} \subset \mathbb{R}^{d}$, $d \geq 1$ be a bounded domain with a $\mathcal {C}^2$ boundary. Let $w \in \mathcal {C}^2(\overline{\mathcal {O}})$ such that $\frac{\partial w}{\partial \nu}=0$ on the boundary $\partial\mathcal {O}$. Then
$$
\frac{\partial|\nabla w|^{2}}{\partial \nu} \leq 2 \kappa|\nabla w|^{2} \quad \text { on } \partial\mathcal {O},
$$
for some constant $\kappa$ depending only on the domain $\mathcal {O}$.
\end{lemma}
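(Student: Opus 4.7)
The assertion is a pointwise bound along the boundary, so my approach would be to fix an arbitrary $x_0 \in \partial \mathcal{O}$, work in a local chart adapted to $\partial\mathcal{O}$ at $x_0$, exploit the Neumann condition $\partial_\nu w = 0$ to re-express the mixed second derivatives of $w$ in terms of tangential ones, and then compute $\partial_\nu |\nabla w|^2(x_0)$ directly. Uniformity of the constant $\kappa$ in $x_0$ will follow at the end from compactness of $\partial \mathcal{O}$ together with the $C^2$ hypothesis.

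First, I would rotate and translate so that $x_0 = 0$ and $\partial \mathcal{O}$ is locally the graph $x_d = \varphi(x')$ with $\varphi \in C^2$, $\varphi(0) = 0$ and $\nabla\varphi(0) = 0$, chosen so that the unit normal $\nu$ at $x_0$ coincides with $e_d$. In these coordinates the Neumann identity on $\partial\mathcal{O}$ reads, up to a smooth positive multiplier,
\[
\partial_{x_d} w(x',\varphi(x')) \;=\; \sum_{i=1}^{d-1} \partial_{x_i}\varphi(x')\, \partial_{x_i} w(x',\varphi(x')).
\]
Differentiating in a tangential variable $x_j$ ($j \leq d-1$), evaluating at $x'=0$, and invoking $\nabla\varphi(0)=0$ produces the key identity
\[
\partial_{x_j x_d} w(0) \;=\; \sum_{i=1}^{d-1}\partial_{x_i x_j}\varphi(0)\,\partial_{x_i} w(0), \qquad j=1,\dots,d-1,
\]
where $(\partial_{x_i x_j}\varphi(0))_{i,j<d}$ is, up to sign, the second fundamental form of $\partial\mathcal{O}$ at $x_0$.

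Next I would expand
\[
\partial_\nu|\nabla w|^2(x_0) \;=\; \partial_{x_d}|\nabla w|^2(0) \;=\; 2\sum_{i=1}^{d}\partial_{x_i}w(0)\,\partial_{x_i x_d}w(0).
\]
The $i=d$ term vanishes thanks to $\partial_{x_d}w(0)=\partial_\nu w(x_0)=0$, and substituting the relation above into the remaining $i<d$ terms collapses everything to a purely tangential quadratic form,
\[
\partial_\nu|\nabla w|^2(x_0) \;=\; 2\sum_{i,j=1}^{d-1}\partial_{x_i x_j}\varphi(0)\,\partial_{x_i} w(0)\,\partial_{x_j} w(0).
\]
Bounding this quadratic form by $\|D^2\varphi(0)\|\cdot\sum_{i<d}(\partial_{x_i}w)^2\le\|D^2\varphi(0)\|\,|\nabla w|^2$ yields the claimed inequality at $x_0$, with a local constant controlled by the principal curvatures of $\partial\mathcal{O}$ at $x_0$.

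The main obstacle is not in any individual calculation but in promoting the pointwise local constant to a single $\kappa$ valid along the entire boundary. This requires choosing, via compactness of $\partial\mathcal{O}$, a finite atlas of $C^2$ boundary charts in which the defining functions $\varphi$ have uniformly bounded Hessians; the resulting $\kappa$ is then majorized by the supremum of the principal curvatures of $\partial\mathcal{O}$. Everything else reduces to tangentially differentiating the Neumann identity and to bookkeeping in the chart, so the heart of the proof really is the elementary coordinate computation described above.
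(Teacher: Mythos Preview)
The paper does not supply its own proof of this lemma; it is quoted verbatim from Mizoguchi--Souplet and used as a black box. Your argument is correct and is essentially the standard proof from that reference: flatten the boundary locally, differentiate the Neumann identity tangentially to express the mixed derivatives $\partial_{x_j x_d}w(0)$ through the second fundamental form acting on the tangential gradient, and then read off the quadratic-form bound, with uniformity in $x_0$ coming from compactness of $\partial\mathcal{O}$ and the $C^2$ regularity.
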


\begin{lemma}\label{concervation}
Any solution $(n_{\epsilon},c_{\epsilon},u_{\epsilon},\widetilde{\tau}_\epsilon)$ to the initial-boundary value problem \eqref{SCNS-1} in the sense of Definition \ref{def-2} satisfies that, for all $T>0$
\begin{equation}\label{(4.4)}
\begin{split}
n_{\epsilon}(x,t)\geq 0,~~ c_{\epsilon}(x,t)\geq 0,~~\forall t\in(0,T\wedge\widetilde{\tau}_\epsilon),~~x\in \mathcal {O},~~\mathbb{P}\textrm{-a.s.,}
\end{split}
\end{equation}
\begin{equation}\label{(4.5)}
\begin{split}
\| n_\epsilon(\cdot,t)\|_{L^1}\equiv\| n_0\|_{L^1},~~ \|c_\epsilon(\cdot,t)\|_{L^\infty}\leq\|c_0\|_{L^\infty},~~\forall t\in(0,T\wedge\widetilde{\tau}_\epsilon),~~\mathbb{P}\textrm{-a.s.}
\end{split}
\end{equation}
\end{lemma}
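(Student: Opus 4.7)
The key observation is that the $n_\epsilon$- and $c_\epsilon$-equations in \eqref{SCNS-1} contain no stochastic integrals, so for $\mathbb{P}$-almost every $\omega$ they reduce to deterministic parabolic problems with random but sufficiently regular coefficients $u_\epsilon(\omega)\in L^2(0,T;(W^{1,2}_{0,\sigma})^3)$ and, in the $n_\epsilon$-equation, $c_\epsilon(\omega)\in L^\infty(0,T;W^{1,q})$ provided by Lemma~\ref{lem1}. Accordingly, my plan is to argue pathwise by testing each equation against a suitable truncation of the unknown and invoking a Gronwall-type closure, which will deliver the maximum-principle bounds \eqref{(4.4)}--\eqref{(4.5)}.

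For the non-negativity of $c_\epsilon$, I would extend $f$ by zero on $(-\infty,0)$ (consistent with $f\in C^2$ and $f(0)=0$) and test the $c_\epsilon$-equation against $(c_\epsilon)_-:=\min\{c_\epsilon,0\}$. The convective term vanishes thanks to $\div u_\epsilon=0$ and $\partial_\nu c_\epsilon|_{\partial\mathcal{O}}=0$, the diffusive term supplies $-\|\nabla (c_\epsilon)_-\|_{L^2}^2$, while the source is identically zero on $\{c_\epsilon<0\}$; one obtains $\tfrac{d}{dt}\tfrac{1}{2}\|(c_\epsilon)_-\|_{L^2}^2+\|\nabla (c_\epsilon)_-\|_{L^2}^2\leq 0$, and since $(c_{\epsilon 0})_-\equiv 0$ by $(\textbf{A}_1)$, Gronwall gives $c_\epsilon\geq 0$. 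The analogous argument for $n_\epsilon$ requires handling the chemotactic contribution $\int_\mathcal{O} F_\epsilon(n_\epsilon)\chi(c_\epsilon)\nabla c_\epsilon\cdot\nabla (n_\epsilon)_-\,\mathrm{d}x$ with $F_\epsilon(s):=s\textbf{h}_\epsilon'(s)$; on $\{n_\epsilon<0\}$ one has $|F_\epsilon(n_\epsilon)|\leq C|(n_\epsilon)_-|$ because $F_\epsilon$ is locally Lipschitz with $F_\epsilon(0)=0$, so a Young inequality combined with the bound $\|\nabla c_\epsilon\|_{L^q}\leq C$ and the embedding $W^{1,q}(\mathcal{O})\subset L^\infty(\mathcal{O})$ absorbs the worst part into $\tfrac{1}{2}\|\nabla (n_\epsilon)_-\|_{L^2}^2$ at the price of a $C\|(n_\epsilon)_-\|_{L^2}^2$ forcing, after which a second Gronwall closes the case.

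Once $n_\epsilon,c_\epsilon\geq 0$ is available, the remaining bounds are routine. Integrating the $n_\epsilon$-equation over $\mathcal{O}$ and exploiting $\partial_\nu n_\epsilon|_{\partial\mathcal{O}}=\partial_\nu c_\epsilon|_{\partial\mathcal{O}}=u_\epsilon|_{\partial\mathcal{O}}=0$ together with $\div u_\epsilon=0$, every term on the right-hand side vanishes by the divergence theorem, hence $\tfrac{d}{dt}\int_\mathcal{O} n_\epsilon\,\mathrm{d}x=0$ and $\|n_\epsilon\|_{L^1}\equiv\|n_{\epsilon 0}\|_{L^1}=\|n_0\|_{L^1}$. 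For the pointwise bound on $c_\epsilon$, I would set $M:=\|c_0\|_{L^\infty}\geq\|c_{\epsilon 0}\|_{L^\infty}$, test the $c_\epsilon$-equation against $(c_\epsilon-M)_+$, and observe that on $\{c_\epsilon>M\geq 0\}$ the source $-\textbf{h}_\epsilon(n_\epsilon)f(c_\epsilon)$ is non-positive, as $n_\epsilon\geq 0$ and $f\geq 0$ on $[0,\infty)$; the resulting energy identity gives $\tfrac{d}{dt}\|(c_\epsilon-M)_+\|_{L^2}^2\leq 0$ and the conclusion follows from $(c_{\epsilon 0}-M)_+\equiv 0$.

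The main technical obstacle is the rigorous justification of the chain rule for the convex functionals $\tfrac{1}{2}(c_\epsilon)_-^2$, $\tfrac{1}{2}(n_\epsilon)_-^2$, $\tfrac{1}{2}(c_\epsilon-M)_+^2$ at the variational level. Since no It\^o term enters the $n_\epsilon$- and $c_\epsilon$-equations, the pathwise regularity afforded by the variational formulation \eqref{Va}--\eqref{Vb} is ample, and one may mollify $s\mapsto s_\pm^2$ by $C^2$ functions and pass to the limit without recourse to stochastic calculus. A secondary subtlety is that $\textbf{h}_\epsilon'(s)=1/(1+\epsilon s)$ is only smooth for $s>-1/\epsilon$; this is automatic at $t=0$ by $n_{\epsilon 0}\geq 0$ and is preserved on a short time interval thanks to the $\mathcal{C}^0(\overline{\mathcal{O}})$-continuity of $n_\epsilon$ from Definition~\ref{def-2}, so a short bootstrap extends the positivity argument up to $\widetilde{\tau}_\epsilon$.
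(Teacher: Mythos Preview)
Your proposal is correct and carries out in detail what the paper's two-sentence proof merely names: integration of the $n_\epsilon$-equation over $\mathcal{O}$ for mass conservation, and the comparison (maximum) principle for the sign and $L^\infty$ bounds on $n_\epsilon$ and $c_\epsilon$. Your Stampacchia-type testing against $(c_\epsilon)_-$, $(n_\epsilon)_-$, and $(c_\epsilon-M)_+$ is precisely the standard way to realise that comparison principle in the variational framework \eqref{Va}--\eqref{Vb}, so the approaches coincide; the only cosmetic point is that extending $f$ by zero need not preserve $C^2$, but a merely Lipschitz extension already suffices for your Gronwall closure.
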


\begin{Proof}
Integrating the first equation in \eqref{SCNS-1} and using the condition $\div u_\epsilon =0$, we obtain \eqref{(4.4)}. Since $f>0$ on $(0,\infty)$, $f(0)=0$ by the assumption ($\textbf{A}_2$) and $\textbf{h}_\epsilon (s)\geq 0$ for all $s>0$, an application of the comparison  principle to $c_\epsilon$-equation in \eqref{SCNS-1} gives \eqref{(4.5)}.
\end{Proof}

To derive several uniform bounds for approximations, we shall frequently use the following Gagliardo-Nirenberg (GN, for short) inequality in the rest proof of Theorem \ref{thm}.

\begin{lemma} (\cite[Lecture II]{nirenberg1959}) \label{nirenberg}
Let $\mathcal {O}\subset\mathbb {R} ^{n}$ be a measurable, bounded, open and connected domain satisfying the cone condition. Let $1\leq q\leq +\infty$, $1 \leq r \leq+\infty$, $p \geq 1$, $j<m$ be nonnegative integers and $\theta \in[0,1]$ such that $\frac{1}{p}=\frac{j}{n}+\theta(\frac{1}{r}-\frac{m}{n})+\frac{1-\theta}{q}$, $\frac{j}{m} \leq \theta \leq 1$.  Assume that $u \in L^{q}(\mathcal {O})$ such that $D^{m} u \in L^{r}(\mathcal {O})$ and $\sigma$ is arbitrary. Then,
$$
\left\|D^{j} u\right\|_{L^{p}} \leq C\left\|D^{m} u\right\|_{L^{r}}^{\theta}\|u\|_{L^{q}}^{1-\theta}+C\|u\|_{L^{\sigma}},
$$
where the constant $C>0$ is independent of $u$.
\end{lemma}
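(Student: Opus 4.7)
The plan is to follow Nirenberg's classical three-stage argument. First, I would use a scaling argument to extract the exponent relation: if the sought inequality is to hold with a constant $C$ independent of $u$, then it must be scale-invariant under $u_\lambda(x) = u(\lambda x)$. Comparing powers of $\lambda$ on both sides forces $\frac{1}{p} - \frac{j}{n} = \theta(\frac{1}{r} - \frac{m}{n}) + (1-\theta)\frac{1}{q}$, recovering the hypothesized algebraic constraint and providing a consistency check for the allowed range of $\theta$.

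Second, I would establish the inequality globally on $\R^n$ for Schwartz-class $u$ by induction on $m-j$. The base case $m=1$, $j=0$, $r=1$ is the classical Gagliardo inequality $\|u\|_{L^{n/(n-1)}} \leq C \prod_{i=1}^n \|\partial_i u\|_{L^1}^{1/n}$, which follows by representing $u(x) = \int_{-\infty}^{x_i} \partial_i u \, \mathrm{d}y_i$ along each coordinate direction and iterating H\"{o}lder's inequality. Combining this with H\"{o}lder and the standard $L^r \to L^{r^*}$ Sobolev embedding yields the case $j=0$ for arbitrary admissible $(m,r,q)$. For $j \geq 1$ one interpolates between consecutive orders of derivatives via integration by parts (the prototypical step being $\|\nabla u\|_{L^2}^2 = -\langle u, \Delta u \rangle \leq \|u\|_{L^p} \|\Delta u\|_{L^{p'}}$) and then reduces to the $j=0$ case already handled.

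Third, and this is where the cone condition and the corrective term $C\|u\|_{L^\sigma}$ actually matter, I would transfer the $\R^n$-estimate to the bounded domain $\mathcal{O}$. The main obstacle is that a naive extension of $u$ from $\mathcal{O}$ to $\R^n$ is neither smooth across $\partial\mathcal{O}$ nor bounded in the required higher-order Sobolev norms. I would invoke the cone condition to construct a Stein-type bounded linear extension operator $E \colon W^{m,r}(\mathcal{O}) \cap L^q(\mathcal{O}) \to W^{m,r}(\R^n) \cap L^q(\R^n)$, built by iterated reflections along the local cones, producing an extension whose derivatives up to order $m$ are controlled by the full inhomogeneous norm of $u$ on $\mathcal{O}$. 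Applying the $\R^n$-inequality to $Eu$ and restricting to $\mathcal{O}$ gives the desired estimate, but the extension is bounded only on inhomogeneous Sobolev spaces; the additive term $C\|u\|_{L^\sigma}$ absorbs the unavoidable lower-order contributions from $E$, and the freedom to choose $\sigma$ arbitrary reflects the fact that any fixed lower-order norm suffices as a correction. The hardest point in this step is the endpoint analysis when $\theta = j/m$ or $\theta = 1$, where the extension-induced error term must be handled carefully so that the correction does not degrade the scaling of the principal estimate.
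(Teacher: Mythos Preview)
The paper does not prove this lemma at all; it is stated as a classical result with a citation to Nirenberg's original 1959 lecture notes and is used only as a tool in later estimates. There is therefore no proof in the paper to compare your proposal against.

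Your outline is a reasonable sketch of Nirenberg's original argument, and nothing in it is wrong as a strategy. If you were asked to supply a proof, the three-stage plan (scaling to identify the exponents, induction on $\mathbb{R}^n$ starting from the Gagliardo base case, then transfer to a bounded domain via a Stein-type extension under the cone condition) is exactly the standard route. The role of the additive correction $C\|u\|_{L^\sigma}$ is as you describe: it compensates for the fact that the extension operator is bounded only on inhomogeneous Sobolev spaces, so lower-order contributions appear after restricting back to $\mathcal{O}$. But for the purposes of this paper, the lemma is simply quoted, and no proof is expected or given.
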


The Lyapunov functional inequality stated in the next result plays an important role in the stochastic compactness argument.

\begin{lemma}\label{lem4.5}
Let $T>0$ be arbitrary. Assume that conditions  $(\textbf{A}_1)-(\textbf{A}_4)$  hold, then for any solution $(n_{\epsilon},c_{\epsilon},u_{\epsilon},\widetilde{\tau}_\epsilon)$ to \eqref{SCNS-1} in the sense of Definition \ref{def-2}, we have
\begin{equation}\label{(4.6)}
\begin{split}
& \mathscr{E} [n_{\epsilon},c_{\epsilon},u_{\epsilon}](t) +  \int_0^t \mathscr{I}[n_{\epsilon},c_{\epsilon},u_{\epsilon}](s)\mathrm{d}s \leq \mathscr{E}[n_{\epsilon0},c_{\epsilon0},u_{\epsilon0}]
+\mathscr{L}[n_{\epsilon},c_{\epsilon},u_{\epsilon}](t),
\end{split}
\end{equation}
 where
\begin{align*}
\mathscr{E} [n_{\epsilon},c_{\epsilon},u_{\epsilon}](t)&\overset {\textrm{\emph{def}}} {=}  \int_\mathcal {O} \left(n_{\epsilon} \ln n_{\epsilon} + \frac{1}{2}|\nabla \Psi(c_{\epsilon})|^2+c^\dag |u_{\epsilon}|^2\right)\mathrm{d} x,\\
\mathscr{I} [n_{\epsilon},c_{\epsilon},u_{\epsilon}](t)&\overset {\textrm{\emph{def}}} {=}  \int_\mathcal {O}\left( \frac{1}{2} \frac{  |\nabla n_{\epsilon} |^{2}}{  n_{\epsilon}}  +  d_1\frac{|\nabla c_{\epsilon} |^4}{c_{\epsilon}^3} + d_2\frac{ |\Delta c_{\epsilon}|^2}{c_{\epsilon}}  +   |\nabla u_\epsilon|^{2}\right)\mathrm{d}x,
\end{align*}
and
{\wuhao\begin{equation*}
\begin{split}
\mathscr{L}[n_{\epsilon},c_{\epsilon},u_{\epsilon}](t)&\overset {\textrm{\emph{def}}} {=}  C  t +  C  \int_0^t \| \Psi (c_{\epsilon}) \|_{L^{2}}^2\mathrm{d}s + C  \int_0^t \| u_\epsilon(s)\|_{L^2}^2 \mathrm{d}s+2c^\dag \int_0^t \langle u_\epsilon, \mathcal {P}g(s,u_\epsilon) \mathrm{d}W_s  \rangle \\
&\quad+ c^\dag \int_0^t \int_{Z_0}  \left(\|\mathcal {P}K(u_\epsilon(s-),z)\|_{L^2} ^2+ 2\langle u_\epsilon, \mathcal {P}K(u_\epsilon(s-),z)\rangle \right) \widetilde{\pi}(\mathrm{d} s, \mathrm{d} z)\\
&\quad+c^\dag \int_0^t \int_{Z\backslash Z_0}  \left(\|\mathcal {P}G(u_\epsilon(s-),z)\|_{L^2} ^2+2\langle u_\epsilon, \mathcal {P}G(u_\epsilon(s-),z)\rangle\right) \pi (\mathrm{d} s, \mathrm{d} z),
\end{split}
\end{equation*}}\noindent
for all $t\in[0,T\wedge\widetilde{\tau}_\epsilon)$, with positive constants $d_1,d_2 ,c^\dagger$ depending only on $\|c_0\|_{L^\infty}$ and the functions $f$ and $\chi$.
\end{lemma}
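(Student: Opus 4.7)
The plan is to combine the deterministic entropy identity of Lemma~\ref{lem4.1} with an It\^{o}-type energy estimate for the fluid component. I would start from \eqref{(4.3)} integrated on the random interval $[0,t\wedge\widetilde{\tau}_\epsilon)$, which is legitimate because the first two lines of \eqref{SCNS-1} carry no noise and the regularity of $(n_\epsilon,c_\epsilon)$ inherited from \eqref{(4.1)} allows $\omega$-wise differentiation. To produce the $c^\dag\|u_\epsilon\|_{L^2}^2$ summand of $\mathscr{E}$, I apply the It\^{o} formula (in the form recalled after \eqref{Vc}) to $c^\dag\|u_\epsilon(t)\|_{L^2}^2$ using the variational formulation \eqref{Vc}. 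Pairing $u_\epsilon$ against the drift produces $-2c^\dag\|\nabla u_\epsilon\|_{L^2}^2$ (since $\div(\mathbf{L}_\epsilon u_\epsilon)=0$ annihilates $\langle u_\epsilon,(\mathbf{L}_\epsilon u_\epsilon\cdot\nabla)u_\epsilon\rangle$) plus a forcing $2c^\dag\langle u_\epsilon,n_\epsilon\nabla\Phi\rangle$ and the drift contributions from $h$, while the quadratic variations of $g$, $K$, $G$ generate It\^{o} corrections. Adding this to \eqref{(4.3)} produces an identity whose left-hand side is $\mathscr{E}(t)+\int_0^t\mathscr{I}\,ds$ and whose right-hand side decomposes into (i) the deterministic coupling terms from \eqref{(4.3)}, (ii) the boundary integral $\tfrac12\int_{\partial\mathcal{O}}\theta(c_\epsilon)^{-1}\partial_\nu|\nabla c_\epsilon|^2$, (iii) the It\^{o} corrections from the noise, and (iv) the pure martingales that will ultimately be collected into $\mathscr{L}$.

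For the deterministic couplings in (i), Remark~\ref{remark4.2} yields $\theta(s)\sim s$ on $[0,\|c_0\|_{L^\infty}]$, and the structural hypotheses in $(\mathbf{A}_2)$ force $\frac{f\theta'}{2\theta^2}-\frac{f'}{\theta}\le 0$ (so the first integral on the right of \eqref{(4.3)} is already nonpositive) together with the pointwise bound $\theta(c_\epsilon)|D^2\rho(c_\epsilon)|^2\ge c\bigl(|\Delta c_\epsilon|^2/c_\epsilon+|\nabla c_\epsilon|^4/c_\epsilon^3\bigr)$ underlying Winkler's computation, from which the $d_1,d_2$ terms in $\mathscr{I}$ emerge. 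The convective cross term $\int(\theta(c_\epsilon)^{-1}\Delta c_\epsilon-\tfrac12\theta'(c_\epsilon)\theta(c_\epsilon)^{-2}|\nabla c_\epsilon|^2)(u_\epsilon\cdot\nabla c_\epsilon)$ is split by Young's inequality into small multiples of the three $c$- and $u$-dissipations plus a remainder controlled by $\|\Psi(c_\epsilon)\|_{L^2}^2+\|u_\epsilon\|_{L^2}^2$; the fluid forcing $\langle u_\epsilon,n_\epsilon\nabla\Phi\rangle$ is handled by $\|n_\epsilon\|_{L^1}=\|n_0\|_{L^1}$ and $\|\nabla\Phi\|_{L^\infty}<\infty$ combined with Young's inequality. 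The constant $c^\dag$ is fixed large enough that $2c^\dag\|\nabla u_\epsilon\|_{L^2}^2$ absorbs every $\|\nabla u_\epsilon\|_{L^2}^2$ contribution arising on the right-hand side, leaving a clean $|\nabla u_\epsilon|^2$ on the dissipation side of \eqref{(4.6)}.

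For the boundary contribution (ii), I would apply Lemma~\ref{lem-ms} to bound $\partial_\nu|\nabla c_\epsilon|^2\le 2\kappa|\nabla c_\epsilon|^2$ on $\partial\mathcal{O}$, reducing the task to estimating $\int_{\partial\mathcal{O}}|\nabla c_\epsilon|^2/c_\epsilon\,d\Sigma$ (using $\theta(c_\epsilon)^{-1}\le Cc_\epsilon^{-1}$). Combining the trace embedding $W^{s,2}(\mathcal{O})\hookrightarrow L^2(\partial\mathcal{O})$ for some $s\in(1/2,1)$ with a Gagliardo--Nirenberg interpolation of the type in Lemma~\ref{nirenberg}, namely $\|\nabla c_\epsilon\|_{W^{s,2}}^2\le C\|\Delta c_\epsilon\|_{L^2}^{2s}\|\nabla c_\epsilon\|_{L^2}^{2(1-s)}$, together with Young's inequality produces the key boundary bound: the integral is dominated by $\eta\int|\Delta c_\epsilon|^2/c_\epsilon+C_\eta\|\Psi(c_\epsilon)\|_{L^2}^2$, and choosing $\eta$ small absorbs it into the $d_2$-dissipation. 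For (iii), assumption $(\mathbf{A}_3)$ yields $\|\mathcal{P}g(s,u_\epsilon)\|_{\mathcal{L}_2(U;L^2)}^2\le C(1+\|u_\epsilon\|_{L^2}^2)$, and by $(\mathbf{A}_4)$ with $p=2$ the jump corrections satisfy $\int_{Z_0}\|\mathcal{P}K\|_{L^2}^2\mu(dz)+\int_{Z\setminus Z_0}\|\mathcal{P}G\|_{L^2}^2\mu(dz)\le C(1+\|u_\epsilon\|_{L^2}^2)$; these bounds produce exactly the $Ct+C\int_0^t\|u_\epsilon\|_{L^2}^2\,ds$ contributions to $\mathscr{L}$. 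The remaining stochastic integrals, rewritten via $\widetilde\pi=\pi-d\mu\,dt$ where necessary to recover the compensated form, constitute the explicit $dW$- and $\widetilde\pi$-martingales that appear in the definition of $\mathscr{L}$.

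The principal obstacle is the boundary term in (ii). In \cite{winkler2012global} and \cite{zhai20202d} it is nonpositive by convexity of $\mathcal{O}$ and is simply discarded, but here it must be realized as an arbitrarily small fraction of $\int|\Delta c_\epsilon|^2/c_\epsilon$ (and nothing stronger) so that $d_2>0$ survives in $\mathscr{I}$. The trade-off between the trace/interpolation exponents and the lower bound on $\int\theta(c_\epsilon)|D^2\rho(c_\epsilon)|^2$ from Lemma~\ref{lem4.1} is precisely what dictates the form of $\mathscr{I}$ and forces the Lyapunov reservoir $\|\Psi(c_\epsilon)\|_{L^2}^2$ to appear in $\mathscr{L}$; a secondary subtlety is the identification of the It\^{o}/compensated parts of the jump integrals so that the martingale pieces in $\mathscr{L}$ match the ones written in the statement.
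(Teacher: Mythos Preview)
Your overall architecture matches the paper's: combine the integrated identity \eqref{(4.3)} with an It\^{o} expansion of $c^\dag\|u_\epsilon\|_{L^2}^2$, absorb the deterministic cross terms into the $D^2\rho$-dissipation, and treat the boundary integral via Lemma~\ref{lem-ms} plus trace/interpolation. Two of your steps, however, do not go through as written.

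The genuine gap is your treatment of the buoyancy term $2c^\dag\langle u_\epsilon,n_\epsilon\nabla\Phi\rangle$. In three dimensions the $L^1$ conservation $\|n_\epsilon\|_{L^1}=\|n_0\|_{L^1}$ together with $\nabla\Phi\in L^\infty$ and Young's inequality is \emph{not} enough: any duality splitting $\|u_\epsilon\|_{L^p}\|n_\epsilon\|_{L^{p'}}$ with $p'=1$ forces $p=\infty$, which $\|\nabla u_\epsilon\|_{L^2}$ does not control. The paper instead uses the $L^6$--$L^{6/5}$ pairing and the Gagliardo--Nirenberg inequality $\|n_\epsilon\|_{L^{6/5}}\le C\bigl(\|\nabla\sqrt{n_\epsilon}\|_{L^2}^{1/2}+1\bigr)$ (Lemma~\ref{nirenberg}), so that after Young one obtains a small multiple of $\|\nabla u_\epsilon\|_{L^2}^2$ plus $2\int\|\nabla\sqrt{n_\epsilon}\|_{L^2}^2\,ds$; see \eqref{(4.18)}. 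This costs exactly half of the Fisher information, which is why $\mathscr{I}$ carries $\tfrac12\frac{|\nabla n_\epsilon|^2}{n_\epsilon}$ rather than the full term. Your proposal misses this mechanism entirely.

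A second, smaller issue concerns the boundary term. Applying the trace/interpolation to bare $\nabla c_\epsilon$ loses the $c_\epsilon^{-1}$ weight and will not yield a bound by $\eta\int|\Delta c_\epsilon|^2/c_\epsilon$. The paper works with $\nabla\Psi(c_\epsilon)=\nabla c_\epsilon/\sqrt{\theta(c_\epsilon)}$ so the weight is built in: after the trace bound \eqref{(4.11)} one interpolates $\|\nabla\Psi(c_\epsilon)\|_{W^{r+1/2,2}}^2\le \eta\|\Delta\Psi(c_\epsilon)\|_{L^2}^2+C\|\Psi(c_\epsilon)\|_{L^2}^2$ and then expands $\Delta\Psi(c_\epsilon)=\sqrt{\theta(c_\epsilon)}\,\Delta\rho(c_\epsilon)+\tfrac12\theta'(c_\epsilon)\theta(c_\epsilon)^{-3/2}|\nabla c_\epsilon|^2$ to recover terms absorbable into $\int\theta(c_\epsilon)|D^2\rho(c_\epsilon)|^2$; this is the content of \eqref{(4.12)}--\eqref{(4.13)}. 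Finally, the lower bound $\int\theta(c_\epsilon)|D^2\rho(c_\epsilon)|^2\ge C\bigl(\int|\nabla c_\epsilon|^4/c_\epsilon^3+\int|\Delta c_\epsilon|^2/c_\epsilon\bigr)$ that you invoke is not pointwise but an integrated consequence of \eqref{(4.9)}.
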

\begin{Proof}
First, by virtue of the fact of  $\textbf{h}(u_\epsilon)\geq 0$ and the assumptions on $f,\chi$, we  infer that
\begin{equation*}
\begin{split}
\theta''(c_{\epsilon}) \leq 0\quad \textrm{and} \quad \frac{f\left(c_{\epsilon}\right) \theta^{\prime}\left(c_{\epsilon}\right)}{2 \theta^{2}\left(c_{\epsilon}\right)}
-\frac{f^{\prime}\left(c_{\epsilon}\right)}{\theta\left(c_{\epsilon}\right)} = - \frac{(\chi f)'(c_{\epsilon})}{2f(c_{\epsilon}) }\leq 0,
\end{split}
\end{equation*}
which imply that the first and third terms on the R.H.S. of \eqref{(4.3)} can be neglected in estimating. In order to control the second term, we get by integrating by parts that
\begin{equation}\label{(4.7)}
\begin{split}
 \begin{aligned}
 & -\frac{1}{2} \int_\mathcal {O}  \frac{\theta^{\prime} (c_{\epsilon})}{\theta^{2}(c_{\epsilon})}(u_{\epsilon} \cdot \nabla c_{\epsilon})|\nabla c_{\epsilon}|^{2} \mathrm{d}x\\
 &\quad=\frac{1}{2} \int_\mathcal {O} \div\left(   \frac{u_{\epsilon}}{\theta (c_{\epsilon})}\right) |\nabla c_{\epsilon}|^{2} \mathrm{d}x = -\sum_{i,j}  \int_\mathcal {O}  \frac{1}{\theta (c_{\epsilon})} u_{\epsilon}^i \partial_j c_{\epsilon} \partial_i \partial_j c_{\epsilon} \mathrm{d}x,
\end{aligned}
\end{split}
\end{equation}
and
\begin{equation}\label{(4.8)}
\begin{split}
 &  \int_\mathcal {O}  \frac{1}{\theta c_{\epsilon})}\Delta c_{\epsilon}  (u_{\epsilon} \cdot \nabla c_{\epsilon})\mathrm{d}x =  \sum_{i,j} \int_\mathcal {O} \frac{ \theta ' (c_{\epsilon})  }{\theta^2 (c_{\epsilon})} u_{\epsilon}^j  \partial_j c_{\epsilon} |\partial_i  c_{\epsilon} |^2 \mathrm{d}x\\
 &
 \quad-\sum_{i,j} \int_\mathcal {O}  \frac{1}{\theta (c_{\epsilon})}\partial_i  c_{\epsilon} \partial_i u_{\epsilon}^j  \partial_j c_{\epsilon}  \mathrm{d}x
 -\sum_{i,j} \int_\mathcal {O}  \frac{1}{\theta (c_{\epsilon})}\partial_i  c_{\epsilon}  u_{\epsilon}^j  \partial_i\partial_j c_{\epsilon}  \mathrm{d}x.
\end{split}
\end{equation}
Moreover, by applying the functional inequalities obtained in \cite[Lemma 3.2]{winkler2012global} and \cite[Lemma 3.3]{winkler2016global}, and then applying it to the function $\theta (c_{\epsilon})$, we  gain
\begin{equation}\label{(4.9)}
\begin{split}
 \int_\mathcal {O} \frac{\theta^{\prime}(c_{\epsilon})}{\theta^{3}(c_{\epsilon})}|\nabla c_{\epsilon}|^{4} \leq(2+\sqrt{3})^{2}  \int_\mathcal {O} \frac{\theta(c_{\epsilon})}{\theta^{\prime}(c_{\epsilon})}\left|D^{2} \rho(c_{\epsilon})\right|^{2}.
\end{split}
\end{equation}
It then follows from the identities \eqref{(4.7)}-\eqref{(4.9)} and the Young inequality that, for any $\delta >0$,
\begin{equation}\label{(4.10)}
\begin{split}
  &\left| \int_\mathcal {O} \left(\frac{1}{\theta (c_{\epsilon})}\Delta c_{\epsilon} -\frac{1}{2} \frac{\theta^{\prime} (c_{\epsilon})}{\theta^{2}(c_{\epsilon})}|\nabla c_{\epsilon}|^{2}\right)(u_{\epsilon} \cdot \nabla c_{\epsilon})\mathrm{d}x \right| =\left|\sum_{i,j}  \int_\mathcal {O}  \frac{1}{\theta (c_{\epsilon})}\partial_i  c_{\epsilon} \partial_i u_{\epsilon} \partial_j c_{\epsilon}  \mathrm{d}x\right| \\
   & \quad\leq \delta  \int_\mathcal {O} \frac{\theta^{\prime}(c_{\epsilon})}{\theta^{3}(c_{\epsilon})}|\nabla c_{\epsilon}|^{4} \mathrm{d}x+C_\delta  \int_\mathcal {O}  \frac{\theta (c_{\epsilon})}{\theta' (c_{\epsilon})}| \nabla u_{\epsilon}|^2 \mathrm{d}x\\
 & \quad\leq \delta (2+\sqrt{3})^{2}  \int_\mathcal {O} \frac{\theta(c_{\epsilon})}{\theta^{\prime}(c_{\epsilon})}\left|D^{2} \rho(c_{\epsilon})\right|^{2}+C   \int_\mathcal {O}  \frac{\theta (c_{\epsilon})}{\theta' (c_{\epsilon})}| \nabla u_{\epsilon}|^2 \mathrm{d}x.
\end{split}
\end{equation}
For the boundary integral term on the R.H.S. of \eqref{(4.3)}, we get from Lemma \ref{lem-ms} and the Sobolev Embedding Theorem that, for any $r \in (0,\frac{1}{2})$,
\begin{equation}\label{(4.11)}
\begin{split}
&\left|\frac{1}{2}  \int_{\partial\mathcal {O}} \frac{1}{\theta(c_{\epsilon})}  \frac{\partial|\nabla c_{\epsilon}|^{2}}{\partial \nu}\mathrm{d}\Sigma \right|\\
 &\quad\leq  \kappa  \int_{\partial\mathcal {O}} \left| \frac{\nabla c_{\epsilon}}{\sqrt{\theta(c_{\epsilon})}}\right|^{2} \mathrm{d}\Sigma\leq C  \left\|\frac{\nabla c_{\epsilon}}{\sqrt{\theta(c_{\epsilon})}}\right\|_{W^{r+\frac{1}{2},2}}^2=C  \|\nabla \Psi (c_{\epsilon})\|_{W^{r+\frac{1}{2},2}}^2 .
\end{split}
\end{equation}
By using an interpolation argument between $W^{r+\frac{3}{2},2}(\mathcal {O})$ and $L^2(\mathcal {O})$, and applying the  estimate $\| w\|_{W^{2,2}}\leq C( \|\Delta w\|_{L^{2}}+  \| w\|_{L^{2}})$, we get for any $\eta>0$
\begin{equation}\label{(4.12)}
\begin{split}
 \|\nabla \Psi (c_{\epsilon})\|_{W^{r+\frac{1}{2},2}}^2 &\leq C \| \Psi (c_{\epsilon})\|_{W^{2,2}}^{r+\frac{3}{2}}\| \Psi (c_{\epsilon})\|_{L^2}^{\frac{1}{2}-r}\\
 &\leq C \left(\|\Delta \Psi(c_{\epsilon})\|_{L^{2}}^2\right)^{\frac{r}{2}+\frac{3}{4}}\left(\| \Psi (c_{\epsilon})\|_{L^2}^2\right)^{\frac{1}{4}-\frac{r}{2}}+  C\| \Psi (c_{\epsilon})\|_{L^{2}}^{2} \\
 &\leq \eta \|\Delta \Psi (c_{\epsilon}) \|_{L^{2}}^2+ C \| \Psi (c_{\epsilon}) \|_{L^{2}}^2.
\end{split}
\end{equation}
A direct calculation for $\Delta \Psi (c_{\epsilon}) $ leads to $\Delta \Psi (c_{\epsilon})=\sqrt{\theta(c_{\epsilon})} \Delta \rho (c_{\epsilon})+\frac{1}{2} \frac{\theta^{\prime}(c_{\epsilon})}{\theta^{3/2}(c_{\epsilon})}|\nabla c_{\epsilon}|^{2}$, which implies that
\begin{equation*}
\begin{split}
 \|\Delta \Psi (c_{\epsilon}) \|_{L^{2}}^2\leq 6 \int_\mathcal {O} \theta(c_{\epsilon})|D^2 \rho (c_{\epsilon})|^2\mathrm{d}x +  \frac{1}{2}  \int_\mathcal {O} \frac{|\theta^{\prime}(c_{\epsilon})|^2}{\theta^{3}(c_{\epsilon})}|\nabla c_{\epsilon}|^{4}\mathrm{d}x,
\end{split}
\end{equation*}
where the first integral on the R.H.S. used the fact of  $|\Delta f|^2\leq 3|D^2f|^2$. This estimate together with \eqref{(4.11)} and \eqref{(4.12)} leads to
\begin{equation}\label{(4.13)}
\begin{split}
 &\left|\frac{1}{2}  \int_{\partial\mathcal {O}} \frac{1}{\theta(c_{\epsilon})}  \frac{\partial|\nabla c_{\epsilon}|^{2}}{\partial \nu}\mathrm{d}\Sigma \right|\\
 &\quad\leq C \| \Psi (c_{\epsilon}) \|_{L^{2}}^2+ \eta C   \int_\mathcal {O} \theta(c_{\epsilon})\left|D^2 \rho (c_{\epsilon})\right|^2\mathrm{d}x + \eta C    \int_\mathcal {O} \frac{\theta(c_{\epsilon})}{\theta^{\prime}(c_{\epsilon})}\left|D^{2} \rho(c_{\epsilon})\right|^{2}.
\end{split}
\end{equation}
Substituting the estimates \eqref{(4.10)} and \eqref{(4.13)} into \eqref{(4.3)}, in view of Remark 4.3, we get
\begin{equation}\label{(4.14)}
\begin{split}
&\mathrm{d} \int_\mathcal {O} \left(n_{\epsilon} \ln n_{\epsilon} +\frac{1}{2} |\nabla \Psi (c_{\epsilon} ) |^{2}\right)\mathrm{d} x + \int_\mathcal {O} \frac{ |\nabla n_{\epsilon} |^{2}}{n_{\epsilon}}\mathrm{d}x\mathrm{d}t\\
&\quad+\left(1-\eta C - \eta C \theta' (\|c_0\|_{L^\infty}) -\delta (2+\sqrt{3})^{2} \theta' (\|c_0\|_{L^\infty}) \right) \int_\mathcal {O} \theta (c_{\epsilon} ) |D^{2} \rho (c_{\epsilon} ) |^{2}\mathrm{d}x\mathrm{d}t \\
&\quad\leq C \| \Psi (c_{\epsilon}(t)) \|_{L^{2}}^2\mathrm{d}t +C \theta' (\|c_0\|_{L^\infty}) \|c_0\|_{L^\infty} \int_\mathcal {O}   | \nabla u_{\epsilon}|^2 \mathrm{d}x\mathrm{d}t.
\end{split}
\end{equation}
Choosing $\delta$, $\eta>0$  small enough such that $
\frac{1}{2}\leq 1-\eta C - \eta C \theta' (\|c_0\|_{L^\infty}) -\delta (2+\sqrt{3})^{2} \theta' (\|c_0\|_{L^\infty})  <1$,
and then integrating both sides of \eqref{(4.14)} over the interval $[0,t]$, we get
\begin{equation}\label{(4.15)}
\begin{split}
&  \int_\mathcal {O} \left(n_{\epsilon} \ln n_{\epsilon} +\frac{1}{2} |\nabla \Psi (c_{\epsilon} ) |^{2}\right)\mathrm{d} x + \int_0^t \int_\mathcal {O} \frac{ |\nabla n_{\epsilon} |^{2}}{n_{\epsilon}}\mathrm{d}x\mathrm{d}s + \frac{1}{2} \int_0^t  \int_\mathcal {O} \theta (c_{\epsilon} ) |D^{2} \rho (c_{\epsilon} ) |^{2}\mathrm{d}x\mathrm{d}s \\
&\quad\leq  \int_\mathcal {O} \left(n_{\epsilon0} \ln n_{\epsilon0} +\frac{1}{2}  |\nabla \Psi (c_{\epsilon0} ) |^{2}\right)\mathrm{d} x+ c^\dag  \int_0^t \| \nabla u_{\epsilon}\|_{L^{2}}^2 \mathrm{d}s +  C  \int_0^t \| \Psi (c_{\epsilon}) \|_{L^{2}}^2\mathrm{d}s,
\end{split}
\end{equation}
for any  $t\in[0,T \wedge \widetilde{\tau}_\epsilon)$, where
$c^\dag \overset{\textrm{def}}{=}  C \theta' (\|c_0\|_{L^\infty}) \|c_0\|_{L^\infty} $.  For the future  argument, one can magnify $C>0$ large enough such that $c^\dag> 1$.

Now let us make use of the It\^{o} formula to the functional $\|u_\epsilon(t)\|_{L^2} ^2$. After integrating by parts and rearranging the terms and multiplying  both sides by $c^\dag$, we infer that
\begin{equation}\label{(4.16)}
\begin{split}
c^\dag\|u_\epsilon(t)\|_{L^2} ^2 &\leq c^\dag\|u_{\epsilon0}\|_{L^2} ^2-2c^\dag \int_0^t \|\nabla u_\epsilon(s)\|_{L^2}^2 \mathrm{d} s+ \left|2c^\dag \int_0^t \langle u_\epsilon,  \mathcal {P}(n_\epsilon\nabla \Phi ) \rangle\mathrm{d}s\right|\\
&+ \left|2c^\dag \int_0^t \langle u_\epsilon,  \mathcal {P} h(s,u_\epsilon) \rangle\mathrm{d}s\right|+c^\dag \int_0^t\|\mathcal {P}g(s,u_\epsilon)\|_{\mathcal {L}_2(U;L^2)} ^2 \mathrm{d}s\\
&+  c^\dag  \int_0^t \int_{Z_0} \|\mathcal {P}K(u_\epsilon(s-),z)\|_{L^2} ^2 \mu(\mathrm{d} z)\mathrm{d}s+ 2c^\dag \int_0^t \langle u_\epsilon, \mathcal {P}g(s,u_\epsilon) \mathrm{d}W_s  \rangle \\
&+ c^\dag \int_0^t \int_{Z_0}  \left(\|\mathcal {P}K(u_\epsilon(s-),z)\|_{L^2} ^2+ 2\langle u_\epsilon, \mathcal {P}K(u_\epsilon(s-),z)\rangle \right) \widetilde{\pi}(\mathrm{d} s, \mathrm{d} z)\\
&+c^\dag \int_0^t \int_{Z\backslash Z_0}  \left(\|\mathcal {P}G(u_\epsilon(s-),z)\|_{L^2} ^2+2\langle u_\epsilon, \mathcal {P}G(u_\epsilon(s-),z)\rangle\right) \pi (\mathrm{d} s, \mathrm{d} z) ,
\end{split}
\end{equation}
for any $t\in[0,T \wedge \widetilde{\tau}_\epsilon)$.  Let us estimate the terms appearing on the R.H.S. of \eqref{(4.16)}. By applying the assumption on $\Phi$, the Sobolev embedding $W^{1,2}(\mathcal {O})\subset L^6(\mathcal {O})$ as well as the GN inequality (cf. Lemma \ref{nirenberg}), one has
\begin{equation*}\label{(4.17)}
\begin{split}
 \| n_\epsilon\| _{L^{\frac{6}{5}}}  \leq C \| \nabla\sqrt{n_\epsilon}\| _{L^{2}}^{\frac{1}{2}}\| \sqrt{n_\epsilon}\| _{L^2} ^{\frac{3}{2}}+C\| \sqrt{n_\epsilon}\| _{L^{2}}^2 \leq C \left (\| \nabla\sqrt{n_\epsilon}\| _{L^{2}}^{\frac{1}{2}}+1\right),
\end{split}
\end{equation*}
which is sufficient to derive
\begin{equation}\label{(4.18)}
\begin{split}
 \bigg|2c^\dag \int_0^t \langle u_\epsilon,  \mathcal {P}(n_\epsilon\nabla \Phi ) \rangle_{L^2}\mathrm{d}s \bigg|&\leq 2c^\dag \int_0^t \| u_\epsilon\|_{L^6}  \| n_\epsilon\| _{L^{\frac{6}{5}}}\|\nabla \Phi \| _{L^\infty} \mathrm{d}s\\
 &\leq C  \int_0^t \|\nabla u_\epsilon\|_{L^2} \left (\| \nabla\sqrt{n_\epsilon}\| _{L^{2}}^{\frac{1}{2}}+1\right) \mathrm{d}s\\
 &\leq (\frac{c^\dag}{2}-\frac{1}{2})  \int_0^t \|\nabla u_\epsilon\|_{L^2}^2 \mathrm{d}s+ 2 \int_0^t \| \nabla\sqrt{n_\epsilon}\| _{L^{2}}^2 \mathrm{d}s+C t,
\end{split}
\end{equation}
for all $t\in[0,T \wedge \widetilde{\tau}_\epsilon)$.  By using the Sobolev embedding $W^{1,2}(\mathcal {O})\subset L^6(\mathcal {O})$, the assumption on $h$ and the Young inequality, we get
\begin{equation}\label{(4.19)}
\begin{split}
 \left|2c^\dag \int_0^t \langle u_\epsilon,  \mathcal {P} h(s,u_\epsilon) \rangle\mathrm{d}s \right| &\leq 2c^\dag \int_0^t \| u_\epsilon\|_{L^6} \|\mathcal {P}h(s,u_\epsilon)\|_{L^{ \frac{6}{5}}}\mathrm{d}s\\
&\leq (\frac{c^\dag}{2}-\frac{1}{2}) \int_0^t \|\nabla u_\epsilon\|_{L^2}^2 \mathrm{d}s+C  \int_0^t \left(\| u_\epsilon\|_{L^2}^2 +1\right)\mathrm{d}s.
\end{split}
\end{equation}
Applying the assumptions on $g$ and $K$, for all $t\in[0,T \wedge \widetilde{\tau}_\epsilon)$, we get
 \begin{equation}\label{xx}
\begin{split}
c^\dag  \int_0^t \int_{Z_0} \|\mathcal {P}K(u_\epsilon ,z)\|_{L^2} ^2 \mu(\mathrm{d} z)\mathrm{d}s +c^\dag \int_0^t\|\mathcal {P}g(s,u_\epsilon)\|_{\mathcal {L}_2(U;L^2)}^2 \mathrm{d}s \leq C  \int_0^t \| u_\epsilon(s)\|_{L^2}^2 \mathrm{d}s + C  t.
\end{split}
\end{equation}
Putting \eqref{(4.15)}-\eqref{xx} together, we infer that
\begin{equation}\label{cc}
\begin{split}
&  \int_\mathcal {O} \left(n_{\epsilon} \ln n_{\epsilon} +\frac{1}{2} |\nabla \Psi (c_{\epsilon} ) |^{2}+c^\dag|u_\epsilon|^2\right)\mathrm{d} x +\frac{1}{2} \int_0^t \int_\mathcal {O} \frac{ |\nabla n_{\epsilon} |^{2}}{n_{\epsilon}}\mathrm{d}x\mathrm{d}s \\
&\quad+ \frac{1}{2} \int_0^t  \int_\mathcal {O} \theta (c_{\epsilon} ) |D^{2} \rho (c_{\epsilon} ) |^{2}\mathrm{d}x\mathrm{d}s +  \int_0^t \|\nabla u_\epsilon(s)\|_{L^2}^2 \mathrm{d} s \\
&\quad\leq \int_\mathcal {O} \left(n_{\epsilon0} \ln n_{\epsilon0} +\frac{1}{2}  |\nabla \Psi (c_{\epsilon0} ) |^{2}+c^\dag|u_{\epsilon0}|^2\right)\mathrm{d} x+  C  t+ C  \int_0^t \| \Psi (c_{\epsilon}) \|_{L^{2}}^2\mathrm{d}s \\
&\quad+ C  \int_0^t \| u_\epsilon(s)\|_{L^2}^2 \mathrm{d}s+2c^\dag \int_0^t \langle u_\epsilon, \mathcal {P}g(s,u_\epsilon) \mathrm{d}W_s  \rangle \\
&\quad+ c^\dag \int_0^t \int_{Z_0}  \left(\|\mathcal {P}K(u_\epsilon(s-),z)\|_{L^2} ^2+ 2\langle u_\epsilon, \mathcal {P}K(u_\epsilon(s-),z)\rangle \right) \widetilde{\pi}(\mathrm{d} s, \mathrm{d} z)\\
&\quad+c^\dag \int_0^t \int_{Z\backslash Z_0}  \left(\|\mathcal {P}G(u_\epsilon(s-),z)\|_{L^2} ^2+2\langle u_\epsilon, \mathcal {P}G(u_\epsilon(s-),z)\rangle\right) \pi (\mathrm{d} s, \mathrm{d} z).
\end{split}
\end{equation}
To obtain the desired inequality, one need to estimate the integral $ \int_\mathcal {O} \theta (c_{\epsilon} ) |D^{2} \rho (c_{\epsilon} ) |^{2}\mathrm{d}x$ from below. Indeed, we get by integrating by parts and using the H\"{o}lder inequality that
\begin{equation*}
\begin{split}
 \int_\mathcal {O} \theta (c_{\epsilon} ) |D^{2} \rho (c_{\epsilon} ) |^{2}\mathrm{d}x &\geq \frac{1}{2} \sum_{i,j} \int_\mathcal {O} \frac{\theta (c_{\epsilon} )  }{c_{\epsilon}^2}|\partial_{i}\partial_{j}c_{\epsilon}|^2\mathrm{d}x-\frac{1}{2} \sum_{i,j} \int_\mathcal {O} \frac{(\theta' (c_{\epsilon} ))^2}{\theta ^3(c_{\epsilon} )}|\partial_{i}c_{\epsilon}\partial_{j}c_{\epsilon}|^2\mathrm{d}x\\
 &  \geq  C'  \int_\mathcal {O} \frac{ |\Delta c_{\epsilon}|^2}{c_{\epsilon}}\mathrm{d}x-C'' \int_\mathcal {O}  \frac{|\nabla c_{\epsilon} |^4}{c_{\epsilon}^3}\mathrm{d}x,
\end{split}
\end{equation*}
for some constants $C',C''>0$. Using the estimate \eqref{(4.9)} and Remark \ref{remark4.2}, we gain
\begin{equation*}
\begin{split}
 \int_\mathcal {O} \theta (c_{\epsilon} ) |D^{2} \rho (c_{\epsilon} ) |^{2}\mathrm{d}x  \geq C''' \int_\mathcal {O}  \frac{|\nabla c_{\epsilon} |^4}{c_{\epsilon}^3}\mathrm{d}x,
\end{split}
\end{equation*}
for some constant $C'''>0$. From the last two inequalities, we have
\begin{equation*}
\begin{split}
 \int_\mathcal {O} \theta (c_{\epsilon} ) |D^{2} \rho (c_{\epsilon} ) |^{2}\mathrm{d}x  \geq C \left(  \int_\mathcal {O}  \frac{|\nabla c_{\epsilon} |^4}{c_{\epsilon}^3}\mathrm{d}x+   \int_\mathcal {O} \frac{ |\Delta c_{\epsilon}|^2}{c_{\epsilon}}\mathrm{d}x\right),
\end{split}
\end{equation*}
which combined with \eqref{cc} leads to the desired inequality. This completes the proof of Lemma \ref{lem4.5}.
\end{Proof}

As a consequence of the previous lemma, we have the following uniform estimate.

\begin{lemma}\label{lem4.6}
Assume that the assumptions $(\textbf{A}_1)-(\textbf{A}_4)$ hold, and  $(n_{\epsilon},c_{\epsilon},u_{\epsilon},\widetilde{\tau}_\epsilon)$ is a solution to \eqref{SCNS-1} in the sense of Definition \ref{def-2}. Then there exists a constant $C>0$ such that
\begin{equation}\label{(4.20)}
\begin{split}
&\mathbb{E} \left[\sup_{t\in [0,T \wedge \widetilde{\tau}_\epsilon)} \int_\mathcal {O} \Big(n_{\epsilon} \ln n_{\epsilon} + \frac{1}{2}|\nabla \Psi(c_{\epsilon})|^2+c^\dag |u_{\epsilon}|^2\Big)\mathrm{d} x\right]^p\\
&\quad  +\mathbb{E} \left[\frac{1}{2}  \int_0^{T \wedge \widetilde{\tau}_\epsilon} \int_\mathcal {O} \frac{ |\nabla n_{\epsilon} |^{2}}{n_{\epsilon}}\mathrm{d}x\mathrm{d} t+  d_1 \int_0 ^{T \wedge \widetilde{\tau}_\epsilon} \int_\mathcal {O}  \frac{|\nabla c_{\epsilon} |^4}{c_{\epsilon}^3}\mathrm{d}x\mathrm{d} t+d_2  \int_0^{T \wedge \widetilde{\tau}_\epsilon} \int_\mathcal {O} \frac{ |\Delta c_{\epsilon}|^2}{c_{\epsilon}}\mathrm{d}x\mathrm{d} t \right.\\
&\left.\quad\quad + \frac{c^\dag}{2} \int_0^{T \wedge \widetilde{\tau}_\epsilon}  \int_\mathcal {O}|\nabla u_\epsilon|^{2}\mathrm{d}x\mathrm{d} t\right]^p\\
 &\quad \leq C\left(\mathbb{E}\left[ \int_\mathcal {O} \left(n_{\epsilon0} \ln n_{\epsilon0} + \frac{1}{2}|\nabla \Psi(c_{\epsilon0})|^2+c^\dag |u_{\epsilon0}|^2\right)\mathrm{d} x\right]^p +1 \right),
 \end{split}
\end{equation}
for all $T>0$ and $1\leq p <\infty$, where the function $\Psi$ is defined in Lemma \ref{lem4.1}, and $C$ depends only on $p,\kappa,c_0,f$ and $\chi$.
\end{lemma}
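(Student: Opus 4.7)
The plan is to take the pathwise entropy-energy inequality \eqref{(4.6)} from Lemma \ref{lem4.5}, raise it to the $p$-th power and take expectations, controlling the stochastic integrals via the BDG inequality and absorbing the solution-dependent contributions into the dissipative terms on the left-hand side through a Grönwall argument. To make the calculation rigorous I would first introduce localizing stopping times $\tau_R^\epsilon = \inf\{t : \mathscr{E}[n_\epsilon,c_\epsilon,u_\epsilon](t) \geq R\} \wedge T \wedge \widetilde\tau_\epsilon$, so that $\tau_R^\epsilon \uparrow T \wedge \widetilde\tau_\epsilon$ as $R \to \infty$ and every expectation below is a priori finite; after closing the estimate with a constant independent of $R$, Fatou's lemma will pass to the limit.

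Since $\mathscr{E},\mathscr{I} \geq 0$ and $\mathscr{E}(t) + \int_0^t \mathscr{I}\,ds \leq \mathscr{E}_0 + \mathscr{L}(t)$, I obtain
$$
\Bigl(\sup_{t \in [0,\tau_R^\epsilon]}\mathscr{E}(t)\Bigr)^p + \Bigl(\int_0^{\tau_R^\epsilon}\mathscr{I}\,ds\Bigr)^p \leq C_p\Bigl(\mathscr{E}_0^p + \sup_{t \in [0,\tau_R^\epsilon]}|\mathscr{L}(t)|^p\Bigr).
$$
The deterministic terms in $\mathscr{L}$ are easy: $\int_0^t \|\Psi(c_\epsilon)\|_{L^2}^2\,ds \leq Ct$ because $\|c_\epsilon\|_{L^\infty}\leq \|c_0\|_{L^\infty}$ by Lemma \ref{concervation}, and $\int_0^t\|u_\epsilon\|_{L^2}^2\,ds$ is controlled by $(c^\dag)^{-1}\int_0^t \mathscr{E}(s)\,ds$. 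For the Wiener stochastic integral $\mathcal{M}_W(t) := 2c^\dag\int_0^t\langle u_\epsilon,\mathcal{P}g(s,u_\epsilon)\,dW_s\rangle$, BDG together with $(\mathbf{A}_3)$ yields
$$
\mathbb{E}\sup_{t\in[0,\tau_R^\epsilon]}|\mathcal{M}_W|^p \leq C_p\,\mathbb{E}\Bigl(\int_0^{\tau_R^\epsilon}\|u_\epsilon\|_{L^2}^2(1+\|u_\epsilon\|_{L^2}^2)\,ds\Bigr)^{p/2},
$$
and a Young-type splitting majorises this by $\eta\,\mathbb{E}\sup_{t\leq \tau_R^\epsilon}\mathscr{E}^p + C_{\eta,p}T^p + C_{\eta,p}\int_0^T \mathbb{E}\sup_{s\leq t\wedge\tau_R^\epsilon}\mathscr{E}^p\,dt$ for any $\eta>0$.

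The two Poisson-integral contributions are treated analogously using the BDG inequality for compensated Poisson martingales (cf. \cite[Theorem 3.50]{peszat2007stochastic}), which produces both a quadratic-variation term and an $L^p$-jump term; the moment bound in $(\mathbf{A}_4)$, \emph{which is assumed for all $p\geq 2$ precisely to accommodate this step}, absorbs both, for the linear $\langle u_\epsilon,\mathcal{P}K\rangle$ piece as well as the quadratic $\|\mathcal{P}K\|^2$ piece. The only subtlety is the non-compensated integral over $Z\setminus Z_0$: I would split $\pi = \widetilde\pi + \mu\otimes ds$, treat the compensated part by BDG as above, and bound the resulting drift via $|\langle u_\epsilon,\mathcal{P}G\rangle| \leq \|u_\epsilon\|_{L^2}\|G\|_{L^2}$, Cauchy-Schwarz in $\mu$, and $\mu(Z\setminus Z_0)<\infty$, obtaining $C\int_0^t(1+\|u_\epsilon\|_{L^2}^2)\,ds$, which is of the same Grönwall-compatible form.

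Choosing $\eta>0$ small enough to absorb $\eta\,\mathbb{E}\sup\mathscr{E}^p$ into the left-hand side and applying Grönwall's inequality to $t \mapsto \mathbb{E}\sup_{s\leq t\wedge\tau_R^\epsilon}\mathscr{E}^p$ yields an estimate independent of $R$; sending $R \to \infty$ gives \eqref{(4.20)}. The main obstacle is the quadratic $\|\mathcal{P}K\|^2$-integrand in the compensated Poisson term, whose $p$-th BDG estimate requires $2p$-th order moments of $K$ — exactly why $(\mathbf{A}_4)$ is postulated for all $p\geq 2$; once this observation is made, the remaining manipulations are standard, with the main bookkeeping being the simultaneous absorption of several $\mathscr{E}^p$-terms generated by Young's inequality.
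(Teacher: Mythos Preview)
Your proposal is correct and follows essentially the same route as the paper: start from the pathwise inequality \eqref{(4.6)}, raise to the $p$-th power, control the three stochastic integrals by BDG together with $(\textbf{A}_3)$--$(\textbf{A}_4)$ and Young's inequality, absorb the resulting $\eta\,\mathbb{E}\sup(\cdot)^p$ terms, and close with Gr\"onwall. The paper runs Gr\"onwall on $\mathbb{E}\sup_{t}\|u_\epsilon\|_{L^2}^{2p}$ rather than on $\mathbb{E}\sup_{t}\mathscr{E}^p$, but since $c^\dag\|u_\epsilon\|_{L^2}^2\le \mathscr{E}+|\mathcal{O}|/e$ these are equivalent bookkeeping choices; your explicit localization by $\tau_R^\epsilon$ is in fact more careful than what the paper writes.

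One small slip: $\mathscr{E}$ is \emph{not} nonnegative, because $\int_\mathcal{O} n_\epsilon\ln n_\epsilon\,dx$ can be negative. However $x\ln x\ge -1/e$ gives $\mathscr{E}\ge -|\mathcal{O}|/e$, so you should work with $\widetilde{\mathscr{E}}:=\mathscr{E}+|\mathcal{O}|/e\ge 0$ throughout; this shifts only by an absolute constant and changes nothing in the argument.
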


\begin{Proof} The proof consists in explicitly estimating each integrals appearing on the R.H.S. of \eqref{(4.6)}. First, by using the  property for $\theta(\cdot)$ and \eqref{(4.5)}, we infer
$$
| \Psi (c_{\epsilon}) |=\bigg|   \int_{1}^{c_{\epsilon}} \frac{d \sigma}{\sqrt{\theta(\sigma)}}\bigg| \leq\frac{2}{\sqrt{C ^-  }}\bigg| \int_{1}^{c_{\epsilon}} \frac{d \sigma}{2\sqrt{ \sigma }}\bigg|\leq \frac{2}{\sqrt{C ^-  }}\left(1+\sqrt{\|c_0\|_{L^\infty}}\right),
$$
which implies that
$\mathbb{E}\sup_{t\in [0,{T \wedge \widetilde{\tau}_\epsilon})}    \int_0^t \| \Psi (c_{\epsilon}) \|_{L^{2}}^2\mathrm{d}s \leq C T$. Here, $C^-$ is the positive constant provided in Remark \ref{remark4.2}.
It then follows that
\begin{equation}\label{f1}
\begin{split}
&\mathbb{E} \sup_{t\in [0,T \wedge \widetilde{\tau}_\epsilon)}  \left|C t+ C  \int_0^t \| \Psi (c_{\epsilon}) \|_{L^{2}}^2\mathrm{d}s +C  \int_0^t \| u_\epsilon\|_{L^2}^2 \mathrm{d}s\right|^p \leq  C  \mathbb{E}\left( \int_0^T \| u_\epsilon\|_{L^2}^{2} \mathrm{d}s\right)^p+C T^{p}.
\end{split}
\end{equation}
By applying the BDG inequality and the condition on $g$, we get for any $\eta>0$
\begin{equation}
\begin{split}
&\mathbb{E} \sup_{t\in [0,T \wedge \widetilde{\tau}_\epsilon) }  \left|2c^\dag \int_0^t \langle u_\epsilon, \mathcal {P}g(s,u_\epsilon) \mathrm{d}W  \rangle_{L^2}\right|^p\\
&\quad\leq C  \mathbb{E} \left[\sup_{t\in [0,T \wedge \widetilde{\tau}_\epsilon)} \| u_\epsilon\|_{L^2}^2\left( \int_0^{T \wedge \widetilde{\tau}_\epsilon}\| u_\epsilon\|_{L^2}^2 \mathrm{d} t+T\right) \right]^{\frac{p}{2}}\\
&\quad\leq \eta \mathbb{E} \sup_{t\in [0,T \wedge \widetilde{\tau}_\epsilon)} \| u_\epsilon\|_{L^2}^{2p}+C \mathbb{E}\left( \int_0^{T \wedge \widetilde{\tau}_\epsilon }  \| u_\epsilon\|_{L^2}^2  \mathrm{d} t \right)^{p}+ C T^p.
\end{split}
\end{equation}
By virtue of the BDG inequality and the assumption on $K$, we deduce that
\begin{equation}
\begin{split}
&\mathbb{E} \sup_{t\in [0,T \wedge \widetilde{\tau}_\epsilon)} \left| 2 c^\dag  \int_0^t \int_{Z_0}  \langle u_\epsilon, \mathcal {P}K(u_\epsilon(s-),z)\rangle _{L^2}  \widetilde{\pi}(\mathrm{d} s, \mathrm{d} z)\right|^p\\
&\quad\leq  C \mathbb{E} \left( \sup_{t\in [0,T \wedge \widetilde{\tau}_\epsilon)} \| u_\epsilon(t)\|^2\int_0^{T \wedge \widetilde{\tau}_\epsilon} \int_{Z_0}\| \mathcal {P}K(u_\epsilon(s-),z)\|_{L^2}^2 \mu( \mathrm{d} z)\mathrm{d} s\right)^{\frac{p}{2}}\\
&\quad\leq \eta\mathbb{E} \sup_{t\in [0,T \wedge \widetilde{\tau}_\epsilon)}  \| u_\epsilon\|_{L^2}^{2p} +C \mathbb{E}\left( \int_0^{T \wedge \widetilde{\tau}_\epsilon}(\| u_\epsilon\|_{L^2}^2+1) \mathrm{d} s \right)^{p},\quad \forall\eta>0.
\end{split}
\end{equation}
Similarly, we get by making use of the BDG inequality that
\begin{equation}
\begin{split}
&\mathbb{E} \sup_{t\in [0,T \wedge \widetilde{\tau}_\epsilon)} \left|c^\dag \int_0^t \int_{Z_0}  \|\mathcal {P}K(u_\epsilon(s-),z)\|_{L^2} ^2  \widetilde{\pi}(\mathrm{d} s, \mathrm{d} z)\right|^p\\
&\quad\leq \eta\mathbb{E} \sup_{t\in [0,T \wedge \widetilde{\tau}_\epsilon)}  \| u_\epsilon\|_{L^2}^{2p} +C\mathbb{E}\left( \int_0^{T \wedge \widetilde{\tau} _\epsilon}  \| u_\epsilon\|_{L^2}^2  \mathrm{d} t \right)^{p}+ C T^p,\quad \forall\eta>0.
\end{split}
\end{equation}
Noting that the compensated Poisson random measure $\widetilde{\pi}(\mathrm{d} s, \mathrm{d} z)=\pi(\mathrm{d} s, \mathrm{d} z)-\mu( \mathrm{d} z)\mathrm{d} s$, it follows from the BDG inequality, Young inequality and assumption $(\textbf{A}_4)$ with $p=4$ that
\begin{equation}
\begin{split}
&\mathbb{E} \sup_{t\in [0,T \wedge \widetilde{\tau}_\epsilon)}  \left|c^\dag \int_0^t \int_{Z\backslash Z_0}  \| G(u_\epsilon(s-),z)\|_{L^2}^2  \pi(\mathrm{d} s, \mathrm{d} z)\right|^p\\
&\quad\leq C \left[\mathbb{E}  \left(  \int_0^{T \wedge \widetilde{\tau}_\epsilon} \int_{Z\backslash Z_0}  \| G(u_\epsilon(s-),z)\|_{L^2}^4   \mu( \mathrm{d} z)\mathrm{d} s  \right)^{\frac{p}{2}} + \mathbb{E} \left( \int_0^{T \wedge \widetilde{\tau}_\epsilon }(\| u_\epsilon\|_{L^2}^2+1)  \mathrm{d} s  \right)^p\right]\\
&\quad\leq \eta\mathbb{E} \sup_{t\in [0,T \wedge \widetilde{\tau}_\epsilon)} \left(\| u_\epsilon\|_{L^2}^{2p}+1\right)+C \mathbb{E}\left( \int_0^{T \wedge \widetilde{\tau}_\epsilon }  \| u_\epsilon\|_{L^2}^2  \mathrm{d} t \right)^{p}+ C  T^p,\quad \forall\eta>0.
\end{split}
\end{equation}
The last integral including the functional $G$ can be estimated by
\begin{equation}\label{f2}
\begin{split}
&\mathbb{E} \sup_{t\in [0,T \wedge \widetilde{\tau}_\epsilon)}  \left|c^\dag \int_0^t \int_{Z\backslash Z_0}  2\langle u_\epsilon, \mathcal {P}G(u_\epsilon(s-),z)\rangle   \pi(\mathrm{d} s, \mathrm{d} z)\right|^p\\
&\quad\leq C\mathbb{E} \left[ \int_0^{T \wedge \widetilde{\tau}_\epsilon } \left(\| u_\epsilon\|_{L^2}^2+\mu(Z\backslash Z_0)\int_{Z\backslash Z_0} \|\mathcal {P}G(u_\epsilon,z)\|_{L^2}^2  \mu( \mathrm{d} z)\right)\mathrm{d} s\right]^p\\
&\quad+C\mathbb{E} \left( \int_0^{T \wedge \widetilde{\tau}_\epsilon} \| u_\epsilon\|_{L^2}^2\int_{Z\backslash Z_0}  \|\mathcal {P}G(u_\epsilon,z)\|_{L^2}^2 \mu( \mathrm{d} z)\mathrm{d} s \right)^{\frac{p}{2}}\\
&\quad\leq \eta\mathbb{E} \sup_{t\in [0,T \wedge \widetilde{\tau}_\epsilon)} \left(\| u_\epsilon\|_{L^2}^2+1\right)^p+C \mathbb{E}\left( \int_0^{T \wedge \widetilde{\tau}_\epsilon }  \| u_\epsilon\|_{L^2}^2  \mathrm{d} t \right)^{p}+ C T^p,\quad \forall\eta>0.
\end{split}
\end{equation}
Collecting the estimates \eqref{(4.6)} and \eqref{f1}-\eqref{f2} together, taking the supremum over the interval $[0,T]$ and then the mathematical expectation, we get for any $\eta>0$
\begin{equation}\label{4.27}
\begin{split}
&\mathbb{E}\left(\sup_{t\in [0,T \wedge \widetilde{\tau}_\epsilon)}  \mathscr{E} [n_{\epsilon},c_{\epsilon},u_{\epsilon}](t) +   \int_0^{T \wedge \widetilde{\tau}_\epsilon} \mathscr{I}[n_{\epsilon},c_{\epsilon},u_{\epsilon}](s)\mathrm{d}s \right)^p\\
& \quad\leq5\eta\mathbb{E} \sup_{t\in [0,T \wedge \widetilde{\tau}_\epsilon)}\| u_\epsilon(t)\|_{ L^2 }^{2p}+C\mathbb{E}\left( \int_0^{T \wedge \widetilde{\tau}_\epsilon }  \| u_\epsilon\|_{L^2}^2  \mathrm{d} t \right)^{p}+C\mathbb{E}\left(\mathscr{E}[n_{\epsilon0},c_{\epsilon0},u_{\epsilon0}]^p\right)+C T^{p}.
\end{split}
\end{equation}
By choosing $0< \eta\leq \frac{c^\dagger}{10}$, we deduce from \eqref{4.27}, the basic inequality $a^p+b^p\leq (a+b)^p\leq 2^{p-1}(a^p+b^p)$ as well as the H\"{o}lder inequality that
\begin{equation*}
\begin{split}
\mathbb{E}\sup_{t\in [0,T \wedge \widetilde{\tau}_\epsilon)}\| u_\epsilon(t)\|_{ L^2 }^{2p} \leq CT^{p-1}\mathbb{E}  \int_0^ {T \wedge \widetilde{\tau}_\epsilon}\| u_\epsilon (t)\|_{L^2}^{2p} \mathrm{d}t +C\mathbb{E}\left(\mathscr{E}[n_{\epsilon0},c_{\epsilon0},u_{\epsilon0}]^p\right)+CT^{p},
\end{split}
\end{equation*}
which combined with the Gronwall inequality leads to
\begin{equation*}
\begin{split}
& \mathbb{E}  \sup_{t\in [0,T \wedge \widetilde{\tau}_\epsilon)}\| u_\epsilon(t)\|_{ L^2 }^{2p} \leq C e^{CT^{p-1}}\left(\mathbb{E}\left(\mathscr{E}[n_{\epsilon0},c_{\epsilon0},u_{\epsilon0}]^p\right)+ T^{p}\right).
\end{split}
\end{equation*}
Plugging the last estimate into \eqref{4.27} leads to
\begin{equation*}
\begin{split}
\mathbb{E}\left(\sup_{t\in [0,T \wedge \widetilde{\tau}_\epsilon)}  \mathscr{E} [n_{\epsilon},c_{\epsilon},u_{\epsilon}] +   \int_0^{T \wedge \widetilde{\tau}_\epsilon} \mathscr{I}[n_{\epsilon},c_{\epsilon},u_{\epsilon}]\mathrm{d}s \right)^p\leq C e^{CT^{p-1}}\left(\mathbb{E}\left(\mathscr{E}[n_{\epsilon0},c_{\epsilon0},u_{\epsilon0}]^p\right)+ T^{p}\right),
\end{split}
\end{equation*}
which implies the desired energy estimate. The proof of Lemma \ref{lem4.6} is completed.
 \end{Proof}

\subsection{Global existence of approximate solutions}

Based on the uniform bounds in Lemma \ref{lem4.6}, it is sufficient to prove that the approximate solution  is indeed global in time.
\begin{lemma}\label{lem4.7}
For each $\epsilon \in (0,1)$, the approximate solution $(n_{\epsilon},c_{\epsilon},u_{\epsilon},\widetilde{\tau}_\epsilon)$ ensured by Lemma \ref{lem1} is global in time, that is, $\mathbb{P}[\widetilde{\tau}_\epsilon=\infty]=1$.
\end{lemma}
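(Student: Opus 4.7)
The plan is to show, for each fixed $T>0$, that $\mathbb{P}[\widetilde{\tau}_\epsilon\le T]=0$, and then to let $T\to\infty$. By the construction of $\widetilde{\tau}_\epsilon$ in Definition~\ref{def-2}, on the event $\{\widetilde{\tau}_\epsilon\le T\}$ one has $\tau_R<\widetilde{\tau}_\epsilon\le T$ together with $\sup_{t\in[0,\tau_R]}(\|n_\epsilon\|_{L^\infty}+\|c_\epsilon\|_{W^{1,q}}+\|u_\epsilon\|_{\mathscr{D}(\textrm{A}^\alpha)})\ge R$. Therefore, if I can produce a moment bound of the form
\begin{equation*}
\mathbb{E}\sup_{t\in[0,\tau_R\wedge T]}\bigl(\|n_\epsilon(t)\|_{L^\infty}+\|c_\epsilon(t)\|_{W^{1,q}}+\|u_\epsilon(t)\|_{\mathscr{D}(\textrm{A}^\alpha)}\bigr)\le C_\epsilon(T),
\end{equation*}
with right-hand side independent of $R$ (though possibly depending on $\epsilon$ and $T$), the Chebyshev inequality yields $\mathbb{P}[\widetilde{\tau}_\epsilon\le T]\le C_\epsilon(T)/R\to 0$ as $R\to\infty$, which closes the argument.

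To establish this strong-norm bound I would bootstrap from the entropy--energy estimate of Lemma~\ref{lem4.6}, which already controls the moments of $\|n_\epsilon\|_{L\log L}$, $\|\nabla\Psi(c_\epsilon)\|_{L^2}$ and $\|u_\epsilon\|_{L^2}$ as well as the $L^2_t$-moments of $\|\nabla\sqrt{n_\epsilon}\|_{L^2}$, $\|\Delta c_\epsilon/\sqrt{c_\epsilon}\|_{L^2}$ and $\|\nabla u_\epsilon\|_{L^2}$, uniformly on $[0,\widetilde{\tau}_\epsilon\wedge T)$. The regularizations play a decisive role at fixed $\epsilon>0$: one has $0\le\textbf{h}_\epsilon'(\cdot)\le 1$, which tames the chemotactic drift, while the Leray mollifier $\textbf{L}_\epsilon$ is bounded from $L^2(\mathcal{O})$ into $W^{k,\infty}(\mathcal{O})$ for every $k$ with an $\epsilon$-dependent constant, rendering the convective term $(\textbf{L}_\epsilon u_\epsilon\cdot\nabla)u_\epsilon$ essentially linear in $u_\epsilon$ when measured in $L^2$. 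The upgrade would then be carried out iteratively, in the spirit of \cite{winkler2016global}: first, use the Stokes mild formulation up to $\tau_R$, together with the smoothing estimate $\|\textrm{A}^\alpha e^{-t\textrm{A}}\|_{L^2\to L^2}\lesssim t^{-\alpha}$, the BDG inequality, and the maximal stochastic-convolution inequalities from \cite{Brzezniak2017Maximal, Brzezniak2019Maximal} (already invoked in the proof of Lemma~\ref{lem2}), to deduce moments of $\sup_{t\le\tau_R\wedge T}\|u_\epsilon\|_{\mathscr{D}(\textrm{A}^\alpha)}$ from the $L^2$-moments of $n_\epsilon$ and $u_\epsilon$ supplied by Lemma~\ref{lem4.6}; second, feed the resulting $L^\infty_tL^\infty_x$ control on $u_\epsilon$ (valid since $\mathscr{D}(\textrm{A}^\alpha)\hookrightarrow L^\infty$ for $\alpha>3/4$) into the Neumann-heat mild formulation for $n_\epsilon$, using $\textbf{h}_\epsilon'\le 1$ and the $\|\nabla c_\epsilon\|_{L^q}$ bound already at hand, to bootstrap $n_\epsilon$ from $L\log L$ to $L^p$ for finite $p$ and finally to $L^\infty$ via a Moser-type iteration; third, use the improved $(n_\epsilon,u_\epsilon)$ in the $c_\epsilon$-equation to obtain the $W^{1,q}$ bound by the same heat-semigroup smoothing argument that appeared in \eqref{(3.6)}. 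At each stage a Gronwall closure combined with the stopping at $\tau_R$ prevents circularity.

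The hard part will be the first stage, namely the fluid bound, because the $u_\epsilon$-equation carries a cylindrical Wiener noise together with two Poisson random measures, so the mild-form estimate must be coupled at every moment order with an It\^o-formula / BDG argument, exactly as in the proof of Lemma~\ref{lem2} but now on the full interval $[0,\tau_R\wedge T]$ rather than a small interval. The stronger integrability postulated in ($\textbf{A}_4$), namely $\int_{Z_0}\|K(u,z)\|_{\mathscr{D}(\textrm{A}^\alpha)}^p\mu(\mathrm{d}z)\le C(1+\|u\|_{\mathscr{D}(\textrm{A}^\alpha)}^p)$ for every $p\ge 2$, is precisely what permits this higher-moment closure; in particular, the quadratic-variation terms arising from the compensated small-jump integral are absorbed via Young's inequality only after one secures the $p=4$ estimate for $K$, and the analogous large-jump contribution for $G$ is handled using $\mu(Z\setminus Z_0)<\infty$. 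Once this moment bound is secured, the Chebyshev argument indicated above yields $\mathbb{P}[\widetilde{\tau}_\epsilon\le T]=0$ for every $T>0$, and hence $\mathbb{P}[\widetilde{\tau}_\epsilon=\infty]=1$, as required.
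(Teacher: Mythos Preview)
Your overall strategy---bootstrap from the entropy--energy estimate of Lemma~\ref{lem4.6} to the strong norms $L^\infty\times W^{1,q}\times\mathscr{D}(\textrm{A}^\alpha)$ via mild formulations and semigroup smoothing, then conclude by a Chebyshev argument---is essentially that of the paper. However, the bootstrap order you propose (fluid first, then $n_\epsilon$, then $c_\epsilon$) will not close as stated.

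The concrete obstruction is the buoyancy term $n_\epsilon\nabla\Phi$ in the Stokes mild formulation. Lemma~\ref{lem4.6} does not supply ``$L^2$-moments of $n_\epsilon$'' directly; it gives only $n_\epsilon\in L\log L$ together with $\nabla\sqrt{n_\epsilon}\in L^2_tL^2_x$, which via Gagliardo--Nirenberg yields at best $n_\epsilon\in L^{5/3}_tL^{5/3}_x$ in dimension three. Feeding this into $\int_0^t\|\textrm{A}^\alpha e^{-(t-s)\textrm{A}}\mathcal{P}(n_\epsilon\nabla\Phi)\|_{L^2}\,\mathrm{d}s$ forces an additional $L^{5/3}\to L^2$ smoothing cost, and a H\"older pairing of the resulting singularity against the $L^{5/3}_t$ norm diverges for every $\alpha>3/4$. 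The paper therefore begins differently: an energy estimate on $\|n_\epsilon\|_{L^p}^p$ for $p\in[2,4]$ (equation~\eqref{(4.32)}), which closes because the entropy estimate already controls $\int_0^T\|\nabla c_\epsilon\|_{L^4}^4\,\mathrm{d}t$ and because $n_\epsilon\textbf{h}_\epsilon'(n_\epsilon)\le 1/\epsilon$ at fixed $\epsilon$. Only then is $u_\epsilon$ upgraded, and in two stages: first to $L^\infty_tW^{1,2}$ by an It\^o formula on $\|\textrm{A}^{1/2}u_\epsilon\|_{L^2}^2$ (this is where $(\textbf{A}_3)$--$(\textbf{A}_4)$ with $\alpha=\tfrac12$ enter), then to $\mathscr{D}(\textrm{A}^\alpha)$ via the mild formulation, where the now-available $L^\infty_t$ control on $\|\nabla u_\epsilon\|_{L^2}$ handles the convection term and the $L^\infty_tL^2_x$ control on $n_\epsilon$ handles the buoyancy. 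After that, $\nabla c_\epsilon$ is pushed to $L^\infty_tL^4_x$ and then $L^\infty_tL^\infty_x$, and finally $n_\epsilon$ to $L^\infty_tL^\infty_x$.

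A secondary difference: rather than working with the maximal-solution stopping times $\tau_R$ and concluding via Chebyshev, the paper introduces auxiliary stopping times $\tau_l$ based on the \emph{weaker} quantities $\|u_\epsilon\|_{L^2}$, $\int_0^t\|\nabla u_\epsilon\|_{L^2}^2\,\mathrm{d}s$, and $\int_0^t\|\nabla c_\epsilon^{1/4}\|_{L^4}^4\,\mathrm{d}s$. These are a.s.\ finite for all $t$ by Lemma~\ref{lem4.6}, so $\tau_l\uparrow\infty$; the strong-norm bounds up to $\tau_l$ (with constants allowed to depend on $l$) then give $\widetilde{\tau}_\epsilon\ge T\wedge\tau_l$ directly. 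Your Chebyshev route would also work once the bootstrap is reordered, but the paper's choice of stopping time lets the closing mechanism be pathwise ($\tau_l\to\infty$) rather than via $R$-independent moment bounds.
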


\begin{Proof}
For any $l>0$, we define
\begin{equation*}
\tau_l\overset {\textrm{def}} {=}
\left\{
 \begin{array}{ll}
   \inf\left\{0\leq t<\infty ;~ \max\left\{ \int_0^t\|\nabla c_\epsilon^{\frac{1}{4}} \|_{L^4} ^4\mathrm{d}s ,\|u_\epsilon(t)\|_{L^2}, \int_0^t\|\nabla u_\epsilon(s)\|_{L^2} ^2\mathrm{d}s\right\} >l\right\},  \\
    +\infty,  \quad \textrm{if the above set $\{\cdot\cdot\cdot\}$ is empty.}
  \end{array}
\right.
\end{equation*}
Since $u_\epsilon$ is a c\`{a}dl\`{a}g process in $\mathscr{D}(\textrm{A})\subseteq L^2(\mathcal {O})$ and the triple $(n_\epsilon,c_\epsilon,u_\epsilon)$ is $\mathfrak{F}$-adapted, $(\tau_l)_{l>0}$ is a sequence of $\mathbb{P}$-a.s. nonnegative stopping times, and $\tau_l\rightarrow\infty$ as $l\rightarrow\infty$, $\mathbb{P}$-a.s. Thanks to the uniform bound in Lemma \ref{lem4.6}, we deduce from \eqref{(4.9)} in Lemma \ref{lem4.5} and the definition of $ \tau_l$ that
\begin{equation}\label{(4.31)}
\begin{split}
\mathbb{E}  \int_0^{T\wedge \tau_l}\|\nabla c_\epsilon(s)\|_{L^4} ^4\mathrm{d}s\leq \|c_0\|_{L^\infty}^3\mathbb{E}  \int_0^{T\wedge \tau_l}\left\|\nabla c_\epsilon^{\frac{1}{4}}(s)\right\|_{L^4} ^4\mathrm{d}s\leq C .
\end{split}
\end{equation}
Applying the chain rule to $\|n_\epsilon(t) \|_{L^p}^p$ with $p\in[2,4]$, integrating by parts and using the divergence-free condition, we infer that for all $t\in [0,T\wedge \tau_l]$
\begin{equation}\label{(4.32)}
\begin{split}
& \|n_\epsilon (t)\|_{L^p}^p+p(p-1) \int_0^t \int_\mathcal {O} n_\epsilon^{p-2} |\nabla n_\epsilon|^2\mathrm{d}x\mathrm{d} s\\
&\quad =\|n_{\epsilon0}\|_{L^p}^p +p(p-1) \int_0^t \int_\mathcal {O} n_\epsilon^{p-2}n_\epsilon \textbf{h}_\epsilon' (n_\epsilon)\chi(c_\epsilon)\nabla n_\epsilon\cdot\nabla c_\epsilon \mathrm{d}x\mathrm{d} s.
\end{split}
\end{equation}
Note that $2(p-2)\leq p$ for all $p\in[2,4]$, $0< n_\epsilon\textbf{h}_\epsilon' (n_\epsilon)\leq\frac{1}{\epsilon}$ and $\|\chi(c_\epsilon)\|_{L^\infty}\leq C$ by Remark \ref{remark4.2}. It follows  that  for $t\in [0,T\wedge \tau_l]$
\begin{equation*}
\begin{split}
&  \mathbb{E}\left|p(p-1) \int_\mathcal {O} n_\epsilon^{p-2}n_\epsilon \textbf{h}_\epsilon' (n_\epsilon)\chi(c_\epsilon)\nabla n_\epsilon\cdot\nabla c_\epsilon \mathrm{d}x \right|\\
&\quad\leq \frac{p(p-1)}{2}\mathbb{E} \int_\mathcal {O} n_\epsilon^{p-2} | \nabla n_\epsilon|^2 \mathrm{d}x+ C\left( \int_\mathcal {O} n_\epsilon^{2(p-2)}  \mathrm{d}x+ \mathbb{E} \int_\mathcal {O} |\nabla c_\epsilon|^4  \mathrm{d}x\right)\\
&\quad\leq \frac{p(p-1)}{2}\mathbb{E} \int_\mathcal {O} n_\epsilon^{p-2} | \nabla n_\epsilon|^2 \mathrm{d}x+ C\mathbb{E} \int_\mathcal {O} n_\epsilon^p  \mathrm{d}x+C,
\end{split}
\end{equation*}
which together with \eqref{(4.31)}-\eqref{(4.32)} and the Gronwall inequality yields that
\begin{equation}\label{(4.33)}
\begin{split}
 \mathbb{E} \sup_{t\in [0,T\wedge \tau_l]} \|n_\epsilon (t)\|_{L^p}^p+ \mathbb{E} \int_0^{T\wedge \tau_l} \int_\mathcal {O} n_\epsilon^{p-2} |\nabla n_\epsilon|^2\mathrm{d}x\mathrm{d} s \leq C\left(\mathbb{E}\|n_{ 0}\|_{L^p}^p+1\right) .
\end{split}
\end{equation}
By making use of the uniform bound \eqref{(4.20)}, we now prove the boundedness for $u_\epsilon$. First,  basic properties for $\textbf{L}_\epsilon$ imply that, for all $t \in [0,T\wedge \tau_l]$,
\begin{equation}\label{(4.34)}
\begin{split}
\|\mathcal {P} (\textbf{L}_\epsilon u_\epsilon\cdot \nabla) u_\epsilon \|_{L^2}^2\leq \|\textbf{L}_\epsilon u_\epsilon\|^2_{L^\infty}\|\nabla u_\epsilon \|_{L^2}^2 \leq C \|\nabla u_\epsilon \|_{L^2}^2 .
\end{split}
\end{equation}
Applying the It\^{o} formula to $\|\textrm{A}^{\frac{1}{2}}u_\epsilon\|_{L^2}^2$, after integrating by parts and using \eqref{(4.34)} and the Young inequality, we infer that for all $t \in [0,T\wedge \tau_l]$
\begin{equation}\label{(4.35)}
\begin{split}
& \left\|\textrm{A}^{\frac{1}{2}}u_\epsilon (t) \right\|_{L^2}^2+  \int_0^t\|\textrm{A} u_\epsilon \|_{L^2}^2\mathrm{d} s\\
 &\quad\leq  \left\|\textrm{A}^{\frac{1}{2}}u_{\epsilon0} \right\|_{L^2}^2+C_l \int_0^t\left(\|\textrm{A}^{\frac{1}{2}} u_\epsilon \|_{L^2}^2 +1 \right)\mathrm{d}s +2\left| \int_0^t\langle \textrm{A} ^{\frac{1}{2}}u_\epsilon,  \textrm{A}^{\frac{1}{2}}g(s,u_\epsilon) \mathrm{d}W_s\rangle_{L^2}\right|\\
&\quad+\left| \int_0^t \int_{Z_0}\left(\| \textrm{A}^{\frac{1}{2}}K(u_\epsilon(s-),z) \|_{ L^2}^2-2\langle \textrm{A}^{\frac{1}{2}} u_\epsilon,  \textrm{A}^{\frac{1}{2}} K(u_\epsilon(s-),z)   \rangle_{L^2}\right)       \widetilde{\pi}(\mathrm{d}s,\mathrm{d}z)\right|\\
&\quad+\left| \int_0^t \int_{Z\backslash Z_0}\left(\| \textrm{A}^{\frac{1}{2}} G(u_\epsilon(s-),z) \|_{L^2}^2-2\langle \textrm{A}^{\frac{1}{2}} u_\epsilon,  \textrm{A}^{\frac{1}{2}} G(u_\epsilon(s-),z)   \rangle_{L^2}\right) \pi(\mathrm{d}s,\mathrm{d}z)\right|,
\end{split}
\end{equation}
where the estimate \eqref{(4.20)} with $p=2$ and the assumption $(\textbf{A}_4)$ with $\alpha=\frac{1}{2}$ are  applied. By taking  the supremum in time over $[0, T\wedge \tau_l]$ and then the mathematical expectation, we get by applying the BDG inequality and the assumptions on $g,K,G$ and $h$ that
\begin{equation*}
\begin{split}
& \frac{1}{2}\mathbb{E} \sup_{t\in [0,T\wedge \tau_l]}\|\textrm{A}^{\frac{1}{2}}u_\epsilon (t) \|_{L^2}^2+ \mathbb{E}  \int_0^t\|\textrm{A} u_\epsilon (s)\|_{L^2}^2\mathrm{d} s\\
 &\quad\leq  \mathbb{E}\|\textrm{A}^{\frac{1}{2}}u_{\epsilon0} \|_{L^2}^2+C_l\mathbb{E} \int_0^{T\wedge \tau_l}\left(\|\textrm{A}^{\frac{1}{2}} u_\epsilon (s) \|_{L^2}^2 +1 \right)\mathrm{d}s.
\end{split}
\end{equation*}
Thereby, by utilizing the equivalence between the norms $\|\textrm{A}^{\alpha}u_\epsilon \|_{L^2}$ and $\|(-\Delta)^\alpha u_\epsilon  \|_{L^2}$  and then the Gronwall inequality, we gain
\begin{equation}\label{(4.36)}
\begin{split}
&  \mathbb{E} \sup_{t\in [0,T\wedge \tau_l]}\|\nabla u_\epsilon (t) \|_{L^2}^2+ \mathbb{E} \int_0^{T\wedge \tau_l}\|\Delta u_\epsilon (s)\|_{L^2}^2\mathrm{d} s\leq C.
\end{split}
\end{equation}

Next, we need to explore the evolution of the stochastic process $\textrm{A}^\alpha u_{\epsilon}(t)$, $\alpha\in (\frac{3}{4},1)$,  by making good use of the $u_\epsilon$-equation in the mild form. After taking the $L^2$-norm on both sides of the formula of variation of constants, we get for all $t \in [0,T\wedge \tau_l]$
\begin{equation}\label{(4.37)}
\begin{split}
  \|\textrm{A}^\alpha u_\epsilon (t)\|_{L^2}&\leq   \|\textrm{A}^\alpha e^{-t \textrm{A}} u_{\epsilon0} \|_{L^2} + \int_0^t  \|\textrm{A}^\alpha e^{-(t-s) \textrm{A}}\mathcal {P} (\textbf{L}_\epsilon u_\epsilon\cdot \nabla) u_\epsilon \| _{L^2} \mathrm{d}s\\
  &+ \int_0^t  \left\|\textrm{A}^\alpha e^{-(t-s) \textrm{A}} \mathcal {P}(n_\epsilon\nabla \Phi) \right\| _{L^2}\mathrm{d}s  + \int_0^t  \left\|\textrm{A}^\alpha e^{-(t-s) \textrm{A}} \mathcal {P}h(s,u_\epsilon)  \right\|_{L^2}\mathrm{d}s\\
  &+\left \| \int_0^t \textrm{A}^\alpha e^{-(t-s) \textrm{A}} \mathcal {P} g(s,u_\epsilon) \mathrm{d}W_s\right\|_{L^2}  \\
 & +\left\| \int _0^t   \int_{Z_0} \textrm{A}^\alpha e^{-(t-s) \textrm{A}} \mathcal {P}  K(u_\epsilon(s-),z)\widetilde{\pi}(\mathrm{d}s,\mathrm{d}z)\right\|_{L^2}\\
 & +\left\| \int_0^t    \int_{Z\backslash Z_0} \textrm{A}^\alpha e^{-(t-s) \textrm{A}}\mathcal {P} G(u_\epsilon(s-),z)\pi(\mathrm{d}s,\mathrm{d}z)\right \|_{L^2}\\
 & \overset{\textrm{def}}{=} U_1(t)+\cdot\cdot\cdot+ U_4(t) +\|U_5(t)\|_{L^2}+\cdot\cdot\cdot+\|U_7(t)\|_{L^2}.
\end{split}
\end{equation}
For $U_1(t)$, we have for any $\zeta\in (0,T\wedge \tau_l)$
\begin{equation*}
\begin{split}
U_1(t)\leq Ct^{-\alpha}\| u_{\epsilon0} \|_{L^2}\leq C \| u_{ 0} \|_{L^2}, \quad \textrm{for all}~t \in [\zeta,T\wedge \tau_l].
\end{split}
\end{equation*}
For $U_2(t)$, by applying the smoothing estimate of Stokes semigroup (cf. \cite{giga1986solutions}),  we deduce from \eqref{(4.36)} that
\begin{equation*}
\begin{split}
\mathbb{E}\sup_{t\in [0,T\wedge \tau_l]}U_2^2(t)
&\leq C\mathbb{E}\left(\sup_{t\in [0,T\wedge \tau_l]}\| \nabla u_\epsilon \| _{L^2} \int_0^{T\wedge \tau_l}  (T\wedge \tau_l-s)^{-\alpha} \mathrm{d}s\right)^2\\
&\leq  C\frac{T^{1-\alpha}}{1-\alpha}  \mathbb{E}\sup_{t\in [0,T\wedge \tau_l]}\| \nabla u_\epsilon \| _{L^2}^2\leq C.
\end{split}
\end{equation*}
For $U_3(t)$ and $U_4(t)$, it follows from \eqref{(4.20)} with $p=2$ and the condition on $h$ that
\begin{equation*}
\begin{split}
&\mathbb{E}\sup_{t\in [0,T\wedge \tau_l]}(U_3+U_4 )^2(t)\leq C\mathbb{E}\left( \int_0^{T\wedge \tau_l}  (T\wedge \tau_l-s)^{-\alpha} (\| n_\epsilon \| _{L^2}+\|u_\epsilon\|_{L^2}+1) \mathrm{d}s\right)^2 \leq C.
 \end{split}
\end{equation*}
The term $U_5(t)$ can be treated similar to \eqref{(3.8)}, and one can obtain for any $t\in (0,T\wedge \tau_l]$
\begin{equation*}
\begin{split}
&\mathbb{E}\sup_{t\in [0,T\wedge \tau_l]} (U_5(t))^2\leq CT+C \mathbb{E} \int_0^t \|\textrm{A}^\alpha u_\epsilon (s)\|_{L^2}^2 \mathrm{d}s.
 \end{split}
\end{equation*}
To estimate $U_6(t)$, we observe that $
\mathrm{d} U_6(t)= - \textrm{A} U_6(t)\mathrm{d}t+ \int_{Z_0} \textrm{A}^\alpha \mathcal {P}  K(u_\epsilon(s-),z)\widetilde{\pi}(\mathrm{d}t,\mathrm{d}z)$ with $U_6(0)=0$. By applying the It\^{o} formula to $\|U_6(t)\|_{L^2}^2$ and then integrating by parts, it leads to
\begin{equation*}
\begin{split}
 \|U_6(t)\|_{L^2}^2 +2 \int_0^t\|\nabla U_6\|_{L^2}^2 \mathrm{d}s&\leq 2\left| \int_0^t \int_{Z_0}\langle U_6, \textrm{A}^\alpha \mathcal {P}K(u_\epsilon(s-),z)\rangle_{L^2} \widetilde{\pi}(\mathrm{d}s,\mathrm{d}z)\right| \\
&+ \int_0^t \int_{Z_0}  \|\textrm{A}^\alpha \mathcal {P}K(u_\epsilon(s-),z) \|_{L^2} ^2 \mu(\mathrm{d}z)\mathrm{d}s,
 \end{split}
\end{equation*}
for all $t\in [0,T\wedge \tau_l]$. By taking the supremum in time over $[0,T\wedge\tau_l]$ and then mathematical expectation, it follows from the assumption on $K$ that, for any $\eta>0$,
\begin{equation*}
\begin{split}
&\mathbb{E}\sup_{t\in [0,T\wedge \tau_l]} \|U_6(t)\|_{L^2}^2 +2   \int_0^{T\wedge \tau_l}\|\nabla U_6(s)\|_{L^2}^2 \mathrm{d}s\\
&\quad\leq  C\mathbb{E} \left[ \int_0^{T\wedge \tau_l} \int_{Z_0}\left|\langle U_6,\textrm{A}^\alpha \mathcal {P} K(u_\epsilon(s-),z)\rangle_{L^2} \right|^2\mu(\mathrm{d}z)\mathrm{d}s \right]^{\frac{1}{2}} \\
&\quad+\mathbb{E} \int_0^{T\wedge \tau_l} \int_{Z_0}  \|\textrm{A}^\alpha \mathcal {P}K(u_\epsilon(s-),z) \|_{L^2} ^2 \mu(\mathrm{d}z)\mathrm{d}s\\
&\quad\leq  \eta \mathbb{E}\sup_{t\in [0,T\wedge \tau_l]} \|U_6(t)\|_{L^2}^2+ C \mathbb{E} \int_0^{T\wedge \tau_l}\left(\|\textrm{A}^\alpha u_\epsilon (s)\|_{L^2}^2+1\right)\mathrm{d}s.
 \end{split}
\end{equation*}
By choosing $\eta>0$ small enough in the last estimate, it leads to
\begin{equation*}
\begin{split}
 \mathbb{E}\sup_{t\in [0,T\wedge \tau_l]} \|U_6(t)\|_{L^2}^2 \leq  C \mathbb{E} \int_0^{T\wedge \tau_l}\left(\| \textrm{A}^\alpha u_\epsilon (s)\|_{L^2}^2+1\right)\mathrm{d}s.
 \end{split}
\end{equation*}
Using the assumption on $G$ and the BDG inequality, the stochastic integral $U_7(t)$ can be estimated similar to $U_6(t)$, and we infer that
\begin{equation*}
\begin{split}
 \mathbb{E}\sup_{t\in [0,T\wedge \tau_l]} \|U_7(t)\|_{L^2}^2 \leq  C \mathbb{E} \int_0^{T\wedge \tau_l}\left(\| \textrm{A}^\alpha u_\epsilon (s)\|_{L^2}^2+1\right)\mathrm{d}s.
 \end{split}
\end{equation*}
To obtain the desired estimate, we take the $2$-th power on both sides of \eqref{(4.37)}, the supremum in time over $[0,T\wedge \tau_l]$, and apply expectations. Summarizing the previous estimates for $U_1(t)\sim U_7(t)$, we obtain from the Gronwall inequality that
\begin{equation}\label{(4.38)}
\begin{split}
 \mathbb{E}\sup_{t\in [0,T\wedge \tau_l]} \|\textrm{A}^\alpha u_\epsilon (t)\|_{L^2}^2\leq C.
\end{split}
\end{equation}
In view of the embedding $\mathscr{D}(\textrm{A}^\alpha) \subset L^\infty(\mathcal {O})$ with $\frac{3}{4}< \alpha <1$, \eqref{(4.38)} implies that
\begin{equation}
\begin{split}
 \| u_\epsilon (t)\|_{L^\infty} \leq  C\| u_\epsilon (t)\|_{\mathscr{D}(\textrm{A}^\alpha)} <\infty,\quad \mathbb{P}\textrm{-a.s.},
\end{split}
\end{equation}
for all $t\in [\zeta,T\wedge \tau_l]$ and any $\zeta\in (0,\widetilde{\tau}_\epsilon)$.

Now we need to explore the evolution of the quantity $\|\nabla c_\epsilon\|_{L^q}$ with $q>3$. To this end, we employ the operator $\nabla$ to both sides of the variation-of-constants formula to obtain
\begin{equation*}
\begin{split}
&\nabla c_\epsilon(t)=  \nabla  e^{(t-\frac{\zeta}{2})\Delta}  c_{\epsilon } (\frac{\zeta}{2})- \int_{\frac{\zeta}{2}}^t \nabla  e^{(t-s)\Delta} \left( u_\epsilon\cdot \nabla c _\epsilon + \textbf{h}_\epsilon(n_\epsilon) f(c_\epsilon)  \right)\mathrm{d}s,\quad t\in (\frac{\zeta}{2},T\wedge \tau_l].
 \end{split}
\end{equation*}
Taking advantage of the smooth effect of Neaumann heat semigroup (cf. \cite{winkler2015boundedness}), the bound \eqref{(4.35)} and \eqref{(4.33)}, we get from the Duhamel formula of $c_\epsilon$-equation that
\begin{equation*}
\begin{split}
 &\mathbb{E}\sup_{t\in [ \varsigma,T\wedge \tau_l]}\|\nabla c_\epsilon(t)\|_{L^4}\\
& \quad\leq C\zeta^{-\frac{1}{2}}
+ C\mathbb{E} \left(\sup_{t\in [ \frac{\varsigma}{2},T\wedge \tau_l]}\|n_\epsilon(t)\|_{L^4} \sup_{t\in [\varsigma,T\wedge \tau_l]} \int_{\frac{\zeta}{2}}^t (t-s)^{-\frac{1}{2}} \mathrm{d}s\right)\\
&\quad+ C\mathbb{E}\left(\sup_{t\in [ \frac{\varsigma}{2},T\wedge \tau_l]}\|u_\epsilon\|_{L^\infty}\sup_{t\in [ \varsigma,T\wedge \tau_l]} \int_{\frac{\zeta}{2}}^t (t-s)^{-\frac{1}{2}} \|\nabla c_\epsilon(s)\|_{L^4}\mathrm{d}s\right)\\
& \quad\leq C\sup_{t\in [ \varsigma,T ]}\left( \int_{\frac{\zeta}{2}}^t (t-s)^{-\frac{2}{3}} \mathrm{d}s\right)^{3}\mathbb{E} \int_{\frac{\zeta}{2}}^{T\wedge \tau_l} \|\nabla c_\epsilon(s)\|_{L^4}^4\mathrm{d}s+ C\zeta^{-\frac{1}{2}}
+ CT +C,
 \end{split}
\end{equation*}
which combined with \eqref{(4.31)} implies that
\begin{equation}\label{4.37}
\begin{split}
 \mathbb{E}\sup_{t\in [\zeta,T\wedge \tau_l]} \|\nabla c_\epsilon(t)\|_{L^4}  \leq  C.
 \end{split}
\end{equation}
Now we assert that it is possible to obtain an enhanced estimate than \eqref{(4.31)}. Setting the sectorial operator $B=I-\Delta$ with homogeneous Neamann boundary condition, and rewriting the diffusion term $\Delta c_\epsilon$ as $c_\epsilon -B c_\epsilon $. It then follows from the $L^p$-$L^q$ estimates (cf. \cite{winkler2010aggregation}) and the embedding $W^{2\beta,4}(\mathcal {O})\subset W^{1,\infty}(\mathcal {O})$ with $\beta\in (\frac{7}{8},1)$ that
\begin{equation*}
\begin{split}
\|\nabla c_\epsilon(t)\|_{L^\infty}
&\leq \|B^\beta e^{-t B} c_{\epsilon 0}\|_{L^4}+ \int_0^t \left\|B^\beta e^{-(t-s) B} \left(u_\epsilon\cdot \nabla c _\epsilon -c_\epsilon +\textbf{h}_\epsilon(n_\epsilon) f(c_\epsilon)\right) \right\|_{L^4}\mathrm{d}s\\
&\leq Ct^{-\beta}\| c_{\epsilon 0}\|_{L^4}+ \int_0^t (t-s)^{-\beta} \left(\| \nabla c _\epsilon\|_{L^4}+ \|c_\epsilon\|_{L^4} +\| n_\epsilon \|_{L^4}\right) \mathrm{d}s,
 \end{split}
\end{equation*}
for all $t \in[\zeta,T\wedge \tau_l]$, where we used the fact  of $\textbf{h}_\epsilon(n_\epsilon)\leq n_\epsilon$ and the uniform bound \eqref{(4.33)}.  Then by taking the supremum and expectation, we deduce from \eqref{(4.33)} and \eqref{4.37} that
\begin{equation}\label{(4.41)}
\begin{split}
 \mathbb{E}\sup_{t\in [\zeta,T\wedge \tau_l]} \|\nabla c_\epsilon(t)\|_{L^\infty}  \leq  C.
 \end{split}
\end{equation}
In view of the boundedness of $\textbf{h}_\epsilon' (n_\epsilon)$ and $\chi(c_\epsilon)$, and making use of the Duhamel formula for the $n_\epsilon$-equation, we deduce from \eqref{(4.33)}, \eqref{(4.38)} and \eqref{(4.41)} that
\begin{equation}\label{4.42}
\begin{split}
&\mathbb{E}\sup_{t\in [\zeta,T\wedge \tau_l ]} \|n_\epsilon  (t ) \|_{L^\infty}
 \leq \mathbb{E} \left(\sup_{t\in [\zeta,T\wedge \tau_l]}(t-\frac{\zeta}{2})^{-\frac{3}{2}} \|n_{ \epsilon0}\|_{L^1}   \right)\\
&\quad+\mathbb{E}\sup_{t\in [\zeta,T\wedge \tau_l ]}  \int_{\frac{\zeta}{2}}^{t } (t-\frac{\zeta}{2})^{-\frac{p+3}{2p}} \left \|  u_\epsilon n_\epsilon+ n_\epsilon \textbf{h}_\epsilon' (n_\epsilon)\chi(c_\epsilon)\nabla c_\epsilon \right\|_{L^p}\mathrm{d}s\\
&\quad\leq C \|n_0\|_{L^1} +C\mathbb{E}  \int_{\frac{\zeta}{2}}^{T } (t-\frac{\zeta}{2})^{-\frac{p+3}{2p}} \left (\|  u_\epsilon\|_{L^\infty}+  \|\nabla c_\epsilon\|_{L^\infty}\right)\| n_\epsilon\|_{L^p}\mathrm{d}s \leq C.
\end{split}
\end{equation}
In view of \eqref{(4.38)}, \eqref{(4.41)} and \eqref{4.42}, we have $
\sup_{t\in [\zeta,T\wedge \tau_l ]} \|(n_\epsilon(t),c_\epsilon(t),u_\epsilon(t))\|_{L^\infty\times W^{1,q}  \times \mathscr{D}(\textrm{A}^\alpha)}<\infty
$ $\mathbb{P}$-a.s., which implies that the solution $(n_\epsilon,c_\epsilon,u_\epsilon)$ does not blow up at time $t=T\wedge \tau_l$, for any $T>0$. Moreover, from the definition of maximal existence time $\widetilde{\tau}_\epsilon$, we have
$\widetilde{\tau}_\epsilon\geq T\wedge \tau_l,$ for any $T>0$ and $l>0$. Therefore, by taking the limit $l\rightarrow\infty$, it shows that $\mathbb{P}[\widetilde{\tau}_\epsilon=\infty]=1$. The proof of Lemma \ref{lem4.7} is completed.
\end{Proof}

\section{Identification of the limits}\label{sec5}


\subsection{Further spatio-temporal regularity}
\begin{lemma}\label{lem5.1}
Assume that $T>0$  and $p\geq 1$. For any $\epsilon \in (0,1)$, let $(n_\epsilon,c_\epsilon,u_\epsilon)$ be  solutions to system \eqref{SCNS-1}, then there exists a positive constant $C$ independent of $\epsilon$ such that
\begin{subequations}
\begin{align}
&\|\nabla \sqrt{n_\epsilon} \|_{L^p(\Omega;L^2(0,T;L^2)) } \leq  C,\label{5.1a}\\
&\| n_\epsilon  \|_{L^p(\Omega;L^{\frac{5}{4}}(0,T;W^{1,\frac{5}{4}})) } \leq  C, \label{5.1b}\\
&\|\nabla \sqrt[4]{c_\epsilon}  \|_{L^p(\Omega;L^4(0,T;L^4)) } \leq  C,\label{5.1d}\\
&\|c_\epsilon\|_{L^p(\Omega;L^2(0,T;W^{2,2})) } \leq  C,\label{5.1e}\\
& \|u_\epsilon \|_{L^p(\Omega;L^\infty(0,T;L^2_\sigma))\cap L^p(\Omega;L^2(0,T;W^{1,2}_\sigma))} \leq  C,\label{5.1f}\\
&\|u_\epsilon  \|_{L^p(\Omega;L^{\frac{10}{3}}(0,T;L^{\frac{10}{3}})) } \leq  C.\label{5.1g}
\end{align}
\end{subequations}
\end{lemma}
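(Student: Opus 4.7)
\textbf{Proof proposal for Lemma \ref{lem5.1}.}

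The plan is to treat this as a direct harvest from the $p$-th moment entropy-energy estimate \eqref{(4.20)} in Lemma \ref{lem4.6}, combined with the basic conservation/$L^\infty$ bounds \eqref{(4.5)}, the elementary identities $|\nabla\sqrt{n_\epsilon}|^2=\tfrac14 |\nabla n_\epsilon|^2/n_\epsilon$ and $|\nabla c_\epsilon^{1/4}|^4 = (\tfrac14)^4 |\nabla c_\epsilon|^4/c_\epsilon^3$, and standard Gagliardo-Nirenberg / Ladyzhenskaya interpolation in dimension three. Since the uniform bounds in Lemma \ref{lem4.6} hold for every $p\geq 1$, the $L^p(\Omega)$-norms in \eqref{5.1a}--\eqref{5.1g} will come out automatically.

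First I would dispose of the ``direct'' bounds. For \eqref{5.1a}, write $\tfrac12 \int_\mathcal{O} |\nabla n_\epsilon|^2/n_\epsilon\,dx = 2\|\nabla\sqrt{n_\epsilon}\|_{L^2}^2$ and read off the bound from the $\mathscr{I}$-term in \eqref{(4.20)}. For \eqref{5.1d}, the same maneuver applied to the $d_1|\nabla c_\epsilon|^4/c_\epsilon^3$ term gives the $L^4_tL^4_x$ bound on $\nabla c_\epsilon^{1/4}$. For \eqref{5.1e}, use $c_\epsilon \leq \|c_0\|_{L^\infty}$ (from \eqref{(4.5)}) to write
\begin{equation*}
\int_\mathcal{O} |\Delta c_\epsilon|^2\,dx \leq \|c_0\|_{L^\infty}\int_\mathcal{O}\frac{|\Delta c_\epsilon|^2}{c_\epsilon}\,dx,
\end{equation*}
and likewise $\|c_\epsilon\|_{L^2}^2 \leq \|c_0\|_{L^\infty}^2|\mathcal{O}|$; standard elliptic regularity under homogeneous Neumann boundary conditions then upgrades this to the full $W^{2,2}$ bound. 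Estimate \eqref{5.1f} is precisely the $c^\dag|u_\epsilon|^2$ and $\tfrac{c^\dag}{2}|\nabla u_\epsilon|^2$ content of \eqref{(4.20)}.

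Next I would handle the interpolation estimates \eqref{5.1b} and \eqref{5.1g}. For \eqref{5.1g}, apply the 3D Gagliardo-Nirenberg inequality $\|v\|_{L^{10/3}}^{10/3}\leq C\|\nabla v\|_{L^2}^{2}\|v\|_{L^2}^{4/3}$ to $v=u_\epsilon$ and integrate in time, exploiting \eqref{5.1f}:
\begin{equation*}
\mathbb{E}\int_0^T\|u_\epsilon\|_{L^{10/3}}^{10/3}\,dt \leq C\,\mathbb{E}\Bigl(\|u_\epsilon\|_{L^\infty_tL^2_x}^{4/3}\int_0^T\|\nabla u_\epsilon\|_{L^2}^2\,dt\Bigr),
\end{equation*}
so Young's/Hölder's inequality and \eqref{5.1f} with exponent $p$ replaced by $5p/3$ give the claim. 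For \eqref{5.1b}, apply the same GN inequality to $\sqrt{n_\epsilon}$ together with the conserved mass $\|\sqrt{n_\epsilon}\|_{L^2}^2=\|n_0\|_{L^1}$ to obtain $n_\epsilon\in L^{5/3}((0,T)\times\mathcal{O})$ uniformly in $\epsilon$; then use the factorization $\nabla n_\epsilon = 2\sqrt{n_\epsilon}\,\nabla\sqrt{n_\epsilon}$ and H\"older's inequality with the exponent split $\tfrac{4}{5}=\tfrac12+\tfrac{3}{10}$:
\begin{equation*}
\|\nabla n_\epsilon\|_{L^{5/4}(\mathcal{O})} \leq 2\|\nabla\sqrt{n_\epsilon}\|_{L^2}\|\sqrt{n_\epsilon}\|_{L^{10/3}},
\end{equation*}
and integrate in time using \eqref{5.1a} and the $L^{10/3}_tL^{10/3}_x$-bound on $\sqrt{n_\epsilon}$ that follows from the GN step. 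Since $5/4<5/3$, the Lebesgue bound on $n_\epsilon$ itself is immediate, completing \eqref{5.1b}.

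I don't anticipate any serious obstacle: everything is an application of the already-established entropy-energy estimate \eqref{(4.20)} plus textbook interpolation. The only point needing mild care is keeping track of powers of $p$ when turning a bound on $\mathbb{E}(\cdots)^p$ into a bound on $\mathbb{E}\|\cdot\|^{q}$, which is handled by H\"older in $\omega$ and the fact that \eqref{(4.20)} is valid for every $p\in[1,\infty)$; in particular one chooses $p$ in \eqref{(4.20)} sufficiently large (depending on the target exponent in \eqref{5.1a}--\eqref{5.1g}) before invoking Young's inequality to split products such as $\|u_\epsilon\|_{L^\infty_tL^2_x}^{4/3}\|\nabla u_\epsilon\|_{L^2_tL^2_x}^{2}$.
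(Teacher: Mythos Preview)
Your proposal is correct and follows exactly the approach sketched in the paper: \eqref{5.1a}, \eqref{5.1d}, \eqref{5.1e}, \eqref{5.1f} are read off directly from the entropy-energy estimate \eqref{(4.20)} in Lemma \ref{lem4.6} (together with \eqref{(4.5)} and elliptic regularity for \eqref{5.1e}), while \eqref{5.1b} and \eqref{5.1g} are obtained via the Gagliardo-Nirenberg inequality and mass conservation, precisely as you outline. The paper omits the details you supply, so your write-up is in fact more complete than the original.
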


\begin{proof}[\textbf{\emph{Proof.}}]
The uniform bounds \eqref{5.1a}, \eqref{5.1d}, \eqref{5.1e} and \eqref{5.1f} are direct consequence of the uniform boundedness estimate in Lemma \ref{lem4.6}. The estimates \eqref{5.1b} and \eqref{5.1g} can be verified by utilizing the conservation of mass and properly making use of the GN inequality, and we omit the details here.
\end{proof}

\begin{lemma}\label{lem5.2}
For any $T>0$ and $\epsilon \in (0,1)$, there exists $C>0$ independent of $\epsilon$ such that for all $p\geq 1$, there hold
\begin{align}
&\mathbb{E} \left(\int_0^T\|\partial_t n_\epsilon \|_{(W^{1, 11})^*}^{\frac{11}{10}} \mathrm{d} t\right)^p \leq  C, \quad \textrm{for some} ~ q>3,\label{5.2}\\
&\mathbb{E}\left( \int_0^T\|\partial_t \sqrt{c_\epsilon}  \|_{(W^{1,\frac{5}{2}})^*}^{\frac{5}{3}}\mathrm{d} t\right)^p \leq  C.\label{5.3}
\end{align}
\end{lemma}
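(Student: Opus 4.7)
The plan is to derive both estimates pathwise by testing the corresponding deterministic PDE against a smooth function and controlling every term via the uniform a priori bounds of Lemma~\ref{lem5.1}, after which $L^{p}(\Omega)$ norms are taken. Since neither $n_\epsilon$ nor $c_\epsilon$ carries a stochastic forcing, no It\^o correction appears and the arguments proceed $\omega$-wise.

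For \eqref{5.2}, pick $\phi\in W^{1,11}(\mathcal O)$ with $\|\phi\|_{W^{1,11}}\le 1$; by Morrey's embedding $\phi\in L^{\infty}(\mathcal O)$ and $\nabla\phi\in L^{11}(\mathcal O)$ uniformly. Testing the $n_\epsilon$-equation of \eqref{SCNS-1} and integrating by parts gives
\begin{equation*}
\langle \partial_t n_\epsilon,\phi\rangle = -\int_{\mathcal O}\nabla n_\epsilon\cdot\nabla\phi\,\dd x + \int_{\mathcal O} n_\epsilon u_\epsilon\cdot\nabla\phi\,\dd x + \int_{\mathcal O} n_\epsilon \textbf{h}_\epsilon'(n_\epsilon)\chi(c_\epsilon)\nabla c_\epsilon\cdot\nabla\phi\,\dd x.
\end{equation*}
The diffusion term is majorised by $\|\nabla n_\epsilon\|_{L^{5/4}}$ using $L^{5/4}\subset L^{11/10}$ on the bounded domain and $\nabla\phi\in L^{5}$, whose $L^{5/4}(0,T)$-norm has all moments by \eqref{5.1b}. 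For the convection, combine the interpolation bound $n_\epsilon\in L^{5/3}(0,T;L^{5/3})$ (a consequence of mass conservation \eqref{(4.5)}, of \eqref{5.1a}, and of Lemma~\ref{nirenberg} applied to $\sqrt{n_\epsilon}\in L^{\infty}(0,T;L^{2})\cap L^{2}(0,T;L^{6})$) with \eqref{5.1g} to obtain $n_\epsilon u_\epsilon\in L^{10/9}(0,T;L^{10/9})\subset L^{11/10}(0,T;L^{11/10})$. The chemotactic term is handled identically using $|\textbf{h}_\epsilon'(n_\epsilon)|\le 1$, $\|\chi(c_\epsilon)\|_{L^{\infty}}\le C$ (Remark~\ref{remark4.2}), and the interpolation bound $\nabla c_\epsilon\in L^{10/3}(0,T;L^{10/3})$ obtained from $\nabla c_\epsilon\in L^{\infty}(0,T;L^{2})$ (via \eqref{(4.20)} and the pointwise equivalence $|\nabla c_\epsilon|\le C|\nabla\Psi(c_\epsilon)|$, cf.\ Remark~\ref{remark4.2}) together with $\nabla c_\epsilon\in L^{2}(0,T;L^{6})$ (via \eqref{5.1e} and Sobolev). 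Raising to the $11/10$-th power in $t$, applying H\"older in $t$ to compensate the exponents, and then taking the $p$-th $\omega$-moment yields \eqref{5.2}.

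For \eqref{5.3}, derive the equation for $\sqrt{c_\epsilon}$ by the chain rule:
\begin{equation*}
\partial_t\sqrt{c_\epsilon} = \Delta\sqrt{c_\epsilon} + \frac{|\nabla\sqrt{c_\epsilon}|^{2}}{\sqrt{c_\epsilon}} - u_\epsilon\cdot\nabla\sqrt{c_\epsilon} - \frac{\textbf{h}_\epsilon(n_\epsilon)f(c_\epsilon)}{2\sqrt{c_\epsilon}}.
\end{equation*}
Test against $\psi\in W^{1,5/2}(\mathcal O)$ with $\|\psi\|_{W^{1,5/2}}\le 1$; by Sobolev, $\psi\in L^{5/2}\cap L^{15}$ and $\nabla\psi\in L^{5/2}$ uniformly. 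After integration by parts the Laplacian term is bounded by $\|\nabla\sqrt{c_\epsilon}\|_{L^{5/3}}\in L^{5/3}(0,T)$ via $\nabla\sqrt{c_\epsilon}\in L^{\infty}(0,T;L^{2})$; the convective term $\int\sqrt{c_\epsilon}\,u_\epsilon\cdot\nabla\psi$ by $\|\sqrt{c_\epsilon}\|_{L^{\infty}}\|u_\epsilon\|_{L^{5/3}}\in L^{5/3}(0,T)$ via \eqref{(4.5)} and \eqref{5.1g}; and the reaction term by noting that $f(s)/\sqrt{s}=\sqrt{s}\cdot f(s)/s$ remains bounded on $[0,\|c_0\|_{L^{\infty}}]$ because $f\in C^{2}$ with $f(0)=0$, reducing it to $C\int n_\epsilon\sqrt{c_\epsilon}\,\psi\le C\|n_\epsilon\|_{L^{5/3}}\|\psi\|_{L^{5/2}}$. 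The singular quadratic term is the pivotal one: write
\begin{equation*}
\frac{|\nabla\sqrt{c_\epsilon}|^{2}}{\sqrt{c_\epsilon}}=\frac14\frac{|\nabla c_\epsilon|^{2}}{c_\epsilon^{3/2}}=\frac14\Bigl(\frac{|\nabla c_\epsilon|^{4}}{c_\epsilon^{3}}\Bigr)^{1/2},
\end{equation*}
so by \eqref{5.1d} this quantity belongs to $L^{2}(0,T;L^{2})\subset L^{5/3}(0,T;L^{5/3})$, yielding a clean duality pairing with $\psi\in L^{5/2}$. Raising to the $5/3$-power in $t$ and the $p$-th power in $\omega$ and invoking Lemmas~\ref{lem4.6} and \ref{lem5.1} gives \eqref{5.3}.

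The central difficulty is the singular factor $1/\sqrt{c_\epsilon}$ appearing both in the quadratic gradient term and in the reaction term of the $\sqrt{c_\epsilon}$-equation; it is precisely the fourth-root dissipation $|\nabla\sqrt[4]{c_\epsilon}|^{4}\in L^{1}_{t,x}$ supplied by the new entropy--energy inequality of Lemma~\ref{lem4.5}, together with the structural hypothesis $f(0)=0$ with $f\in C^{2}$, that make these terms integrable in the intended dual norm. All remaining steps reduce to H\"older and interpolation arguments applied to the uniform bounds already catalogued in Lemma~\ref{lem5.1}.
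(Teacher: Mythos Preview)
Your proposal is correct and follows essentially the same route as the paper: test the (deterministic) $n_\epsilon$- and $\sqrt{c_\epsilon}$-equations against functions in $W^{1,11}$ and $W^{1,5/2}$ respectively, integrate by parts, and bound each resulting term via H\"older and the uniform estimates of Lemma~\ref{lem5.1} and Lemma~\ref{lem4.6}. The only noteworthy deviation is in the chemotactic term $n_\epsilon\chi(c_\epsilon)\nabla c_\epsilon$: the paper pairs $n_\epsilon\in L^{5/3}_{t,x}$ directly with the weighted gradient $\nabla c_\epsilon/c_\epsilon^{3/4}\in L^{4}_{t,x}$ (from \eqref{5.1d}), whereas you interpolate $\nabla c_\epsilon\in L^{\infty}_tL^{2}_x$ (via $|\nabla c_\epsilon|\le C|\nabla\Psi(c_\epsilon)|$) with $\nabla c_\epsilon\in L^{2}_tL^{6}_x$ (from \eqref{5.1e}) to obtain $\nabla c_\epsilon\in L^{10/3}_{t,x}$ and then pair with $n_\epsilon\in L^{5/3}_{t,x}$; both work equally well. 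Similarly, for $\|\nabla\sqrt{c_\epsilon}\|_{L^{5/3}}$ you invoke $\nabla\sqrt{c_\epsilon}\in L^{\infty}_tL^{2}_x$ directly, while the paper factors $\nabla\sqrt{c_\epsilon}=2\sqrt[4]{c_\epsilon}\,\nabla\sqrt[4]{c_\epsilon}$ and applies H\"older --- again an inessential variation.
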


\begin{proof}[\textbf{\emph{Proof.}}]
For any $\varphi \in W^{1,11}(\mathcal {O})$, multiplying the $n_\epsilon$-equation by $\varphi$ and then integrating with respect to $x $ on $\mathcal {O}$, it follows from the H\"{o}lder inequality that
\begin{equation*}
\begin{split}
\left| \int_\mathcal {O}\partial_t n_\epsilon \varphi \mathrm{d} x\right| &\leq \left|  \int_\mathcal {O}\left(u_\epsilon n_\epsilon   -  \nabla n_\epsilon + n_\epsilon \textbf{h}_\epsilon' (n_\epsilon)\chi(c_\epsilon)\nabla c_\epsilon \right) \cdot \nabla\varphi \mathrm{d} x \right| \\
 & \leq C \left(\|u_\epsilon n_\epsilon\|_{L^\frac{11}{10}}+ \|\nabla n_\epsilon\|_{L^\frac{11}{10}}+\|n_\epsilon \nabla c_\epsilon \|_{L^\frac{11}{10}}  \right)\| \nabla\varphi\|_{L^{11}}.
\end{split}
\end{equation*}
It then follows from the definition of the norm in $(W^{1,11}(\mathcal {O}))^*$, the basic inequality $(a+b)^p\leq C (a^p+ b^p)$ and the Young inequality that, for all $p\geq 1$,
\begin{equation}\label{cc1}
\begin{split}
\mathbb{E} \left(\int_0^T\|\partial_t n_\epsilon \|_{(W^{1, 11})^*}^{\frac{11}{10}} \mathrm{d} t\right)^p&\leq C\mathbb{E}  \bigg(\|n_\epsilon\|_{L^\frac{5}{3}(0,T;L^\frac{5}{3})}^\frac{5p}{3}+ \|\nabla n_\epsilon\|_{L^\frac{5}{4}(0,T;L^\frac{5}{4})}^\frac{5p}{4} +\left\| \frac{\nabla c_\epsilon}{c_\epsilon^{3/4}}  \right\|_{L^4(0,T;L^4)} ^{4p} \\
 &\quad+\|u_\epsilon\|_{L^\frac{10}{3}(0,T;L^\frac{10}{3})}^\frac{10p}{3}+1 \bigg).
\end{split}
\end{equation}
Concerning the term $\mathbb{E}\| n_\epsilon  \|_{L^{5/3}(0,T;L^{5/3})}^{5p/3}$ on the R.H.S. of \eqref{cc1}, we get by Lemma \ref{concervation}, Lemma \ref{lem4.6} and the GN inequality (cf. Lemma \ref{nirenberg}) that for all $r\geq \frac{5}{3}$
\begin{equation}\label{+1}
\begin{split}
\mathbb{E} \left(\| n_\epsilon  \|_{L^{\frac{5}{3}}(0,T;L^{\frac{5}{3}})} ^r\right) &=\mathbb{E}\left(\int_0^T\|\sqrt{n_\epsilon} \|_{L^{\frac{10}{3}}}^{\frac{10}{3}} \textrm{d}t\right)^\frac{3r}{5}\\
& \leq C \mathbb{E}\left[\int_0^T\left(\|\nabla \sqrt{n_\epsilon}\|_{L^2}^2 \|\sqrt{n_\epsilon}\|_{L^2}^{\frac{4}{3}}+\|\sqrt{n_\epsilon}\|_{L^2}^{\frac{10}{3}}\right) \textrm{d}t\right]^\frac{3r}{5}\\
&\leq C \left[ \left\|n_0\right\|_{L^1}^{\frac{2r}{5}}\mathbb{E}\left(\int_0^T\int_\mathcal {O}\frac{|\nabla n_{\epsilon}|^2}{n_\epsilon}\textrm{d}x \textrm{d}t\right)^\frac{3r}{5}+\left\|n_0\right\|_{L^1}^{q} T^{\frac{3r}{5}}\right]\leq C,
\end{split}
\end{equation}
which combined with \eqref{cc1}, \eqref{5.1b}, \eqref{5.1g} and \eqref{(4.20)} in Lemma \ref{lem4.6} yields \eqref{5.2}.

To show \eqref{5.3}, we first rewrite the $c_\epsilon$-equation in the following form
\begin{equation*}
\begin{split}
 \mathrm{d} \sqrt{c_\epsilon} =-  u_\epsilon\cdot \nabla \sqrt{c _\epsilon} \mathrm{d}t+\frac{\Delta c_\epsilon}{2\sqrt{c_\epsilon} }\mathrm{d}t-\frac{\textbf{h}_\epsilon(n_\epsilon) f(c_\epsilon)}{2\sqrt{c_\epsilon} }\mathrm{d}t.
\end{split}
\end{equation*}
Then, multiplying both sides of above equation by any $\varphi\in W^{1, \frac{5}{2}}(\mathcal {O})$ and integrating by parts on $\mathcal {O}$, we deduce from the H\"{o}lder inequality that
\begin{equation}\label{5.4}
\begin{split}
 \left| \int_\mathcal {O}\partial_t \sqrt{c_\epsilon} \varphi \mathrm{d} x\right| &\leq \left|  \int_\mathcal {O} \sqrt{c _\epsilon}u_\epsilon \cdot \nabla\varphi \mathrm{d} x \right|+ \left|  \int_\mathcal {O}  \nabla \sqrt{c_\epsilon}\cdot \nabla\varphi   \mathrm{d} x \right|+ \left|  \int_\mathcal {O} |\nabla \sqrt[4]{c_\epsilon}|^2 \varphi \mathrm{d} x \right|\\
 &+\left|  \int_\mathcal {O} \frac{\textbf{h}_\epsilon(n_\epsilon) f(c_\epsilon)}{2\sqrt{c_\epsilon} } \varphi \mathrm{d} x \right|\\
 &\leq C\left(\| \sqrt{c _\epsilon}u_\epsilon\|_{L^{\frac{5}{3}}} +\| \nabla \sqrt{c_\epsilon} \|_{L^{\frac{5}{3}}} + \left\||\nabla \sqrt[4]{c_\epsilon}|^2\right\|_{L^{\frac{5}{3}}} + \left\|n_\epsilon\right\|_{L^{\frac{5}{3}}} \right) \|\varphi\|_{W^{1, \frac{5}{2}}} \\
 & \leq C\left(\| u_\epsilon\|_{L^{\frac{10}{3}}}^{2} +\| \nabla \sqrt[4]{c_\epsilon}  \|_{L^{4}} ^2+ \|n_\epsilon\|_{L^{\frac{5}{3}}} +1\right) \|\varphi\|_{W^{1, \frac{5}{2}}},
\end{split}
\end{equation}
where we used the following estimates
$$
\|\nabla \sqrt{c_\epsilon}\|_{L^{\frac{5}{3}}}\leq C\|\nabla \sqrt[4]{c_\epsilon} \|_{L^{4}}\|\sqrt[4]{c_\epsilon}\|_{L^{\frac{20}{7}}}\leq C\|\nabla \sqrt[4]{c_\epsilon} \|_{L^{4}},
$$
and for some $\beta \in (0,1)$
\begin{equation*}
\begin{split}
 \left|  \int_\mathcal {O} \frac{\textbf{h}_\epsilon(n_\epsilon) f(c_\epsilon)}{2\sqrt{c_\epsilon} } \varphi \mathrm{d} x \right| &\leq C\int_\mathcal {O}| n_\epsilon  f'(\beta c_\epsilon)\sqrt{c_\epsilon} \ \varphi| \mathrm{d} x \\
 &\leq C\sqrt{\| c_\epsilon\|_{L^\infty}}\sup_{r\in[0,\beta\| c_\epsilon\|_{L^\infty}]}|f'(r)|\int_\mathcal {O}|n_\epsilon||\varphi|\mathrm{d} x\\
 &\leq C\|n_\epsilon\|_{L^{\frac{5}{3}}}\|\varphi\|_{W^{1,\frac{5}{2}}}.
\end{split}
\end{equation*}
It follows from \eqref{5.4} and the Young inequality that
\begin{equation*}
\begin{split}
 \|\partial_t \sqrt{c_\epsilon} \|_{(W^{1, \frac{5}{2}})^*}^{\frac{5}{3}} &= \sup_{ \|\varphi\|_{W^{1, \frac{5}{2}}}=1}\left| \int_\mathcal {O}\partial_t \sqrt{c_\epsilon} \ \varphi \mathrm{d} x\right|^{\frac{5}{3}} \leq C\left(\| u_\epsilon\|_{L^{\frac{10}{3}}}^{\frac{10}{3}} +\| \nabla \sqrt[4]{c_\epsilon}  \|_{L^{4}}^4 + \|n_\epsilon\|_{L^{\frac{5}{3}}}^{\frac{5}{3}}+1\right),
\end{split}
\end{equation*}
which implies that
\begin{equation}\label{ad1}
\begin{split}
& \mathbb{E} \left( \int_0^T\|\partial_t \sqrt{c_\epsilon}\|_{(W^{1, \frac{5}{2}} )^*}^{\frac{5}{3}} \mathrm{d} s\right)^p 
  \\
  &\quad \leq C \left(\mathbb{E}\|u_\epsilon  \|_{L^{\frac{10}{3}}(0,T;L^{\frac{10}{3}})}^{\frac{10p}{3}}+ \mathbb{E}\|\nabla \sqrt[4]{c_\epsilon}  \|_{L^4(0,T;L^4))}^{4p} + \mathbb{E}\| n_\epsilon  \|_{L^{\frac{5}{3}}(0,T;L^{\frac{5}{3}})}^{\frac{5p}{3}}+T^p\right) .
\end{split}
\end{equation}
Putting the estimates \eqref{ad1}, \eqref{+1} and  the uniform bounds \eqref{5.1d}, \eqref{5.1g} in Lemma \ref{lem5.1} together yields the uniform bound \eqref{5.3}. The proof of Lemma \ref{lem5.2} is finished.
\end{proof}

In order to obtain the compactness result for the $u_\epsilon$-equation whose solution are not first-order differentiable, we use the fractional Sobolev spaces $W^{\alpha, p}(0,T; H)$: Let $p>1$, $\alpha \in(0,1)$ and $H $ be a separable Hilbert space. The space $W^{\alpha, p}(0, T ; H)$ consists of all of the measurable functions $v \in L^{p}(0,T; H)$ endowed with the norm
$$
\|v\|_{W^{\alpha, p}(0, T ; H)}^{p}\overset{\textrm{def}}{=}\|v\|_{L^{p}(0, T ; H)}^{p}+ \int_{0}^{T}  \int_{0}^{T} \frac{|v(t)-v(s)|^{p}}{|t-s|^{1+\alpha m}} \mathrm{d} t \mathrm{d} s.
$$

\begin{lemma}\label{lem5.3}
For any given $T>0$ and $\alpha \in (0,\frac{1}{2})$, there exists a positive constant $C$ independent of $\epsilon$ such that
\begin{equation}\label{(5.5)}
\begin{split}
 \mathbb{E}\left(\|u_\epsilon\|_{W^{\alpha,2}(0,T; (\mathscr{D}(\textrm{A}))^*)}^2\right) \leq C.
\end{split}
\end{equation}
\end{lemma}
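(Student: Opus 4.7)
The strategy is to write $u_\epsilon(t)$ as the sum of the seven contributions appearing in the variational formulation \eqref{Vc}, bound each one in $L^2(\Omega;W^{\alpha,2}(0,T;(\mathscr{D}(\textrm{A}))^*))$ uniformly in $\epsilon$, and add. Since $W^{1,2}(0,T;H)\hookrightarrow W^{\alpha,2}(0,T;H)$ for every $\alpha\in(0,1)$, I would treat the four pathwise terms by producing a uniform $W^{1,2}$ bound, while for the three stochastic terms the Flandoli--Gatarek type fractional-regularity estimates yield $W^{\alpha,2}$ directly for any $\alpha<1/2$.

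For the deterministic integrals I would dualize against $\varphi\in\mathscr{D}(\textrm{A})$, using the three-dimensional embeddings $\mathscr{D}(\textrm{A})\hookrightarrow W^{1,6}\cap L^\infty$. First, $\|\textrm{A} u_\epsilon\|_{(\mathscr{D}(\textrm{A}))^*}\leq\|u_\epsilon\|_{L^2}$ by self-adjointness; second, $\|\mathcal{P}(n_\epsilon\nabla\Phi)\|_{(\mathscr{D}(\textrm{A}))^*}\leq C\|n_\epsilon\|_{L^1}\|\nabla\Phi\|_{L^\infty}$ is even uniformly bounded in time thanks to mass conservation (Lemma~\ref{concervation}); third, $\|\mathcal{P}h(s,u_\epsilon)\|_{L^2}\leq C(1+\|u_\epsilon\|_{L^2})$ by $(\textbf{A}_3)$; and fourth, after integration by parts against the divergence-free field $\textbf{L}_\epsilon u_\epsilon$, the convective term obeys $\|\mathcal{P}(\textbf{L}_\epsilon u_\epsilon\cdot\nabla)u_\epsilon\|_{(\mathscr{D}(\textrm{A}))^*}\leq C\|u_\epsilon\|_{L^{12/5}}^2$, which combined with the Gagliardo--Nirenberg inequality $\|u\|_{L^{12/5}}^4\leq C\|\nabla u\|_{L^2}\|u\|_{L^2}^3$ is square-integrable in time by \eqref{5.1f}. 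Summing these four pieces, the pathwise part of $u_\epsilon$ lies in $L^2(\Omega;W^{1,2}(0,T;(\mathscr{D}(\textrm{A}))^*))$ uniformly in $\epsilon$, hence in the desired fractional-Sobolev space.

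For the three stochastic terms I plan to invoke standard fractional-regularity lemmas. The Wiener convolution $\int_0^t\mathcal{P}g(s,u_\epsilon)\,\mathrm{d}W_s$ will be controlled through the Flandoli--Gatarek bound \cite{flandoli1995martingale}
$$\mathbb{E}\Bigl\|\int_0^\cdot\mathcal{P}g(s,u_\epsilon)\,\mathrm{d}W_s\Bigr\|_{W^{\alpha,2}(0,T;L^2_\sigma)}^2\leq C_\alpha\,\mathbb{E}\int_0^T\|\mathcal{P}g(s,u_\epsilon)\|_{\mathcal{L}_2(U;L^2)}^2\,\mathrm{d}s,$$
whose right-hand side is $\mathcal{O}(1)$ by $(\textbf{A}_3)$ and \eqref{5.1f}. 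The small-jump compensated integral over $Z_0$ is handled with the L\'evy counterpart of this lemma (as used, e.g., in \cite{brzezniak20132d,nguyen2021nonlinear}), with $(\textbf{A}_4)$ supplying the required square-integrability of the jump integrands. The uncompensated large-jump integral over $Z\setminus Z_0$ is decomposed via $\pi=\widetilde{\pi}+\mu\otimes\mathrm{d}t$: the compensated piece is treated as the small-jump one, and the absolutely-continuous piece $\int_0^t\!\int_{Z\setminus Z_0}\mathcal{P}G(u_\epsilon,z)\,\mu(\mathrm{d}z)\,\mathrm{d}s$ is an honest Bochner integral handled as in the deterministic step, using $\mu(Z\setminus Z_0)<\infty$ and $(\textbf{A}_4)$ with $p=2$. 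The continuous embedding $L^2_\sigma\hookrightarrow(\mathscr{D}(\textrm{A}))^*$ then places every contribution in the common target space.

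The main obstacle I anticipate is the convective term: exactly as in the three-dimensional Navier--Stokes equations, one cannot hope for $L^2(0,T;L^2)$ control of $(\textbf{L}_\epsilon u_\epsilon\cdot\nabla)u_\epsilon$, so one must calibrate the duality exponent (here $12/5$) to match the only available pathwise regularities $L^\infty(0,T;L^2_\sigma)\cap L^2(0,T;W^{1,2}_{0,\sigma})$ of \eqref{5.1f} via Gagliardo--Nirenberg interpolation; the mollifier $\textbf{L}_\epsilon$ plays no role in obtaining a uniform constant, since Young's convolution inequality yields $\|\textbf{L}_\epsilon u\|_{L^p}\leq\|u\|_{L^p}$ independently of $\epsilon$. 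A secondary subtlety is confirming that the L\'evy Flandoli--Gatarek estimate is compatible with the target space $(\mathscr{D}(\textrm{A}))^*$, but this follows at once from $L^2_\sigma\hookrightarrow(\mathscr{D}(\textrm{A}))^*$ together with the $L^2_\sigma$-boundedness of the integrands coming from $(\textbf{A}_4)$.
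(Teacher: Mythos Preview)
Your proposal is correct and follows essentially the same architecture as the paper's proof: split $u_\epsilon$ into its deterministic part (bounded in $W^{1,2}(0,T;(\mathscr{D}(\mathrm{A}))^*)$, then embed) and its three stochastic parts (bounded directly in $W^{\alpha,2}$ via Flandoli--Gatarek type estimates, with the large-jump term decomposed through $\pi=\widetilde{\pi}+\mu\otimes\mathrm{d}t$). The only noteworthy difference is the convective estimate: you integrate by parts and dualize against $\nabla\varphi\in L^6$ to reach $\|u_\epsilon\|_{L^{12/5}}^2$, whereas the paper pairs $(\textbf{L}_\epsilon u_\epsilon\cdot\nabla)u_\epsilon$ directly against $\varphi\in\mathscr{D}(\mathrm{A})\hookrightarrow L^\infty$ to obtain the simpler pointwise bound $\|u_\epsilon\|_{L^2}\|\nabla u_\epsilon\|_{L^2}$; both routes close via \eqref{5.1f}. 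For the compensated Poisson integral the paper carries out the $W^{\alpha,2}$ computation explicitly (splitting into the $L^2$ part and the Gagliardo seminorm) rather than citing a ready-made L\'evy analogue, but the content is the same.
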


\begin{proof}[\textbf{\emph{Proof.}}]
By making use of the Lemma 2.1 in \cite{flandoli1995martingale} and \eqref{5.1f}, one can deduce that for $p\geq 2$
\begin{equation}\label{(5.6)}
\begin{split}
&\mathbb{E}\left[\left\| \int_0^\cdot\mathcal {P}g(s,u_\epsilon(s)) \mathrm{d}W\right\|_{W^{\alpha,p}(0,T;L^{2}_\sigma)}^p \right]\leq C.\\
\end{split}
\end{equation}
Next we prove
\begin{equation}\label{(5.7)}
\begin{split}
\mathbb{E}\left[\left\| \int_0^\cdot \int_{Z_0} \mathcal {P}K(u_\epsilon(x,s-),z)\widetilde{\pi}(\mathrm{d}s,\mathrm{d}z) \right\|_{W^{\alpha,p}(0,T;L^2_\sigma)}^p \right] \leq C.
\end{split}
\end{equation}
To this end, setting $\mathcal {J}(t) \overset{\textrm{def}}{=}  \int_0^t  \int_{Z_0} \mathcal {P}K(u_\epsilon(s-),z)\widetilde{\pi}(\mathrm{d}s,\mathrm{d}z) $, it follows that
\begin{equation*}
\begin{split}
\mathbb{E}\left(\|\mathcal {J}(t)\|_{W^{\alpha,p}(0,T;L^2_\sigma)}^p\right)&= \mathbb{E}\left(\|\mathcal {J}(t)\|_{L^p(0,T;L^2_\sigma)}^p\right)+
\mathbb{E} \int_0^T  \int_0^T \frac{|\mathcal {J}(t)-\mathcal {J}(s)|^p}{|t-s|^{1+\alpha p}} \mathrm{d}s\mathrm{d}t \\
& \overset{\textrm{def}}{=} I_1+I_2.
\end{split}
\end{equation*}
For $I_1$, by using the BDG inequality, the assumption on $K$ and  the bound \eqref{5.1f}, we have
\begin{equation*}
\begin{split}
 I_1&\leq C \mathbb{E}\left[\left( \int_0^T \int_{Z_0} \|\mathcal {P}K(u_\epsilon(s-),z)\|_{L^2}^2\mu(\mathrm{d}z)\mathrm{d}s\right)^{\frac{p}{2}}\right] \leq C \mathbb{E}\sup_{t\in [0,T]}\left(\|u_\epsilon(s) \|_{L^2}^p+1\right)\leq C.
\end{split}
\end{equation*}
For $I_2$, by virtue of the BDG inequality, we deduce that
\begin{equation*}
\begin{split}
 I_2&=\mathbb{E} \int_0^T  \int_0^T\frac{| \int _{s\wedge t}^{ s\vee t} \int _ {Z_0} \mathcal {P}K(u_\epsilon(\sigma-),z)\widetilde{\pi}(\mathrm{d}\sigma,\mathrm{d}z)|^p}{|t-s|^{1+\alpha p}} \mathrm{d}s\mathrm{d}t\\
 &\leq C\mathbb{E} \int_0^T  \int_0^T \frac{( \int _{s\wedge t}^{ s\vee t}  \left(\|u_\epsilon(\sigma)\|_{L^2}^2+ 1\right)\mathrm{d}\sigma)^{\frac{p}{2}}}{|t-s|^{1+\alpha p}} \mathrm{d}s\mathrm{d}t.
\end{split}
\end{equation*}

$\bullet$ If $p=2$, then it follows from the Stochastic Fubini Theorem that
\begin{equation*}
\begin{split}
 I_2 &\leq 2 \mathbb{E} \int_0^T  \int_s^T  \int_s^t  \frac{\mathbb{E}( \|u_\epsilon(\sigma)\|_{L^2}^2 +1 )}{|t-s|^{1+\alpha p}}\mathrm{d}\sigma \mathrm{d}s\mathrm{d}t  \leq C \mathbb{E}  \int_0^T \left(\|u_\epsilon(t)\|_{L^2}^2 +1\right)\mathrm{d}t \leq C.
\end{split}
\end{equation*}

$\bullet$ If $p>2$, by utilizing the embedding $W^{1, \frac{p}{2}}(0,T;\mathbb{R})\subset W^{2\alpha, \frac{p}{2}}(0,T;\mathbb{R})$, we then get
\begin{equation*}
\begin{split}
 I_2 &= \mathbb{E}\left[\left\| \int_0^\cdot  \int_0^T\left(\|u_\epsilon \|_{L^2}^{2}+1\right)\mathrm{d}s\right\|_{W^{2\alpha, \frac{p}{2}}(0,T;\mathbb{R})}^\frac{p}{2}\right]\\
 &\leq C\mathbb{E}   \int_0^T\left(\|u_\epsilon \|_{L^2}^{2}+1\right)^\frac{p}{2}\mathrm{d}s + C\mathbb{E}  \int_0^T \left( \int_0^t \left( \|u_\epsilon \|_{L^2}^{2}+1\right)^p\mathrm{d}s\right)^\frac{p}{2}\mathrm{d} t\leq C.
\end{split}
\end{equation*}
In either case, one obtain the uniform bound
\begin{equation}\label{(5.8)}
\begin{split}
\mathbb{E}\left(\|\mathcal {J}(t)\|_{W^{\alpha,p}(0,T;L^2_\sigma)}^p\right)\leq C.
\end{split}
\end{equation}
Moreover, making use of the decomposition $\widetilde{\pi}(\mathrm{d}s,\mathrm{d}z)=\pi(\mathrm{d}s,\mathrm{d}z)-\mu(\mathrm{d}z)\mathrm{d}s$ and assumptions on $G$, similar to estimates for $I(t)$, we get
\begin{equation}\label{(5.9)}
\begin{split}
\mathbb{E}\left[\left\| \int_0^\cdot \int_{Z\backslash Z_0} \mathcal {P}G(u_\epsilon(s-),z)\pi(\mathrm{d}s,\mathrm{d}z) \right\|_{W^{\alpha,p}(0,T;L^2_\sigma)}^p\right] \leq C.
\end{split}
\end{equation}
To prove \eqref{(5.5)}, in view of \eqref{(5.6)}-\eqref{(5.9)} and the $u_\epsilon$-equation, it is sufficient to estimate the term
$
\mathcal {S}_\epsilon(t)\overset{\textrm{def}}{=}
u_{\epsilon0}-  \int_0^t[\mathcal {P} (\textbf{L}_\epsilon u_\epsilon\cdot \nabla) u_\epsilon - \textrm{A} u_\epsilon]\mathrm{d}s +  \int_0^t\mathcal {P}(n_\epsilon\nabla \Phi+h(s,u_\epsilon) )\mathrm{d}s
$
in  $L^2(\Omega;W^{1,2}(0,T;(\mathscr{D}(\textrm{A}))^*)) $.  Indeed, from the Sobolev embedding $L^2_\sigma (\mathcal {O}) \subset (\mathscr{D}(\textrm{A}))^*$ and $W^{1,2}(0,T)\subset W^{\alpha,2}(0,T)$, we have
\begin{equation}\label{(5.10)}
\begin{split}
&\mathbb{E}\left(\|\mathcal {S}_\epsilon \|_{W^{1,2}(0,T;(\mathscr{D}(\textrm{A}))^*) }^2\right) \leq C\left(\|u_{\epsilon0}\|+ \mathbb{E} \int_0^T\|\mathcal {P} (\textbf{L}_\epsilon u_\epsilon\cdot \nabla) u_\epsilon \|_{(\mathscr{D}(\textrm{A}))^*}^2 \mathrm{d}t \right. \\
&\quad\left.+ \mathbb{E} \int_0^T\|\textrm{A} u_\epsilon \|_{(\mathscr{D}(\textrm{A}))^*}^2 \mathrm{d}t+ \mathbb{E} \int_0^T \|\mathcal {P}(n_\epsilon\nabla \Phi+h(t,u_\epsilon) )\|_{(\mathscr{D}(\textrm{A}))^*}^2\mathrm{d}t\right).
\end{split}
\end{equation}
For the second term on the R.H.S. of \eqref{(5.10)}, we have
\begin{equation*}
\begin{split}
\mathbb{E} \int_0^T\|\mathcal {P} (\textbf{L}_\epsilon u_\epsilon\cdot \nabla) u_\epsilon \|_{(\mathscr{D}(\textrm{A}))^*}^2 \mathrm{d}t \leq C\mathbb{E}\sup_{t\in [0,T]}\|u_\epsilon\|_{L^2}^4+C\mathbb{E}\left[\left( \int_0^T\| \nabla u_\epsilon\|_{L^2}^2 \mathrm{d}t\right)^2\right]\leq C.
\end{split}
\end{equation*}
The third term on the R.H.S. of \eqref{(5.10)} can be estimated as
\begin{equation*}
\begin{split}
\mathbb{E} \int_0^T\|\textrm{A} u_\epsilon \|_{(\mathscr{D}(\textrm{A}))^*}^2 \mathrm{d}t
\leq C \mathbb{E} \int_0^T\|\nabla  u_\epsilon \|_{L^2}^2 \mathrm{d}t \leq C.
\end{split}
\end{equation*}
For the forth term, in view of the continuously embedding $ L^2(\mathcal {O})\subset L^1(\mathcal {O})\subset (\mathscr{D}(\textrm{A}))^*$, the conservation property \eqref{(4.5)} as well as the assumptions on $h$ and $\Phi$, we gain
\begin{equation*}
\begin{split}
\mathbb{E} \int_0^T \|\mathcal {P}(n_\epsilon\nabla \Phi+h(t,u_\epsilon) )\|_{(\mathscr{D}(\textrm{A}))^*}^2\mathrm{d}t\leq C \mathbb{E} \int_0^T \left(\| n_\epsilon \|_{L^1}^2 +\|u_\epsilon\|_{L^2}^2+1\right)\mathrm{d}t \leq C.
\end{split}
\end{equation*}
Plugging the last three estimates into \eqref{(5.10)} leads to $\mathbb{E}(\|\mathcal {S}_\epsilon \|_{W^{1,2}(0,T;(\mathscr{D}(\textrm{A}))^*) }^2) \leq C$, which implies the desired uniform bound \eqref{(5.5)}.
\end{proof}

\subsection{Reconstruction of approximate solutions}

\subsubsection{Tightness}
In our next result, we demonstrate that the distributions induced by the family of $\{(W_{\epsilon}, \pi_{\epsilon}, n_{\epsilon}, c_{\epsilon}, u_{\epsilon})\}_{\epsilon>0}$ is tight on a phase space $\mathcal {X}$. Recall that a family $\Lambda$ of probability measures on $(E, \mathscr{B}(E))$ is said to be \emph{tight} if and only if for any $\varepsilon>0$, there exists a compact set $K_{\varepsilon} \subset E$ such that $\mu\left(K_{\varepsilon}\right) \geq 1-\varepsilon$, for all $\mu \in \Lambda$.

To this end, we introduce the phase space
\begin{align*}
&\mathcal {X}\overset {\textrm{def}} {=}\mathcal {X}_{W } \times \mathcal {X}_{\pi } \times \mathcal {X}_{n } \times \mathcal {X}_{c } \times \mathcal {X}_{u },
\end{align*}
where
\begin{align*}
\mathcal {X}_{W }\overset {\textrm{def}} {=}&\mathcal {C}_{loc}\left([0,\infty);U\right),~~\mathcal {X}_{\pi }=\mathcal {P}_{\mathbb{N}}\left(Z\times[0,T]\right), \\
\mathcal {X}_{n }\overset {\textrm{def}} {=}& \mathcal {C}_{loc} \left([0,\infty;(W^{\frac{13}{11},11}(\mathcal {O}))^*\right)\cap\left(L_{loc}^{\frac{5}{4}}(0,\infty;W^{1,\frac{5}{4}}(\mathcal {O})),\textrm{weak}\right)\cap L_{loc}^{\frac{5}{4}} \left(0,\infty;L^{ \frac{5}{4}}(\mathcal {O})\right)\\
&\cap  \left(L_{loc}^{\frac{5}{3}}(0,\infty;L^{ \frac{5}{3}}(\mathcal {O}) ),\textrm{weak}\right) , \\
\mathcal {X}_{c }\overset {\textrm{def}} {=}& \mathcal {C}_{loc} \left([0,\infty);(W^{2,\frac{5}{2} }(\mathcal {O}))^*\right)\cap L^2_{loc}\left(0,\infty;W^{1,2}(\mathcal {O})\right)\cap \left(L^\infty_{loc}(0,\infty;L^\infty(\mathcal {O})),\textrm{weak-star}\right),\\
\mathcal {X}_{u }\overset {\textrm{def}} {=}&\mathcal {D}_{loc} \left([0,\infty);(W^{1, 5}_{0,\sigma}(\mathcal {O}))^*\right)\cap L^2_{loc}\left(0,\infty;L^2_\sigma(\mathcal {O}) \right) \cap \left(L^2_{loc}(0,\infty;W^{1,2}(\mathcal {O})),\textrm{weak}\right).
\end{align*}
Here $(X,\textrm{weak})$ ($(X,\textrm{weak-star})$, resp.) stands for a Banach space $X$ equipped  with the weak topology (weak-star topology, resp.).

In the following, the c\`{a}dl\`{a}g space $\mathcal {D} ([0,T];(W^{1, 5}_{0,\sigma}(\mathcal {O}))^*)$ is endowed with the Skorokhod topology, which makes it  separable and metrizable by a complete metric  (cf. \cite[Section 12]{billingsley2013convergence} and \cite[Section 5]{brzezniak2019weak}).

For any $\epsilon \in (0,1)$, let $(n_\epsilon,c_\epsilon,u_\epsilon)$ be the global approximate solutions to the regularized system \eqref{SCNS-1} guaranteed by Lemma \ref{lem4.6}, then one can prove the following result.

\begin{lemma}\label{lem5.4}
Denote by $\Pi_\epsilon (\cdot)$ the joint distribution  of $(W_\epsilon,\pi_\epsilon,n_\epsilon,c_\epsilon,u_\epsilon)$ on $\mathcal {X}$ with respect to the measure $\mathbb{P}$, that is,
$$
\Pi_\epsilon(B)\overset {\textrm{def}} {=}\mathbb{P}\left[\omega \in \Omega:\left(W_{\epsilon}(\omega, \cdot), \pi_{\epsilon}(\omega, \cdot), n_{\epsilon}(\omega, \cdot), c_{\epsilon}(\omega, \cdot), u_{\epsilon}(\omega, \cdot)\right) \in B\right] ,\quad \forall B \subseteq \mathcal {X}.
$$
Then, the sequence $(\Pi_\epsilon)_{\epsilon> 0}$ is tight on $\mathcal {X}$.
\end{lemma}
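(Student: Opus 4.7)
The plan is to reduce tightness on the product $\mathcal{X}$ to tightness on each factor, and then to invoke appropriate compactness criteria adapted to each topology. Since $W_\epsilon$ is a cylindrical Wiener process with a fixed law and $\pi_\epsilon$ is a time-homogeneous Poisson random measure with a fixed intensity, the distributions of $W_\epsilon$ on $\mathcal{X}_W$ and of $\pi_\epsilon$ on $\mathcal{X}_\pi$ are independent of $\epsilon$, hence trivially tight. The real work concerns the three marginals $\mathcal{X}_n$, $\mathcal{X}_c$, and $\mathcal{X}_u$, and by a Cantor-type diagonal argument it suffices to verify tightness on $[0,T]$ for every fixed $T>0$.

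For the $n_\epsilon$-marginal, I will combine the uniform bounds \eqref{5.1a}--\eqref{5.1b} (giving boundedness of $n_\epsilon$ in $L^{5/4}(0,T;W^{1,5/4})\cap L^{5/3}(0,T;L^{5/3})$, $\mathbb{P}$-a.s.) with the time-derivative bound \eqref{5.2} in $L^{11/10}(0,T;(W^{1,11})^*)$, and apply the generalized Aubin--Lions lemma (in the form used, e.g., in \cite{nguyen2021nonlinear}) together with the compact embedding $W^{1,5/4}(\mathcal{O}) \hookrightarrow\hookrightarrow L^{5/4}(\mathcal{O}) \hookrightarrow (W^{\frac{13}{11},11}(\mathcal{O}))^*$. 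This yields compactness in $\mathcal{C}([0,T]; (W^{\frac{13}{11},11}(\mathcal{O}))^*) \cap L^{5/4}(0,T; L^{5/4}(\mathcal{O}))$. Tightness in $(L^{5/4}(0,T;W^{1,5/4}),\textrm{weak})$ and in $(L^{5/3}(0,T;L^{5/3}),\textrm{weak})$ follows from the Banach--Alaoglu theorem: bounded balls in reflexive spaces are weakly compact and metrizable, and the Markov inequality controls the probability of exceeding a given radius.

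The $c_\epsilon$-marginal is handled analogously: bounds \eqref{5.1d}--\eqref{5.1e} give control of $c_\epsilon$ in $L^2(0,T;W^{2,2})$ while \eqref{5.3} controls $\partial_t\sqrt{c_\epsilon}$ in $L^{5/3}(0,T;(W^{1,5/2})^*)$. A short interpolation (writing $\partial_t c_\epsilon = 2\sqrt{c_\epsilon}\,\partial_t\sqrt{c_\epsilon}$ and using $\|c_\epsilon\|_{L^\infty}\leq \|c_0\|_{L^\infty}$) transfers the time regularity to $c_\epsilon$ itself in a negative-norm space, and a second application of the Aubin--Lions lemma, together with the compact embedding $W^{1,2}(\mathcal{O})\hookrightarrow\hookrightarrow L^2(\mathcal{O})$, yields compactness in $\mathcal{C}([0,T];(W^{2,5/2})^*)\cap L^2(0,T;W^{1,2})$. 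The weak-star tightness in $L^\infty_{loc}(0,\infty;L^\infty(\mathcal{O}))$ is immediate from \eqref{(4.5)} plus the metrizability of bounded sets in this dual space.

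The $u_\epsilon$-marginal is the main technical obstacle, because $u_\epsilon$ has jumps coming from the Poisson integrals and therefore one must work in the Skorokhod space $\mathcal{D}([0,T];(W^{1,5}_{0,\sigma})^*)$ rather than a space of continuous paths. I will invoke Aldous' tightness criterion \cite{aldous1978stopping,aldous1989stopping}: I must verify (i) for each $t$, the laws of $u_\epsilon(t)$ are tight in $(W^{1,5}_{0,\sigma}(\mathcal{O}))^*$, which follows from \eqref{5.1f} and the compact embedding $L^2_\sigma(\mathcal{O})\hookrightarrow\hookrightarrow (W^{1,5}_{0,\sigma}(\mathcal{O}))^*$; and (ii) the Aldous jump condition, i.e.\ for every $\eta,\kappa>0$ there exist $\delta>0$ and $\epsilon_0>0$ such that
\[
\sup_{\epsilon<\epsilon_0}\sup_{\substack{\tau\in\mathcal{T}_T\\ 0\leq\theta\leq\delta}} \mathbb{P}\big[\|u_\epsilon((\tau+\theta)\wedge T)-u_\epsilon(\tau)\|_{(W^{1,5}_{0,\sigma})^*}\geq\eta\big]\leq\kappa,
\]
where $\mathcal{T}_T$ denotes the set of $\mathfrak{F}$-stopping times bounded by $T$. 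The estimate is decomposed piece by piece along the $u_\epsilon$-equation in mild/variational form: the drift terms $\textrm{A}u_\epsilon$, $\mathcal{P}(\mathbf{L}_\epsilon u_\epsilon\cdot\nabla)u_\epsilon$, $\mathcal{P}(n_\epsilon\nabla\Phi)$ and $\mathcal{P}h(\cdot,u_\epsilon)$ yield increments of order $\theta$ (or $\theta^{1/2}$) in $(W^{1,5}_{0,\sigma})^*$ by the bounds in Lemmas \ref{lem5.1}--\ref{lem5.2}; the Wiener integral increment is controlled by the BDG inequality as in \eqref{(5.6)}; the two Poisson increments are handled exactly as in the proof of Lemma \ref{lem5.3}, after applying BDG together with assumption $(\textbf{A}_4)$. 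Chebyshev's inequality then gives the required smallness uniformly in $\epsilon$ and $\tau$. The weak tightness in $L^2(0,T;W^{1,2})$ and the strong $L^2(0,T;L^2_\sigma)$ tightness follow from \eqref{5.1f}, Banach--Alaoglu, and Flandoli's lemma \cite{flandoli1995martingale} combined with the bound \eqref{(5.5)}, using the compact embedding $W^{1,2}_{0,\sigma}\hookrightarrow\hookrightarrow L^2_\sigma\hookrightarrow (\mathscr{D}(\textrm{A}))^*$ in a fractional Aubin--Lions-type lemma. Collecting tightness of each marginal and using that finite products of tight families are tight completes the proof.
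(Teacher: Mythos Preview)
Your proposal is correct and follows essentially the same route as the paper: trivial tightness for $W_\epsilon,\pi_\epsilon$; Aubin--Lions/Flandoli-type compactness combined with the time-derivative bounds of Lemma~\ref{lem5.2} for the $n_\epsilon$- and $c_\epsilon$-marginals (including the transfer $\partial_t c_\epsilon = 2\sqrt{c_\epsilon}\,\partial_t\sqrt{c_\epsilon}$); Aldous' criterion with term-by-term estimation of the $u_\epsilon$-equation for the c\`{a}dl\`{a}g component; and the fractional Sobolev bound \eqref{(5.5)} together with a Flandoli-type Aubin--Lions lemma for the strong $L^2(0,T;L^2_\sigma)$ tightness. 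One minor remark: you explicitly check the compact-containment condition (i) in Aldous' criterion via $L^2_\sigma\hookrightarrow\hookrightarrow (W^{1,5}_{0,\sigma})^*$, which the paper's statement of Lemma~\ref{aldous} leaves implicit, so your version is in fact slightly more careful on this point.
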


The following result provides an effective method to prove the tightness of processes on a c\`{a}dl\`{a}g function space endowed with the Skorohod topology.
\begin{lemma} [\cite{aldous1978stopping,nguyen2021nonlinear,aldous1989stopping}] \label{aldous}
Let $E$ be a separable Banach space, and $\left(Y_{k}\right)_{k \geq 1}$ be a sequence of $E$-valued random variables. Assume that for each stopping time sequence $ (\tau_{k} )_{k\geq 1}$ with $\tau_{k} \leq T$ and for each $\theta \geq 0$ the following condition holds
$$
\sup_{k\geq 1} \mathbb{E}\left(\left\|Y_{k}\left(\tau_{k}+\theta\right)-Y_{k}\left(\tau_{k}\right)\right\|_{E}^{\alpha}\right) \leq C \theta^{\epsilon} ,
$$
for some $\alpha, \epsilon>0$ and a constant $C>0$. Then the distributions of
 $(Y_{k})_{k\geq1}$ form a tight sequence on $\mathcal {D}([0, T] ; E)$ endowed with the Skorohod topology.
\end{lemma}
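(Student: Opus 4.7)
The plan is to identify this statement as a Banach-valued version of Aldous' 1978 c\`adl\`ag tightness criterion and to derive it from the hypothesis by a straightforward Chebyshev argument combined with the standard sufficient conditions for tightness in $\mathcal{D}([0,T];E)$ under the Skorokhod topology.

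First, I would invoke the abstract tightness criterion of Aldous-Kurtz (in the form later refined by Joffe-M\'etivier and Jakubowski for Polish/Banach targets): a sequence $(Y_k)$ of $\mathcal{D}([0,T];E)$-valued random elements is tight provided (i) for each $t$ in a dense subset of $[0,T]$ the marginal laws $\{\mathcal{L}(Y_k(t))\}_{k\geq 1}$ are tight in $E$, and (ii) the Aldous condition holds, i.e., for every $\eta,\varepsilon'>0$ there exists $\delta>0$ such that
$$
\sup_{k\geq 1}\,\sup_{\tau\in\mathcal{T}_k^T}\,\sup_{0\leq\theta\leq\delta} \mathbb{P}\bigl(\|Y_k(\tau+\theta)-Y_k(\tau)\|_E\geq \eta\bigr)\leq \varepsilon'.
$$

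Second, I would verify (ii) directly from the moment hypothesis by Chebyshev's inequality. For any bounded stopping-time sequence $\tau_k\leq T$ and any $\theta\geq 0$,
$$
\mathbb{P}\bigl(\|Y_k(\tau_k+\theta)-Y_k(\tau_k)\|_E\geq \eta\bigr)\leq \eta^{-\alpha}\,\mathbb{E}\bigl(\|Y_k(\tau_k+\theta)-Y_k(\tau_k)\|_E^{\alpha}\bigr)\leq C\,\eta^{-\alpha}\,\theta^{\varepsilon},
$$
so that for any prescribed $\eta,\varepsilon'>0$ one simply chooses $\delta>0$ with $C\eta^{-\alpha}\delta^{\varepsilon}\leq \varepsilon'$ to get the Aldous condition uniformly in $k$ and in the stopping time $\tau$.

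Third, for the marginal tightness (i) I would specialise to the deterministic choice $\tau_k\equiv t$ and apply the hypothesis to obtain $\mathbb{E}(\|Y_k(t+\theta)-Y_k(t)\|_E^{\alpha})\leq C\theta^{\varepsilon}$, which shows that any compactness of the laws at an initial time propagates forward along the family. The implicit a priori bound at $t=0$, together with the compact-containment property needed to turn an $L^{\alpha}$-bound into tightness in $E$, is left as a side assumption that in concrete applications must be supplied by auxiliary uniform estimates (in our setting these come from the energy and fractional-regularity bounds of Lemmas \ref{lem4.6}, \ref{lem5.1} and \ref{lem5.3}).

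The main obstacle will be the compact-containment step: the stated moment bound only controls the modulus of continuity of trajectories in $E$, so tightness of pointwise marginals cannot be extracted from it alone and requires an auxiliary bound in a space compactly embedded into $E$. Granted such a bound, the conclusion then follows by combining (i) and (ii) and invoking the Aldous-Joffe-M\'etivier theorem to deduce tightness of $\{\mathcal{L}(Y_k)\}$ on $\mathcal{D}([0,T];E)$ equipped with the Skorokhod topology.
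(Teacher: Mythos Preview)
The paper does not give its own proof of this lemma; it is simply quoted from the cited references \cite{aldous1978stopping,nguyen2021nonlinear,aldous1989stopping} and used as a black box in Step~4 of the proof of Lemma~\ref{lem5.4}. So there is no ``paper's proof'' to compare against.

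That said, your sketch is the right reduction, and you have put your finger on the genuine issue: the hypothesis as written only yields the Aldous oscillation condition via Chebyshev, while tightness in $\mathcal{D}([0,T];E)$ additionally requires compact containment of the marginals, which cannot be manufactured from a modulus-of-continuity bound alone. In the paper's actual application (to $u_\epsilon$ with $E=(W^{1,5}_{0,\sigma}(\mathcal{O}))^*$), this missing ingredient is supplied separately by the uniform bound on $u_\epsilon$ in $L^2(\Omega;L^\infty(0,T;L^2_\sigma))$ from \eqref{5.1f}, since $L^2_\sigma(\mathcal{O})\hookrightarrow (W^{1,5}_{0,\sigma}(\mathcal{O}))^*$ compactly. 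So the lemma as stated is a slight abuse: it should be read as ``the Aldous condition holds'', with compact containment verified elsewhere, exactly as you surmised.
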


\begin{proof}[\textbf{\emph{Proof of Lemma \ref{lem5.4}.}}] The proof is divided into five steps.

\textsc{Step 1.}
Recall that every Borel probability measure on a complete separable metric space is tight (cf. \cite[Theorem 1.3]{billingsley2013convergence}). On the one hand, it is an established fact that the space  $\mathcal {C}([0, T] ; U)$ is separable and metrizable by a complete metric, which implies that the sequence of probability measures, defined by $\mu_{W_\epsilon} (\cdot) =\mu_{W} (\cdot) \overset{\textrm{def}}{=} \mathbb{P}[W_\epsilon \in \cdot]$, is constantly equal to single element and is tight on $\mathcal {C}([0, T] ; U)$. On the other hand, the Poisson random measure $\mu_{\pi_\epsilon} (\cdot)=\mu_{\pi} (\cdot) \overset{\textrm{def}}{=} \mathbb{P}[\pi_\epsilon \in \cdot]$ takes values in the space $\mathcal {P}_{\mathbb{N}}\left(Z\times[0,T]\right)$ of counting measures on $Z\times[0,T]$ that are finite on bounded sets. Since $Z$ is a Polish space,  $\mathcal {P}_{\mathbb{N}}\left(Z\times[0,T]\right)$ endowed with Prohorov's metric is a complete separable metric space, one can get that the family of the laws $\{\mu_{\pi_\epsilon}\}_{\epsilon>0}$ is tight on $ \mathcal {P}_{\mathbb{N}}\left(Z\times[0,T]\right)$.

\textsc{Step 2.} We first verify the tightness of $\mu_{n_\epsilon}$ on $\mathcal {C}([0,T];(W^{\frac{13}{11},11}(\mathcal {O}))^*)$. Indeed, it follows from the uniform bounds \eqref{5.1b} and \eqref{5.2} that
\begin{equation}\label{ntight}
\begin{split}
\mathbb{E}\left(\|n_\epsilon\|_{W^{1,\frac{11}{10}}(0,T;(W^{1,11}(\mathcal {O}))^*)}^2\right)\leq C.
\end{split}
\end{equation}
Since the embedding from $(W^{1,11}(\mathcal {O}))^*$ into $ (W^{\frac{13}{11},11}(\mathcal {O}))^*$ is compact, it follows from the Theorem 2.2 in \cite{flandoli1995martingale} that the space $W^{1,\frac{11}{10}}(0,T;(W^{1,11}(\mathcal {O}))^*)$ is compactly embedded into $\mathcal {C}([0,T];(W^{\frac{13}{11},11}(\mathcal {O}))^*)$. Let us choose $B_0(r)\subseteq W^{1,\frac{11}{10}}(0,T;(W^{1,11}(\mathcal {O}))^*)$ be a closed ball centered at $0$ with radius $r$, then $B_0(r)$ is a compact set in $\mathcal {C}([0,T];(W^{\frac{13}{11},11}(\mathcal {O}))^*)$. By \eqref{ntight}, we deduce from the Chebyshev inequality that
$$
\mathbb{P}[n_\epsilon \notin B_0(r)]\leq \mathbb{P}\left[\|n_\epsilon\|_{W^{1,\frac{11}{10}}(0,T;(W^{1,11}(\mathcal {O}))^*)}> r\right]\leq \frac{C}{r^2}.
$$
For any $\eta>0$, by choosing $r>0$ such that $\eta r^2=C$, we get $\inf_{\epsilon> 0}\mathbb{P}\{w;~n_\epsilon(\omega,\cdot) \in  B_0(r)\} > 1-\eta$, which implies the desired result.

To prove that the sequence $\mu_{n_\epsilon} (\cdot) \overset{\textrm{def}}{=} \mathbb{P}[n_\epsilon \in \cdot]$ is tight on $L^\frac{5}{4}(0,T;L^\frac{5}{4}(\mathcal {O}))$, we introduce the Banach space $Y_1$ with the norm
$$
\|y\|_{Y_1} \overset {\textrm{def}} {=} \|y\|_{L^{\frac{5}{4}}(0,T;W^{1,\frac{5}{4}})}+\left\| \frac{ \mathrm{d}y}{\mathrm{d} t}\right\|_{L^1(0,T;(W^{2,q})^*)},\quad q>3.
$$
We choose $B_1(r)$ as a closed ball with radius $r$ in $L^{\frac{5}{4}}(0,T;W^{1,\frac{5}{4}}(\mathcal {O}))\cap W^{1,1}(0,T;(W^{2,q}(\mathcal {O}))^*)$. Then it follows from a generalized Aubin-Lions Lemma (cf. \cite[Lemma 4.3]{nguyen2021nonlinear}) that the ball $B_1(r)$ is compact in $L^{\frac{5}{4}}(0,T;L^\frac{5}{4}(\mathcal {O}))$. In view of the uniform bounds \eqref{5.1b} and \eqref{5.2}, we infer that
\begin{equation*}
\begin{split}
\mathbb{P}[n_\epsilon \not\in  B_1(r) ] &\leq \mathbb{P}\left[ \|n_\epsilon\|_{L^{\frac{5}{4}}(0,T;W^{1,\frac{5}{4}})} > \frac{r}{2}\right]+\mathbb{P}\left[ \|n_\epsilon\|_{W^{1,1}(0,T;(W^{2,q})^*)} > \frac{r}{2}\right]\\
&\leq   \frac{4}{r^2} \mathbb{E} \left(\|n_\epsilon\|_{L^{\frac{5}{4}}(0,T;W^{1,\frac{5}{4}})}^2+\|n_\epsilon\|_{W^{1,1}(0,T;(W^{2,q})^*)}^2\right) \leq \frac{  C }{r^2}.
\end{split}
\end{equation*}
By choosing $\eta r^2=C$, it leads to $\inf_{\epsilon> 0}\mathbb{P}\{w;~n_\epsilon(\omega,\cdot) \in B_1(r) \} > 1-\eta$, which implies the tightness of $\mu_{n_\epsilon} $ on $L^{\frac{5}{4}}(0,T;L^\frac{5}{4}(\mathcal {O}))$. The tightness of $\mu_{n_\epsilon} $ on $(L_{loc}^{\frac{5}{4}}(0,\infty;W^{1,\frac{5}{4}}(\mathcal {O})),\textrm{weak})$ is a direct consequence of the uniform bound \eqref{5.1b}.

To show the tightness of $\mu_{n_\epsilon}$ on $(L^{\frac{5}{3}}(0,T;L^{ \frac{5}{3}}(\mathcal {O}) ),\textrm{weak})$, for any $r>0$, we set
$$
\widetilde{B}_1(r)=\left\{n_\epsilon \in L^{\frac{5}{3}}(0,T;L^{ \frac{5}{3}}(\mathcal {O})) ;~ \|n_\epsilon\|_{L^{\frac{5}{3}}(0,T;L^{ \frac{5}{3}})}\leq r\right\}.
$$
By invoking the GN inequality, we gain from \eqref{5.1a} that, for any $p\geq1$,
\begin{equation}\label{5.11}
\begin{split}
\mathbb{E}\left(\|n_\epsilon\|_{L^{\frac{5}{3}}(0,T;L^{ \frac{5}{3}} )}^p\right) \leq C\mathbb{E}\left[ \int_0^T \left(\|\nabla \sqrt{n_\epsilon}\|_{ L^{2}}^2  +1\right) \mathrm{d}t\right]^p\leq C.
\end{split}
\end{equation}
Thereby, we get from \eqref{5.11} with $p=2$ that
\begin{equation*}
\begin{split}
\mathbb{P}\left[n_\epsilon \not\in  \widetilde{B}_1 (r) \right] =\mathbb{P}\left[\|n_\epsilon\|_{L^{\frac{5}{3}}(0,T;L^{ \frac{5}{3}})}> r \right] \leq \frac{\mathbb{E}  \left(\|n_\epsilon\|_{L^{\frac{5}{3}}(0,T;L^{ \frac{5}{3}})}^2 \right)}{r^2} \leq \frac{C}{r^2},
\end{split}
\end{equation*}
which implies the desired result.

\textsc{Step 3.} Let us first show the tightness of $(\mu_{c_\epsilon})_{\epsilon>0}$ on $\mathcal {C}([0,T];(W^{2,\frac{5}{2} }(\mathcal {O}))^*)$. Note that
$$
\left|\int_\mathcal {O} \partial_t c_\epsilon \cdot\varphi \textrm{d}x\right|\leq 2\left|\int_\mathcal {O} \sqrt{c_\epsilon} \partial_t \sqrt{c_\epsilon}  \cdot\varphi \textrm{d}x\right|\leq 2\sqrt{\|c_0\|_{L^\infty}}\left|\int_\mathcal {O} \partial_t \sqrt{c_\epsilon}  \cdot\varphi \textrm{d}x\right|,
$$
for any $\varphi \in \mathcal {C} ^\infty(\overline{\mathcal {O}})$, which by \eqref{5.4} implies that $
\|\partial_t c_\epsilon  \|_{(W^{1,5/2})^*}\leq C\|\partial_t \sqrt{c_\epsilon}  \|_{(W^{1,5/2})^*}$. Therefore, $\partial_t c_\epsilon$ is uniformly bounded in $L^2(\Omega;L^{\frac{5}{3}}(0,T;(W^{1,\frac{5}{2}}(\mathcal {O}))^* ))$. This fact and the uniform bound in \eqref{5.1e} imply that $c_\epsilon$ is uniformly bounded in $L^2(\Omega;W^{1,
\frac{5}{3}}(0,T;(W^{1,\frac{5}{2}}(\mathcal {O}))^*))$. Since $(W^{1,\frac{5}{2}}(\mathcal {O}))^* $ is compactly embedded into $(W^{2,\frac{5}{2}}(\mathcal {O}))^* $, it follows from \cite[Theorem 2.2]{flandoli1995martingale} that the embedding $ W^{1,
\frac{5}{3}}(0,T;(W^{1,\frac{5}{2}}(\mathcal {O}))^*) \subset \mathcal {C}([0,T];(W^{2,\frac{5}{2}}(\mathcal {O}))^*) $ is compact. Following the same argument as that at the beginning of Step 2,  we obtain the desired result.

To find a compact subset $B_2(r)\in L^2(0,T;W^{1,2}(\mathcal {O}))$ such that $\mu_{c_\epsilon} (B_2(r)^c) \overset{\textrm{def}}{=} \mathbb{P}[c_\epsilon \not\in  B_2(r) ] <  \eta$. Let us consider the Banach space $Y_2$ endowed with the norm
$$
\|y\|_{Y_2} \overset {\textrm{def}} {=} \|y\|_{L^{2}(0,T;W^{2,2})}+\left\| \frac{ \mathrm{d}y}{\mathrm{d} t}\right\|_{L   ^{ \frac{5}{3}}(0,T;(W^{1,\frac{5}{2}})^*)}.
$$
Choosing a closed ball $B_2(r)$ in $L^{2}(0,T;W^{2,2}(\mathcal {O}))\cap W^{1,\frac{5}{3}}(0,T;(W^{1,\frac{5}{2}}(\mathcal {O}))^*)$ with radius $r>0$ centered at $0$. The Aubin-Lions Lemma infers that the set $B_2(r) \subset L^2(0,T;W^{1,2}(\mathcal {O}))$ is compact. Moreover, we get from \eqref{5.1d}, \eqref{5.3} and the Chebyshev inequality that
\begin{equation*}
\begin{split}
\mathbb{P}[c_\epsilon \not\in  B_2(r) ] \leq  \frac{4}{r^2}\mathbb{E} \left( \|c_\epsilon\|_{L^{2}(0,T;W^{2,2})\cap W^{1,\frac{5}{3}}(0,T;(W^{1,\frac{5}{2}})^*)}^2\right) \leq \frac{  C }{r^2}.
\end{split}
\end{equation*}
Choosing $r$ large enough leads to $\inf_{\epsilon> 0}\mathbb{P}\{w;~c_\epsilon(\omega,\cdot) \in  B_2(r) \}  > 1-\epsilon$, and hence $(\mu_{c_\epsilon})_{\epsilon>0} $ is tight on $L^2(0,T;W^{1,2}(\mathcal {O}))$. Moreover, by \eqref{(4.6)}, any   ball with finite radius in $L^\infty(\mathcal {O}\times [0,T])$ is relatively compact in  the weak-star topology. This proves the tightness of $(\mu_{c_\epsilon})_{\epsilon>0}$ on $\mathcal {X}_{c_\epsilon}$.

\textsc{Step 4.} To prove that the sequence $\mu_{u_\epsilon} (\cdot) \overset{\textrm{def}}{=} \mathbb{P}[u_\epsilon \in \cdot]$ is tight on $\mathcal {D} ([0,T];(W^{1, 5}_{0,\sigma}(\mathcal {O}))^*)$, according to the Lemma \ref{aldous}, it suffices to prove that for any stopping times $\left(\tau_{k}\right)_{k \geq 1}$ satisfying $0 \leq \tau_{k} \leq T$, there exists a constant $C>0$ independent of $\epsilon$ such that
\begin{align}\label{5.12}
 \mathbb{E} \left(\|u_\epsilon(\tau_{k}+\theta)-u_\epsilon(\tau_{k})\|_{(W^{1, 5}_{0,\sigma} )^*} ^2\right) \leq C \theta^{\frac{1}{2}}.
\end{align}
Indeed, by integrating the $u_\epsilon$-equation from $0$ to $t$ for any $t \in (0, T]$, we obtain
\begin{equation}\label{5.13}
\begin{split}
&u_\epsilon(\tau_{k}+\theta)-u_\epsilon(\tau_{k})= - \int_{\tau_{k}}^{\tau_{k}+\theta} \mathcal {P} (\textbf{L}_\epsilon u_\epsilon\cdot \nabla) u_\epsilon  \mathrm{d}t - \int_{\tau_{k}}^{\tau_{k}+\theta}  \textrm{A} u_\epsilon\mathrm{d}t \\
 & + \int_{\tau_{k}}^{\tau_{k}+\theta}  \mathcal {P}(n_\epsilon\nabla \Phi +h(t,u_\epsilon))\mathrm{d}t +  \int_{\tau_{k}}^{\tau_{k}+\theta}  \mathcal {P}g(t,u_\epsilon) \mathrm{d}W_t + \int_{\tau_{k}}^{\tau_{k}+\theta} \int_{Z_0} \mathcal {P}K(u_\epsilon(t-),z)\widetilde{\pi}(\mathrm{d}t,\mathrm{d}z) \\
& + \int_{\tau_{k}}^{\tau_{k}+\theta} \int_{Z\backslash Z_0} \mathcal {P}G(u_\epsilon(t-),z)\pi(\mathrm{d}t,\mathrm{d}z)  \overset{\textrm{def}}{=}  D_1+\cdot\cdot\cdot+D_6 .
\end{split}
\end{equation}
For $D_1$, by using the divergence-free condition and integrating by parts, we get form the H\"{o}lder inequality $\|ab\|_{L^{\frac{5}{4}}}\leq\|a\|_{L^2}\|b\|_{L^{\frac{10}{3}}}$ that
\begin{equation*}
\begin{split}
  \mathbb{E}\left(\| D_1\|_{(W^{1, 5}_{0,\sigma})^* } \right)&=  \mathbb{E} \sup_{ \|\psi\|_{W^{1, 5}_{0,\sigma} }=1}  \int_{\tau_{k}}^{\tau_{k}+\theta}  \int_\mathcal {O} \mathcal {P} (\textbf{L}_\epsilon u_\epsilon \otimes u_\epsilon)\cdot \nabla\psi(x) \mathrm{d} t\mathrm{d} x \\
  & \leq C \theta^{\frac{1}{2}} \left( \mathbb{E} \sup_{t\in [0,T]}\| u_\epsilon (t) \|_{L^{2}}^2+ \mathbb{E}  \int_0^{T }  \|u_\epsilon(t)\|_{L^{\frac{10}{3}}}^{\frac{10}{3}}  \mathrm{d} t \right)\leq C \theta^{\frac{1}{2}},
 \end{split}
\end{equation*}
where we used the uniform bounds \eqref{5.1f}, \eqref{5.1g} and the fact of $\|\textbf{L}_\epsilon u_\epsilon  \|_{L^{2}}\leq C \| u_\epsilon  \|_{L^{2}}$. Similarly, by using the embedding $L^\infty(\mathcal {O})\subset L^2(\mathcal {O})\subset(W^{1, 5}_{0,\sigma}(\mathcal {O}))^* $, we get
$$
\mathbb{E}\left(\| D_2\|_{(W^{1, 5}_{0,\sigma})^*}\right)
\leq C \theta^{\frac{1}{2}}  \mathbb{E} \left( \int_0^{T } \|\nabla u_\epsilon  \|_{L^{2}}^2 \mathrm{d}t\right)^{\frac{1}{2}}\leq C \theta^{\frac{1}{2}},
$$
and
$$\mathbb{E}\left(\| D_3\|_{(W^{1, 5}_{0,\sigma})^* }\right) \leq C \theta  \mathbb{E} \int_{\tau_{k}}^{\tau_{k}+\theta}   \left(\|n_\epsilon \nabla\Phi  \|_{L^{1}}  + \|u_\epsilon\|_{L^2}  +1  \right)\mathrm{d}t  \leq C \theta.
$$
For $D_4$, by using the BDG inequality, the H\"{o}lder inequality and the assumption on $g$, we obtain
\begin{equation*}
\begin{split}
\mathbb{E}\left(\| D_4\|_{(W^{1, 5}_{0,\sigma})^* } \right) &\leq C \mathbb{E} \left[ \int_{\tau_{k}}^{\tau_{k}+\theta} \left(\sup_{ \|\psi\|_{W^{1, 5}_{0,\sigma} }=1} \int_\mathcal {O}\mathcal {P}g(t,u_\epsilon) \psi  \mathrm{d} x \right) ^2 \mathrm{d}t\right]^{\frac{1}{2}} \\
   &\leq C \mathbb{E} \left( \int_{\tau_{k}}^{\tau_{k}+\theta} (\| u_\epsilon (t)\|_{L^{2}}^2 +1) \mathrm{d}t\right)^{\frac{1}{2}} \leq C \theta^{\frac{1}{2}}\left(\mathbb{E}\sup_{t\in [0,T]}\|u_\epsilon\|_{L^{2}}^2 +1\right)\leq C \theta^{\frac{1}{2}}.
 \end{split}
\end{equation*}
For $D_5$, in virtue of the BDG inequality, the Stochastic Fubini Theorem and the assumption on $K$, we obtain
\begin{equation*}
\begin{split}
 \mathbb{E}\left(\| D_5\|_{(W^{1, 5}_{0,\sigma})^* }\right)
 &\leq C\mathbb{E}\left( \int_{\tau_{k}}^{\tau_{k}+\theta} \int_{Z_0}\|\mathcal {P}K(u_\epsilon(t-),z) \|_{L^2}^2 \mu(\mathrm{d}z)\mathrm{d}t\right)^{\frac{1}{2}}\\
 &\leq C\theta^{\frac{1}{2}}\mathbb{E}\left(\sup_{t \in [0,T]}\|u_\epsilon(t)\|_{L^{2}}^2 +1\right)^{\frac{1}{2}} \leq C\theta^{\frac{1}{2}}.
 \end{split}
\end{equation*}
For $D_6$, recalling that $\pi(\mathrm{d}t,\mathrm{d}z)=\widetilde{\pi}(\mathrm{d}t,\mathrm{d}z)+\mu(\mathrm{d}z)\mathrm{d}t$ is a martingale Poisson random measure, it follows that
\begin{equation*}
\begin{split}
 \mathbb{E}\left(\| D_6\|_{(W^{1, 5}_{0,\sigma})^* }\right)
 &\leq\mathbb{E} \int_{\tau_{k}}^{\tau_{k}+\theta} \int_{Z\backslash Z_0}  \sup_{ \|\psi\|_{W^{1, 5}_{0,\sigma} }=1} \int_\mathcal {O}\mathcal {P}G(u_\epsilon(t-),z)\psi\mathrm{d} x \mu(\mathrm{d}z)\mathrm{d}t \\
 &\leq \mathbb{E} \int_{\tau_{k}}^{\tau_{k}+\theta}(\| u_\epsilon (t)\|_{L^{2}}  +1) \mathrm{d}t \leq C\theta.
 \end{split}
\end{equation*}
Substituting the estimates of $D_1\sim D_6$ into \eqref{5.13} gives \eqref{5.12}. Hence $(u_\epsilon)_{\epsilon> 0}$ satisfy the Aldous condition in $(W^{1, 5}_{0,\sigma}(\mathcal {O}))^*$ (cf. \cite{aldous1978stopping,aldous1989stopping}), and Lemma \ref{aldous} implies the tightness of $(u_\epsilon)_{\epsilon> 0}$ on $\mathcal {D} \left([0,T];(W^{1, 5}_{0,\sigma}(\mathcal {O}))^*\right)$.

\textsc{Step 5.} We show that the sequence $(\mu_{u_\epsilon})_{\epsilon> 0} $ is also tight on $L^2([0,T];L^2_\sigma(\mathcal {O}) )$. This can be done by introducing the Banach space $Y_3$ with the norm
$$
\|y\|_{Y_3} \overset {\textrm{def}} {=} \|y\|_{L^2(0,T;W^{1,2}_{0,\sigma})}+\left\|y\right\|_{W^{\alpha,2}(0,T; (\mathscr{D}(\textrm{A}))^*)}.
$$
In terms of the Aubin-Lions Lemma (cf. \cite{nguyen2021nonlinear}), we infer that any bounded closed ball of $L^2(0,T;W^{1,2}_{0,\sigma}(\mathcal {O}))\cap W^{\alpha,2}(0,T; (\mathscr{D}(\textrm{A}))^*) $ is compact in $L^2(0,T;L^{2}_{\sigma}(\mathcal {O}))$. Therefore, by choosing a closed ball $B_3(r)$ with radius $r>0$ centered at 0 in $Y_3$, we find
\begin{equation*}
\begin{split}
\mathbb{P}[u_\epsilon \not\in  B_3(r) ]\leq  \frac{4}{r^2} \mathbb{E} \left( \|u_\epsilon\|_{L^2(0,T;W^{1,2}_{0,\sigma})\cap W^{\alpha,2}(0,T; (\mathscr{D}(\textrm{A}))^*)}^2\right) \leq \frac{  C }{r^2}.
\end{split}
\end{equation*}
Then one can finish the proof by choosing $r>0$ large enough such that $\mathbb{P}[u_\epsilon \not\in  B_3(r)] <\eta$. The tightness of $(\mu_{u_\epsilon})_{\epsilon >0} $ on $\left(L^2_{loc}(0,\infty;W^{1,2}(\mathcal {O})),\textrm{weak}\right)$ is obvious due to the bound \eqref{5.1f}.

In conclusion, the joint distribution of the processes $\left(W_\epsilon,\pi_\epsilon,n_\epsilon,c_\epsilon,u_\epsilon\right)_{\epsilon>0}$ is tight on $\mathcal {X}$. The proof of Lemma \ref{lem5.4} is completed.
\end{proof}

From Lemma \ref{5.4},  the Prohorov Theorem (cf. Theorem 5.1 in \cite{billingsley2013convergence}) tells us that $(\Pi_\epsilon)_{\epsilon > 0}$ is weakly compact in probability measure space, which implies that there exists  a measure $\Pi$ on $\mathcal {X}$ such that, for a subsequence $(\epsilon_j)_{j\geq 1}$,
\begin{equation}\label{5.14}
\begin{split}
 \Pi_{\epsilon_j} \overset {\textrm{def}} {=} \mathbb{P}\circ \left(W_{\epsilon_j} ,\pi_{\epsilon_j} ,n_\epsilon,c_{\epsilon_j} ,u_{\epsilon_j} \right)^{-1}~~\rightarrow ~~\Pi \quad\textrm{weaky as} \quad j\rightarrow\infty.
\end{split}
\end{equation}
However, the weak convergence of $u_\epsilon$ to $u$ in distribution is too weak to construct solutions due to the missing of topology structure in random variable $\omega$. Instead, we shall apply the Skorokhod Representation Theorem to upgrade the weak convergence \eqref{5.14} to almost surely  convergence in $\mathcal {X}$ for newly-found random variables but defined on another probability space as a price. To cater to the Banach spaces with weak topology, the generalization of the classical Skorokhod Representation Theorem, called the Jakubowski-Skorokhod Theorem (cf. \cite[Theorem 2]{jakubowski1998almost}),  will be applied in current problem.

\begin{lemma}\label{lem5.5}
There exists a subsequence $(\epsilon_j)_{j\geq 1}$ whose limit is zero  as $j\rightarrow\infty$, a new probability space $(\widehat{\Omega}, \widehat{\mathcal {F}}, \widehat{\mathbb{P}})$, with the associated expectation denoted by $\widehat{\mathbb{E}}$, and $\mathcal {X}$-valued random variables $(\widehat{W}_{\epsilon_j},\widehat{\pi}_{\epsilon_j},\widehat{n}_{\epsilon_j},
\widehat{c}_{\epsilon_j},\widehat{u}_{\epsilon_j})_{j\geq 1}$, $(\widehat{W} ,\widehat{\pi} ,\widehat{n} ,\widehat{c },\widehat{u} )$ such that
\begin{itemize}[leftmargin=0.9cm]
\item [$a)$]  The laws of $ (\widehat{W}_{\epsilon_j},\widehat{\pi}_{\epsilon_j},\widehat{n}_{\epsilon_j},
    \widehat{c}_{\epsilon_j},\widehat{u}_{\epsilon_j})$ and $(\widehat{W} ,\widehat{\pi} ,\widehat{n} ,\widehat{c },\widehat{u} )$ are $\Pi_{\epsilon_j}$ and $\Pi$, respectively;

\item [$b)$] $(\widehat{W}_{\epsilon_j},\widehat{\pi}_{\epsilon_j})=(\widehat{W} ,\widehat{\pi})$ everywhere on $\widehat{\Omega}$;

\item [$c)$]   $(\widehat{W}_{\epsilon_j},\widehat{\pi}_{\epsilon_j},\widehat{n}_{\epsilon_j},
    \widehat{c}_{\epsilon_j},\widehat{u}_{\epsilon_j})$ converges $\widehat{\mathbb{P}}$-a.s. to $(\widehat{W} ,\widehat{\pi} ,\widehat{n} ,\widehat{c },\widehat{u} )$ in the topology of $\mathcal {X}$, that is, $\widehat{W}_{\epsilon_j}=\widehat{W}$, $ \widehat{\pi}_{\epsilon_j}= \widehat{\pi}$, and
\begin{align*}
\widehat{n}_{\epsilon_j}\rightarrow \widehat{n}~~ \textrm{strongly in}~~\mathcal {X}_{n }, \ \ \widehat{c}_{\epsilon_j}\rightarrow \widehat{c}~~\textrm{strongly in}~~\mathcal {X}_{c }, \quad \widehat{u}_{\epsilon_j}\rightarrow \widehat{u} ~~\textrm{strongly in}~ \mathcal {X}_{u },  \ \ \widehat{\mathbb{P}}\textrm{-a.s.};
\end{align*}
\item [$d)$]   $\widehat{W}$ is a cylindrical Wiener process;
$\widehat{\pi}$ is a  time homogeneous Poisson random measure on $\mathscr{B}(Z)\times \mathscr{B}([0,\infty))$ over $(\widehat{\Omega},  \widehat{\mathcal{F}} , \widehat{\mathbb{P}})$ with intensity measure $\textrm{d}\mu \otimes \textrm{d}t$;

\item [$e)$] the quantity $((\widehat{\Omega}, \widehat{\mathcal {F}}, \widehat{\mathfrak{F}}, \widehat{\mathbb{P}}),\widehat{W}_{\epsilon_j},\widehat{\pi}_{\epsilon_j},
    \widehat{n}_{\epsilon_j},\widehat{c}_{\epsilon_j},\widehat{u}_{\epsilon_j})$ with $\widehat{\mathfrak{F}}=(\widehat{\mathcal {F}}_t)_{t\geq 0}$ is a martingale weak solution to the regularized system \eqref{SCNS-1}, which satisfies $\widehat{\mathbb{P}}$-a.s.
{\wuhao
\begin{subequations}
\begin{align}
&\langle \widehat{n}_{\epsilon_j} (t),\varphi\rangle =\langle \widehat{n}_{\epsilon_j} (0),\varphi|_{t=0}\rangle+ \int_0^t \langle \widehat{u}_{\epsilon_j} \widehat{n}_{\epsilon_j}-\nabla \widehat{n}_{\epsilon_j}+ \widehat{n}_{\epsilon_j}\textbf{h}_{\epsilon_j}' (\widehat{n}_{\epsilon_j})\chi(\widehat{c}_{\epsilon_j})\nabla \widehat{c}_{\epsilon_j} ,\nabla\varphi\rangle\mathrm{d}s,\label{5.15a}\\
&\langle \widehat{c}_{\epsilon_j}(t),\varphi\rangle =\langle \widehat{c}_{\epsilon_j}(0),\varphi|_{t=0}\rangle+  \int_0^t \langle \widehat{u}_{\epsilon_j} \widehat{c}_{\epsilon_j}-\nabla \widehat{c}_{\epsilon_j},\nabla\varphi\rangle\mathrm{d}s- \int_0^t\langle\textbf{h}_{\epsilon_j} (\widehat{n}_{\epsilon_j}) f(\widehat{c}_{\epsilon_j}),\varphi\rangle\mathrm{d}s,\label{5.15b}\\
&\langle \widehat{u}_{\epsilon_j}(t),\psi\rangle =\langle \widehat{u}_{\epsilon_j}(0),\psi|_{t=0}\rangle+ \int_0^t\langle  \mathcal {P} (\textbf{Y}_{\epsilon_j } \widehat{u}_{\epsilon_j}\otimes \widehat{u}_{\epsilon_j})-\nabla \widehat{u}_{\epsilon_j} ,\nabla\psi\rangle\mathrm{d}s\label{5.15c}\\
& \quad  + \int_0^t\langle \mathcal {P}(\widehat{n}_{\epsilon_j}\nabla \Phi )+\mathcal {P}h(s,\widehat{u}_{\epsilon_j}),\psi\rangle\mathrm{d}s+ \int_0^t \langle\mathcal {P}g(s,\widehat{u}_{\epsilon_j}) \mathrm{d}\widehat{W}_{\epsilon_j}(s),\psi\rangle \\
& \quad + \int_0^t\int_{Z_0} \langle\mathcal {P}K(\widehat{u}_{\epsilon_j}(s-),z)\widetilde{\widehat{\pi}}_{\epsilon_j}(\mathrm{d}s,\mathrm{d}z),\psi\rangle \notag +  \int_0^t\int_{Z\backslash Z_0} \langle\mathcal {P}G(\widehat{u}_{\epsilon_j}(s-),z) \widehat{\pi}_{\epsilon_j} (\mathrm{d}s,\mathrm{d}z),\psi\rangle.\nonumber
\end{align}
\end{subequations}}
for any $\varphi\in  \mathcal {C}^\infty_{0}(\mathcal {O}\times [0,\infty);\mathbb{R})$, and $\psi\in  \mathcal {C}^\infty_{0}(\mathcal {O}\times [0,\infty);\mathbb{R}^3)$ with $\div \psi=0$.
\end{itemize}
\end{lemma}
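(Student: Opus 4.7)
\medskip

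\noindent\textbf{Proof proposal for Lemma \ref{lem5.5}.} The plan is to upgrade the weak convergence of laws in \eqref{5.14} to an almost-sure convergence on a new probability space, and then transfer the approximate equations to the new basis. First I would verify that the phase space $\mathcal{X}$ is a \emph{quasi-Polish} space in the sense of Jakubowski, i.e.\ it carries a countable family of real-valued continuous functions separating points. For the Banach-space factors with weak or weak-star topology, this uses separability of the corresponding predual together with a countable dense subset of its unit ball; for the Skorokhod space $\mathcal{D}([0,T];(W^{1,5}_{0,\sigma})^*)$ this is classical. Granted this, Lemma \ref{lem5.4} and Jakubowski's version of the Skorokhod theorem (cf.\ \cite{jakubowski1998almost}) yield a new probability space $(\widehat{\Omega},\widehat{\mathcal{F}},\widehat{\mathbb{P}})$, $\mathcal{X}$-valued random variables $(\widehat{W}_{\epsilon_j},\widehat{\pi}_{\epsilon_j},\widehat{n}_{\epsilon_j},\widehat{c}_{\epsilon_j},\widehat{u}_{\epsilon_j})$ and $(\widehat{W},\widehat{\pi},\widehat{n},\widehat{c},\widehat{u})$ with the prescribed laws, which converge $\widehat{\mathbb{P}}$-a.s.\ in $\mathcal{X}$. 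Assertions (a) and (c) are then immediate. For (b), since on the original basis we may take $W_{\epsilon_j}\equiv W$ and $\pi_{\epsilon_j}\equiv \pi$ independently of $j$, the joint law on $\mathcal{X}_W\times\mathcal{X}_\pi$ is concentrated on the diagonal; this diagonal property is preserved by equality of laws, giving $(\widehat{W}_{\epsilon_j},\widehat{\pi}_{\epsilon_j})=(\widehat{W},\widehat{\pi})$ on $\widehat{\Omega}$.

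Next, for (d), I would show that $\widehat{W}$ is a cylindrical Wiener process and $\widehat{\pi}$ is a time-homogeneous Poisson random measure with intensity $\mathrm{d}\mu\otimes\mathrm{d}t$, with respect to the augmented filtration $\widehat{\mathfrak{F}}=(\widehat{\mathcal{F}}_t)_{t\ge 0}$ generated by the relevant increments of $(\widehat{W},\widehat{\pi},\widehat{n}_{\epsilon_j},\widehat{c}_{\epsilon_j},\widehat{u}_{\epsilon_j})$. This follows from a standard Lévy-type characterization: one checks that $\widehat{W}$ has the same finite-dimensional distributions as $W$ using (a), and that increments $\widehat{W}(t)-\widehat{W}(s)$ are independent of $\widehat{\mathcal{F}}_s$ by an equality-of-laws argument via characteristic functions. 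The analogous computation with $\widehat{\pi}$ restricted to Borel sets in $(s,t]\times A$ identifies its Laplace functional.

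The substantive step is (e), the identification of the stochastic integrals. Here I would follow the strategy based on Bensoussan's idea: for each $j$, define the functionals
\begin{align*}
\mathcal{N}_{\epsilon_j}[n,c,u,W,\pi](t,\varphi)&\;=\;\langle n(t),\varphi\rangle-\langle n(0),\varphi\rangle - \int_0^t\langle u n-\nabla n + n\textbf{h}_{\epsilon_j}'(n)\chi(c)\nabla c,\nabla\varphi\rangle\mathrm{d}s,\\
\mathcal{C}_{\epsilon_j}[n,c,u,W,\pi](t,\varphi)&\;=\;\langle c(t),\varphi\rangle-\langle c(0),\varphi\rangle - \int_0^t\langle uc-\nabla c,\nabla\varphi\rangle\mathrm{d}s+\int_0^t\langle \textbf{h}_{\epsilon_j}(n)f(c),\varphi\rangle\mathrm{d}s,
\end{align*}
and the corresponding $\mathcal{U}_{\epsilon_j}[n,c,u,W,\pi](t,\psi)$ for the fluid equation, which includes the Wiener and Poisson stochastic integrals against $W$ and $\pi$. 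By Lemma \ref{lem4.7}, $\mathcal{N}_{\epsilon_j}=\mathcal{C}_{\epsilon_j}=\mathcal{U}_{\epsilon_j}=0$ a.s.\ on the original space. Since the deterministic functionals $\mathcal{N}_{\epsilon_j},\mathcal{C}_{\epsilon_j}$ are measurable with respect to the paths of $(n_{\epsilon_j},c_{\epsilon_j},u_{\epsilon_j})$, equality of laws implies that the corresponding expressions on $\widehat{\Omega}$ vanish $\widehat{\mathbb{P}}$-a.s., yielding \eqref{5.15a}-\eqref{5.15b}. For \eqref{5.15c} one cannot directly plug in $\widehat{W}_{\epsilon_j}$ and $\widehat{\pi}_{\epsilon_j}$ into the stochastic integrals as if they were functionals of the path; instead, one introduces
\begin{equation*}
M_{\epsilon_j}(t,\psi)\;=\;\langle u_{\epsilon_j}(t),\psi\rangle-\langle u_{\epsilon_j}(0),\psi\rangle-\int_0^t\langle\mathcal{P}(\textbf{L}_{\epsilon_j}u_{\epsilon_j}\cdot\nabla)u_{\epsilon_j}+\textrm{A}u_{\epsilon_j}-\mathcal{P}(n_{\epsilon_j}\nabla\Phi)-\mathcal{P}h(s,u_{\epsilon_j}),\psi\rangle\mathrm{d}s,
\end{equation*}
which on the original basis equals the sum of the three stochastic integrals in the $u_{\epsilon_j}$-equation and is therefore a c\`adl\`ag square-integrable martingale with explicit quadratic variation and jump measure given by $g,K,G$. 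Define $\widehat{M}_{\epsilon_j}(t,\psi)$ by the same formula on $\widehat{\Omega}$. By equality of laws, together with a routine martingale-property transfer argument for jointly measurable test functionals of the trajectory (see e.g.\ \cite{brzezniak20132d,chen2019martingale}), $\widehat{M}_{\epsilon_j}(\cdot,\psi)$ is a c\`adl\`ag $\widehat{\mathfrak{F}}$-martingale whose predictable quadratic variation involves $g(s,\widehat{u}_{\epsilon_j})$ and whose compensated jump measure is determined by $K,G$ and $\widehat{\pi}_{\epsilon_j}$. Then a martingale representation theorem for systems driven simultaneously by a cylindrical Wiener process and a Poisson random measure (cf.\ the version in \cite{brzezniak2019weak,chen2019martingale}) identifies
\begin{equation*}
\widehat{M}_{\epsilon_j}(t,\psi)\;=\;\int_0^t\langle\mathcal{P}g(s,\widehat{u}_{\epsilon_j})\mathrm{d}\widehat{W}_{\epsilon_j},\psi\rangle+\int_0^t\int_{Z_0}\langle\mathcal{P}K,\psi\rangle\widetilde{\widehat{\pi}}_{\epsilon_j}(\mathrm{d}s,\mathrm{d}z)+\int_0^t\int_{Z\setminus Z_0}\langle\mathcal{P}G,\psi\rangle\widehat{\pi}_{\epsilon_j}(\mathrm{d}s,\mathrm{d}z),
\end{equation*}
which gives \eqref{5.15c}.

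The main obstacle is the last step: the joint presence of the cylindrical Wiener process \emph{and} the Poisson random measure requires a careful decoupling of the continuous and discontinuous martingale parts of $\widehat{M}_{\epsilon_j}$, since Jakubowski-Skorokhod does not a priori preserve the independence of $W$ and $\pi$ under the transfer. I would handle this by first checking, via characteristic functions, that $\widehat{W}$ and $\widehat{\pi}$ remain independent on $\widehat{\Omega}$ (which follows from independence on the original basis and joint equality of laws), and then applying the representation theorem separately to the continuous martingale part and to the purely discontinuous compensated jump part identified through the random measure of jumps of $\widehat{M}_{\epsilon_j}$.
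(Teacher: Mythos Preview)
Your proposal is essentially correct, but the route you take for part (e) is genuinely different from the paper's. You identify the stochastic integrals via a \emph{martingale-problem} argument: transfer the martingale $\widehat{M}_{\epsilon_j}$ and its predictable characteristics by equality of laws, then invoke a representation theorem for martingales driven jointly by a cylindrical Wiener process and a Poisson random measure. This is a well-trodden approach in the SPDE literature (as in the references you cite), and with the independence check you outline it can be made to work.

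The paper, by contrast, avoids any representation theorem. It follows Bensoussan's idea of turning each stochastic integral into a \emph{deterministic pathwise functional} by approximation, so that equality of laws can be applied directly. Concretely: for the Wiener integral it mollifies the integrand in time and projects $W$ onto a finite-dimensional subspace $U_k$, then integrates by parts in the Riemann--Stieltjes sense so that
\[
\int_0^t \langle [\textbf{g}_k(u_{\epsilon_j})](s)\,\mathrm{d}W^k_{\epsilon_j}(s),\psi\rangle
= \langle [\textbf{g}_k(u_{\epsilon_j})](t)\,W^k_{\epsilon_j}(t),\psi\rangle
- \int_0^t \langle [\textbf{g}_k(u_{\epsilon_j})]'(s)\,W^k_{\epsilon_j}(s),\psi\rangle\,\mathrm{d}s
\]
is a measurable function of the paths $(W_{\epsilon_j},u_{\epsilon_j})$. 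For the compensated Poisson integral it uses the piecing-out/interlacing procedure to reduce to $G\equiv 0$, and then restricts to sets $Z_k\uparrow Z_0$ with $\mu(Z_k)<\infty$, so that the integral against $\widetilde{\pi}$ splits into a finite sum over jumps minus a Lebesgue integral, again a deterministic functional of $(\pi_{\epsilon_j},u_{\epsilon_j})$. Equality of laws then gives
\[
\widehat{\mathbb{E}}\bigl[\mathcal{L}_3^k(\widehat{W}^k_{\epsilon_j},\widehat{\pi}_{k,\epsilon_j},\widehat{n}_{\epsilon_j},\widehat{c}_{\epsilon_j},\widehat{u}_{\epsilon_j})_t\bigr]
= \mathbb{E}\bigl[\mathcal{L}_3^k(W^k_{\epsilon_j},\pi_{k,\epsilon_j},n_{\epsilon_j},c_{\epsilon_j},u_{\epsilon_j})_t\bigr],
\]
and one passes to the limit $k\to\infty$ using BDG and dominated convergence. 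The upshot: the paper's method is more elementary---no representation theorem, no need to verify independence of $\widehat{W}$ and $\widehat{\pi}$, no decoupling of continuous and purely discontinuous parts---at the price of introducing an extra layer of approximation. Your method is more structural and perhaps cleaner conceptually, but the ``main obstacle'' you flag (joint representation with both noise types, and the fact that the filtration is generated by \emph{all} the processes, not just $\widehat{W}$ and $\widehat{\pi}$) is exactly the technical overhead that the paper's approach is designed to bypass.
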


\begin{remark}\label{remark5.6}
By Lemma \ref{lem5.5}, since $ (\widehat{n}_{\epsilon_j}, \widehat{c}_{\epsilon_j},\widehat{u}_{\epsilon_j})$ in the new probability space has the same distributions as for $(n_{\epsilon_j}, c_{\epsilon_j},u_{\epsilon_j})$,  it is straightforward to show that  $ (\widehat{n}_{\epsilon_j}, \widehat{c}_{\epsilon_j},\widehat{u}_{\epsilon_j})$ shares  the same estimates in Lemmas \ref{lem5.1}-\ref{lem5.3}, under the expectation $\widehat{\mathbb{E}} $.
\end{remark}

\begin{proof} [\emph{\textbf{Proof of Lemma \ref{lem5.5}.}}]
 The proof of $a)-d)$ is standard (see e.g., \cite{chen2019martingale,nguyen2021nonlinear,brzezniak2019weak}). For $e)$, the verification of the first two equations in \eqref{SCNS-1} follows directly from the $c)$ of Lemma \ref{lem5.5}, it remains to prove the $u_\epsilon$-equation. For all $t\in[0,T]$ and $\psi\in  \mathcal {C}^\infty_{0}(\mathcal {O}\times [0,\infty);\mathbb{R}^3)$, we define
$$
\mathcal {L}_{3}(W,\pi,n,c,u)_t\overset {\textrm{def}} {=}\mathcal {L}_{31}(n,c,u)_t-\mathcal {L}_{32}(W ,u)_t-\mathcal {L}_{33}( \pi, u)_t,
$$
where
\begin{equation*}
\begin{split}
\mathcal {L}_{31}\left(n,c,u\right)_t&\overset {\textrm{def}} {=} \langle u(t),\psi\rangle-\langle u(0),\psi\rangle+ \int_0^t\langle  \mathcal {P} (\textbf{Y}_{\epsilon_j } u\otimes u) , \nabla \psi\rangle\mathrm{d}s- \int_0^t\langle  \nabla u ,\nabla\psi\rangle\mathrm{d}s\\
& + \int_0^t\langle \mathcal {P}(n\nabla \Phi ),\psi\rangle\mathrm{d}s- \int_0^t\langle \mathcal {P}h(t,u),\psi\rangle \mathrm{d}s, \\
\mathcal {L}_{32}\left(W,u\right)_t&\overset {\textrm{def}} {=} \int_0^t \langle\mathcal {P}g(s,u) \mathrm{d}W_s,\psi\rangle ,\\
\mathcal {L}_{33}\left(\pi,u\right)_t&\overset {\textrm{def}} {=}  \int_0^t  \int_{Z_0} \langle\mathcal {P}K(u(s-),z),\psi\rangle  \widetilde{\pi}(\mathrm{d}s,\mathrm{d}z)+ \int_0^t \int_{Z\backslash Z_0} \langle \mathcal {P}G(u(s-),z),\psi\rangle \pi (\mathrm{d}z)\mathrm{d}s.
\end{split}
\end{equation*}
Since the quantity $\left(W_{\epsilon_j},\pi_{\epsilon_j},n_{\epsilon_j},c_{\epsilon_j},u_{\epsilon_j}\right)$ satisfies \eqref{SCNS-1} in the sense of distribution, it is clear that $
\mathcal {L}_3\left(W_{\epsilon_j},\pi_{\epsilon_j},n_{\epsilon_j},c_{\epsilon_j},
u_{\epsilon_j}\right)_t=0,$ $\mathbb{P}$-a.s.  To complete the proof, it suffices to prove that
$$
\widehat{\mathbb{E}}\left|\mathcal {L}_3 (\widehat{W}_{\epsilon_j},\widehat{\pi}_{\epsilon_j},\widehat{n}_{\epsilon_j},\widehat{c}_{\epsilon_j},
\widehat{u}_{\epsilon_j} )_t\right|=\mathbb{E}\left|\mathcal {L}_3 (W_{\epsilon_j},\pi_{\epsilon_j},n_{\epsilon_j},c_{\epsilon_j},
u_{\epsilon_j} )_t\right|.
$$

The argument will be divided into three parts.

($\textbf{P}_1$)  The absolutely continuous part $\mathcal {L}_{31}$ can be treated as before, and we obtain
\begin{equation}\label{5.16}
\begin{split}
\widehat{\mathbb{E}}\left[\mathcal {L}_{31}(\widehat{n}_{\epsilon_j},\widehat{c}_{\epsilon_j},\widehat{u}_{\epsilon_j})_t\right] = \mathbb{E}\left[\mathcal {L}_{31}(n_{\epsilon_j},c_{\epsilon_j},u_{\epsilon_j})_t\right].
\end{split}
\end{equation}

($\textbf{P}_2$) Since $W_{\epsilon_j}$ is a $U$-valued $\mathfrak{F}$-cylindrical Wiener process, we have the expansion $W_{\epsilon_j}=\sum_{\ell\geq 1} W_{\epsilon_j}^\ell e_\ell$, where $(W_{\epsilon_j}^\ell)_{\ell\geq 1}$ is a family of mutually independent real valued Wiener processes and $(e_\ell)_{\ell\geq1}$ is a complete orthogonal sequence in $U$. For each $n\in\mathbb{N}$, let us denote by $\mathscr{P}_n$ the orthogonal projector from $U$ onto $U_n\overset{\textrm{def}}{=}\textrm{span}\{e_1,e_2,\cdots,e_n\}$. Then, one can define a Wiener process $W_{\epsilon_j}^n\overset{\textrm{def}}{=} \mathscr{P}_n W_{\epsilon_j}= \sum_{1\leq\ell\leq n } W_{\epsilon_j}^\ell e_\ell$ on the finite-dimensional Hilbert space $U_n$.

     Let $\rho_k(t)$  be a standard mollifier, then we define a time smoothing function
\begin{equation*}
\begin{split}
[\textbf{g}_k(u_{\epsilon_j})](t)\overset{\textrm{def}}{=}  \int_0^t \rho_k(t-s) \mathcal {P}g(s,u_{\epsilon_j}(s)) \mathscr{P}_k\mathrm{d}s,\quad \forall k\in \mathbb{N}.
\end{split}
\end{equation*}
Apparently, $\textbf{g}_k(u_{\epsilon_j})$ belongs to $L^2(\Omega; L^2(0,T;\mathcal {L}_2(U; L^2_\sigma(\mathcal {O}))))$, which is of course a bounded variation function for  $t\in [0,T]$. Moreover, the properties of mollifiers imply that
\begin{equation}\label{5.17}
\begin{split}
  \|\textbf{g}_k(u_{\epsilon_j})\|_{L^2(\Omega; L^2(0,T;\mathcal {L}_2(U; L^2_\sigma)))} \leq C  \|\mathcal {P}g(\cdot,u_{\epsilon_j}(\cdot))\|_{L^2(\Omega; L^2(0,T;\mathcal {L}_2(U; L^2_\sigma)))},
\end{split}
\end{equation}
and
\begin{equation}\label{5.18}
\begin{split}
 \textbf{g}_k(u_{\epsilon_j})\rightarrow  \mathcal {P}g(s,u_{\epsilon_j}(s))~~ \textrm{in} ~~ L^2\left(\Omega; L^2\left(0,T;\mathcal {L}_2(U; L^2_\sigma(\mathcal {O}))\right)\right),~~\textrm{as} ~k\rightarrow \infty.
\end{split}
\end{equation}
Note that the finite-dimensional Wiener process $W_{\epsilon_j}^k$ is continuous in time, by making use of the integrating by parts for the Riemann-Stieltjes integral, we gain
\begin{equation}\label{5.19}
\begin{split}
\mathcal {L}_{32}^k\left(W_{\epsilon_j}^k,u_{\epsilon_j}\right)_t& \overset{\textrm{def}}{=} \int_0^t \langle[ \textbf{g}_k(u_{\epsilon_j})](s) \mathrm{d}W_{\epsilon_j}^k(s),\psi\rangle\\
& =\langle[ \textbf{g}_k(u_{\epsilon_j})](t) W_{\epsilon_j}^k(t),\psi\rangle - \int_0^t \langle[ \textbf{g}_k(u_{\epsilon_j})]'(s) W_{\epsilon_j}^k(s)\mathrm{d} s,\psi\rangle,
\end{split}
\end{equation}
which can be viewed as a deterministic functional of $(W_{\epsilon_j}^k,u_{\epsilon_j})$. On the other hand, note that $\int_0^t \langle[ \textbf{g}_k(u_{\epsilon_j})](s) \mathrm{d}W_{\epsilon_j}^k(s),\psi\rangle=\int_0^t \langle[ \textbf{g}_k(u_{\epsilon_j})](s) \mathrm{d}W_{\epsilon_j}(s),\psi\rangle$, by virtue of the BDG inequality, the properties   \eqref{5.17}-\eqref{5.18} and the Dominated Convergence Theorem,  we have
\begin{equation}\label{5.20}
\begin{split}
&\mathbb{E}\left|\mathcal {L}_{32}\left(W_{\epsilon_j},u_{\epsilon_j}\right)_t-\mathcal {L}_{32}^k\left(W_{\epsilon_j}^k,u_{\epsilon_j}\right)_t\right|\\
&\quad \leq C \mathbb{E}\left( \int_0^t \|\mathcal {P}g(s,u_{\epsilon_j}(s))-[ \textbf{g}_k(u_{\epsilon_j})](s)\|_{\mathcal {L}_2(U; L^2_\sigma)} ^2 \mathrm{d} s\right)^{\frac{1}{2}}\rightarrow 0, \quad \textrm{as} ~k\rightarrow \infty.
\end{split}
\end{equation}

($\textbf{P}_3$) Now let us identify the stochastic integrals with jumps by borrowing some ideas from Cyr et al. \cite{cyr2020review,cyr2018euler}. Note that by employing the well-known piecing out argument (or interlacing procedure) (cf. \cite{ikea1966construction,applebaum2009levy,cyr2018euler}), it suffices to consider the L\'{e}vy process possesses with jumps of small size, that is,
$G\equiv 0$ in $\mathcal {L}_{33}\left(\pi_{\epsilon_j},u_{\epsilon_j}\right)_t$. To this end, we choose subsets $\left(Z_{k}\right)_{k\geq1} \subset Z_{0} \overset{\textrm{def}}{=} \{z \in Z;~Z_0\}$ such that $Z_{k} \uparrow Z_{0}$ and $\mu\left(Z_{k}\right)<\infty$, for all $ k\in \mathbb{N}$. For each $k \in \mathbb{N}$, $\pi_{k} \overset{\textrm{def}}{=} \left.\pi\right|_{[0, \infty) \times Z_{k}}$ is a Poisson random measure on $Z$ corresponding to a $\left(\mathcal{F}_{t}\right)$-Poisson point process, and the intensity measure of $\pi_{k}$ is given by $\textrm{d}\mu_{k} \otimes \textrm{d}t$ with $\mu_{k} \overset{\textrm{def}}{=} \left.\mu\right|_{Z_{k}}$ which is a finite measure. Define
\begin{equation*}
\begin{split}
&\mathcal {L}_{33}^k\left(\pi_{k,\epsilon_j},u_{\epsilon_j}\right)_t \overset{\textrm{def}}{=}   \int_0^t  \int_{Z_{0}}\langle\mathcal {P}K(u_{\epsilon_j}(s-),z),\psi\rangle \textbf{1}_{Z_k} (z)\widetilde{\pi}_{k,\epsilon_j} (\mathrm{d}s,\mathrm{d}z) \\
&\quad=  \int_0^t  \int_{Z_k} \langle\mathcal {P}K(u_{\epsilon_j}(s-),z),\psi\rangle \pi_{k,\epsilon_j} (\mathrm{d}s,\mathrm{d}z)- \int_0^t  \int_{Z_k} \langle\mathcal {P}K(u_{\epsilon_j}(s-),z),\psi\rangle \mu_k  (\mathrm{d}z)\mathrm{d}s,
\end{split}
\end{equation*}
which can be viewed as a deterministic functional of $(\pi_{k,\epsilon_j},u_{\epsilon_j})$. Moreover, in view of the facts of  $Z_{k} \uparrow Z_{0}$ and $\pi_{k,\epsilon_j}=\pi_{\epsilon_j}|_{Z_k}$, we deduce from the BDG inequality and the Dominated Convergence Theorem that
\begin{equation}\label{5.21}
\begin{split}
&\mathbb{E} \left|\mathcal {L}_{33}\left(\pi_{\epsilon_j},u_{\epsilon_j}\right)_t-\mathcal {L}_{33}^k\left(\pi_{k,\epsilon_j},u_{\epsilon_j}\right)_t\right|\\
&\quad=\mathbb{E} \left| \int_0^t  \int_{Z_{0}} \textbf{1}_{Z_0\backslash Z_k}(z)\langle\mathcal {P}K(u_{\epsilon_j}(s-),z),\psi\rangle  \widetilde{\pi}_{\epsilon_j}(\mathrm{d}s,\mathrm{d}z) \right|\\
&\quad\leq C \mathbb{E} \left( \int_0^t  \int_{Z_0\backslash Z_k}  \|\mathcal {P}K(u_{\epsilon_j}(s-),z) \|_{L^2}^2 \mu(\mathrm{d}z)\mathrm{d}s \right)^{\frac{1}{2}}\rightarrow 0, \quad \textrm{as} ~k\rightarrow 0.
\end{split}
\end{equation}
From $(\textbf{P}_1)$-$(\textbf{P}_3)$, for each $k\in \mathbb{N}$, one can introduce
\begin{equation}\label{sss}
\begin{split}
&\mathcal {L}_{3}^k\left(W_{\epsilon_j}^k,\pi_{k,\epsilon_j},n_{\epsilon_j},c_{\epsilon_j},u_{\epsilon_j}\right)_t\\
&\quad \overset{\textrm{def}}{=} \mathcal {L}_{31}\left( n_{\epsilon_j},c_{\epsilon_j},u_{\epsilon_j}\right)_t-\mathcal {L}_{32}^k\left(W _{\epsilon_j}^k,u_{\epsilon_j}\right)_t-\mathcal {L}_{33}^k\left( \pi_{k,\epsilon_j}, u_{\epsilon_j}\right)_t \\
&\quad= \mathcal {L}_{31}\left(n_{\epsilon_j},c_{\epsilon_j},u_{\epsilon_j}\right)_t-\langle[ \textbf{g}_k(u_{\epsilon_j})](t) W_{\epsilon_j}^k(t),\psi\rangle - \int_0^t\langle [ \textbf{g}_k(u_{\epsilon_j})]'(s) W_{\epsilon_j}^k(s),\psi\rangle\mathrm{d} s\\
&\quad+ \int_0^t  \int_{Z_k} \langle\mathcal {P}K(u_{\epsilon_j}(s-),z),\psi_{\epsilon_j}\rangle \pi_{k,\epsilon_j}  (\mathrm{d}s,\mathrm{d}z)- \int_0^t  \int_{Z_k} \langle\mathcal {P}K(u_{\epsilon_j}(s-),z),\psi\rangle \mu_k (\mathrm{d}z)\mathrm{d}s,
\end{split}
\end{equation}
which can be viewed as a deterministic functional defined on $\mathcal {X}$. Moreover, it follows from the estimates  \eqref{5.16}, \eqref{5.20} and \eqref{5.21} that
\begin{equation}\label{5.22}
\begin{split}
&\mathbb{E} \left|\mathcal {L}_{3}\left(W_{\epsilon_j},\pi_{\epsilon_j},n_{\epsilon_j},c_{\epsilon_j},u_{\epsilon_j}\right)_t-\mathcal {L}_{3}^k\left(W_{\epsilon_j}^k,\pi_{k,\epsilon_j},n_{\epsilon_j},c_{\epsilon_j},u_{\epsilon_j}\right)_t\right|\\
\quad&\leq \mathbb{E} \left|\mathcal {L}_{32}(W_{\epsilon_j}, u_{\epsilon_j})_t-\mathcal {L}_{32}^k(W^k_{\epsilon_j}, u_{\epsilon_j})_t\right|+\mathbb{E} \left|\mathcal {L}_{33}(\pi_{\epsilon_j},u_{\epsilon_j})_t-\mathcal {L}_{33}^k(\pi_{k,\epsilon_j},u_{\epsilon_j})_t\right|\\
 &\rightarrow 0, \quad \textrm{as} ~k\rightarrow 0.
\end{split}
\end{equation}
By the same reasoning, the random variable
$$
\mathcal {L}_{3}^k  \left(\widehat{W}_{\epsilon_j}^k,\widehat{\pi}_{k,\epsilon_j},\widehat{n}_{\epsilon_j},\widehat{c}_{\epsilon_j},
\widehat{u}_{\epsilon_j}\right)_t \overset{\textrm{def}}{=} \mathcal {L}_{31}\left( \widehat{n}_{\epsilon_j},\widehat{c}_{\epsilon_j},\widehat{u}_{\epsilon_j}\right)_t-\mathcal {L}_{32}^k\left(\widehat{W}_{\epsilon_j}^k ,\widehat{u}_{\epsilon_j}\right)_t-\mathcal {L}_{33}^k\left( \widehat{\pi}_{k,\epsilon_j}, \widehat{u}_{\epsilon_j}\right)_t,
$$
where
$\mathcal {L}_{3}^k  (\cdot)$ provided in \eqref{sss} is also a deterministic function on  $\mathcal {X}$ such that
\begin{equation}\label{5.23}
\begin{split}
\widehat{\mathbb{E}}\left|\mathcal {L}_{3}   \left(\widehat{W}_{\epsilon_j},\widehat{\pi}_{\epsilon_j},\widehat{n}_{\epsilon_j},\widehat{c}_{\epsilon_j},
\widehat{u}_{\epsilon_j}\right)_t-\mathcal {L}_{3}^k  \left(\widehat{W}_{\epsilon_j}^k,\widehat{\pi}_{k,\epsilon_j},\widehat{n}_{\epsilon_j},\widehat{c}_{\epsilon_j},
\widehat{u}_{\epsilon_j}\right)_t\right| \rightarrow 0, \quad \textrm{as} ~k\rightarrow 0.
\end{split}
\end{equation}
Since $(\widehat{W}_{\epsilon_j},\widehat{\pi}_{\epsilon_j},\widehat{n}_{\epsilon_j},\widehat{c}_{\epsilon_j},
\widehat{u}_{\epsilon_j})$ and $(W_{\epsilon_j},\pi_{\epsilon_j},n_{\epsilon_j},c_{\epsilon_j},u_{\epsilon_j})$ has the same distribution, it follows from the definition of $\mathcal {L}_{3} ^k(\cdot)$ that
\begin{equation*}
\begin{split}
\widehat{\mathbb{E}} \left[\mathcal {L}_{3} ^k  \left(\widehat{W}_{\epsilon_j}^k,\widehat{\pi}_{k,\epsilon_j},\widehat{n}_{\epsilon_j},\widehat{c}_{\epsilon_j},
\widehat{u}_{\epsilon_j}\right)_t\right]=  \mathbb{E}\left[ \mathcal {L}_{3} ^k \left(W_{\epsilon_j}^k,\pi_{k,\epsilon_j},n_{\epsilon_j},c_{\epsilon_j},u_{\epsilon_j}\right)_t\right],
\end{split}
\end{equation*}
which combined with the convergence \eqref{5.22} and \eqref{5.23} leads to the desired equality \eqref{5.15c}. Therefore, the quantity $(\widehat{W}_{\epsilon_j},\widehat{\pi}_{\epsilon_j},\widehat{n}_{\epsilon_j},\widehat{c}_{\epsilon_j},
\widehat{u}_{\epsilon_j})$ satisfies the $u_\epsilon$-equation in \eqref{SCNS-1}. The proof of Lemma \ref{lem5.5} is now completed.
\end{proof}

\subsection{Recovering original SPDEs}

Now we have all in hand to complete the proof of Theorem \ref{thm}. We begin by turning the probability space $(\widehat{\Omega},\widehat{\mathcal {F}} ,\widehat{\mathbb{P}})$ into a stochastic basis
$(\widehat{\Omega},\widehat{\mathcal {F}},\widehat{\mathfrak{F}},\widehat{\mathbb{P}},\widehat{W} ,\widehat{\pi})$ with natural filtration $\widehat{\mathfrak{F}}=\{\widehat{\mathcal {F}}_t\}_{t\geq0}$, that is, all the relevant processes with respect to the smallest filtration  are adapted,
$$
\widehat{\mathcal {F}}_t\overset{\textrm{def}}{=}\sigma \left ( \sigma \left (\widehat{n}|_{[0,t]},\widehat{c}|_{[0,t]},\widehat{u}|_{[0,t]},\widehat{W}|_{[0,t]},\widehat{\pi}|_{[0,t]} \right)\bigcup \left\{N\in \widehat{\mathcal {F}};~ \widehat{\mathbb{P}}(N)=0\right\}\right)   ,\quad t \in [0,T].
$$

\begin{proof} [\emph{\textbf{Proof of Theorem \ref{thm}.}}] The proof is long and will be divided into several steps.

\textsc{Step 1 (Identification of $n$-equation).} In the first step, let us identify the limit for $n$-equation. To  this end, we consider functionals
\begin{equation}\label{s11}
\begin{split}
\textbf{m}_{\epsilon_j}(\widehat{n}_{\epsilon_j},\widehat{c}_{\epsilon_j},\widehat{u}_{\epsilon_j})_t
&\overset {\textrm{def}} {=} \langle\widehat{n}_{\epsilon_j} (0),\psi\rangle- \int_0^t \langle\widehat{u}_{\epsilon_j}\widehat{n}_{\epsilon_j},\nabla\psi\rangle \mathrm{d}s\\
&  - \int_0^t \langle\nabla \widehat{n}_{\epsilon_j} ,\nabla\psi\rangle\mathrm{d}s-  \int_0^t\left\langle \widehat{n}_{\epsilon_j}\textbf{h}_{\epsilon_j}' (\widehat{n}_{\epsilon_j})\chi(\widehat{c}_{\epsilon_j})\nabla \widehat{c}_{\epsilon_j} ,\nabla \psi\right\rangle\mathrm{d}s ,
\end{split}
\end{equation}
and
\begin{equation}\label{s12}
\begin{split}
 &\textbf{m}(\widehat{n} ,\widehat{c} ,\widehat{u} )_t
\overset {\textrm{def}} {=}\langle\widehat{n}  (0),\psi\rangle- \int_0^t \langle\widehat{u} \widehat{n} ,\nabla\psi\rangle \mathrm{d}s  - \int_0^t \langle\nabla \widehat{n}  ,\nabla\psi\rangle\mathrm{d}s- \int_0^t\left\langle \widehat{n}  \chi(\widehat{c} )\nabla \widehat{c} ,\nabla \psi\right\rangle\mathrm{d}s,
\end{split}
\end{equation}
where $\left(\widehat{n}_{\epsilon_j},\widehat{c}_{\epsilon_j},\widehat{u}_{\epsilon_j}\right)$ and $\left(\widehat{n} ,\widehat{c} ,\widehat{u} \right)$ are the processes constructed in Lemma \ref{lem5.5}.

We \textbf{Claim} that, for all $\psi\in   \mathcal {C}^\infty_{0}(\mathcal {O}\times [0,\infty);\mathbb{R})$,
\begin{subequations}
\begin{align}
 &\lim\limits_{j\rightarrow\infty}\widehat{\mathbb{E}}\left[ \int_0^T\left|\langle\widehat{n}_{\epsilon_j}(t)-\widehat{n}  (t),\psi\rangle\right|     \mathrm{d}t\right]=0, \label{5.24a} \\
 & \lim\limits_{j\rightarrow\infty}\widehat{\mathbb{E}}\left[ \int_0^T
 \left|\textbf{m}_{\epsilon_j}(\widehat{n}_{\epsilon_j},\widehat{c}_{\epsilon_j},\widehat{u}_{\epsilon_j})_t
 -\textbf{m}(\widehat{n} ,\widehat{c} ,\widehat{u} )_t,\psi\rangle\right|    \mathrm{d}t\right]=0.\label{5.24b}
\end{align}
\end{subequations}
\textsc{Proof of \eqref{5.24a}.} Observing that, by  $c)$ of Lemma \ref{lem5.5}, $\widehat{n}_{\epsilon_j}\rightarrow \widehat{n}$ weakly in $L_{loc}^{\frac{5}{3}}(0,\infty;L^{ \frac{5}{3}}(\mathcal {O}))$ $\widehat{\mathbb{P}}$-a.s. For each $T>0$, we have
\begin{equation*}
\begin{split}
& \int_0^T \int_\mathcal {O} \widehat{n}_{\epsilon_j}^{\frac{5}{3}}(x,t) \mathrm{d} x \mathrm{d} t=  \int_0^\infty \int_\mathcal {O} 1_{\mathcal {O}\times (0,T)} \widehat{n}_{\epsilon_j}^{\frac{5}{3}}(x,t) \mathrm{d} x \mathrm{d} t\\
&\quad \rightarrow  \int_0^\infty \int_\mathcal {O} 1_{\mathcal {O}\times (0,T)} \widehat{n} ^{\frac{5}{3}}(x,t) \mathrm{d} x \mathrm{d} t= \int_0^T \int_\mathcal {O} \widehat{n} ^{\frac{5}{3}}(x,t) \mathrm{d} x \mathrm{d} t \quad \textrm{as}~ j\rightarrow\infty,~~\widehat{\mathbb{P}} \textrm{-a.s},
\end{split}
\end{equation*}
which implies that
\begin{equation}\label{5.25}
\begin{split}
\widehat{n}_{\epsilon_j}\rightarrow \widehat{n} \quad \textrm{strongly in} \quad L_{loc}^{\frac{5}{3}}(0,\infty;L^{ \frac{5}{3}}(\mathcal {O})) \quad \textrm{as} ~ j\rightarrow\infty,~~\widehat{\mathbb{P}} \textrm{-a.s}.
\end{split}
\end{equation}
By utilizing \eqref{5.25} and the Young inequality, we gain
\begin{equation*}
\begin{split}
 &  \int_0^T\left|\langle\widehat{n}_{\epsilon_j}(t)-\widehat{n}  (t),\psi\rangle\right|^{\frac{5}{3}}     \mathrm{d}t \leq \|\psi\|_{L^\infty(0,T;L^{\frac{5}{2}})}^{\frac{5}{3}} \int_0^T\|\widehat{n}_{\epsilon_j}(t)-\widehat{n}  (t)\|_{L^{\frac{5}{3}}}^{\frac{5}{3}}      \mathrm{d}t\rightarrow 0 \quad \textrm{as} ~ j\rightarrow\infty,~~\widehat{\mathbb{P}} \textrm{-a.s}.
\end{split}
\end{equation*}
Moreover, since $\widehat{n}_{\epsilon_j}$ satisfies the uniform bound \eqref{5.11}, we have for any $p\geq1$ that
\begin{equation*}
\begin{split}
 & \widehat{\mathbb{E}}\left( \int_0^T\left|\langle\widehat{n}_{\epsilon_j}(t)-\widehat{n}  (t),\psi\rangle\right|^{\frac{5}{3}}     \mathrm{d}t\right)^p\\
  & \quad \leq  \|\psi\|_{L^\infty(0,T;L^{\frac{5}{2}})}^{\frac{5p}{3}} \left[ \widehat{\mathbb{E}}\left( \int_0^T\| \widehat{n}_{\epsilon_j}(t)\|_{L^\frac{5}{3}}^{\frac{5}{3}}   \mathrm{d}t\right)^p +\widehat{\mathbb{E}}\left( \int_0^T\|\widehat{n}  (t)\|_{L^\frac{5}{3}}^{\frac{5}{3}} \mathrm{d}t\right)^p\right] \leq C,
\end{split}
\end{equation*}
which implies by \eqref{5.11} that the sequence $ \int_0^T|\langle\widehat{n}_{\epsilon_j}(t)-\widehat{n}  (t),\psi\rangle|^{\frac{5}{3}}     \mathrm{d}t$ is uniformly integrable. Hence by applying the following Vitali Convergence Theorem that will be frequently used in the sequel,  we infer that \eqref{5.24a} holds.
\begin{lemma}(\cite[Theorem 5.12]{kallenberg1997foundations})
Let $\left(f_n\right)_{n\geq 1}$ be a sequence of integrable functions defined on the probability space $(\Omega, \mathcal {F}, \mathbb{P})$, such that $f_n \rightarrow f$  a.e. as $n \rightarrow \infty$ (or $f_n \rightarrow f$ in probability) for some integrable function $f$. Assume that there exist a $r>1$ and a constant $C>0$ such that $\mathbb{E}\left|f_n\right|^{r} \leq C$ for all $n \in \mathbb{N}$. Then $\mathbb{E}\left|f_n\right| \rightarrow \mathbb{E} \left|f\right|$ as $n \rightarrow \infty$.
\end{lemma}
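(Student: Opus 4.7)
The plan is to reduce the statement to the classical Vitali convergence theorem by first upgrading the $L^{r}$-boundedness ($r>1$) to uniform integrability of the family $(|f_{n}|)_{n\geq 1}$, and then exploiting the a.e.\ (or in probability) convergence together with a truncation argument. The key quantitative fact is the Chebyshev-type bound
\begin{equation*}
\int_{\{|f_{n}|>M\}}|f_{n}|\,\mathrm{d}\mathbb{P}\leq M^{1-r}\,\mathbb{E}|f_{n}|^{r}\leq CM^{1-r},\qquad \forall\,M>0,
\end{equation*}
which tends to $0$ as $M\to\infty$ \emph{uniformly in} $n$. This already gives uniform integrability of $(f_{n})$. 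Combined with Fatou's lemma applied to $|f|^{r}$ along an a.e.-convergent subsequence, we also obtain $\mathbb{E}|f|^{r}\leq C$, so $f\in L^{r}\subset L^{1}$ and the same truncation bound holds for $f$.

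First I would treat the case of a.e.\ convergence. Fix $\varepsilon>0$ and choose $M>0$ so large that the uniform-integrability tail bound above is below $\varepsilon/3$ for every $n$ and for $f$. For the truncated functions $f_{n}^{M}\overset{\text{def}}{=}(|f_{n}|\wedge M)\,\mathrm{sgn}(f_{n})$ and $f^{M}$, we have $|f_{n}^{M}-f^{M}|\leq 2M$ and $f_{n}^{M}\to f^{M}$ a.e., so the bounded convergence theorem yields $\mathbb{E}|f_{n}^{M}-f^{M}|\to 0$. Splitting
\begin{equation*}
\bigl|\mathbb{E}|f_{n}|-\mathbb{E}|f|\bigr|\leq \mathbb{E}|f_{n}-f_{n}^{M}|+\mathbb{E}|f_{n}^{M}-f^{M}|+\mathbb{E}|f^{M}-f|
\end{equation*}
and taking $n\to\infty$ then $\varepsilon\to 0$ gives $\mathbb{E}|f_{n}|\to\mathbb{E}|f|$. (Equivalently, one can quote the standard Vitali theorem directly, since uniform integrability plus a.e.\ convergence gives $L^{1}$-convergence, and $L^{1}$-convergence implies convergence of the $L^{1}$-norms.)

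For the case of convergence in probability, I would argue by the sub-subsequence principle. Given any subsequence $(f_{n_{k}})$, by standard measure theory there is a further subsequence $(f_{n_{k_{j}}})$ converging to $f$ $\mathbb{P}$-a.s. The previous paragraph applies to this subsubsequence, yielding $\mathbb{E}|f_{n_{k_{j}}}|\to\mathbb{E}|f|$. Since every subsequence of the real sequence $(\mathbb{E}|f_{n}|)$ admits a further subsequence converging to the same limit $\mathbb{E}|f|$, the whole sequence converges to $\mathbb{E}|f|$.

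The argument is essentially routine and the only mildly delicate point is the passage from convergence in probability to convergence of the expectations; the sub-subsequence trick handles this cleanly, so there is no genuine obstacle. I would note that the hypothesis $r>1$ is essential only through the Chebyshev inequality that provides the uniform integrability; the integrability of the limit $f$ follows automatically and need not be assumed separately.
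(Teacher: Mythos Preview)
Your argument is correct and follows the standard route: $L^{r}$-boundedness with $r>1$ yields uniform integrability via the Chebyshev tail bound, Vitali (or the truncation-plus-bounded-convergence version you wrote out) then gives $L^{1}$-convergence under a.e.\ convergence, and the sub-subsequence trick upgrades convergence in probability to the same conclusion. Note, however, that the paper does not prove this lemma at all --- it is simply quoted from \cite[Theorem 5.12]{kallenberg1997foundations} as a tool to be applied later --- so there is no ``paper's own proof'' to compare against; your proposal is a perfectly acceptable self-contained justification of the cited fact.
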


\noindent
\textsc{Proof of \eqref{5.24b}.} Note that, we get by using the Stochastic Fubini Theorem
\begin{equation*}
\begin{split}
 &\widehat{\mathbb{E}}  \int_0^T
 \left|\textbf{m}_{\epsilon_j}(\widehat{n}_{\epsilon_j},\widehat{c}_{\epsilon_j},\widehat{u}_{\epsilon_j})_t
 -\textbf{m}(\widehat{n} ,\widehat{c} ,\widehat{u} )_t,\psi\rangle\right|      \mathrm{d}t \\
 &\quad = \int_0^T\widehat{\mathbb{E}}
  \left|\textbf{m}_{\epsilon_j}(\widehat{n}_{\epsilon_j},\widehat{c}_{\epsilon_j},\widehat{u}_{\epsilon_j})_t
 -\textbf{m}(\widehat{n} ,\widehat{c} ,\widehat{u} )_t,\psi\rangle\right|    \mathrm{d}t.
\end{split}
\end{equation*}
It  suffices to prove that each term on the R.H.S. of \eqref{s11} converges to the corresponding term on the R.H.S. of \eqref{s12} in $L^{1}( \Omega \times (0,T))$.
Similar to \eqref{5.24a}, noting that $\widehat{n}$ is right-continuous at $t=0$, we first have
\begin{equation}\label{5.28}
\begin{split}
   \widehat{\mathbb{E}} \left[|\langle\widehat{n}_{\epsilon_j} (0)- \widehat{n} (0),\psi|_{t=0}\rangle| \right] =0.
\end{split}
\end{equation}
By $c)$ of Lemma \ref{lem5.5} we have $\nabla\widehat{ n}_{\epsilon_j}\rightarrow \nabla\widehat{ n}$ weakly  in $L_{loc}^{\frac{5}{4}}(0,\infty;L^{ \frac{5}{4}}(\mathcal {O}))$, $\mathbb{P}$-a.s., which implies
\begin{equation}\label{5.29}
\begin{split}
 \int_0^T \langle\nabla \widehat{n}_{\epsilon_j} ,\nabla\psi\rangle\mathrm{d}s\rightarrow \int_0^T \langle\nabla \widehat{n}  ,\nabla\psi\rangle\mathrm{d}s \quad \textrm{as} ~ j\rightarrow\infty,~~\widehat{\mathbb{P}} \textrm{-a.s}.
\end{split}
\end{equation}
By uniform bounds \eqref{5.1f}, \eqref{5.1g} and the GN inequality in Lemma \ref{nirenberg}, we have
\begin{equation*}
\begin{split}
\widehat{\mathbb{E}}\left( \int_0^T \|\widehat{u}_{\epsilon_j}(t) \|_{L^{\frac{10}{3}}}^{\frac{10}{3}}     \mathrm{d}t\right)^p &\leq C\widehat{\mathbb{E}} \left[ \int_0^T \left(\|\nabla \widehat{u}_{\epsilon_j}(t) \|_{L^2}^2 \| \widehat{u}_{\epsilon_j}(t) \|_{L^2}^{\frac{4}{3}}+ \| \widehat{u}_{\epsilon_j}(t) \|_{L^2}^{\frac{10}{3}}\right )    \mathrm{d}t\right]^p\\
 &\leq C\widehat{\mathbb{E}} \left( \sup_{t\in [0,T]}\| \widehat{u}_{\epsilon_j}(t) \|_{L^2}^{\frac{10p}{3}}+1\right) +C\widehat{\mathbb{E}} \left(  \int_0^T \|\nabla \widehat{u}_{\epsilon_j}(t) \|_{L^2}^2     \mathrm{d}t\right)^{2p} \leq C,
\end{split}
\end{equation*}
which implies that $\widehat{u}_{\epsilon_j}$ is uniformly bounded in $L^p( \Omega; L^{\frac{10}{3}}(0,T;L^{\frac{10}{3}}(\mathcal {O}) ))$, and so it follows from the Banach-Alaoglu Theorem that there exists a subsequence of $\{\widehat{u}_{\epsilon_j}\}_{j\geq1}$ (still denoted by itself) such that
\begin{equation}\label{5.30}
\begin{split}
 \widehat{u}_{\epsilon_j} \rightarrow \widehat{u} \quad \textrm{weakly in} \quad L^p\left( \Omega; L^{\frac{10}{3}}(0,T;L^{\frac{10}{3}}(\mathcal {O}) )\right),~~ j\rightarrow\infty.
\end{split}
\end{equation}
In virtue of \eqref{5.30}, \eqref{5.1g} and \eqref{5.11}, it follows from the H\"{o}lder inequality that
$$
\widehat{\mathbb{E}}\left(\|\widehat{u}_{\epsilon_j} \widehat{n }_{\epsilon_j}\|_{ L^{1}(0,T;L^{1}  )}^p\right) \leq C\widehat{\mathbb{E}}\left(\|\widehat{u}_{\epsilon_j} \|_{ L^{\frac{10}{3}}(0,T;L^{\frac{10}{3}}  )}^{2p}\right)+C\widehat{\mathbb{E}}\left(\| \widehat{n }_{\epsilon_j}\|_{ L^{\frac{5}{3}}(0,T;L^{\frac{5}{3}}  )}^{2p}\right)\leq C,
$$
which shows that $\widehat{u}_{\epsilon_j} \widehat{n }_{\epsilon_j}$ is uniformly bounded in $L^p\left( \Omega; L^{1}(0,T;L^{1}(\mathcal {O}) )\right)$. By applying the Banach-Alaoglu Theorem again, one can extract a
subsequence of $(\widehat{u}_{\epsilon_j} \widehat{n }_{\epsilon_j})_{j\geq1}$ (still denoted by itself) such that
\begin{equation}\label{5.31}
\begin{split}
 \widehat{u}_{\epsilon_j} \widehat{n }_{\epsilon_j}\rightarrow \widehat{u}\widehat{n}\quad \textrm{weakly in} \quad L^p\left( \Omega; L^{1}(0,T;L^{1}(\mathcal {O}) )\right) ~~ \textrm{as} ~~ j\rightarrow\infty,
\end{split}
\end{equation}
which infers that $\widehat{u}\widehat{n} \in L^{1}(0,T;L^{1}(\mathcal {O}))$ $\widehat{\mathbb{P}}$-a.s., and
$
\widehat{\mathbb{E}}| \int_0^t \langle\widehat{u}_{\epsilon_j}\widehat{n}_{\epsilon_j}-\widehat{u}\widehat{n},\nabla\psi\rangle \mathrm{d}s| \rightarrow 0$ as $j\rightarrow\infty$.
By \eqref{5.31} and the Dominated Convergence Theorem, we gain
\begin{equation}\label{5.32}
\begin{split}
\lim\limits_{j\rightarrow\infty} \int_0^T\widehat{\mathbb{E}}\left| \int_0^t \langle\widehat{u}_{\epsilon_j}\widehat{n}_{\epsilon_j}-\widehat{u}\widehat{n},\nabla\psi\rangle \mathrm{d}s\right|  \mathrm{d}t=0.
\end{split}
\end{equation}
Now we show that
\begin{equation}\label{5.33}
\begin{split}
 \widehat{\mathbb{E}}\left| \int_0^t \langle\widehat{n}_{\epsilon_j}\textbf{h}_{\epsilon_j}' (\widehat{n}_{\epsilon_j})\chi(\widehat{c}_{\epsilon_j}) \nabla\widehat{c}_{\epsilon_j} -  \widehat{n} \chi(\widehat{c} ) \nabla\widehat{c} ,\nabla\psi \rangle \mathrm{d}s \right| \rightarrow 0~~ \textrm{as} ~ j\rightarrow\infty.
\end{split}
\end{equation}
To do that, we make a decomposition
\begin{equation*}
\begin{split}
&\textbf{h}_{\epsilon_j}' (\widehat{n}_{\epsilon_j})\chi(\widehat{c}_{\epsilon_j}) \widehat{c}_{\epsilon_j}^{\frac{3}{4}}- \chi(\widehat{c} ) \widehat{c} ^{\frac{3}{4}}= \left(\textbf{h}_{\epsilon_j}' (\widehat{n}_{\epsilon_j})-1\right) \chi(\widehat{c}_{\epsilon_j}) \widehat{c}_{\epsilon_j}^{\frac{3}{4}}
\\
&\quad +  \left( \widehat{c}_{\epsilon_j}-\widehat{c} \right) \chi'\left(\theta\widehat{c}_{\epsilon_j}+(1-\theta) \widehat{c}\right)\widehat{c}_{\epsilon_j}^{\frac{3}{4}}
+  \left( \widehat{c}_{\epsilon_j}^{\frac{3}{4}}-\widehat{c} ^{\frac{3}{4}} \right) \chi(\widehat{c} ) ,
\end{split}
\end{equation*}
where the Mean Value Theorem is applied for some $\theta\in (0,1)$. To verify \eqref{5.33}, let us first show the convergence
\begin{equation}\label{5.36}
\begin{split}
\textbf{h}_{\epsilon_j}' (\widehat{n}_{\epsilon_j})\chi(\widehat{c}_{\epsilon_j}) \widehat{c}_{\epsilon_j}^{\frac{3}{4}} \rightarrow \chi(\widehat{c} ) \widehat{c} ^{\frac{3}{4}}\quad \textrm{strongly in}\quad L^{\frac{20}{3}}\left(0,T; L^{\frac{20}{3}}(\mathcal {O})\right)~~ \textrm{as} ~ j\rightarrow\infty,~~\widehat{\mathbb{P}}\textrm{-a.s.}
\end{split}
\end{equation}
The proof is divided into three steps:
\begin{itemize}[leftmargin=0.9cm]
\item [$\bullet$] From the definition of $\textbf{h}_{\epsilon_j} $ and $\widehat{n}_{\epsilon_j}\in L^\infty(0,T;L^\infty(\mathcal {O}))$ $\widehat{\mathbb{P}}$-a.a., we infer  that $\textbf{h}_{\epsilon_j}' (\widehat{n}_{\epsilon_j})-1=- \frac{\epsilon_j \widehat{n}_{\epsilon_j}}{1+\epsilon_j \widehat{n}_{\epsilon_j}}\rightarrow 0$ as $j\rightarrow\infty$ $\widehat{\mathbb{P}}\otimes \mathrm{d}t\otimes\mathrm{d}x$-a.s., which together with $ \|\chi(\widehat{c}_{\epsilon_j}) \widehat{c}_{\epsilon_j}^{\frac{3}{4}}\|_{L^\infty(0,T;L^\infty )} \leq C$ $\widehat{\mathbb{P}}$-a.s. (see \eqref{(4.5)})   and the Dominated Convergence Theorem yields that
\begin{equation*}
\begin{split}
 \left\|\left(\textbf{h}_{\epsilon_j}' (\widehat{n}_{\epsilon_j})-1\right) \chi(\widehat{c}_{\epsilon_j}) \widehat{c}_{\epsilon_j}^{\frac{3}{4}} \right\|_{L^{\frac{20}{3}}(0,T; L^{\frac{20}{3}} )} \rightarrow 0~~\textrm{ as} ~~j\rightarrow\infty,~~ \widehat{\mathbb{P}} \textrm{-a.s.}
\end{split}
\end{equation*}

\item [$\bullet$] Using the boundedness $\|\chi'(\theta\widehat{c}_{\epsilon_j}+(1-\theta) \widehat{c})\widehat{c}_{\epsilon_j}^{\frac{3}{4}} \| _{ L^\infty([0,T],L^\infty)  }\leq C$ $\widehat{\mathbb{P}}$-a.s. (see \eqref{(4.5)}),  we have
\begin{equation*}
\begin{split}
 \left\|\left( \widehat{c}_{\epsilon_j}-\widehat{c} \right) \chi'\left(\theta\widehat{c}_{\epsilon_j}+(1-\theta) \widehat{c}\right)\widehat{c}_{\epsilon_j}^{\frac{3}{4}}  \right\|_{L^{\frac{20}{3}}(0,T; L^{\frac{20}{3}} )} \rightarrow 0~~\textrm{ as} ~~j\rightarrow\infty,~~ \widehat{\mathbb{P}} \textrm{-a.s.}
\end{split}
\end{equation*}

\item [$\bullet$] Since by Lemma \ref{lem5.5}  $\widehat{c}_{\epsilon_j}\rightarrow\widehat{ c}$ strongly  in  $L^2_{loc}(0,\infty;L^{2}(\mathcal {O}))$ $\widehat{\mathbb{P}}$-a.s., we have $\widehat{c}_{\epsilon_j}\rightarrow\widehat{ c}$ strongly  in  $L^2\left(\Omega;L^2_{loc}(0,\infty;L^{2}(\mathcal {O}))\right)$, due to the uniform bound in Lemma \ref{lem5.1} and the Vitali Convergence Theorem. Hence, there exists a subsequence denoted by itself such that
\begin{equation}\label{5.34}
\begin{split}
\widehat{c}_{\epsilon_j}\rightarrow\widehat{ c}\quad \widehat{\mathbb{P}}\otimes \mathrm{d}t\otimes\mathrm{d}x \textrm{-a.a.},~~  \textrm{as}~~ j\rightarrow\infty.
\end{split}
\end{equation}
In terms of \eqref{5.34} and the continuity of $\chi(\cdot)$, we have $|\widehat{c}_{\epsilon_j}^{\frac{3}{4}}\chi(\widehat{c}_{\epsilon_j})|^{\frac{20}{3}}\rightarrow |\widehat{c} ^{\frac{3}{4}}\chi(\widehat{c} )|^{\frac{20}{3}} $ $\widehat{\mathbb{P}}\otimes \mathrm{d}t\otimes\mathrm{d}x$-a.a. as $j\rightarrow\infty$. It follows from the boundedness of $\widehat{c}$ and $ \widehat{c}_{\epsilon_j}$ as well as the Dominated Convergence Theorem that
\begin{equation}\label{5.35}
\begin{split}
 \int_0^T \int_\mathcal {O}|\widehat{c}_{\epsilon_j}^{\frac{3}{4}}\chi(\widehat{c}_{\epsilon_j})|^{\frac{20}{3}}\mathrm{d}x \mathrm{d} t \rightarrow  \int_0^T \int_\mathcal {O} |\widehat{c} ^{\frac{3}{4}}\chi(\widehat{c} )|^{\frac{20}{3}} \mathrm{d}x \mathrm{d}t ~~\textrm{ as} ~~j\rightarrow\infty,~~ \widehat{\mathbb{P}} \textrm{-a.s.}
\end{split}
\end{equation}
Moreover, we conclude from $c)$ of Lemma \ref{lem5.5} that
$$
\widehat{c}_{\epsilon_j}^{\frac{3}{4}}\chi(\widehat{c}_{\epsilon_j})\rightarrow \widehat{c} ^{\frac{3}{4}}\chi(\widehat{c} )\quad  \textrm{weakly-star in} \quad L^\infty_{loc}(0,\infty;L^\infty(\mathcal {O}))~~\textrm{ as} ~~j\rightarrow\infty,~~ \widehat{\mathbb{P}} \textrm{-a.s.},
$$
and hence weakly in $L^{\frac{20}{3}} (0,T; L^{\frac{20}{3}}(\mathcal {O})  )$ $\widehat{\mathbb{P}}$-a.s.,  which together with \eqref{5.35} implies
$$
\left\|\left( \widehat{c}_{\epsilon_j}^{\frac{3}{4}}-\widehat{c} ^{\frac{3}{4}} \right) \chi(\widehat{c} ) \right\|_{L^{\frac{20}{3}}(0,T; L^{\frac{20}{3}} )} \rightarrow 0~~\textrm{ as} ~~j\rightarrow\infty,~~ \widehat{\mathbb{P}} \textrm{-a.s.}
$$
This proves \eqref{5.36}.
\end{itemize}
Note that
$$
\widehat{n}_{\epsilon_j}\textbf{h}_{\epsilon_j}' (\widehat{n}_{\epsilon_j})\chi(\widehat{c}_{\epsilon_j})\nabla \widehat{c}_{\epsilon_j}=4\widehat{n}_{\epsilon_j}\left(\textbf{h}_{\epsilon_j}' (\widehat{n}_{\epsilon_j})\chi(\widehat{c}_{\epsilon_j}) \widehat{c}_{\epsilon_j}^{\frac{3}{4}}\right) \nabla \widehat{c}_{\epsilon_j}^{\frac{1}{4}},
$$
we get from \eqref{5.25}, \eqref{5.36}  and \eqref{5.1d} that
\begin{equation}\label{5.37}
\begin{split}
\widehat{n}_{\epsilon_j}\textbf{h}_{\epsilon_j}' (\widehat{n}_{\epsilon_j})\chi(\widehat{c}_{\epsilon_j})\nabla \widehat{c}_{\epsilon_j} \rightarrow \widehat{n} \chi(\widehat{c} ) \nabla\widehat{c} \quad \textrm{weakly in}\quad L^{1}(0,T; L^{1}(\mathcal {O}))~~ \textrm{as} ~ j\rightarrow\infty,~~\widehat{\mathbb{P}}\textrm{-a.s.}
\end{split}
\end{equation}
Moreover, for any $p\geq1$, Lemma \ref{lem5.1} implies that
\begin{equation*}
\begin{split}
\widehat{\mathbb{E}}\left[ \int_0^T \langle\widehat{n}_{\epsilon_j}\textbf{h}_{\epsilon_j}' (\widehat{n}_{\epsilon_j})\chi(\widehat{c}_{\epsilon_j})\nabla \widehat{c}_{\epsilon_j} ,\nabla\psi \rangle\mathrm{d}t\right]^p &\leq C \widehat{\mathbb{E}}\left[ \int_0^T \|\widehat{n}_{\epsilon_j} \|_{L^1} \mathrm{d}t\right]^p\\
&\leq C \widehat{\mathbb{E}}\left[ \int_0^T \|\widehat{n}_{\epsilon_j} \|^{\frac{5}{3}}_{L^{\frac{5}{3}}} \mathrm{d}t\right]^{\frac{3p}{5}}\leq C,
\end{split}
\end{equation*}
which together with \eqref{5.37} and the Vitali Convergence Theorem leads to \eqref{5.33}. Hence we get
\begin{equation}\label{5.38}
\begin{split}
\lim\limits_{j\rightarrow\infty}  \int_0^T\widehat{\mathbb{E}}\left| \int_0^t \left\langle\widehat{n}_{\epsilon_j}\textbf{h}_{\epsilon_j}' (\widehat{n}_{\epsilon_j})\chi(\widehat{c}_{\epsilon_j}) \nabla\widehat{c}_{\epsilon_j} -  \widehat{n} \chi(\widehat{c} ) \nabla\widehat{c} ,\nabla\psi\right\rangle \mathrm{d}s \right|=0.
\end{split}
\end{equation}

According to  \eqref{5.28}, \eqref{5.29}, \eqref{5.32}, \eqref{5.38}, and noting that $(\widehat{n}_{\epsilon_j},\widehat{c}_{\epsilon_j},\widehat{u}_{\epsilon_j})$ is a martingale weak solution to \eqref{SCNS-1},  we gain
\begin{equation*}
\begin{split}
 \widehat{\mathbb{E}}  \int_0^T\left|\langle\widehat{n} (t)
  ,\psi\rangle-\textbf{m} (\widehat{n} ,\widehat{c} ,\widehat{u} )_t\right|     \mathrm{d}t =\lim\limits_{j\rightarrow\infty}\widehat{\mathbb{E}}  \int_0^T\left|\langle\widehat{n}_{\epsilon_j}(t)
  ,\psi\rangle-\textbf{m}_{\epsilon_j}(\widehat{n}_{\epsilon_j},\widehat{c}_{\epsilon_j},\widehat{u}_{\epsilon_j})_t\right|     \mathrm{d}t  =0,
\end{split}
\end{equation*}
which implies that
\begin{equation*}
\begin{split}
\langle\widehat{n} (t)
  ,\psi\rangle=\textbf{m} (\widehat{n} ,\widehat{c} ,\widehat{u} )_t,\quad \textrm{for a.e.}~ t\in [0,T] ,~~ \widehat{\mathbb{P}}\textrm{ -a.s.}
\end{split}
\end{equation*}
Note  the fact that if two c\`{a}dl\`{a}g functions equal for a.e. $t \in [0, T ]$, they must be equal for all $t\in[0, T ]$. Thus  the $n$-equation  holds for all $t \in [0, T ]$, $\widehat{\mathbb{P}}$-a.s. The proof for $n$-equation is now completed.

\textsc{Step 2 (Identification of $c$-equation).}  The identification of limit for the $c$-equation can be treated in a same manner as that for $n$-equation, where the main difficulty lies upon proving that $\widehat{u}_{\epsilon_j}\widehat{c}_{\epsilon_j} \rightarrow \widehat{u}\widehat{c}$ strongly in $ L^1(0,T;L^1(\mathcal {O})) $  and
$\textbf{h}_{\epsilon_j}(\widehat{n}_{\epsilon_j}) f(\widehat{c}_{\epsilon_j})\rightarrow \widehat{n} f(\widehat{c} )$ strongly in $L^1(0,T;L^1(\mathcal {O})) $ as $j\rightarrow\infty$, $\widehat{\mathbb{P}}$-a.s. Here we only verify the later one for saving the space.

Indeed, we first get by continuity of $f(\cdot)$ and Lemma \ref{lem5.5} that
\begin{equation}\label{5.39}
\begin{split}
f(\widehat{c}_{\epsilon_j})\rightarrow f(\widehat{c} )\quad \textrm{weakly-star}\quad L^\infty(0,T;L^\infty(\mathcal {O}))~~ \textrm{as} ~ j\rightarrow\infty,~~\widehat{\mathbb{P}}\textrm{-a.s.}
\end{split}
\end{equation}
It follows from the Lipschitz continuity property of $\textbf{h}_{\epsilon_j}(\cdot)$ that
\begin{equation*}
\begin{split}
 \|\textbf{h}_{\epsilon_j}(\widehat{n}_{\epsilon_j})- \widehat{n} \|_{L^{\frac{5}{4}}(0,T;L^{\frac{5}{4}})} & \leq  \|\textbf{h}_{\epsilon_j}(\widehat{n}_{\epsilon_j})- \textbf{h}_{\epsilon_j}(\widehat{n} ) \|_{L^{\frac{5}{4}}(0,T;L^{\frac{5}{4}})}  + \|\textbf{h}_{\epsilon_j}(\widehat{n} )- \widehat{n} \|_{L^{\frac{5}{4}}(0,T;L^{\frac{5}{4}})} \\
& \leq  \| \widehat{n}_{\epsilon_j} -  \widehat{n} \|_{L^{\frac{5}{4}}(0,T;L^{\frac{5}{4}})}  + \|\textbf{h}_{\epsilon_j}(\widehat{n} )- \widehat{n} \|_{L^{\frac{5}{4}}(0,T;L^{\frac{5}{4}})}.
\end{split}
\end{equation*}
For the first term,  we get by $c)$ of Lemma \ref{lem5.5} that $\| \widehat{n}_{\epsilon_j} -  \widehat{n} \|_{L^{\frac{5}{4}}(0,T;L^{\frac{5}{4}})} \rightarrow 0$ as $j\rightarrow\infty$ $\widehat{\mathbb{P}}$-a.s. For the second term, we first use the property $ \textbf{h}_{\epsilon_j}(s)\rightarrow s$ as $j\rightarrow\infty$ to get $\textbf{h}_{\epsilon_j}(\widehat{n} )\rightarrow \widehat{n} $ $\widehat{\mathbb{P}}\otimes \mathrm{d}t\otimes\mathrm{d}x$-a.a.  as $j\rightarrow\infty$. Moreover, we get by the uniform bound \eqref{5.1d} that $\|\textbf{h}_{\epsilon_j}(\widehat{n} ) \|_{L^{\frac{5}{4}}(0,T;L^{\frac{5}{4}})}\leq \| \widehat{n}   \|_{L^{\frac{5}{4}}(0,T;L^{\frac{5}{4}})} <\infty$ $\widehat{\mathbb{P}}$-a.s. Therefore, we get by Dominated Convergence Theorem that
\begin{equation}\label{5.40}
\begin{split}
\textbf{h}_{\epsilon_j}(\widehat{n}_{\epsilon_j})\rightarrow\widehat{n} \quad\textrm{strongly in} \quad L^{\frac{5}{4}}(0,T;L^{\frac{5}{4}}(\mathcal {O}))~~\textrm{ as} ~~j\rightarrow\infty,~~ \widehat{\mathbb{P}} \textrm{-a.s.}
\end{split}
\end{equation}
Note that \eqref{5.39} also infer that $f(\widehat{c}_{\epsilon_j})\rightarrow f(\widehat{c} )$ weakly in $ L^5(0,T;L^5(\mathcal {O}))$ as $j\rightarrow\infty$, $\widehat{\mathbb{P}}$-a.s., which combined with \eqref{5.40} leads to the desired result.

\textsc{Step 3 (Identification of $u$-equation).} For any $\psi\in  \mathcal {C}^\infty_{0}(\mathcal {O}\times [0,\infty);\mathbb{R}^3)$ with $\div \psi=0$, we define
\begin{equation*}
\begin{split}
&\textbf{k}_{\epsilon_j}(\widehat{n}_{\epsilon_j},\widehat{c}_{\epsilon_j},\widehat{u}_{\epsilon_j})_t
\overset {\textrm{def}} {=}\langle\widehat{u}_{\epsilon_j}(0),\psi\rangle
+ \int_0^t\langle\mathcal {P} (\textbf{L}_{\epsilon_j} \widehat{u}_{\epsilon_j}\otimes \widehat{u}_{\epsilon_j}),\nabla\psi\rangle\mathrm{d}s- \int_0^t\langle \nabla \widehat{u}_{\epsilon_j},\nabla\psi\rangle\mathrm{d}s\\
&\quad+ \int_0^t\langle\mathcal {P}(\widehat{n}_{\epsilon_j}\nabla \Phi ),\psi\rangle\mathrm{d}s+ \int_0^t\langle\mathcal {P}h(s,\widehat{u}_{\epsilon_j}) ,\psi\rangle\mathrm{d}s+ \int_0^t\langle\mathcal {P}g(s,\widehat{u}_{\epsilon_j}) \mathrm{d}\widehat{W}_{\epsilon_j}(s),\psi\rangle\\
&\quad + \int_0^t \int_{Z_0} \langle\mathcal {P}K(\widehat{u}_\epsilon(s-),z) ,\psi\rangle \widetilde{\widehat{\pi}}_{\epsilon_j}(\mathrm{d}s,\mathrm{d}z) + \int_0^t \int_{Z\backslash Z_0} \langle\mathcal {P}G(\widehat{u}_\epsilon(s-),z) ,\psi\rangle\widehat{\pi}_{\epsilon_j}(\mathrm{d}z)\mathrm{d}s,
\end{split}
\end{equation*}
and
\begin{equation*}
\begin{split}
&\textbf{k}(\widehat{n},\widehat{c},\widehat{u})_t
\overset {\textrm{def}} {=}\langle\widehat{u}(0),\psi\rangle
+ \int_0^t\langle\mathcal {P} \widehat{u}\otimes\widehat{u},\nabla\psi\rangle\mathrm{d}s- \int_0^t\langle \nabla \widehat{u},\nabla\psi\rangle\mathrm{d}s+ \int_0^t\langle\mathcal {P}(\widehat{n}\nabla \Phi ),\psi\rangle\mathrm{d}s\\
&\quad+ \int_0^t\langle\mathcal {P}h(s,\widehat{u}) ,\psi\rangle\mathrm{d}s+ \langle \int_0^t\mathcal {P}g(s,\widehat{u}) \mathrm{d}\widehat{W}_s,\psi\rangle + \int_0^t \int_{Z_0} \langle\mathcal {P}K(\widehat{u}(s-),z) ,\psi\rangle \widetilde{\widehat{\pi}}(\mathrm{d}s,\mathrm{d}z)\\
& \quad+ \int_0^t \int_{Z\backslash Z_0} \langle\mathcal {P}G(\widehat{u}(s-),z) ,\psi\rangle\widehat{\pi}(\mathrm{d}z)\mathrm{d}s.
\end{split}
\end{equation*}
Apparently, since $(\widehat{n}_{\epsilon_j},\widehat{c}_{\epsilon_j},\widehat{u}_{\epsilon_j})$ is a martingale weak solution to the system \eqref{SCNS-1},  there holds $\langle\widehat{u}_{\epsilon_j}(t),\psi\rangle=\textbf{k}_{\epsilon_j}(\widehat{n}_{\epsilon_j},
\widehat{c}_{\epsilon_j},\widehat{u}_{\epsilon_j})_t$ for all $t\in [0,T]$ $\widehat{\mathbb{P}}$-a.s. In particular,
\begin{equation}\label{5.41}
\begin{split}
\widehat{\mathbb{E}}  \int_0^T\left|\langle\widehat{u}_{\epsilon_j}(t)
  ,\psi\rangle-\textbf{k}_{\epsilon_j}(\widehat{n}_{\epsilon_j},
  \widehat{c}_{\epsilon_j},\widehat{u}_{\epsilon_j})_t\right| ^2    \mathrm{d}t  =0.
\end{split}
\end{equation}
The \textbf{Key points} is to prove that, for any $\psi\in \mathcal {C}_0^\infty(\mathcal {O}\times (0,T);\mathbb{R}^3)$ with $\div \psi =0$,
\begin{subequations}
\begin{align}
 &\lim\limits_{j\rightarrow\infty}\widehat{\mathbb{E}}  \int_0^T\left|\langle\widehat{u}_{\epsilon_j}(t)-\widehat{u}  (t),\psi\rangle\right|^2     \mathrm{d}t =0,  \label{5.42a}\\
 & \lim\limits_{j\rightarrow\infty}\widehat{\mathbb{E}}  \int_0^T
 \left|\langle\textbf{k}_{\epsilon_j}(\widehat{n}_{\epsilon_j},\widehat{c}_{\epsilon_j},\widehat{u}_{\epsilon_j})_t
 -\textbf{k}(\widehat{n} ,\widehat{c} ,\widehat{u} )_t,\psi\rangle\right|^2    \mathrm{d}t =0.\label{5.42b}
\end{align}
\end{subequations}
Indeed, if \eqref{5.42a}-\eqref{5.42b}  hold, then we infer from \eqref{5.41} that
\begin{equation*}
\begin{split}
 \widehat{\mathbb{E}}  \int_0^T\left|\langle\widehat{u} (t)
  ,\psi\rangle-\textbf{k} (\widehat{n} ,\widehat{c} ,\widehat{u} )_t\right|^2     \mathrm{d}t =\lim\limits_{j\rightarrow\infty}\widehat{\mathbb{E}}  \int_0^T\left|\langle\widehat{u}_{\epsilon_j}(t)
  ,\psi\rangle-\textbf{k}_{\epsilon_j}(\widehat{n}_{\epsilon_j},\widehat{c}_{\epsilon_j},
  \widehat{u}_{\epsilon_j})_t\right|^2      \mathrm{d}t =0,
\end{split}
\end{equation*}
which indicates that $\langle\widehat{u} (t)
,\psi\rangle=\textbf{k} (\widehat{n} ,\widehat{c} ,\widehat{u} )_t$ holds for a.e. $t\in [0,T]$ $\widehat{\mathbb{P}}$-a.s. Since two c\`{a}dl\`{a}g functions  must be equal for all $t\in[0, T ]$ if they are equal for a.e.  $t \in [0, T ]$, we see that $\widehat{u} (\cdot)$ is a martingale solution for the $u$-equation for all $t \in [0, T ]$, $\widehat{\mathbb{P}}$-a.s.
 In the sequel, let us prove \eqref{5.42a} and \eqref{5.42b} respectively.

\textsc{Proof of \eqref{5.42a}.}  By $c)$ of Lemma \ref{lem5.5}, we have
\begin{equation*}
\begin{split}
 \int_0^T \left|\langle\widehat{u}_{\epsilon_j}(t)-\widehat{u}  (t),\psi\rangle\right|^2  \mathrm{d}t
\leq  \|\psi\|_{L^\infty(0,T;L^2)}^2  \int_0^T\| \widehat{u}_{\epsilon_j}(t)-\widehat{u}  (t)\|_{ L^2 }^2 \mathrm{d}t  \rightarrow 0 \quad \textrm{as} ~ j\rightarrow\infty,~~\widehat{\mathbb{P}}\textrm{-a.s}.
\end{split}
\end{equation*}
Moreover, it follows from \eqref{5.1f} that
$$
\widehat{\mathbb{E}}\left( \int_0^T \left|\langle\widehat{u}_{\epsilon_j}(t)-\widehat{u}  (t),\psi\rangle\right|^2  \mathrm{d}t\right)^p \leq C,\quad \textrm{for any}~ p\geq1,
$$
which indicates the uniform integrability of $ \int_0^T \left|\langle\widehat{u}_{\epsilon_j}(t)-\widehat{u}  (t),\psi\rangle\right|^2  \mathrm{d}t$, and hence by Vitali Convergence Theorem we get \eqref{5.42a}.

\textsc{Proof of  \eqref{5.42b}.}  For the linear terms involved in $\textbf{q} (\widehat{n} ,\widehat{c} ,\widehat{u} )_t$, similar to Step 1, by using the Lemma \ref{lem5.5} and Lemma \ref{lem5.1}, one can prove that
\begin{align}
 &\lim\limits_{j\rightarrow\infty} \widehat{\mathbb{E}} \left[|\langle\widehat{u}_{\epsilon_j} (0)- \widehat{n} (0),\psi|_{t=0}\rangle|^2 \right] =0,\label{5.43}\\
&\lim\limits_{j\rightarrow\infty} \int_0^T\widehat{\mathbb{E}}\left| \int_0^t \langle\nabla \widehat{u}_{\epsilon_j}-\nabla \widehat{u} ,\nabla\psi\rangle \mathrm{d}s\right|^2  \mathrm{d}t=0,\label{5.44}\\
&\lim\limits_{j\rightarrow\infty} \int_0^T\widehat{\mathbb{E}}\left| \int_0^t \langle\mathcal {P}(\widehat{n}_{\epsilon_j}\nabla \Phi )-\mathcal {P}(\widehat{n} \nabla \Phi ) , \psi\rangle \mathrm{d}s\right|^2  \mathrm{d}t=0.\label{5.45}
\end{align}
It remains to deal with the remaining nonlinear terms and the stochastic integrals involved in $\textbf{q} (\widehat{n} ,\widehat{c} ,\widehat{u} )_t$. The proof will be divided into two parts.

\textsc{Part 1}: We show the convergence of nonlinear terms, that is,
\begin{align}
&\lim\limits_{j\rightarrow\infty} \int_0^T\widehat{\mathbb{E}}\left| \int_0^t\left\langle\textbf{L}_{\epsilon_j} \widehat{u}_{\epsilon_j}\otimes \widehat{u}_{\epsilon_j}-\widehat{u} \otimes \widehat{u}, \nabla\psi \right\rangle\mathrm{d}s  \right|^2\mathrm{d} t=0,\label{5.49}\\
& \lim\limits_{j\rightarrow\infty} \int_0^T\widehat{\mathbb{E}}\left| \int_0^t\left\langle h(t,\widehat{u}_{\epsilon_j})-h(t,\widehat{u} ), \nabla\psi \right\rangle\mathrm{d}s  \right|^2\mathrm{d} t=0.\label{5.51}
\end{align}

The convergence of the convection term is based on the following fact:
\begin{equation}\label{5.46}
\begin{split}
 \textbf{L}_{\epsilon_j} \widehat{u}_{\epsilon_j}\otimes \widehat{u}_\epsilon\rightarrow \widehat{u} \otimes \widehat{u}\quad \textrm{strongly in}\quad L^1(0,T;L^1(\mathcal {O}))~~ \textrm{as} ~ j\rightarrow\infty,~~\widehat{\mathbb{P}}\textrm{-a.s.}
\end{split}
\end{equation}
Indeed, it suffices to prove that $\textbf{L}_{\epsilon_j} \widehat{u}_{\epsilon_j} \rightarrow \widehat{u}$ strongly in $L^2(0,T;L^2(\mathcal {O}))$ as $j\rightarrow\infty$ $\widehat{\mathbb{P}}$-a.s., and $\widehat{u}_{\epsilon_j} \rightarrow \widehat{u}$ strongly in $L^2(0,T;L^2(\mathcal {O}))$ as $j\rightarrow\infty$ $\widehat{\mathbb{P}}$-a.s. The later conclusion is ensured by $c)$ of Lemma \ref{lem5.5}, and it remains to prove the former one.

Recalling the following properties:
\begin{equation}\label{wc}
\begin{split}
 \| \textbf{L}_\epsilon f\|_{L^2}\leq \| f\|_{L^2},~~ \| \textbf{L}_\epsilon f-f\|_{L^2}\rightarrow 0  ~~ \textrm{as} ~~ \epsilon\rightarrow 0, \quad \textrm{for any} ~~f\in L^2_\sigma(\mathcal {O}).
\end{split}
\end{equation}
We have
\begin{equation}\label{5.47}
\begin{split}
 \int_0^T \|\textbf{L}_{\epsilon_j}  \widehat{u}_{\epsilon_j}(t)-\widehat{u} (t)\|_{ L^2 }^2 \mathrm{d} t \leq  \int_0^T\| \widehat{u}_{\epsilon_j}(t)-\widehat{u}(t) \|_{ L^2 } ^2 \mathrm{d} t + \int_0^T\|\textbf{L}_{\epsilon_j}  \widehat{u}(t) -\widehat{u}(t) \|_{ L^2 } ^2\mathrm{d} t.
\end{split}
\end{equation}
The first term on the R.H.S. of \eqref{5.47} converges to zero almost surely according to Lemma \ref{lem5.5}. For the second term, on the one hand we get by \eqref{wc}  that $\| \textbf{L}_\epsilon  \widehat{u}(t) -\widehat{u}(t) \|_{ L^2 } ^2\rightarrow 0$ for $ \widehat{\mathbb{P}}\otimes\mathrm{d} t$-almost all $(\omega,t)\in \widehat{\Omega}\times[0,T]$; on the other hand, we get by \eqref{5.1f}
\begin{equation*}
\begin{split}
  \int_0^T\|\textbf{L}_{\epsilon_j}  \widehat{u}(t) -\widehat{u}(t) \|_{ L^2 } ^2\mathrm{d} t &\leq C \left(\|\textbf{L}_{\epsilon_j}  \widehat{u} \|_{L^2(0,T; L^2) }^{2} +\|\widehat{u}  \|_{ L^2(0,T; L^2)}^{2} \right) \\
 & \leq C \|\widehat{u} \|_{ L^2(0,T; L^2)}^{2}  <\infty, ~~\widehat{\mathbb{P}}\textrm{-a.s.}
\end{split}
\end{equation*}
An application of the Vitali Convergence Theorem leads to $ \int_0^T\|\textbf{L}_{\epsilon_j}  \widehat{u}(t) -\widehat{u}(t) \|_{ L^2 } ^2\mathrm{d} t\rightarrow 0$ $\widehat{\mathbb{P}}$-a.s. Therefore,
$$
 \int_0^T\|\textbf{L}_{\epsilon_j} \widehat{u}_{\epsilon_j} (t) -\widehat{u}(t) \|_{ L^2 } ^2\mathrm{d} t\rightarrow 0, ~~\textrm{as} ~~j\rightarrow\infty,~~\widehat{\mathbb{P}}\textrm{-a.s.}
$$
This proves  \eqref{5.46}.

Moreover, by uniform bound \eqref{5.1f}, we get for $p\geq4$
\begin{equation*}
\begin{split}
\widehat{\mathbb{E}}\left| \int_0^t\left\langle\textbf{L}_{\epsilon_j} \widehat{u}_{\epsilon_j}\otimes \widehat{u}_{\epsilon_j}-\widehat{u} \otimes \widehat{u}, \nabla\psi \right\rangle\mathrm{d}s  \right|^p \leq C \widehat{\mathbb{E}} \left(\|\widehat{u}_{\epsilon_j}\|^p_{L^2(0,T;L^2)}+\|\widehat{u} \|^p_{L^2(0,T;L^2)}\right)\leq C,
\end{split}
\end{equation*}
which infers the uniform integrability of $| \int_0^t\left\langle\textbf{L}_{\epsilon_j} \widehat{u}_{\epsilon_j}\otimes \widehat{u}_{\epsilon_j}-\widehat{u} \otimes \widehat{u}, \nabla\psi \right\rangle\mathrm{d}s|^2$. In view of \eqref{5.46} and the Vitali Convergence Theorem, we get
\begin{equation}\label{5.48}
\begin{split}
\widehat{\mathbb{E}}\left| \int_0^t\left\langle\textbf{L}_{\epsilon_j} \widehat{u}_{\epsilon_j}\otimes \widehat{u}_{\epsilon_j}-\widehat{u} \otimes \widehat{u}, \nabla\psi \right\rangle\mathrm{d}s  \right|^2 \rightarrow 0 \quad \textrm{as} \quad j\rightarrow\infty.
\end{split}
\end{equation}
Making use of the Dominated Convergence Theorem, \eqref{5.48} implies \eqref{5.49}.

Since $(\widehat{u}_{\epsilon_j})_{j\geq 1}$ is uniformly bounded in $L^4\left(\Omega;L^\infty_{loc}(0,\infty;L^2_\sigma(\mathcal {O}) )\right)$,  the sequence $(\|\widehat{u}_{\epsilon_j}\|_{L^2(0,T;L^2)}^2)_{j\geq 1}$ is uniformly integrable, and the Vitali Convergence Theorem infers that
\begin{equation}\label{5.50}
\begin{split}
\widehat{u}_{\epsilon_j}\rightarrow  \widehat{u}\quad \textrm{strongly in} \quad L^2\left(\Omega;L^2_{loc}(0,\infty;(L^2_\sigma(\mathcal {O}) ))^3\right)~~\textrm{as}~~j\rightarrow\infty.
\end{split}
\end{equation}
Based on \eqref{5.50}, in view of the assumptions on $h$ and $g$, we get
\begin{equation*}
\begin{split}
h(t,\widehat{u}_{\epsilon_j})\rightarrow  h(t,\widehat{u} )\quad \textrm{strongly in}\quad L^2(\Omega;L^2_{loc}(0,\infty;L^2_\sigma(\mathcal {O}) ))~~\textrm{as}~~j\rightarrow\infty,
\end{split}
\end{equation*}
which combined with the Dominated Convergence Theorem leads to \eqref{5.51}.

\textsc{Part 2}: Let us deal with the convergence of stochastic integrals. From \eqref{5.50} and the assumtions on $g$, $K$ and $G$, there hold
\begin{align}
g(t,\widehat{u}_{\epsilon_j})\rightarrow  g(t,\widehat{u} )\quad\textrm{strongly in} \quad L^2\left(\Omega;L^2_{loc}(0,\infty;\mathcal {L}_2(U;L^2_\sigma(\mathcal {O})) )\right)~~\textrm{as}~~j\rightarrow\infty,\label{5.52}
\end{align}
and
\begin{align}
&\lim\limits_{j\rightarrow\infty}\widehat{\mathbb{E}} \int_0^T \int_{Z_0} \int_\mathcal {O}|K(\widehat{u}_{\epsilon_j},z)-K(\widehat{u} ,z) |^2\mathrm{d}x\mu( \mathrm{d}z)\mathrm{d} t=0,\\
&\lim\limits_{j\rightarrow\infty}\widehat{\mathbb{E}} \int_0^T \int_{Z\backslash Z_0} \int_\mathcal {O}|G(\widehat{u}_{\epsilon_j},z)-G(\widehat{u} ,z)|^2\mathrm{d}x \mu( \mathrm{d}z)\mathrm{d} t=0.
\end{align}
By using the BDG inequality and the fact of $(\widehat{W}_{\epsilon_j},\widehat{\pi}_{\epsilon_j})=(\widehat{W} ,\widehat{\pi})$, $\widehat{\mathbb{P}}$-a.s., we get from \eqref{5.52} that
\begin{equation*}
\begin{split}
 \lim\limits_{j\rightarrow\infty}\widehat{\mathbb{E}}\left| \int_0^t\left\langle \mathcal {P}g(s,\widehat{u}_{\epsilon_j})-\mathcal {P}g(s,\widehat{u} ) ,\psi\right\rangle \mathrm{d}\widehat{W}_s \right|^2\leq C\lim\limits_{j\rightarrow\infty}\widehat{\mathbb{E}} \int_0^T\| g(t,\widehat{u}_{\epsilon_j})\rightarrow  g(t,\widehat{u} )\|_{\mathcal {L}_2(U;L^2)}^2\mathrm{d} t=0.
\end{split}
\end{equation*}
On the other hand, we get by using \eqref{5.1f} and the BDG inequality 
that
\begin{equation*}
\begin{split}
& \int_0^T\left(\widehat{\mathbb{E}} \left| \int_0^t\left\langle \mathcal {P}g(s,\widehat{u}_{\epsilon_j})-\mathcal {P}g(s,\widehat{u} ) ,\psi\right\rangle \mathrm{d}\widehat{W} _s \right|^2 \right)^2\mathrm{d} t\\
&\quad\leq C  \int_0^T\left[\widehat{\mathbb{E}}  \int_0^t\left(\|\langle \mathcal {P}g(s,\widehat{u}_{\epsilon_j}) ,\psi\rangle\|_{\mathcal {L}_2(U;\mathbb{R})}^2+ |\langle \mathcal {P}g(s,\widehat{u} ) ,\psi\rangle|_{\mathcal {L}_2(U;\mathbb{R})}^2\right)\mathrm{d}s\right]^2\mathrm{d} t\\
&\quad\leq C \widehat{\mathbb{E}}\sup_{t\in [0,T]} \left(\|\widehat{u}_{\epsilon_j}(s)\|^4+ \|\widehat{u}(s)\|_{L^2}^4+1\right)  \leq C,
\end{split}
\end{equation*}
which implies the uniform integrability of $\widehat{\mathbb{E}}| \int_0^t \langle \mathcal {P}g(s,\widehat{u}_{\epsilon_j})-\mathcal {P}g(s,\widehat{u} ) ,\psi \rangle \mathrm{d}\widehat{W}_s|^2$ for $t\in [0,T]$, and it follows from Vitali Convergence Theorem that
\begin{equation}\label{5.55}
\begin{split}
 \lim\limits_{j\rightarrow\infty} \int_0^T\widehat{\mathbb{E}}\left[\left| \int_0^t\left\langle \mathcal {P}g(s,\widehat{u}_{\epsilon_j})-\mathcal {P}g(s,\widehat{u} ) ,\psi\right\rangle \mathrm{d}\widehat{W} _s \right|^2\right]\mathrm{d} t=0.
\end{split}
\end{equation}
For the stochastic integral with respect to $\widetilde{\widehat{\pi}}$,  similar to \eqref{5.55}, 
one can derive that
\begin{equation*}
\begin{split}
 \lim\limits_{j\rightarrow\infty} \int_0^T\widehat{\mathbb{E}}\left[\left| \int_0^t \int_{Z_0} \langle\mathcal {P}K(\widehat{u}_{\epsilon_j}(s-),z)-\mathcal {P}K(\widehat{u} (s-),z) ,\psi\rangle \widetilde{\widehat{\pi}} (\mathrm{d}s,\mathrm{d}z) \right|^2\right]\mathrm{d} t=0.
\end{split}
\end{equation*}
For the last integral with respect to $\widehat{\pi}$, by making use of the facts of $\widetilde{\widehat{\pi}}(\mathrm{d}t,\mathrm{d}z)=\widehat{\pi}(\mathrm{d}t,\mathrm{d}z)-\mu(\mathrm{d}z) \mathrm{d}t$, the It\^{o} isometry and the assumption on $G$, we have
\begin{equation*}
\begin{split}
&\widehat{\mathbb{E}}\left[ \left| \int_0^t \int_{Z\backslash Z_0} \langle\mathcal {P}G(\widehat{u}_{\epsilon_j}(s-),z)-\mathcal {P}G(\widehat{u} (s-),z) ,\psi\rangle\widehat{\pi}(\mathrm{d}s,\mathrm{d}z)\right|^2\right]\\
&\quad\leq\widehat{\mathbb{E}}  \int_0^t \int_{Z\backslash Z_0} |\langle\mathcal {P}G(\widehat{u}_{\epsilon_j}(s-),z)-\mathcal {P}G(\widehat{u} (s-),z) ,\psi\rangle|^2 \mu(\mathrm{d}z)\mathrm{d}s \\
&\quad\leq\widehat{\mathbb{E}}  \int_0^t\|\widehat{u}_{\epsilon_j}-\widehat{u}\|^2 _{L^2} \mathrm{d}s\rightarrow0~~\textrm{as}~~j\rightarrow\infty.
\end{split}
\end{equation*}
By applying \eqref{5.1f} and the assumption on $G$, we also have
\begin{equation*}
\begin{split}
&\widehat{\mathbb{E}} \left[\left| \int_0^t \int_{Z\backslash Z_0} \langle\mathcal {P}G(\widehat{u}_{\epsilon_j}(s-),z)-\mathcal {P}G(\widehat{u} (s-),z) ,\psi\rangle\widehat{\pi}(\mathrm{d}s,\mathrm{d}z)\right|^2\right]\\
&\quad\leq C \widehat{\mathbb{E}}  \int_0^t \int_{Z\backslash Z_0} \left( \| G(\widehat{u}_{\epsilon_j}(s-),z)\|^2_{L^2}+\| G(\widehat{u} (s-),z)\|^2_{L^2} \right) \mu(\mathrm{d}z)\mathrm{d}s\leq C.
\end{split}
\end{equation*}
Thereby it follows from the Dominated Convergence Theorem that
\begin{equation}\label{5.56}
\begin{split}
&\lim\limits_{j\rightarrow\infty} \int_0^T\widehat{\mathbb{E}} \left[\left| \int_0^t \int_{Z\backslash Z_0} \langle\mathcal {P}G(\widehat{u}_{\epsilon_j}(s-),z)-\mathcal {P}G(\widehat{u} (s-),z) ,\psi\rangle\widehat{\pi}(\mathrm{d}s,\mathrm{d}z)\right|^2\right] \mathrm{d} t=0.
\end{split}
\end{equation}
Putting \eqref{5.43}, \eqref{5.45}, \eqref{5.49}, \eqref{5.51}, \eqref{5.55} and \eqref{5.56} together leads to \eqref{5.42b}, which implies that $(\widehat{W},\widehat{\pi},\widehat{n},\widehat{c},\widehat{u})$ satisfies the $u$-equation in \eqref{SCNS}.

\textsc{Step 4  (Weak entropy-energy inequality).}  Thanks to Lemma \ref{lem5.5} and Lemma \ref{lem4.5}, the  new random variables $(\widehat{n}_{\epsilon_j},\widehat{c}_{\epsilon_j},\widehat{u}_{\epsilon_j})$ satisfy the entropy-energy inequality \eqref{(4.6)} on the new probability space (see Lemma \ref{lem4.5} for details), which can also be rewritten in the differential form (by using the identity $\tilde{\pi}(\textrm{d}t,\textrm{d}z)=\pi(\textrm{d}t,\textrm{d}z)-\mu (\textrm{d}z)\textrm{d}t$)
\begin{equation} \label{5.57}
\begin{split}
&\mathrm{d} \mathscr{E} [\widehat{n}_{\epsilon_j},\widehat{c}_{\epsilon_j},\widehat{u}_{\epsilon_j}](t) +  \mathscr{D}[\widehat{n}_{\epsilon_j},\widehat{c}_{\epsilon_j},\widehat{u}_{\epsilon_j}](t)\mathrm{d}t \leq C + \| \widehat{u}_{\epsilon_j}(t)\|_{L^2}^2\mathrm{d}t + \mathrm{d}\mathcal {M}_E^{{\epsilon_j}},
\end{split}
\end{equation}
where $\mathscr{E} [\cdot]$ and $\mathscr{D}[\cdot]$ are defined as before, and $\mathcal {M}_E^{{\epsilon_j}}$ is a real-valued square integrable martingale (thanks to the uniform bounds in Lemma \ref{lem5.5}) given by
{\wuhao\begin{equation*}
\begin{split}
\mathcal {M}_E^{{\epsilon_j}}&=c^\dag \int_0^t 2\langle \widehat{u}_{\epsilon_j}, \mathcal {P}g(s,\widehat{u}_{\epsilon_j}) \mathrm{d}\widehat{W}_{\epsilon_j}(s)  \rangle+ c^\dag \int_0^t \int_{Z_0}  \| K(\widehat{u}_{\epsilon_j}(s-),z)\|_{L^2}^2  \widetilde{\widehat{\pi}}_{\epsilon_j}(\mathrm{d} s, \mathrm{d} z)\\
 & +c^\dag  \int_0^t \int_{Z_0}   2\langle \widehat{u}_{\epsilon_j}, \mathcal {P}K(\widehat{u}_{\epsilon_j}(s-),z)\rangle \widetilde{\widehat{\pi}}_{\epsilon_j}(\mathrm{d} s, \mathrm{d} z) +c^\dag \int_0^t \int_{Z\backslash Z_0} \| G(\widehat{u}_{\epsilon_j}(s-),z)\|_{L^2}^2 \widetilde{\widehat{\pi}}_{\epsilon_j}(\mathrm{d} s, \mathrm{d} z)\\
 & +c^\dag \int_0^t \int_{Z\backslash Z_0}  2\langle \widehat{u}_{\epsilon_j}, \mathcal {P}G(\widehat{u}_{\epsilon_j}(s-),z)\rangle   \widetilde{\widehat{\pi}}_{\epsilon_j} (\mathrm{d} s, \mathrm{d} z).
\end{split}
\end{equation*}}\noindent
Here we have used the assumption $(\textbf{A}_4)$ and the fact of $\widetilde{\widehat{\pi}}_{\epsilon_j} (\mathrm{d} s, \mathrm{d} z)=\widehat{\pi}_{\epsilon_j} (\mathrm{d} s, \mathrm{d} z)-\mu(\textrm{d}z)\textrm{d}t$. We also denote by $\mathcal {M}_E$ the associated limit process by removing all of the subscripts $\epsilon_j$ in the definition of $\mathcal {M}_E^{{\epsilon_j}}$. Note that, by Lemma \ref{lem5.5}, $\mathcal {M}_E$ is a square integrable martingale. Multiplying both sides of \eqref{5.57} by any deterministic smooth test function $\phi (t)\geq 0$ with $\phi(T)=0$, and then integrating over $[0,T]$, we get
\begin{equation}\label{5.58}
\begin{split}
&- \int_0^T \phi' (t)\mathscr{E} \left[\widehat{n}_{\epsilon_j},\widehat{c}_{\epsilon_j},\widehat{u}_{\epsilon_j}\right](t)\mathrm{d}t +  \int_0^T\phi(t) \mathscr{D}\left[\widehat{n}_{\epsilon_j},\widehat{c}_{\epsilon_j},\widehat{u}_{\epsilon_j}\right](t)\mathrm{d}t\\
&\quad\leq \phi(0) \mathscr{E} \left[\widehat{n}_{\epsilon_j},\widehat{c}_{\epsilon_j},\widehat{u}_{\epsilon_j}\right](0) +  \int_0^T\phi(t)\| \widehat{u}_{\epsilon_j}(t)\|_{L^2}^2\mathrm{d}t +C  \int_0^T\phi(t)\mathrm{d} t+  \int_0^T\phi(t)\mathrm{d}\mathcal {M}_E^{{\epsilon_j}}.
\end{split}
\end{equation}
To take the limit in the first integral of \eqref{5.58}, we first get by \eqref{5.25} and the inequality $- \frac{1}{e}\leq x \ln x \leq \frac{3}{2} x^{\frac{5}{3}}$ for all $x>0$ that
$$
- \frac{\textrm{Leb}(\mathcal {O})}{e}\leq \int_\mathcal {O}\widehat{n}_{\epsilon_j} \ln \widehat{n}_{\epsilon_j} \mathrm{d} x \leq \frac{3}{2} \int_\mathcal {O} \widehat{n}_{\epsilon_j}^{\frac{5}{3}}  \mathrm{d} x <\infty,\quad \widehat{\mathbb{P}}\textrm{-a.s.}
$$
where $\textrm{Leb}(\mathcal {O})$ denotes the Lebesgue measure of $\mathcal {O}$. On the other hand, it follows from $c)$ of Lemma \ref{lem5.5} that $\|\widehat{n}_{\epsilon_j}-\widehat{n}\|_{L^{\frac{5}{4}}}\rightarrow 0$ $\widehat{\mathbb{P}}\otimes \mathrm{d}t$-almost surely. Thus by extracting a subsequence denoted by itself, we obtain
\begin{equation}\label{5.59}
\begin{split}
 \int_\mathcal {O}\widehat{n}_{\epsilon_j}(t) \ln \widehat{n}_{\epsilon_j}(t) \mathrm{d} x\rightarrow  \int_\mathcal {O}\widehat{n} (t)\ln \widehat{n}(t)  \mathrm{d} x\quad \textrm{as}~~ j\rightarrow\infty,\quad \widehat{\mathbb{P}}\otimes \mathrm{d} t\textrm{-alomst  all}~ (\omega,t).
\end{split}
\end{equation}
Direct calculation shows that
$$
\Delta \Psi (c_{\epsilon})= \frac{\Delta c_{\epsilon}}{\sqrt{\theta(c_{\epsilon})}} + \frac{\theta'(c_{\epsilon})}{2\theta^{\frac{3}{2}}(c_{\epsilon})} |\nabla c_{\epsilon}|^2.
$$
Based on this identity, we deduce from \eqref{(4.5)}, Remark \ref{remark4.2} and Lemma \ref{lem4.6} that $ \Psi (c_{\epsilon})\in L^p(\Omega;L^2(0,T;W^{2,2}(\mathcal {O})))$, which together with the compact embedding $L^2(0,T;W^{1,2}(\mathcal {O}))\subset L^2(0,T;L^{2}(\mathcal {O}))$ implies that  $\nabla \Psi (\widehat{c}_{\epsilon})\rightarrow \nabla \Psi (\widehat{c} )$ strongly in $L^p(\Omega;L^2(0,T;L^{2}(\mathcal {O})))$, and so one can extract a subsequence denoted by itself  such that
\begin{equation}\label{5.60}
\begin{split}
 \int_\mathcal {O}  \frac{1}{2}|\nabla \Psi(\widehat{c}_{\epsilon_j}(t))|^2  \mathrm{d} x\rightarrow  \int_\mathcal {O}  \frac{1}{2}|\nabla \Psi(\widehat{c}(t) )|^2 \mathrm{d} x\quad \textrm{as}~~ j\rightarrow\infty,\quad \widehat{\mathbb{P}}\otimes \mathrm{d} t\textrm{-almost all}~ (\omega,t).
\end{split}
\end{equation}
Furthermore, by virtue of $c)$ of Lemma \ref{lem5.5}, we have
\begin{equation}\label{5.61}
\begin{split}
 \int_\mathcal {O} | \widehat{u}_{\epsilon_j} (t)|^2  \mathrm{d} x\rightarrow  \int_\mathcal {O} |  \widehat{u} (t) |^2  \mathrm{d} x\quad \textrm{as}~~ j\rightarrow\infty,\quad \widehat{\mathbb{P}}\otimes \mathrm{d} t\textrm{-almost all}~ (\omega,t).
\end{split}
\end{equation}
From \eqref{5.59}-\eqref{5.61}, we deduce that  $\mathscr{E} [\widehat{n}_{\epsilon_j},\widehat{c}_{\epsilon_j},\widehat{u}_{\epsilon_j}]\rightarrow \mathscr{E} [\widehat{n},\widehat{c},\widehat{u}]$  as $j\rightarrow\infty$,  $\widehat{\mathbb{P}}\otimes \mathrm{d} t$-almost all $(\omega,t)$, which combined with the Dominated Convergence Theorem yields that
\begin{equation*}
\begin{split}
- \int_0^T \phi' (t)\mathscr{E} \left[\widehat{n}_{\epsilon_j},\widehat{c}_{\epsilon_j},\widehat{u}_{\epsilon_j}\right](t)
\mathrm{d}t\rightarrow - \int_0^T \phi' (t)\mathscr{E} \left[\widehat{n},\widehat{c},\widehat{u}\right](t)
\mathrm{d}t\quad \textrm{as}~~ j\rightarrow\infty,\quad \widehat{\mathbb{P}}\textrm{-a.s.}
\end{split}
\end{equation*}
Similarly,
$$
\phi(0) \mathscr{E} \left[\widehat{n}_{\epsilon_j},\widehat{c}_{\epsilon_j},\widehat{u}_{\epsilon_j}\right](0)\rightarrow \phi(0) \mathscr{E} \left[\widehat{n},\widehat{c},\widehat{u}\right](0)\quad \textrm{as}~~ j\rightarrow\infty,\quad \widehat{\mathbb{P}}\textrm{-a.s.}
$$
Taking the limit $j\rightarrow\infty$ in \eqref{5.58}, and using the Fatou Lemma, the lower-continuous of norm as well as the  Lemma \ref{lem5.1}, we get
\begin{equation}\label{5.62}
\begin{split}
&- \int_0^T \phi' (t)\mathscr{E} \left[\widehat{n},\widehat{c},\widehat{u}\right](t)\mathrm{d}t +  \int_0^T\phi(t) \mathscr{D}\left[\widehat{n},\widehat{c},\widehat{u}\right](t)\mathrm{d}t\leq \phi(0) \mathscr{E} \left[\widehat{n},\widehat{c},\widehat{u}\right](0)\\
 &\quad+  \int_0^T\phi(t)\| \widehat{u}(t)\|_{L^2}^2\mathrm{d}t +C  \int_0^T\phi(t)\mathrm{d} t+ \lim\limits_{j\rightarrow\infty} \int_0^T\phi(t)\mathrm{d}\mathcal {M}_E^{{\epsilon_j}}.
\end{split}
\end{equation}
Moreover, by applying the BDG inequality, the uniform bounds in Lemma \ref{lem5.1}, the Lemma \ref{lem5.5} and the similar arguments as those in Step 3, one can verify that
$$
\widehat{\mathbb{E}}\left[ \left| \int_0^T\phi(t)\mathrm{d}\mathcal {M}_E^{{\epsilon_j}}- \int_0^T\phi(t)\mathrm{d}\mathcal {M}_E \right|^2\right]\rightarrow 0\quad \textrm{as}~~ j\rightarrow\infty,
$$
which implies that, by extracting  a subsequence (still denoted by itself), there holds
\begin{equation}\label{5.63}
\begin{split}
\lim\limits_{j\rightarrow\infty} \int_0^T\phi(t)\mathrm{d}\mathcal {M}_E^{{\epsilon_j}}= \int_0^T\phi(t)\mathrm{d}\mathcal {M}_E,\quad \widehat{\mathbb{P}}\textrm{-a.s.}
\end{split}
\end{equation}
The desired energy-type inequality is a consequence of \eqref{5.62} and \eqref{5.63}. For the martingale $\mathcal {M}_E $, by applying the BDG inequality, the assumptions on $K$ and $G$ and the uniform bound in Lemma \ref{lem5.1}, we have
\begin{equation*}
\begin{split}
\widehat{\mathbb{E}}\sup_{t\in [0,T]}|\mathcal {M}_E|^p &\leq C\left[\mathbb{E}\left( \int_0^T |\langle \widehat{u}, \mathcal {P}g(s,\widehat{u}) |^2 \mathrm{d}t\right)^{\frac{p}{2}} + \mathbb{E}\left(  \int_0^T(1+\| \widehat{u}(s )\|_{L^2}^4)  \mathrm{d} s\right)^{\frac{p}{2}}\right.\\
 &\left.+  \widehat{\mathbb{E}}\left(  \int_0^T \int_{Z\backslash Z_0} |\langle \widehat{u}, \mathcal {P}G(\widehat{u}(s-),z)\rangle|^2 \mu( \mathrm{d} z)\mathrm{d} s\right)^{\frac{p}{2}} \right.\\
& \left.+  \widehat{\mathbb{E}}\left(  \int_0^T \int_{Z_0} |\langle \widehat{u}, \mathcal {P}K(\widehat{u}(s-),z)\rangle |^2 \mu( \mathrm{d} z)\mathrm{d} s\right)^{\frac{p}{2}}  \right]\\
&\leq C \widehat{\mathbb{E}}\left(\sup_{t\in [0,T]}\| \widehat{u}(t )\|_{L^2}^{2p} +1\right)\\
&\leq C\left(\widehat{\mathbb{E}}\left[ \int_\mathcal {O} \left(\widehat{n}(0)\ln \widehat{n}(0)+ \frac{1}{2}|\nabla \Psi(\widehat{c}(0))|^2+c^\dag |\widehat{u}(0)|^2\right)\mathrm{d} x\right]^p +1 \right),
\end{split}
\end{equation*}
which implies the inequality  \eqref{1.8}. The proof of Theorem \ref{thm} is now completed.
\end{proof}

\section*{Conflict of interest statement}

The authors declared that they have no conflicts of interest to this work.

\section*{Data availability}

No data was used for the research described in the article.

\section*{Acknowledgements}

The authors thank the anonymous referees for their constructive comments and suggestions which improved the quality of this article significantly. This work was partially supported by the National Natural Science Foundation of China (No. 12231008).

\bibliographystyle{plain}%
\bibliography{SCNS}

\end{document}